\newcommand{\assign}{:=}
\newcommand{\longhookrightarrow}{{\lhook\joinrel\relbar\joinrel\rightarrow}}
\newcommand{\nin}{\not\in}
\newcommand{\tmdummy}{$\mbox{}$}
\newcommand{\tmmathbf}[1]{\ensuremath{\boldsymbol{#1}}}
\newcommand{\tmop}[1]{\ensuremath{\operatorname{#1}}}
\newcommand{\tmrsub}[1]{\ensuremath{_{\textrm{#1}}}}
\newcommand{\tmstrong}[1]{\textbf{#1}}
\newcommand{\tmtextit}[1]{{\itshape{#1}}}
\newenvironment{enumeratealpha}{\begin{enumerate}[a{\textup{)}}] }{\end{enumerate}}
\newenvironment{enumeratenumeric}{\begin{enumerate}[1.] }{\end{enumerate}}
\newenvironment{itemizedot}{\begin{itemize} }{\end{itemize}}
\newenvironment{proof}{\noindent\textbf{Proof\ }}{\hspace*{\fill}$\Box$\medskip}
\definecolor{grey}{rgb}{0.75,0.75,0.75}
\definecolor{orange}{rgb}{1.0,0.5,0.5}
\definecolor{brown}{rgb}{0.5,0.25,0.0}
\definecolor{pink}{rgb}{1.0,0.5,0.5}
\newtheorem{corollary}{Corollary}
\newtheorem{definition}{Definition}
\newtheorem{lemma}{Lemma}
\newtheorem{proposition}{Proposition}
{\theorembodyfont{\rmfamily}\newtheorem{remark}{Remark}}
\newtheorem{theorem}{Theorem}
\begin{document}

\title{Higher classes of ordinals induced by $<_1$}\author{Parm{\'e}nides
Garc{\'i}a Cornejo}\maketitle

\begin{abstract}
  In this article we want to see that it is possible to iterate and generalize
  the notions presented in {\cite{GarciaCornejo1}} such
  that we can obtain the higher or thinner classes of ordinals induced by
  the $<_1$-relation as solutions of $\langle \alpha, t \rangle$-conditions.
  Indeed, these results are so general that in a future article we will
  be able to characterize the ordinal $| \Pi_1^1 - \tmop{CA}_0 |$ as the smallest ordinal
  of the thinnest class induced by $<_1$.
\end{abstract}

\section{$\tmop{Class} (n)$}

\begin{definition}
  We define by recursion on $\omega$\\
  $\tmop{Class} (1) \assign \mathbbm{E}$;\\
  $\tmop{Class} (n + 1) \assign \{\alpha \in \tmop{OR} | \alpha \in
  \tmop{Class} (n) \wedge \alpha ( +^n) \in \tmop{Class} (n) \wedge \alpha <_1
  \alpha (+^n)\}$,\\
  where for $\alpha \in \tmop{Class} (n)$ we define \\
  $\alpha (+^n) \assign
  \left\{ \begin{array}{l}
    \min \{\beta \in \tmop{Class} (n) | \alpha < \beta\} \text{ iff }
    \{\beta \in \tmop{Class} (n) | \alpha < \beta\} \neq \emptyset\\
    \infty \text{ otherwise }
  \end{array} \right.$, and we make the conventions $\infty \nin \tmop{OR}$
  and $\forall \gamma \in \tmop{OR} . \gamma < \infty$
\end{definition}

In the upper definition of $(+^n)$, which we can call ``the successor
functional of $\tmop{Class} (n)$'', we needed to consider the case when, for
$\alpha \in \tmop{Class} (n)$, such ``successor of $\alpha$ in $\tmop{Class}
(n)$'' may not exist. We want to tell the reader that this is just a merely
formal necessity: one of our purposes is to show that such successor always
exists and that $\tmop{Class} (n)$ ``behaves like the class $\mathbbm{E}$'' in
the sense of being a closed unbounded class of ordinals. This is one of the
important results we want to generalize, although it's proof will take a lot
of effort.

Let's see now some basic properties of the elements of $\tmop{Class} (n)$.

\begin{proposition}
  \label{Class(n)_first_properties}{\tmdummy}
  
  \begin{enumeratenumeric}
    \item $\forall n, i \in [1, \omega) .i \leqslant n \Longrightarrow
    \tmop{Class} (n) \subset \tmop{Class} (i)$
    
    \item For any $n \in [1, \omega)$ and any $\alpha \in \tmop{Class} (n)$
    define recursively on $[0, n - 1]$\\
    $\alpha_n \assign \alpha$, $\alpha_{n - (k + 1)} \assign \alpha_{n - k}
    (+^{n - (k + 1)})$.\\
    Then $\forall i \in [1, n] . \alpha_i \in \tmop{Class} (i)$ and $\alpha =
    \alpha_n <_1 \alpha_{n - 1} <_1 \ldots <_1 \alpha_2 <_1 \alpha_1 <_1
    \alpha_1 2$.
    
    \item For any $n \in [1, \omega)$ and any $\alpha \in \tmop{Class} (n)$
    consider the sequence defined in 2.\\
    If $\alpha <_1 \alpha_1 2 + 1$ then $\alpha \in \tmop{Lim} \tmop{Class}
    (n)$.
    
    \item $\forall n \in [2, \omega) . \tmop{Class} (n) \subset \tmop{Lim}
    \tmop{Class} (n - 1)$.
    
    \item $\forall n, m \in [1, \omega) \forall \alpha . (m < n \wedge \alpha
    \in \tmop{Class} (n)) \Longrightarrow \alpha \in \tmop{Class} (m) \wedge
    \alpha (+^m) < \alpha (+^n)$.
  \end{enumeratenumeric}
\end{proposition}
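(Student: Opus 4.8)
The plan is to dispose of (1) and (2) by bare induction from the definitions, to derive (3) and (4) by reflecting a $\Sigma_1$-statement along the relation $<_1$ using the definability and $\langle \alpha, t \rangle$-condition apparatus of \cite{GarciaCornejo1}, and finally to read (5) off (4). For (1) I would induct on $n$: the definition of $\tmop{Class} (n + 1)$ yields $\tmop{Class} (n + 1) \subset \tmop{Class} (n)$ outright, and composing with the inductive hypothesis gives $\tmop{Class} (n + 1) \subset \tmop{Class} (i)$ for every $i \leqslant n + 1$. For (2) the crucial remark is that the recursion is well defined and stays inside the classes: since $\alpha_{m + 1} \in \tmop{Class} (m + 1)$, the definition of $\tmop{Class} (m + 1)$ itself forces $\alpha_{m + 1} (+^m) \in \tmop{Class} (m) \subset \tmop{OR}$ — so the value is never $\infty$ — and also forces $\alpha_{m + 1} <_1 \alpha_{m + 1} (+^m)$. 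A downward induction on the index $i$ from $n$ to $1$ then yields $\alpha_i \in \tmop{Class} (i)$ together with the consecutive links $\alpha_i <_1 \alpha_{i - 1}$, and the last link $\alpha_1 <_1 \alpha_1 2$ is the basic fact from \cite{GarciaCornejo1} that every element of $\mathbbm{E}$ is $<_1$-below its own double.

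Statements (3) and (4) are two instances of one move. To show $\alpha \in \tmop{Lim} \tmop{Class} (k)$ for the relevant $k$ I would fix $\gamma < \alpha$ and produce some $\delta \in \tmop{Class} (k)$ with $\gamma < \delta < \alpha$: the idea is to exhibit an ordinal $\beta$ with $\alpha <_1 \beta$ that already ``sees'' such a witness above $\gamma$, the cheapest witness being $\alpha$ itself. For (4) take $k = n - 1$, use (1) to get $\alpha \in \tmop{Class} (n - 1)$, and take $\beta = \alpha_1 2$ with $\alpha_1$ the top of the chain of (2); then $\alpha <_1 \beta$ by (2) and transitivity of $<_1$, and — this is the point — (2) together with the monotonicity $\xi < \eta \Rightarrow \xi (+^j) < \eta (+^j)$ of the successor functionals shows that all the data witnessing ``$\alpha \in \tmop{Class} (n - 1)$'' sits below $\beta$. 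For (3) take $k = n$ and $\beta = \alpha_1 2 + 1$, which is exactly the hypothesis. Since membership in $\tmop{Class} (k)$ is — by the definability results of \cite{GarciaCornejo1} — captured by a formula low enough in the hierarchy, whose witnesses for the argument $\alpha$ all sit below $\beta$, the elementarity $\alpha <_1 \beta$ reflects the statement ``$\exists \delta\, (\gamma < \delta \wedge \delta \in \tmop{Class} (k))$'' from $\beta$ down below $\alpha$, and any reflected witness is the required $\delta$.

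For (5): $\alpha \in \tmop{Class} (m)$ is (1), and $\alpha (+^m) \leqslant \alpha (+^n)$ is immediate from $\tmop{Class} (n) \subset \tmop{Class} (m)$. For strictness, if $\alpha (+^n) = \infty$ then $\alpha (+^m) < \alpha (+^n)$ is the stated convention; otherwise $m < n$ gives $\alpha (+^n) \in \tmop{Class} (n) \subset \tmop{Class} (m + 1)$, so by (4) at level $m + 1$ we have $\alpha (+^n) \in \tmop{Lim} \tmop{Class} (m)$, hence $\tmop{Class} (m)$ meets the interval $(\alpha, \alpha (+^n))$; the least element it has there is $\alpha (+^m)$ by the definition of the successor functional, whence $\alpha (+^m) < \alpha (+^n)$.

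I expect the reflection underlying (3) and (4) to be the main obstacle. One must know not only that $\mathbbm{E}$, the functionals $(+^j)$, and hence each $\tmop{Class} (k)$ are definable over the structure carrying $<_1$ by formulas preserved under $\alpha <_1 \beta$, but also that relative to the reflecting ordinal these classes are computed \emph{correctly} (closure of $\alpha$ under the relevant chain-tops), and it is this latter demand that I expect to force the particular threshold $\alpha_1 2 + 1$ in (3) rather than a smaller ordinal. Both points — and the bookkeeping that for the chosen witness every needed piece of data lies strictly below the target $\beta$ — are where essentially all the effort lies, and they should be supplied by the $\langle \alpha, t \rangle$-condition machinery of \cite{GarciaCornejo1}; by contrast (1), (2) and the passage from (4) to (5) are routine.
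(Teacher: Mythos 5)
Your overall strategy for (3) is the paper's: fix $\rho<\alpha$, use $\alpha<_1\alpha_1 2+1$ to pull the finite configuration $\{\rho\}\cup\{\alpha_i,\alpha_i 2\}$ down below $\alpha$, and observe that the image of $\alpha$ lands in $\tmop{Class}(n)\cap(\rho,\alpha)$. The genuine gap is in the step you delegate to ``the definability results of \cite{GarciaCornejo1}'': that reference only treats $\tmop{Class}(1)=\mathbbm{E}$, so it cannot tell you that ``$\delta\in\tmop{Class}(k)$'' for $k\geqslant 2$ is expressed by a formula that transfers along $\alpha<_1\beta$ --- indeed $\tmop{Class}(k)$ and the functionals $(+^j)$ are only defined in the present paper, and the reflected witnesses $h(\alpha_{n-1}),\ldots,h(\alpha_1)$ are in general \emph{not} the canonical successors $h(\alpha)(+^{n-1}),\ldots$ of the image. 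What is actually needed is the chain criterion: any configuration $\delta<_1 a_{k-1}<_1\ldots<_1 a_1<_1 a_1 2$ forces $\delta\in\tmop{Class}(k)$, proved by iterating $<_1$-connectedness (first $h(\alpha_1)\in\mathbbm{E}$ from $h(\alpha_1)<_1 h(\alpha_1)2$, then $h(\alpha_2)\in\tmop{Class}(2)$ because $h(\alpha_2)<h(\alpha_2)(+^1)\leqslant h(\alpha_1)$ and $h(\alpha_2)<_1 h(\alpha_1)$, and so on). This is exactly the content of the steps (1')--(n') in the paper's proof of (3), later stated separately as Proposition \ref{<less>_1_chain_of_length_j}; without supplying it, ``any reflected witness is the required $\delta$'' does not follow, so this lemma must be proved here rather than cited.

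Granting that lemma, your treatment of (4) is a legitimately different and in fact shorter route than the paper's. The paper proves (4) by induction on $n$: it forms the $\tmop{Class}(n-1)$-chain $\beta_{n-1}=\alpha,\beta_{n-2},\ldots,\beta_1$ of $\alpha$, shows by a side induction (using the induction hypothesis $\tmop{Class}(l)\subset\tmop{Lim}\tmop{Class}(l-1)$ for $l<n$) that $\beta_1 2+1<\alpha(+^{n-1})$, deduces $\alpha<_1\beta_1 2+1$ by $<_1$-connectedness, and then applies (3) at level $n-1$; your version instead reflects directly below $\alpha_1 2$ using $\alpha<_1\alpha_1 2$ (transitivity) and needs no induction on $n$. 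Two small repairs: the cleanest witnessing data is the subchain $\alpha<_1\alpha_{n-1}<_1\ldots<_1\alpha_2<_1\alpha_2 2$ (all $<\alpha_1<\alpha_1 2$, and $\alpha_2<_1\alpha_2 2$ since $\alpha_2\in\mathbbm{E}$), since bounding the canonical $\tmop{Class}(n-1)$-chain of $\alpha$ below $\alpha_1 2$ as you propose needs, besides monotonicity, the comparison $\xi(+^j)\leqslant\xi(+^{j'})$ for $j<j'$ --- which does follow from (1), so no circularity with your (5), but it should be said. Your derivation of (5) from (1) and (4) is fine.
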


\begin{proof}
  {\color{orange} 1, 2 and 5 are left to the reader.}
  
  {\noindent}3.\\
  Let $\rho \in \alpha$ be arbitrary. Define $B_{\rho} \assign \{\rho\} \cup
  \{\alpha_i, \alpha_i 2| n \in \{1, \ldots, n\}\}$. Since $\alpha <_1
  \alpha_1 2 + 1$ and $B_{\rho} \subset_{\tmop{fin}} \alpha_1 2 + 1$, then
  there exists an ($<, <_1, +$)-isomorphism $h_{\rho} : B_{\rho}
  \longrightarrow h [B_{\rho}] \subset \alpha$ with \\
  $h_{\rho} |_{\alpha} = \tmop{Id}_{\alpha}$. Note this implies the following
  facts in the following order (the order is important since the later facts
  use the previous to assert their conclusion):

  (1') \ $\forall i \in [1, n] . \alpha_i <_1 \alpha_i 2 \Longleftrightarrow
  h_{\rho} (\alpha_i) <_1 h_{\rho} (\alpha_i) 2$. So $h_{\rho} (\alpha_i) \in
  \mathbbm{E} = \tmop{Class} (1)$.
  
  (2') \ $\forall i \in [2, n] . \alpha_i <_1 \alpha_1 \Longleftrightarrow
  h_{\rho} (\alpha_i) <_1 h_{\rho} (\alpha_1)$. \\
  \ \ \ \ \ \ \ \ \ \ So by (1') and $<_1$-connectedness $\forall i \in [2,
  n] .h_{\rho} (\alpha_i) \in \tmop{Class} (2)$.
  
  (3') \ $\forall i \in [3, n] . \alpha_i <_1 \alpha_2 \Longleftrightarrow
  h_{\rho} (\alpha_i) <_1 h_{\rho} (\alpha_2)$.\\
  \ \ \ \ \ \ \ \ \ \ So by (1') and (2') and $<_1$-connectedness $\forall i
  \in [3, n - 1] .h_{\rho} (\alpha_i) \in \tmop{Class} (3)$.
  
  ... (inductively)
  
  (n') \ $\alpha_n <_1 \alpha_{n - 1} \Longleftrightarrow h_{\rho} (\alpha_n)
  <_1 h_{\rho} (\alpha_{n - 1})$. \\
  \ \ \ \ \ \ \ \ \ \ So by (1'), (2'),.., (n-1') and $<_1$-connectedness
  $h_{\rho} (\alpha_n) \in \tmop{Class} (n)$.

  The previous shows that the set (remember that $\alpha_n = \alpha$)\\
  $A \assign \{h_{\rho} (\alpha) | \rho \in \alpha \wedge h_{\rho} : B_{\rho}
  \longrightarrow \alpha \text{ is an } ( <, <_1, +) \text{-iso sucht hat } 
  h_{\rho} |_{\alpha} = \tmop{Id}_{\alpha}
  \} \subset \tmop{Class} (n)$ contains for any $\rho \in \alpha$ an element
  $h_{\rho} (\alpha) = h_{\rho} (\alpha_n) \in \tmop{Class} (n)$; moreover,
  since $\rho < \alpha = \alpha_n$ implies $\rho = h_{\rho} (\rho) < h_{\rho}
  (\alpha_n) = h_{\rho} (\alpha) < \alpha$, then $A$ is confinal in $\alpha$.
  Hence $\alpha \in \tmop{Lim} \tmop{Class} (n)$.

  {\noindent}4.\\
  We proceed by induction on $[2, \omega)$.
  
  Take $n \in [2, \omega)$.
  
  Suppose $\forall l \in n \cap [2, \omega) . \tmop{Class} (l) \subset
  \tmop{Lim} \tmop{Class} (l - 1)$. \ \ \ \ \ \ \ {\tmstrong{(cIH)}}

  If $\tmop{Class} (n) = \emptyset$ \ then we are done. So suppose
  $\tmop{Class} (n) \neq \emptyset$ and take $\alpha \in \tmop{Class} (n)$. By
  definition this means $\alpha \in \tmop{Class} (n - 1) \ni \alpha (+^{n -
  1})$ and $\alpha <_1 \alpha (+^{n - 1})$. \ \ \ \ \ \ \ {\tmstrong{(*3)}}

  Case $n - 1 = 1$.
  
  Then by (*3), the inequality $\alpha < \alpha 2 + 1 < \alpha (+^1$) and
  $<_1$-connectedness we get $\alpha <_1 \alpha 2 + 1$. Then by
  {\cite{GarciaCornejo1}} corollary \ref{eta(t)+1<less>_1_Equivalences},
  $\alpha \in \tmop{Lim} \mathbbm{E} = \tmop{Lim} \tmop{Class} (1)$.

  Case $n - 1 \geqslant 2$.
  
  By (*3) we know $\alpha \in \tmop{Class} (n - 1)$. Then, by 2., the on $[0,
  n - 2]$ recursively defined sequence of ordinals $\beta_{n - 1} \assign
  \alpha$, $\beta_{n - 1 - (k + 1)} \assign \beta_{n - 1 - k} (+^{n - 1 - (k +
  1)}$) satisfies \\
  $\beta_{n - 1} <_1 \beta_{n - 2} <_1 \ldots <_1 \beta_2 <_1 \beta_1 <_1
  \beta_1 2$ and $\forall i \in [1, n - 1] . \beta_i \in \tmop{Class} (i)$. \
  \ \ \ \ \ \ {\tmstrong{(*4)}}

  Let $\gamma \assign \alpha (+^{n - 1}) \in \tmop{Class} (n - 1)$.
  
  We show now by a side induction that $\forall u \in [2, n - 1] . \beta_{n -
  u} < \gamma$. \ \ \ \ \ \ \ {\tmstrong{(**3**)}}

  Let $u \in [2, n - 1]$.
  
  Suppose for $l \in u \cap [2, n - 1] . \beta_{n - l} < \gamma$. \ \ \ \ \ \
  \ {\tmstrong{(SIH)}}
  
  Since $\gamma \in \tmop{Class} (n - 1)$, then by (3*) (in case $u = 2$), by
  our SIH (in case $u > 2$) and 1. we have that $\beta_{n - (u - 1)} < \gamma
  \in \tmop{Class} (n - (u - 1))$, that is, $\gamma \in \{e \in \tmop{Class}
  (n - (u - 1)) | \beta_{n - (u - 1)} <_1 e\}$. But $\beta_{n - u} = \beta_{n
  - (u - 1)} (+^{n - u}) = \min \{e \in \tmop{Class} (n - u) | \beta_{n - (u -
  1)} < e\}$. From all this follows \\
  $\beta_{n - u} \leqslant \gamma$. \ \ (*5)
  
  On the other hand, our cIH applied to $\gamma \in \tmop{Class} (n - (u -
  1))$ implies $\gamma \in \tmop{Lim} \tmop{Class} (n - u)$; however, since
  $\beta_{n - u} = \min \{e \in \tmop{Class} (n - u) | \beta_{n - (u - 1)} <
  e\}$, then $\beta_{n - u} \nin \tmop{Lim} \tmop{Class} (n - u)$. From this
  and (*5) follows $\beta_{n - u} < \gamma$. This shows (**3**).

  From the fact that $\gamma \in \mathbbm{E}$, (**3**) and (*4) we have\\
  $\alpha = \beta_{n - 1} <_1 \beta_{n - 2} <_1 \ldots <_1 \beta_2 <_1 \beta_1
  <_1 \beta_1 2 < \beta_1 2 + 1 < \gamma = \alpha (+^{n - 1}$); moreover, from
  this, (*3) and $<_1$-connectedness we obtain $\beta_{n - 1} <_1 \beta_1 2 +
  1$. This way, by 3., it follows that \\
  $\alpha = \beta_{n - 1} \in \tmop{Lim} \tmop{Class} (n - 1)$ as we needed to
  show.
\end{proof}

\begin{proposition}
  \label{d<less>j<less>n_implies_Class(j)_contained_Lim(Class(d))}Let $j \in
  [2, \omega)$ and $c \in \tmop{Class} (j)$. Then for any $d \in [1, j)$ there
  exists a sequence $(c_{\xi})_{\xi \in X} \subset \tmop{Class} (d)$ such that
  $c_{\xi}\underset{\text{cof}}{\longhookrightarrow}c$.
\end{proposition}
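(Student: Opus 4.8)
The plan is to reduce the statement to the single inclusion $\tmop{Class} (j) \subset \tmop{Lim} \tmop{Class} (d)$. Once this is known, the conclusion is just an unpacking of the definition of $\tmop{Lim}$: if $c \in \tmop{Lim} \tmop{Class} (d)$, then $\tmop{Class} (d) \cap c$ is cofinal in $c$ (every ordinal below $c$ lies below some element of $\tmop{Class} (d)$ which is itself below $c$), so taking $X$ to be the order type of $\tmop{Class} (d) \cap c$ and letting $(c_{\xi})_{\xi \in X}$ be its strictly increasing enumeration (or any cofinal subsequence thereof) gives a sequence $(c_{\xi})_{\xi \in X} \subset \tmop{Class} (d)$ with $c_{\xi} \underset{\text{cof}}{\longhookrightarrow} c$. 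So all the content lies in showing that an element of $\tmop{Class} (j)$ is a limit point of $\tmop{Class} (d)$ whenever $1 \leqslant d < j$.

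For that inclusion I would simply chain two already-established facts. Since $j \geqslant 2$ and $d \in [1, j)$, we have $1 \leqslant d \leqslant j - 1$. By Proposition \ref{Class(n)_first_properties}.1, $\tmop{Class} (j - 1) \subset \tmop{Class} (d)$, and the operator $\tmop{Lim}$ is monotone under $\subset$ (a limit point of a smaller class is a limit point of any larger one), so $\tmop{Lim} \tmop{Class} (j - 1) \subset \tmop{Lim} \tmop{Class} (d)$. By Proposition \ref{Class(n)_first_properties}.4, $\tmop{Class} (j) \subset \tmop{Lim} \tmop{Class} (j - 1)$. Combining, $c \in \tmop{Class} (j) \subset \tmop{Lim} \tmop{Class} (j - 1) \subset \tmop{Lim} \tmop{Class} (d)$, as desired. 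If one prefers an inductive presentation, the same two facts give a trivial induction on $j - d \in [1, \omega)$, with base case $j - d = 1$ being exactly Proposition \ref{Class(n)_first_properties}.4.

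I do not expect a genuine obstacle here: the substantive work --- that each class $\tmop{Class} (n)$ is contained in $\tmop{Lim} \tmop{Class} (n - 1)$ --- was already carried out in Proposition \ref{Class(n)_first_properties}.4, which in turn rests on the $(<, <_1, +)$-isomorphism and $<_1$-connectedness arguments used there and imported from \cite{GarciaCornejo1}. The only mild care-points are: (i) verifying the index arithmetic, namely that $d \in [1, j)$ together with $j \geqslant 2$ really yields $d \leqslant j - 1$ and $j - 1 \geqslant 1$, so that both cited parts of Proposition \ref{Class(n)_first_properties} apply; (ii) the monotonicity of $\tmop{Lim}$, which is immediate from its definition; and (iii) the routine passage from ``$c$ is a limit point of $\tmop{Class} (d)$'' to an honest indexed cofinal sequence, which is the enumeration described in the first paragraph.
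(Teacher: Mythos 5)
Your proposal is correct; the paper itself leaves this proof to the reader, and your route — combining Proposition \ref{Class(n)_first_properties}.4 ($\tmop{Class}(j) \subset \tmop{Lim}\tmop{Class}(j-1)$) with Proposition \ref{Class(n)_first_properties}.1 ($\tmop{Class}(j-1) \subset \tmop{Class}(d)$ for $d \leqslant j-1$) and the monotonicity of $\tmop{Lim}$ — is exactly the intended one-line derivation, with the passage from limit point to an indexed cofinal sequence handled correctly.
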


\begin{proof}
  {\color{orange} Left to the reader.}
\end{proof}

\begin{corollary}
  \label{alpha_i_properties}For any $n \in [1, \omega)$ and any $\alpha \in
  \tmop{Class} (n)$ define by recursion on $[0, n - 1]$ the ordinals $\alpha_n
  \assign \alpha$, $\alpha_{n - (k + 1)} \assign \alpha_{n - k} (+^{n - (k +
  1)})$. Then
  \begin{enumeratealpha}
    \item $\alpha = \alpha_n \in \tmop{Class} (n)$.
    
    \item $\forall j \in [1, n - 1] . \alpha_j \in \tmop{Class} (j) \backslash
    \tmop{Class} (j + 1)$.
    
    \item $\alpha = \alpha_n <_1 \alpha_{n - 1} <_1 \alpha_{n - 2} <_1
    \alpha_{n - 3} <_1 \ldots <_1 \alpha_2 <_1 \alpha_1 <_1 \alpha_1 2 <
    \alpha (+^n)$.
    
    \item $\forall j \in [1, n - 1] .m (\alpha_j) = \alpha_1 2$.
  \end{enumeratealpha}
\end{corollary}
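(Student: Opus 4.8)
The plan is to derive each of the four items essentially as a repackaging of Proposition~\ref{Class(n)_first_properties} together with Proposition~\ref{d<less>j<less>n_implies_Class(j)_contained_Lim(Class(d))}, so the proof is mostly bookkeeping about the sequence $(\alpha_i)_{i\in[1,n]}$. First I would dispatch item a): it is literally the base case $\alpha_n:=\alpha$ of the recursion together with the hypothesis $\alpha\in\tmop{Class}(n)$, so nothing is to be done. For item c), I would invoke Proposition~\ref{Class(n)_first_properties}(2) to get the chain $\alpha=\alpha_n<_1\alpha_{n-1}<_1\cdots<_1\alpha_2<_1\alpha_1<_1\alpha_1 2$ and the membership $\alpha_i\in\tmop{Class}(i)$ for all $i\in[1,n]$; the only extra point is the last inequality $\alpha_1 2<\alpha(+^n)$, which I would obtain by noting that $\alpha<_1\alpha(+^n)$ (definition of $\tmop{Class}(n+1)$ applied to $\alpha\in\tmop{Class}(n)$, or rather the defining clause for $\alpha$ to lie in $\tmop{Class}(n)$ via $\alpha(+^{n-1})$ — I must be a little careful here and instead use that by item~2 applied inside the ambient class we already know $\alpha<_1\alpha_1 2$, and $\alpha_1 2<\alpha(+^n)$ because $\alpha(+^n)$, when it is an ordinal, is an $\epsilon$-number strictly above $\alpha_1 2$; if $\alpha(+^n)=\infty$ the inequality holds by the convention $\forall\gamma.\gamma<\infty$).

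Next I would prove item b): fix $j\in[1,n-1]$. Membership $\alpha_j\in\tmop{Class}(j)$ is part of Proposition~\ref{Class(n)_first_properties}(2). For the non-membership $\alpha_j\nin\tmop{Class}(j+1)$ I would argue by minimality: by construction $\alpha_j=\alpha_{j+1}(+^j)=\min\{\beta\in\tmop{Class}(j)\mid\alpha_{j+1}<\beta\}$, so $\alpha_j$ is the immediate $\tmop{Class}(j)$-successor of the point $\alpha_{j+1}$, hence $\alpha_j\nin\tmop{Lim}\tmop{Class}(j)$. But Proposition~\ref{Class(n)_first_properties}(4) gives $\tmop{Class}(j+1)\subset\tmop{Lim}\tmop{Class}(j)$ (this uses $j+1\geq 2$, which holds since $j\geq 1$), so $\alpha_j\notin\tmop{Class}(j+1)$, as desired. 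I expect this to be the step needing the most care, because one has to make sure the "minimum" in the definition of $(+^j)$ really is attained — i.e. that the set $\{\beta\in\tmop{Class}(j)\mid\alpha_{j+1}<\beta\}$ is nonempty — which follows precisely from item~c) we just established (it contains $\alpha_j$, or more honestly its nonemptiness is guaranteed because $\alpha_j\in\tmop{Class}(j)$ was produced by the recursion under the standing hypothesis that all the relevant successors exist, a fact underwritten by Proposition~\ref{Class(n)_first_properties}(2)).

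Finally, for item d) I would show $m(\alpha_j)=\alpha_1 2$ for each $j\in[1,n-1]$, where $m(\cdot)$ denotes the relevant "$<_1$-reach" or maximal witness function from \cite{GarciaCornejo1}. The chain from item~c) gives $\alpha_j<_1\alpha_1<_1\alpha_1 2$, and $<_1$ is transitive in the weak sense needed, so $\alpha_j<_1\alpha_1 2$, giving $m(\alpha_j)\geq\alpha_1 2$. For the reverse inequality I would use that $\alpha_1\in\tmop{Class}(1)=\mathbbm{E}$ with $\alpha_1<_1\alpha_1 2$ but $\alpha_1\not<_1\alpha_1 2+1$ — the latter by the characterization cited as \cite{GarciaCornejo1} corollary~\ref{eta(t)+1<less>_1_Equivalences} (if $\alpha_1<_1\alpha_1 2+1$ then $\alpha_1\in\tmop{Lim}\mathbbm{E}$, contradicting that $\alpha_1=\alpha_2(+^1)$ is a successor in $\mathbbm{E}$ when $n\geq 3$; for $n=2$ one argues directly, or the statement is vacuous/handled since $[1,n-1]=\{1\}$ and $m(\alpha_1)=\alpha_1 2$ is exactly the standard fact about $\epsilon$-numbers). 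Hence $m(\alpha_j)\leq m(\alpha_1)=\alpha_1 2$, and combining the two bounds yields $m(\alpha_j)=\alpha_1 2$. The main obstacle throughout is not any single deep step but ensuring the successor functionals $(+^j)$ are everywhere defined along the sequence so that all the minimality arguments are legitimate; this is exactly what Proposition~\ref{Class(n)_first_properties} was set up to guarantee, so I would lean on it explicitly at each invocation.
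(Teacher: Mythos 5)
Items a), b) and the chain part of c) are essentially right and use the natural ingredients (Proposition~\ref{Class(n)_first_properties}); since the paper leaves this corollary to the reader there is no official proof to compare with, so I judge the argument on its own terms. The lesser problem is the last inequality of c): you justify $\alpha_1 2<\alpha(+^n)$ by saying that $\alpha(+^n)$ is an epsilon number ``strictly above $\alpha_1 2$'', which is exactly what has to be proved. The honest argument is a downward induction that uses your own item b): $\alpha(+^n)\in\tmop{Class}(n)\subset\tmop{Class}(j)$ for all $j\leqslant n$, so once $\alpha_{j+1}<\alpha(+^n)$ the minimality defining $\alpha_j=\alpha_{j+1}(+^j)$ gives $\alpha_j\leqslant\alpha(+^n)$, and equality is excluded because $\alpha(+^n)\in\tmop{Class}(j+1)$ while $\alpha_j\nin\tmop{Class}(j+1)$; hence $\alpha_1<\alpha(+^n)\in\mathbbm{E}$ and therefore $\alpha_1 2<\alpha(+^n)$. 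So you should establish b) before c) and insert this induction rather than the circular remark.

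The genuine gap is in d). The lower bound $m(\alpha_j)\geqslant\alpha_1 2$ from the chain is fine, but the upper bound rests on the inference ``$\alpha_j<_1\alpha_1$ and $m(\alpha_1)=\alpha_1 2$, hence $m(\alpha_j)\leqslant m(\alpha_1)$'', and no such monotonicity holds for $<_1$: for instance $\varepsilon_1<_1\varepsilon_1+\omega$ (by $\leqslant_1$-connectedness, since $\varepsilon_1<_1\varepsilon_1 2$), yet $m(\varepsilon_1+\omega)=\varepsilon_1+\omega<\varepsilon_1 2=m(\varepsilon_1)$, because an ordinal outside $\mathbbm{P}$ has no proper $\leqslant_1$-reach. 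A smaller ordinal can reach strictly farther than a point it is $<_1$-connected to, so knowing $m(\alpha_1)=\alpha_1 2$ gives by itself no bound on $m(\alpha_j)$ for $j\geqslant 2$. What you need is the $\tmop{Class}(j)$-level analogue of the argument you gave for $j=1$, and it is already in the paper: the sequence that Proposition~\ref{Class(n)_first_properties}(2) attaches to $\alpha_j\in\tmop{Class}(j)$ is precisely the tail $\alpha_j,\alpha_{j-1},\ldots,\alpha_1$ (with the same $\alpha_1$), so if $m(\alpha_j)>\alpha_1 2$, i.e.\ $\alpha_j<_1\alpha_1 2+1$, then Proposition~\ref{Class(n)_first_properties}(3) gives $\alpha_j\in\tmop{Lim}\tmop{Class}(j)$, contradicting your item b), since $\alpha_j$ is the immediate $\tmop{Class}(j)$-successor of $\alpha_{j+1}$ (which exists because $j\leqslant n-1$). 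With this replacement d) goes through; as written, the step is a non sequitur.
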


\begin{proof}
  {\color{orange} Left to the reader.}
\end{proof}

\begin{proposition}
  \label{<less>_1_chain_of_length_j}{\tmdummy}
  
  \begin{enumeratenumeric}
    \item $\forall j \in [1, \omega) \forall (a_1, \ldots, a_j) \in
    \tmop{OR}^j$.$a_j <_1 a_{j - 1} <_1 \ldots <_1 a_1 <_1 a_1 2
    \Longrightarrow a_j \in \tmop{Class} (j)$
    
    \item $\forall j \in [1, \omega) \forall a \in \tmop{OR}$.\\
    $a \in \tmop{Class} (j) \Longleftrightarrow \exists (a_1, \ldots, a_j) \in
    \tmop{OR}^j .a = a_j <_1 a_{j - 1} <_1 \ldots <_1 a_2 < a_1 < a_1 2$
  \end{enumeratenumeric}
\end{proposition}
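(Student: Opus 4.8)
The plan is to prove both parts essentially simultaneously, since part 2 is almost a restatement of part 1 combined with the earlier Proposition \ref{Class(n)_first_properties}. I will treat part 1 as the substantive claim and obtain part 2 as a corollary.

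For part 1, I would proceed by induction on $j \in [1,\omega)$. The base case $j=1$ is immediate: if $a_1 <_1 a_1 2$ then by definition $a_1 \in \mathbbm{E} = \tmop{Class}(1)$. For the inductive step, assume the statement for $j$ and suppose $a_{j+1} <_1 a_j <_1 \ldots <_1 a_1 <_1 a_1 2$. By the induction hypothesis applied to the tail $(a_1,\ldots,a_j)$ we get $a_j \in \tmop{Class}(j)$. It remains to show $a_{j+1} \in \tmop{Class}(j+1)$, i.e. that $a_{j+1} \in \tmop{Class}(j)$, that $a_{j+1}(+^j) \in \tmop{Class}(j)$, and that $a_{j+1} <_1 a_{j+1}(+^j)$. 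The first point: since $a_{j+1} <_1 a_j$ we certainly have $a_{j+1} <_1 a_{j+1}2$ (by $<_1$-connectedness, using $a_{j+1} < a_{j+1}2 < a_{j+1}2+1 \le a_j$ — one must check $a_{j+1}2 < a_j$, which follows because $a_j \in \mathbbm{E}$ is additively closed and $a_{j+1} < a_j$), so inductively $a_{j+1}\in\tmop{Class}(1)$; then one repeats this argument climbing the chain $a_{j+1}<_1 a_j <_1 \cdots$, using the transfer of $<_1$-relations along an initial chain together with $<_1$-connectedness, to conclude $a_{j+1} \in \tmop{Class}(i)$ for every $i \le j$. This is exactly the mechanism used in the proof of Proposition \ref{Class(n)_first_properties}.3, and I would invoke that style of argument.

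The heart of the matter is showing $a_{j+1}(+^j) \le a_j$ together with $a_{j+1}(+^j) \in \tmop{Class}(j)$ and $a_{j+1} <_1 a_{j+1}(+^j)$. Since $a_j \in \tmop{Class}(j)$ and $a_{j+1} < a_j$, the set $\{\beta \in \tmop{Class}(j) \mid a_{j+1} < \beta\}$ is nonempty, so $a_{j+1}(+^j)$ is a genuine ordinal and $a_{j+1}(+^j) \le a_j \in \mathbbm{E}$; in particular $a_{j+1}(+^j) \in \tmop{Class}(j)$ by construction. Finally, because $a_{j+1} <_1 a_j$ and $a_{j+1} < a_{j+1}(+^j) \le a_j$, $<_1$-connectedness gives $a_{j+1} <_1 a_{j+1}(+^j)$ (with equality $a_{j+1}(+^j)=a_j$ handled trivially). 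Hence all three clauses of the definition of $\tmop{Class}(j+1)$ hold and $a_{j+1} \in \tmop{Class}(j+1)$. I expect the main obstacle to be the bookkeeping in the repeated ``climb the chain'' argument establishing $a_{j+1} \in \bigcap_{i\le j}\tmop{Class}(i)$: one must be careful that each step only uses $<_1$-relations already known to transfer down an initial segment of the chain, exactly as in the ordered list (1'), (2'), \ldots in the proof of Proposition \ref{Class(n)_first_properties}.3.

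For part 2, the direction ($\Leftarrow$) is immediate from part 1 once one notes that a chain $a = a_j <_1 a_{j-1} <_1 \cdots <_1 a_2 < a_1 < a_1 2$ can be extended to $a_2 <_1 a_1 <_1 a_1 2$: indeed from $a_2 < a_1$ and $a_2 <_1 a_3$-type relations one recovers $a_2 <_1 a_1$, and similarly $a_1 <_1 a_1 2$ since (climbing again) $a_1$ lies in $\mathbbm{E}$; then part 1 applies. For the direction ($\Rightarrow$), given $a \in \tmop{Class}(j)$, take the sequence $\alpha_j := a$, $\alpha_{j-(k+1)} := \alpha_{j-k}(+^{j-(k+1)})$ from Proposition \ref{Class(n)_first_properties}.2, which directly supplies $a = \alpha_j <_1 \alpha_{j-1} <_1 \cdots <_1 \alpha_2 <_1 \alpha_1 <_1 \alpha_1 2$, and a fortiori the weaker chain with $<$ in the last two spots. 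This completes both parts.
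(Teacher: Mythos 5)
Your part 1 is essentially the paper's own proof: induction on $j$, with the inductive step bootstrapping $a_{j+1}$ upward through $\tmop{Class}(1), \tmop{Class}(2), \ldots, \tmop{Class}(j+1)$ by observing $a_{j+1} ( +^i) \leqslant a_j$ (since $a_j \in \tmop{Class}(i)$ for all $i \leqslant j$ by the induction hypothesis and Proposition \ref{Class(n)_first_properties}.1) and then applying $<_1$-connectedness to the single relation $a_{j+1} <_1 a_j$. The step you write out explicitly for $i = j$ is exactly what is used at every level, so your appeal to the isomorphism-transfer mechanism of Proposition \ref{Class(n)_first_properties}.3 is unnecessary, but that is presentation, not a gap.

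Part 2 is where there is a genuine problem (the paper leaves it to the reader, so there is nothing to compare against). Your ($\Rightarrow$) direction via Proposition \ref{Class(n)_first_properties}.2 (or Corollary \ref{alpha_i_properties}) is fine. But your ($\Leftarrow$) argument does not work: you claim that from $a_2 < a_1$ and the relations lower in the chain one ``recovers'' $a_2 <_1 a_1$, and that ``climbing again'' gives $a_1 \in \mathbbm{E}$. Neither inference is available. The chain supplies $a_3 <_1 a_2$ (note the direction --- not $a_2 <_1 a_3$), and $<_1$-connectedness only transfers an already established relation $x <_1 z$ to intermediate ordinals $y$ with $x < y \leqslant z$; it can never create a relation from $a_2$ to a strictly larger ordinal $a_1$ about which nothing is assumed, and nothing in the weak chain constrains $a_1$ at all. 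Indeed, read literally, the ($\Leftarrow$) implication of part 2 is false: for $j = 2$ the right-hand side reduces to the existence of some $a_1$ with $a = a_2 < a_1 < a_1 2$, which every ordinal satisfies (take $a_1 = a + 1$), whereas $\tmop{Class}(2) \subset \mathbbm{E} \neq \tmop{OR}$. The displayed condition is evidently a typo for the chain with $<_1$ in the last two places, exactly as produced by Proposition \ref{Class(n)_first_properties}.2; with that reading ($\Leftarrow$) is literally part 1 and no upgrading argument is needed. So you should either correct the statement and quote part 1, or supply a genuinely different hypothesis-strengthening argument --- the one you sketch is invalid as it stands.
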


\begin{proof}

  {\noindent}1.\\
  By induction on $[1, \omega)$.
  
  For $j = 1$ it is clear.

  Suppose for $j \in [1, \omega)$ the claim holds, that is \\
  \ \ \ $\forall (a_1, \ldots, a_j) \in \tmop{OR}^j$.$a_j <_1 a_{j - 1} <_1
  \ldots <_1 a_1 <_1 a_1 2 \Longrightarrow a_j \in \tmop{Class} (j)$. \ \ \
  {\tmstrong{(IH)}}

  We show that the claim holds for $j + 1$.

  Let $(a_1, \ldots, a_{j + 1}) \in \tmop{OR}^{j + 1}$ be such that $a_{j + 1}
  <_1 a_j <_1 \ldots <_1 a_1 <_1 a_1 2$. \ \ \ \ \ \ \ {\tmstrong{(*)}}
  
  By our (IH) follows $a_j \in \tmop{Class} (j)$ and therefore, $\forall s \in
  [1, j] .a_j \in \tmop{Class} (s)$. \ \ \ \ \ \ \ {\tmstrong{(**)}}
  
  Now, observe the following argument:
  
  $a_{j + 1} < 2 a_{j + 1}\underset{\text{because }a_{j + 1} < a_j \in \mathbbm{E}
  \subset \mathbbm{P}}{<}a_j$ and then $a_{j + 1} <_1 a_{j + 1} 2$ by (*)
  and $<_1$-connectedness; that is, $a_{j + 1} \in \mathbbm{E} = \tmop{Class}
  (1)$. But then, $a_{j + 1} < a_{j + 1} (+^1)\underset{\text{because }a_{j + 1} <
  a_j \in \mathbbm{E}}{\leqslant}a_j$ and then $a_{j + 1} <_1 a_{j + 1}
  (+^1$) by (*) and $<_1$-connectedness; that is, $a_{j + 1} \in \tmop{Class}
  (2)$. But then \\
  $a_{j + 1} < a_{j + 1} (+^2)\underset{\text{because }a_{j + 1} < a_j \in
  \tmop{Class} (2)}{\leqslant}a_j$ and then $a_{j + 1} <_1 a_{j + 1}
  (+^2$) by (*) and $<_1$-connectedness; that is, $a_{j + 1} \in \tmop{Class}
  (3)$. Inductively, we get $a_{j + 1} \in \tmop{Class} (j)$, \\
  $a_{j + 1} < a_{j + 1} (+^j)\underset{\text{because }a_{j + 1} < a_j \in
  \tmop{Class} (j)}{\leqslant}a_j$ and then, by (*) and
  $<_1$-connectedness, $a_{j + 1} <_1 a_{j + 1} (+^j$); that is, $a_{j + 1}
  \in \tmop{Class} (j + 1)$.

  {\noindent}2.\\
  {\color{orange} Left to the reader.}
\end{proof}

\begin{proposition}
  \label{alpha<less>_1_beta_in_Class(k)_then_alpha_in_Class(k+1)}$\forall k
  \in [1, \omega) . \forall \alpha, \beta \in \tmop{OR} . \alpha <_1 \beta \in
  \tmop{Class} (k) \Longrightarrow \alpha \in \tmop{Class} (k + 1)$.
\end{proposition}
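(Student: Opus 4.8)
The plan is to reduce the claim to Proposition \ref{<less>_1_chain_of_length_j}.1, which manufactures membership in $\tmop{Class} (k + 1)$ out of a $<_1$-chain of length $k + 1$ capped by the ``$+ 2$'' condition. So the whole task is to build such a chain ending at $\alpha$ from the single hypothesis $\alpha <_1 \beta \in \tmop{Class} (k)$.

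First I would fix $k \in [1, \omega)$ and $\alpha, \beta \in \tmop{OR}$ with $\alpha <_1 \beta$ and $\beta \in \tmop{Class} (k)$. Applying Proposition \ref{Class(n)_first_properties}.2 to $\beta \in \tmop{Class} (k)$ (i.e. taking $n \assign k$ there), I obtain the recursively defined ordinals $\beta_k \assign \beta$, $\beta_{k - (l + 1)} \assign \beta_{k - l} (+^{k - (l + 1)})$ for $l \in [0, k - 2]$, together with the chain
\[ \beta = \beta_k <_1 \beta_{k - 1} <_1 \ldots <_1 \beta_2 <_1 \beta_1 <_1 \beta_1 2 . \]
For $k = 1$ this degenerates to just $\beta = \beta_1 <_1 \beta_1 2$, which is precisely the statement $\beta \in \mathbbm{E}$. (If one prefers not to invoke the ``left to the reader'' item \ref{Class(n)_first_properties}.2, the same chain is furnished by Corollary \ref{alpha_i_properties}.c, with no change to the argument below.)

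Next I would simply prepend $\alpha$ to this chain. Since by hypothesis $\alpha <_1 \beta = \beta_k$, the conjunction
\[ \alpha <_1 \beta_k <_1 \beta_{k - 1} <_1 \ldots <_1 \beta_2 <_1 \beta_1 <_1 \beta_1 2 \]
holds; note that this is literally a conjunction of $<_1$-relations between consecutive terms (plus $\beta_1 <_1 \beta_1 2$), so no transitivity of $<_1$ needs to be invoked --- only the facts already in hand. Setting $(a_1, \ldots, a_{k + 1}) \assign (\beta_1, \beta_2, \ldots, \beta_k, \alpha) \in \tmop{OR}^{k + 1}$, we then have exactly $a_{k + 1} <_1 a_k <_1 \ldots <_1 a_1 <_1 a_1 2$. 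Now Proposition \ref{<less>_1_chain_of_length_j}.1, applied with $j \assign k + 1$, yields $a_{k + 1} = \alpha \in \tmop{Class} (k + 1)$, which is the desired conclusion.

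There is essentially no obstacle here beyond keeping the indexing straight: the content of the statement has already been packaged into Propositions \ref{Class(n)_first_properties} and \ref{<less>_1_chain_of_length_j}, and this proof is just the observation that taking one $<_1$-step below an element of $\tmop{Class} (k)$ extends the canonical $<_1$-chain by one term, hence lands in $\tmop{Class} (k + 1)$.
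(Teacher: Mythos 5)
Your proof is correct, but it takes a different route from the paper. The paper proves the statement by a direct induction on $k$: for $k = 1$ it uses $\leqslant_1$-connectedness twice (first on $\alpha < \alpha 2 < \beta$ to get $\alpha \in \mathbbm{E}$, then on $\alpha < \alpha(+^1) \leqslant \beta$ to get $\alpha \in \tmop{Class}(2)$), and in the inductive step it passes from $\beta \in \tmop{Class}(k+1)$ to $\beta \in \tmop{Class}(k)$, applies the induction hypothesis to get $\alpha \in \tmop{Class}(k+1)$, and then uses connectedness once more on $\alpha < \alpha(+^{k+1}) \leqslant \beta$. You instead avoid any new induction by prepending $\alpha$ to the canonical chain $\beta = \beta_k <_1 \ldots <_1 \beta_1 <_1 \beta_1 2$ supplied by Proposition \ref{Class(n)_first_properties}.2 (or Corollary \ref{alpha_i_properties}.c) and then invoking Proposition \ref{<less>_1_chain_of_length_j}.1 with $j = k+1$; the indexing $(a_1,\ldots,a_{k+1}) = (\beta_1,\ldots,\beta_k,\alpha)$ checks out, including the degenerate case $k=1$, and there is no circularity since both cited results precede the proposition and do not use it. What your reduction buys is brevity and reuse: the step-by-step connectedness argument that the paper writes out here is exactly the engine inside the proof of Proposition \ref{<less>_1_chain_of_length_j}.1, so you are delegating the induction to an already-proved lemma rather than repeating it; the mild cost is that you lean on Proposition \ref{Class(n)_first_properties}.2, which the paper leaves to the reader, whereas the paper's own argument is self-contained modulo $\leqslant_1$-connectedness and the definition of $(+^k)$.
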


\begin{proof}

  By induction on $[1, \omega)$.
  
  Let $k = 1$ and $\alpha, \beta \in \tmop{OR}$ be such that $\alpha <_1 \beta
  \in \tmop{Class} (1)$. Then $\alpha <_1 \alpha 2$ by
  $\leqslant_1$-connectedness (because $\alpha < \alpha 2 < \beta$), which, as
  we know, means $\alpha \in \mathbbm{E}$. This way $\alpha < \alpha (+^1)
  \leqslant \beta$, and then, by $\leqslant_1$-connectedness again, $\alpha
  <_1 \alpha (+^1$), that is, $\alpha \in \tmop{Class} (2)$.
  
  Suppose the claim holds for $k \in [1, \omega)$. \ \ \ \ \ \ \
  {\tmstrong{(IH)}}
  
  Let $\alpha, \beta \in \tmop{OR}$ be such that $\alpha <_1 \beta \in
  \tmop{Class} (k + 1)$. Then $\beta \in \tmop{Class} (k)$ by proposition
  \ref{Class(n)_first_properties}. So $\alpha <_1 \beta \in \tmop{Class} (k)$,
  and our (IH) implies $\alpha \in \tmop{Class} (k + 1)$. But then $\alpha <
  \alpha (+^{k + 1}) \leqslant \beta$, which implies by
  $\leqslant_1$-connectedness that $\alpha <_1 \alpha (+^{k + 1}$). Thus
  $\alpha \in \tmop{Class} (k + 2)$.
\end{proof}

\begin{proposition}
  \label{t_in_(a,a(+)..(+^1)2]_then_m(t)<less>=a(+)..(+^1)2}Let $i \in [1,
  \omega)$ and $\alpha \in \tmop{Class} (i)$. Then
  \begin{enumeratenumeric}
    \item $\forall z \in [1, i) .m (\alpha ( +^{i - 1}) ( +^{i - 2}) \ldots (
    +^z)) = \alpha (+^{i - 1}) ( +^{i - 2}) \ldots ( +^z) ( +^{z - 1}) \ldots
    ( +^1) 2$.
    
    \item $\forall t \in (\alpha, \alpha ( +^{i - 1}) ( +^{i - 2}) \ldots (
    +^2) ( +^1) 2] .m (t) \leqslant \alpha (+^{i - 1}) ( +^{i - 2}) \ldots (
    +^2) ( +^1) 2$.
  \end{enumeratenumeric}
\end{proposition}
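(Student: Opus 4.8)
The plan is to read Part 1 off Corollary \ref{alpha_i_properties} and to prove Part 2 by induction on $i$, at each step reducing the required bound to the behaviour of $m$ on the single interval lying immediately above $\alpha$.

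For Part 1 I would fix $\alpha\in\tmop{Class}(i)$ and let $(\alpha_j)_{j\in[1,i]}$ be the sequence $\alpha_i\assign\alpha$, $\alpha_j\assign\alpha_{j+1}(+^j)$ provided by Corollary \ref{alpha_i_properties}. Unwinding the recursion gives $\alpha(+^{i-1})(+^{i-2})\cdots(+^z)=\alpha_z$ for every $z\in[1,i-1]$, and hence $\alpha(+^{i-1})(+^{i-2})\cdots(+^z)(+^{z-1})\cdots(+^1)=\alpha_1$; so the asserted identity is exactly $m(\alpha_z)=\alpha_1 2$ for $z\in[1,i-1]$, which is part d) of Corollary \ref{alpha_i_properties}.

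For Part 2 I would argue by induction on $i\in[1,\omega)$. In the base case $i=1$ we have $\alpha\in\mathbbm{E}$, and the half-open interval $(\alpha,\alpha 2]$ contains no additively principal ordinal, since the only power of $\omega$ in $[\alpha,\alpha\omega)$ is $\alpha=\omega^\alpha$, the next being $\omega^{\alpha+1}=\alpha\omega>\alpha 2$; hence every $t\in(\alpha,\alpha 2]$ lies outside $\mathbbm{P}$, and as an ordinal outside $\mathbbm{P}$ has no proper $<_1$-successor we get $m(t)=t\leqslant\alpha 2$. (Equivalently, this base case is the $\mathbbm{E}$-level statement already proved in \cite{GarciaCornejo1}.) For the inductive step, assume Part 2 holds for $i$ and let $\alpha\in\tmop{Class}(i+1)$, so that $\beta\assign\alpha(+^i)\in\tmop{Class}(i)$, $\alpha<_1\beta$, and $\beta$ is the least element of $\tmop{Class}(i)$ above $\alpha$. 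Put $\alpha_1\assign\alpha(+^i)(+^{i-1})\cdots(+^1)$ and note that the length-$i$ sequence attached by Corollary \ref{alpha_i_properties} to $\beta\in\tmop{Class}(i)$ terminates in this same $\alpha_1$, so the bound claimed for $\alpha$ coincides with the one the inductive hypothesis supplies for $\beta$. Now take $t\in(\alpha,\alpha_1 2]$ and split on the position of $t$ relative to $\beta$. If $\beta<t\leqslant\alpha_1 2$, the inductive hypothesis applied to $\beta$ gives $m(t)\leqslant\alpha_1 2$. If $t=\beta=\alpha(+^i)$, then Part 1 (with $z=i$) gives $m(t)=\alpha_1 2$. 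Finally, if $\alpha<t<\alpha(+^i)$ then, since $\alpha(+^i)$ is the least element of $\tmop{Class}(i)$ above $\alpha$, we have $t\notin\tmop{Class}(i)$; so were $m(t)>\alpha(+^i)$, then $t<_1\mu$ for some $\mu>\alpha(+^i)$, whence $<_1$-connectedness gives $t<_1\alpha(+^i)\in\tmop{Class}(i)$ and Proposition \ref{alpha<less>_1_beta_in_Class(k)_then_alpha_in_Class(k+1)} forces $t\in\tmop{Class}(i+1)\subseteq\tmop{Class}(i)$, a contradiction — so $m(t)\leqslant\alpha(+^i)$, which by the chain of Corollary \ref{alpha_i_properties}(c) is $\leqslant\alpha_1<\alpha_1 2$. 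In every case $m(t)\leqslant\alpha_1 2$, closing the induction.

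I expect the delicate points to be just the two ``localization'' facts exploited above: that an ordinal outside $\mathbbm{P}$ has no proper $<_1$-successor (needed in the base case, and established in \cite{GarciaCornejo1}), and that an ordinal not in $\tmop{Class}(i)$ cannot $<_1$-reach beyond its prospective $\tmop{Class}(i)$-successor (needed on the bottom interval, and obtained here from $<_1$-connectedness together with Proposition \ref{alpha<less>_1_beta_in_Class(k)_then_alpha_in_Class(k+1)}). Everything else is bookkeeping about which $\alpha_j$-sequence terminates where; the one genuinely heavy input, part d) of Corollary \ref{alpha_i_properties}, is already at our disposal.
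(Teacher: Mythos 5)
The paper itself leaves this proposition to the reader, so there is no official proof to compare against; judged on its own, your argument is correct and uses only material available at this point in the paper. Part 1 is indeed just a notational unwinding of Corollary \ref{alpha_i_properties}(d), since $\alpha(+^{i-1})\cdots(+^z)=\alpha_z$ and $\alpha_z(+^{z-1})\cdots(+^1)=\alpha_1$. In Part 2 the induction is sound: the base case rests on the standard fact from \cite{GarciaCornejo1} that $t\leq_1 t+1$ forces $t\in\mathbbm{P}$ (so $m(t)=t$ on $(\alpha,\alpha 2]$, which contains no additive principal number); the step correctly splits $(\alpha,\alpha_1 2]$ at $\beta=\alpha(+^i)$, handles $(\beta,\alpha_1 2]$ by the inductive hypothesis for $\beta\in\tmop{Class}(i)$ (whose associated bound is the same ordinal $\alpha_1 2$), handles $t=\beta$ by Part 1, and on $(\alpha,\beta)$ the combination of $\leqslant_1$-connectedness with Proposition \ref{alpha<less>_1_beta_in_Class(k)_then_alpha_in_Class(k+1)} and the minimality of $\alpha(+^i)$ correctly yields $m(t)\leqslant\alpha(+^i)\leqslant\alpha_1<\alpha_1 2$. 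No gaps; the only dependencies are the corollary (stated earlier, so citable) and the two $<_1$ localization facts you explicitly name.
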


\begin{proof}
  {\color{orange} Left to the reader.}
\end{proof}

\subsection{More general substitutions}

For our subsequent work we need to introduce the following general notion of
substitutions.

\begin{definition}
  \label{NoAppendix_simultaneous_substitution}Let $x \in \tmop{OR}$ and $f :
  \tmop{Dom} f \subset \mathbbm{E} \longrightarrow \mathbbm{E}$ a strictly
  increasing function such that $\tmop{Ep} (x) \subset \tmop{Dom} f$. We
  define $x [f]$, the ``simultaneous substitution of all the epsilon numbers
  $\tmop{Ep} (x)$ of the Cantor Normal Form of $x$ by the values of $f$ on
  them'', as
  
  $x [f] = \left\{ \begin{array}{l}
    f (x)  \text{ if } x \in \mathbbm{E}\\
    \\
    \sum_{i = 1}^n T_i [f] t_i \text{ if } x \nin \mathbbm{E}
    \text{ and } x =_{\tmop{CNF}} \sum_{i = 1}^n T_i t_i
    \text{ and } (t_1 \geqslant 2 \vee n \geqslant 2)\\
    \\
    \omega^{Z [f]}  \text{ if } x \nin \mathbbm{E} \text{ and } x =
    \omega^Z \text{ for some } Z \in \tmop{OR}\\
    \\
    x \text{ if } x < \varepsilon_0
  \end{array} \right.$

  Moreover, for $\tmop{Ep} (x) = \{e_1 > \ldots > e_k \}$ and a set $Y \assign
  \{\sigma_1 > \ldots > \sigma_k \} \subset \mathbbm{E}$ of epsilon numbers,
  we may also write $x [\tmop{Ep} (x) \assign Y]$ instead of $x [h]$, where $h
  : \tmop{Ep} (x) \longrightarrow Y$ is the function $h (e_i) \assign
  \sigma_i$.
\end{definition}

\begin{definition}
  Let $S \subset \tmop{OR}$ and $f_1, f_2 : S \longrightarrow \tmop{OR}$. We
  will denote as usual:
  \begin{itemizedot}
    \item $f_1 \leqslant f_2$ \ $: \Longleftrightarrow$ $\forall e \in S.f_1
    (e) \leqslant f_2 (e)$.
    
    \item $f_1 < f_2$ $: \Longleftrightarrow$ $f_1 \leqslant f_2 \wedge
    \exists e \in S.f_1 (e) < f_2 (e)$.
  \end{itemizedot}
\end{definition}

Now we enunciate the properties about these kind of substitutions that are of
our interest.

\begin{proposition}
  \label{NoAppendix_If_x<less>y_then_x[f]<less>y[f]}Let $x, y \in \tmop{OR}$.
  Let $f : S \subset \mathbbm{E} \longrightarrow \mathbbm{E}$ be a strictly
  increasing function with \ $\tmop{Ep} (x) \cup \tmop{Ep} (y) \subset S$.
  Then
  \begin{enumeratenumeric}
    \item $y \in \mathbbm{P} \Longleftrightarrow y [f] \in \mathbbm{P}$.
    
    \item $x < y \Longleftrightarrow x [f] < y [f]$.
    
    \item $y \in \mathbbm{E} \Longleftrightarrow y [f] \in \mathbbm{E}$ and $y
    \in \mathbbm{P} \backslash \mathbbm{E} \Longleftrightarrow y [f] \in
    \mathbbm{P} \backslash \mathbbm{E}$
  \end{enumeratenumeric}
\end{proposition}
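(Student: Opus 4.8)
The plan is to proceed by induction on the structure of the Cantor Normal Form of $x$ and $y$, exploiting the recursive definition of the substitution $[f]$ and the fact that $f$ is strictly increasing with range inside $\mathbbm{E}$. I would prove the three items more or less simultaneously, since they feed into each other: the principal-number statement (1) is needed to know that a term $\omega^Z[f]$ is again additively indecomposable, the order-preservation statement (2) is the main workhorse, and the epsilon-number statement (3) follows from (1) and (2) together with the defining equation $e\in\mathbbm{E}\iff e=\omega^e$.

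First I would dispatch the trivial cases: if both $x,y<\varepsilon_0$ then $x[f]=x$, $y[f]=y$ and everything is immediate; if exactly one of them is $<\varepsilon_0$, note that $f$ maps into $\mathbbm{E}$, hence any nonzero value of the substitution on an argument $\geq\varepsilon_0$ is itself $\geq\varepsilon_0$, which settles the comparison. For item (1), $y\in\mathbbm{P}$ iff $y$ is either an epsilon number, or of the form $\omega^Z$ with $Z\notin\mathbbm{E}$, or a natural-number-free principal number below $\varepsilon_0$; in each branch the definition of $y[f]$ returns respectively $f(y)\in\mathbbm{E}\subset\mathbbm{P}$, $\omega^{Z[f]}\in\mathbbm{P}$, or $y\in\mathbbm{P}$; conversely, a non-principal $y=_{\mathrm{CNF}}\sum_{i=1}^n T_i t_i$ with $t_1\geq2$ or $n\geq2$ is sent to $\sum_{i=1}^n T_i[f]\,t_i$, which visibly has the same shape (here one uses inductively that the $T_i[f]$ are principal and strictly decreasing, so this really is a CNF), hence is non-principal.

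For item (2), the heart of the matter, I would compare the CNFs $x=_{\mathrm{CNF}}\sum_{i=1}^m S_i s_i$ and $y=_{\mathrm{CNF}}\sum_{i=1}^n T_i t_i$ and use the standard lexicographic criterion for $<$ on ordinals. The induction hypothesis applied to the exponents $S_i$ versus $T_j$ (which are principal numbers, in particular either epsilon numbers handled by strict monotonicity of $f$, or strictly smaller in the recursion) gives $S_i<T_j\iff S_i[f]<T_j[f]$ and $S_i=T_j\iff S_i[f]=T_j[f]$; feeding this into the lexicographic comparison of $\sum S_i[f]s_i$ against $\sum T_j[f]t_j$ yields the equivalence. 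The genuinely delicate point, and the one I expect to be the main obstacle, is the case $x\in\mathbbm{E}$ or $y\in\mathbbm{E}$: when $y=\omega^y\in\mathbbm{E}$, its "CNF" as used in the non-epsilon clause degenerates, and one must argue directly that $x<y=f(y)$ iff $x[f]<f(y)$ — this requires knowing that $x[f]<f(y)$ whenever $x<y$, which in turn needs that substitution cannot push an ordinal past the next epsilon number above it, i.e. a bound of the form $x<e\in\mathbbm{E}\Rightarrow x[f]\leq f(e)$ together with strictness when $x<e$ is witnessed below. I would isolate this as an auxiliary claim (provable by the same CNF induction, using that $\mathrm{Ep}(x)\subseteq S$ and $f$ maps into $\mathbbm{E}$ monotonically) and then invoke it in both directions.

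Finally, item (3): $y\in\mathbbm{E}\iff y=\omega^y$. If $y\in\mathbbm{E}$ then $y[f]=f(y)\in\mathbbm{E}$ by hypothesis on $f$. Conversely, if $y\notin\mathbbm{E}$ but $y\in\mathbbm{P}$, then either $y=\omega^Z$ with $Z\notin\mathbbm{E}$, whence $y[f]=\omega^{Z[f]}$ with $Z[f]\notin\mathbbm{E}$ by the inductive form of (3) applied to $Z$, so $y[f]\notin\mathbbm{E}$; or $y<\varepsilon_0$ and $y[f]=y<\varepsilon_0$, so $y[f]\notin\mathbbm{E}$. Combining this with (1) gives the second equivalence $y\in\mathbbm{P}\setminus\mathbbm{E}\iff y[f]\in\mathbbm{P}\setminus\mathbbm{E}$ directly. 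Throughout, the only nonroutine bookkeeping is making sure that $\sum_i T_i[f]\,t_i$ is a legitimate CNF, which is exactly where items (1) and (2) applied to the exponents are used, so the three parts really must be threaded together in a single induction.
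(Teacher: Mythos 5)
The paper itself gives no proof of this proposition (it is ``left to the reader''), so there is no argument of the author's to compare against; your plan is the natural one and it is sound: a single simultaneous induction on Cantor normal forms, with items (1) and (2) applied to the exponents used to certify that $\sum_i T_i[f]\,t_i$ is again a normal form, and item (3) read off from the clauses of the definition. The step you flag as delicate is indeed the whole content, and I would urge you to state the auxiliary claim in its sharp form rather than as ``$\leqslant$ together with strictness'': if $x<e\in\mathbbm{E}$ and $\tmop{Ep}(x)\cup\{e\}\subset S$, then $x[f]<f(e)$, strictly. This follows by the same CNF induction, because every $\varepsilon\in\tmop{Ep}(x)$ satisfies $\varepsilon\leqslant x<e$, hence $f(\varepsilon)<f(e)$, and $f(e)$, being an epsilon number, is closed under addition, multiplication by natural numbers and $\lambda z.\omega^{z}$; the converse bound ($x\geqslant e\Rightarrow x[f]\geqslant f(e)$, strict when $x>e$) is obtained by inspecting the leading term of the normal form of $x$ (if the leading exponent equals $e$ one uses $t_1\geqslant 2$ or $n\geqslant 2$; if it exceeds $e$ one applies the induction hypothesis to it). With that lemma in place your lexicographic comparison of the substituted normal forms goes through in both directions, and (3) follows exactly as you describe; the only cosmetic slip is your case split in (1), where ``$\omega^{Z}$ with $Z\notin\mathbbm{E}$'' already covers the principal ordinals below $\varepsilon_0$.
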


\begin{proof}
  {\color{orange} Left to the reader.}
\end{proof}

\begin{proposition}
  \label{NoAppendix_[f]_=-compatible}Let $f : \tmop{Dom} f \subset \mathbbm{E}
  \longrightarrow \mathbbm{E}$ be a strictly increasing function. \\
  Let $A \assign \{x \in \tmop{OR} | \tmop{Ep} (x) \subset \tmop{Dom} f\}$.
  Then the assignation $\varphi : A \longrightarrow \tmop{OR}$ defined as
  $\varphi (x) \assign x [f]$ is a function with respect to the equality in
  the ordinals, that is, $\forall x, y \in A.x = y \Longrightarrow \varphi (x)
  = \varphi (y)$.
\end{proposition}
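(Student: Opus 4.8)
The plan is to prove by transfinite induction on $x$ that the clauses of Definition~\ref{NoAppendix_simultaneous_substitution} determine a \emph{unique} ordinal $x[f]$ for every $x\in A$, so that $\varphi$ is a genuine function; the assertion ``$x=y\Rightarrow\varphi(x)=\varphi(y)$'' then follows at once, since the recipe for $x[f]$ reads off only the (unique) Cantor Normal Form of $x$ and thus depends on $x$ solely \emph{qua} ordinal. The induction is along $<$ restricted to the class $A$; although $A$ need not be downward closed, this causes no trouble because the inductive hypothesis will only ever be invoked at proper subterms, which do lie in $A$.

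First I would collect the routine facts that make the recursion legitimate. (a) Uniqueness of the Cantor Normal Form: if $x=_{\tmop{CNF}}\sum_{i=1}^{n}T_i t_i$ then $n$, the additive principal numbers $T_1>\dots>T_n$ and the coefficients $t_i\in[1,\omega)$ are uniquely determined by $x$, and likewise if $x=\omega^{Z}$ then $Z$ is unique. (b) Descent: whenever the definition recurses into a subterm, that subterm is a strictly smaller ordinal. In clause~2 the hypothesis $t_1\geqslant 2\vee n\geqslant 2$ forces $x>T_1\geqslant T_i$ for every $i$; in clause~3 the hypothesis that $x=\omega^{Z}$ with $x\nin\mathbbm{E}$ forces $Z<\omega^{Z}=x$, since $Z\mapsto\omega^{Z}$ is inflationary and has exactly the epsilon numbers as fixed points. (c) Stability of $A$: by the standard behaviour of $\tmop{Ep}$ one has $\tmop{Ep}(T_i)\subset\tmop{Ep}(x)$ and $\tmop{Ep}(Z)\subset\tmop{Ep}(x)$, so $\tmop{Ep}(x)\subset\tmop{Dom}f$ gives $T_i,Z\in A$; hence the recursive references $T_i[f]$ and $Z[f]$ are meaningful.

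Next I would settle that the clauses, though not written as literally disjoint, are nevertheless consistent. Clause~1 is separated from the rest because the least epsilon number is $\varepsilon_0$ while clauses 2--4 all presuppose $x\nin\mathbbm{E}$ (and clause~4 even presupposes $x<\varepsilon_0$). For $x\geqslant\varepsilon_0$ with $x\nin\mathbbm{E}$ the dichotomy ``$t_1\geqslant 2\vee n\geqslant 2$'' versus ``$n=1$ and $t_1=1$, i.e.\ $x=\omega^{\beta_1}$'' shows that exactly one of clauses 2, 3 is triggered, so there is nothing to reconcile. The only real overlap is for $x<\varepsilon_0$, where clause~4 may coexist with clause~2 or clause~3. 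To neutralise it I would first establish, by a subsidiary transfinite induction on $x<\varepsilon_0$, that $x[f]=x$ for every $x<\varepsilon_0$ (as expected, the substitution is the identity below $\varepsilon_0$, since there $\tmop{Ep}(x)=\emptyset$): the step simply unwinds the relevant clause and applies the inductive hypothesis to the smaller ordinals $T_i$ (clause~2) or $\beta_1$ (clause~3, the pure $\omega$-power case), yielding $\sum_i T_i[f]t_i=\sum_i T_i t_i=x$, respectively $\omega^{\beta_1[f]}=\omega^{\beta_1}=x$. Thus below $\varepsilon_0$ every applicable clause returns $x$, and $\varphi$ is unambiguous there.

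With (a)--(c) and this consistency check in hand, the main induction closes immediately: for $x\in A$ exactly one ``regime'' is in force, the defining expression for $x[f]$ is built solely from $x$ itself and from values $T_i[f]$, $Z[f]$ at strictly smaller members of $A$, and by the inductive hypothesis these are already well-defined ordinals depending only on $T_i$, $Z$; hence $x[f]$ is a well-defined ordinal depending only on $x$. In particular $x=y$ in $\tmop{OR}$ carries the same CNF data for both, so $\varphi(x)=x[f]=y[f]=\varphi(y)$. I expect the only mildly delicate point to be observation~(c) --- checking that descending to a CNF-subterm can never create an epsilon number lying outside $\tmop{Dom}f$ --- together with the housekeeping that makes the overlap with clause~4 harmless; the remainder is bookkeeping.
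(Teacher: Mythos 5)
The paper gives no proof of this proposition (it is explicitly left to the reader), so there is nothing to compare against; your argument is correct and is the standard one the author evidently intends. Your transfinite induction --- resting on uniqueness of the Cantor Normal Form, the descent $T_i,Z<x$ in clauses 2 and 3, the closure $\tmop{Ep}(T_i),\tmop{Ep}(Z)\subset\tmop{Ep}(x)\subset\tmop{Dom}f$ so the recursive calls stay in $A$, and the check that the overlap of the $x<\varepsilon_0$ clause with clauses 2--3 is harmless because all applicable clauses return $x$ there --- covers exactly the points needed for well-definedness, and hence for $x=y\Longrightarrow\varphi(x)=\varphi(y)$.
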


\begin{proof}
  {\color{orange} Left to the reader.}
\end{proof}

\begin{proposition}
  \label{NoAppendix_general_substitution_properties2}Let $x \in \tmop{OR}$ and
  $f : S \subset \mathbbm{E} \longrightarrow \mathbbm{E}$ be a strictly
  increasing function, where\\
  $\tmop{Ep} (x) \subset S$. Then
  \begin{enumeratenumeric}
    \item $x [f]$ is already in Cantor Normal Form.
    
    \item $\tmop{Ep} (x [f]) = f [\tmop{Ep} (x)] \subset \tmop{Im} f$.
    
    \item It exists $f^{- 1} : \tmop{Im} f \longrightarrow S$, $f^{- 1}$ is
    strictly increasing and $(x [f]) [f^{- 1}] = x$.
    
    \item Let $\alpha \in \mathbbm{E}$. Then $x \in [\alpha, \alpha ( +^1))
    \Longleftrightarrow \alpha \in S \wedge x [f] \in [f (\alpha), f (\alpha)
    ( +^1))$.
  \end{enumeratenumeric}
\end{proposition}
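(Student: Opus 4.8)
The plan is to prove all four items by induction on the ordinal $x$, exploiting the recursive structure of the definition of $x[f]$ in Definition \ref{NoAppendix_simultaneous_substitution}, and leaning heavily on Proposition \ref{NoAppendix_If_x<less>y_then_x[f]<less>y[f]} (which gives $<$-preservation, $\mathbbm{P}$-preservation and $\mathbbm{E}$-preservation) and Proposition \ref{NoAppendix_[f]_=-compatible} (which tells us $[f]$ is well-defined on ordinals, so we may freely pass to Cantor Normal Forms). I would treat 1 and 2 together, since they are really one statement: ``$x[f]$ is in CNF and its set of epsilon components is exactly $f[\tmop{Ep}(x)]$''. For the base cases $x < \varepsilon_0$ and $x \in \mathbbm{E}$ both are immediate ($x[f] = x$ resp. $x[f] = f(x) \in \mathbbm{E}$, and $\tmop{Ep}(x) = \emptyset$ resp. $\{x\}$). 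For the inductive step I split according to the same cases as the definition: if $x =_{\tmop{CNF}} \sum_{i=1}^n T_i t_i$ with $t_1 \geqslant 2$ or $n \geqslant 2$, then $x[f] = \sum_{i=1}^n T_i[f] t_i$; by the induction hypothesis each $T_i[f]$ is in CNF with $\tmop{Ep}(T_i[f]) = f[\tmop{Ep}(T_i)]$, and since $f$ is strictly increasing, Proposition \ref{NoAppendix_If_x<less>y_then_x[f]<less>y[f]}(2) gives $T_1[f] > T_2[f] > \cdots > T_n[f]$, so the displayed sum is genuinely a CNF; moreover $\tmop{Ep}(x[f]) = \bigcup_i \tmop{Ep}(T_i[f]) = \bigcup_i f[\tmop{Ep}(T_i)] = f\bigl[\bigcup_i \tmop{Ep}(T_i)\bigr] = f[\tmop{Ep}(x)]$. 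The remaining case $x = \omega^Z \notin \mathbbm{E}$ is handled the same way using $x[f] = \omega^{Z[f]}$ and the induction hypothesis on $Z$; here one must also note $\omega^{Z[f]} \notin \mathbbm{E}$, which follows because $Z[f] < \omega^{Z[f]}$ would fail for an epsilon number and $x \notin \mathbbm{E}$ transfers via Proposition \ref{NoAppendix_If_x<less>y_then_x[f]<less>y[f]}(3). That $f[\tmop{Ep}(x)] \subset \tmop{Im} f$ is trivial.

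For item 3, the function $f : S \longrightarrow \tmop{Im} f$ is a strictly increasing bijection onto its image, so $f^{-1} : \tmop{Im} f \longrightarrow S$ exists and is strictly increasing; by item 2 we have $\tmop{Ep}(x[f]) = f[\tmop{Ep}(x)] \subset \tmop{Im} f = \tmop{Dom}(f^{-1})$, so $(x[f])[f^{-1}]$ is defined. The identity $(x[f])[f^{-1}] = x$ is then again a straightforward induction on $x$ following the case split of the definition: on $x \in \mathbbm{E}$ it is $f^{-1}(f(x)) = x$; on a CNF sum $\sum T_i t_i$ it reduces by the induction hypothesis to $\sum (T_i[f])[f^{-1}] t_i = \sum T_i t_i$ (one should check the middle expression is still recognized as the ``$\notin\mathbbm{E}$, sum'' branch of the definition for $x[f]$, which holds because $x[f] \notin \mathbbm{E}$ by Proposition \ref{NoAppendix_If_x<less>y_then_x[f]<less>y[f]}(3) and because $t_1 \geqslant 2 \vee n \geqslant 2$ is unchanged); and similarly on $x = \omega^Z$.

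For item 4, recall $[\alpha, \alpha(+^1))$ is, for $\alpha \in \mathbbm{E}$, exactly the set of ordinals whose largest epsilon component is $\alpha$ — i.e. $x \in [\alpha, \alpha(+^1))$ iff $\alpha = \max \tmop{Ep}(x)$ (with $\alpha \in \tmop{Ep}(x)$). Granting this characterization, the forward direction is immediate from item 2: $\alpha = \max\tmop{Ep}(x)$ forces $\alpha \in S$ and $f(\alpha) = \max f[\tmop{Ep}(x)] = \max \tmop{Ep}(x[f])$, hence $x[f] \in [f(\alpha), f(\alpha)(+^1))$. The converse direction follows symmetrically by applying the forward direction to $x[f]$ together with $f^{-1}$ from item 3, using $(x[f])[f^{-1}] = x$ and $f^{-1}(f(\alpha)) = \alpha$. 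The one point requiring a little care is the exact statement and proof of the characterization ``$x \in [\alpha,\alpha(+^1)) \Leftrightarrow \alpha \in S \wedge \max\tmop{Ep}(x) = \alpha$''; this is presumably established in \cite{GarciaCornejo1} (it is the standard fact that $\alpha(+^1) = \min\{e \in \mathbbm{E} \mid \alpha < e\}$ is the least epsilon number exceeding $\alpha$, so the interval $[\alpha,\alpha(+^1))$ collects precisely the ordinals built from epsilon numbers $\leqslant \alpha$ in which $\alpha$ actually occurs), and I would simply cite it.

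The main obstacle is bookkeeping rather than conceptual: in the inductive steps for items 1–3 one must repeatedly verify that after substitution the resulting expression still lies in the intended branch of the case-defined $[f]$ (e.g. that $x[f]$ remains a non-epsilon additive principal number when $x$ is, or that the ``$t_1 \geqslant 2 \vee n \geqslant 2$'' side condition is preserved), and all of these verifications are discharged by Proposition \ref{NoAppendix_If_x<less>y_then_x[f]<less>y[f]}. There is no deep difficulty; the whole proposition is essentially the observation that $[f]$ is a CNF-term-rewriting operation induced by an order-isomorphism $f$ of epsilon numbers, hence commutes with all the syntactic structure and is invertible by $[f^{-1}]$.
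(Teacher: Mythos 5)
The paper leaves this proposition unproved (``left to the reader''), so there is no proof of record to compare against; judged on its own, your argument is correct. The simultaneous induction on the CNF build-up of $x$ for items 1--3, with the branch-membership checks (preservation of $\mathbbm{P}$, of $\mathbbm{E}$-freeness, and of the order of the terms) discharged by Proposition \ref{NoAppendix_If_x<less>y_then_x[f]<less>y[f]}, together with the reduction of item 4 to the standard fact that for $\alpha \in \mathbbm{E}$ the interval $[\alpha, \alpha(+^1))$ consists exactly of the ordinals $x$ with $\max \tmop{Ep}(x) = \alpha$ (the converse obtained by applying the forward direction to $f^{-1}$ and invoking item 3), is precisely the routine verification the author intends.
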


\begin{proof}
  {\color{orange} Left to the reader.}
\end{proof}

\begin{proposition}
  Let $f, g : S \subset \mathbbm{E} \longrightarrow \mathbbm{E}$ be strictly
  increasing functions. \\
  Let $D \assign \{e \in S|f (e) < g (e)\}$. Then
  \begin{enumeratenumeric}
    \item $f \leqslant g \Longleftrightarrow \forall x \in \tmop{OR} .
    \tmop{Ep} (x) \subset S \Longrightarrow x [f] \leqslant x [g])$.
    
    \item $f < g \Longrightarrow \forall x \in \tmop{OR} . (\tmop{Ep} (x)
    \subset S \wedge \tmop{Ep} (x) \cap D \neq \emptyset) \Longrightarrow x
    [f] < x [g])$.
  \end{enumeratenumeric}
\end{proposition}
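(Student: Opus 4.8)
The plan is to prove both items by a single induction on $x \in \tmop{OR}$, reading off the four clauses of Definition \ref{NoAppendix_simultaneous_substitution}. It suffices to show, for every $x$ with $\tmop{Ep} (x) \subset S$, the two statements: (a) if $f \leqslant g$ then $x [f] \leqslant x [g]$; and (b) if $f \leqslant g$ and $\tmop{Ep} (x) \cap D \neq \emptyset$ then $x [f] < x [g]$. Granting this, item 2 follows because $f < g$ is by definition the conjunction of $f \leqslant g$ and $D \neq \emptyset$; the $\Longrightarrow$ direction of item 1 is exactly (a); and the $\Longleftarrow$ direction of item 1 is immediate, since for $e \in S$ one has $\tmop{Ep} (e) = \{e\} \subset S$, so the hypothesis yields $f (e) = e [f] \leqslant e [g] = g (e)$.

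The induction has the expected shape. If $x < \varepsilon_0$ then $\tmop{Ep} (x) = \emptyset$, so $x [f] = x = x [g]$ and (b) is vacuous. If $x \in \mathbbm{E}$ then $\tmop{Ep} (x) = \{x\}$ and $x [f] = f (x)$, $x [g] = g (x)$, so (a) is $f (x) \leqslant g (x)$ and (b) is the case $x \in D$. If $x \notin \mathbbm{E}$ and $x = \omega^Z$, then $Z < x$, $\tmop{Ep} (x) = \tmop{Ep} (Z) \subset S$, and $x [f] = \omega^{Z [f]}$, $x [g] = \omega^{Z [g]}$; applying the induction hypothesis to $Z$ and using strict monotonicity of $W \mapsto \omega^W$ yields (a), and also (b) since $\tmop{Ep} (Z) \cap D = \tmop{Ep} (x) \cap D$. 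Finally, if $x \notin \mathbbm{E}$ and $x =_{\tmop{CNF}} \sum_{i = 1}^n T_i t_i$ with $t_1 \geqslant 2 \vee n \geqslant 2$, then each $T_i \in \mathbbm{P}$ satisfies $T_i < x$ (this is where the side condition $t_1 \geqslant 2 \vee n \geqslant 2$ is used) and $\tmop{Ep} (T_i) \subset \tmop{Ep} (x)$, with $\bigcup_{i = 1}^n \tmop{Ep} (T_i) = \tmop{Ep} (x)$; moreover $x [f] = \sum_{i = 1}^n T_i [f] t_i$ and $x [g] = \sum_{i = 1}^n T_i [g] t_i$, and by Propositions \ref{NoAppendix_If_x<less>y_then_x[f]<less>y[f]} and \ref{NoAppendix_general_substitution_properties2} both are genuine Cantor normal forms (the bases $T_i [f]$, resp. $T_i [g]$, are principal and strictly decreasing in $i$). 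The induction hypothesis applied to each $T_i$ gives $T_i [f] \leqslant T_i [g]$. Let $k$ be the least index with $T_k [f] < T_k [g]$, if one exists; if not, $x [f] = x [g]$. If $k$ exists, the first $k - 1$ monomials of $x [f]$ and $x [g]$ coincide, and since $T_k [f], T_k [g] \in \mathbbm{P}$ with $T_k [f] < T_k [g]$ we get $\sum_{i \geqslant k} T_i [f] t_i < T_k [f] \cdot \omega \leqslant T_k [g] \leqslant \sum_{i \geqslant k} T_i [g] t_i$, whence $x [f] < x [g]$ by monotonicity of ordinal addition in its right argument. This proves (a) in all cases; for (b) in this last case, pick $i_0$ with $\tmop{Ep} (T_{i_0}) \cap D \neq \emptyset$ (possible since the $\tmop{Ep} (T_i)$ cover $\tmop{Ep} (x)$), apply the induction hypothesis to $T_{i_0}$ to get $T_{i_0} [f] < T_{i_0} [g]$, so the least differing index $k \leqslant i_0$ exists and the same chain of inequalities gives $x [f] < x [g]$.

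The only part that needs genuine care is the last case: checking that the side condition $t_1 \geqslant 2 \vee n \geqslant 2$ indeed forces $T_i < x$ (so the induction is well founded), that the substituted monomials $T_i [f] t_i$ still list a valid Cantor normal form — which is why Propositions \ref{NoAppendix_If_x<less>y_then_x[f]<less>y[f]} and \ref{NoAppendix_general_substitution_properties2} are invoked — and the estimate $\sum_{i \geqslant k} T_i [f] t_i < T_k [f] \cdot \omega$, which holds because $T_k [f]$ is additively principal and all later exponents are smaller. Everything else is a direct unwinding of Definition \ref{NoAppendix_simultaneous_substitution}, so no genuinely difficult step arises; the induction on $x$ (equivalently, on the build-up of its Cantor normal form) carries the whole argument.
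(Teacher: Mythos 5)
Your proof is correct. The paper itself marks this proposition ``left to the reader,'' so there is no official proof to compare against; your transfinite induction on the Cantor normal form of $x$ through the four clauses of Definition \ref{NoAppendix_simultaneous_substitution} --- with the key estimate $\sum_{i \geqslant k} T_i [f] t_i < T_k [f] \cdot \omega \leqslant T_k [g] \leqslant \sum_{i \geqslant k} T_i [g] t_i$ at the first differing monomial, the use of Propositions \ref{NoAppendix_If_x<less>y_then_x[f]<less>y[f]} and \ref{NoAppendix_general_substitution_properties2} to keep the substituted sums in normal form, and the easy converse of item 1 obtained by taking $x = e \in S$ --- is exactly the standard argument the author evidently intends, and all steps (including the vacuity of the strict case when $\tmop{Ep} (x) \cap D = \emptyset$ and the derivation of item 2 from $f < g \Leftrightarrow f \leqslant g \wedge D \neq \emptyset$) are sound.
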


\begin{proof}
  {\color{orange} Left to the reader.}
\end{proof}

\begin{proposition}
  \label{NoAppendix_operations_iso_gen_subst}Let $x, y \in \tmop{OR}$. Let $f
  : S \subset \mathbbm{E} \longrightarrow \mathbbm{E}$ be a strictly
  increasing function with $\tmop{Ep} (x) \cup \tmop{Ep} (y) \subset S$. Then
  \begin{enumeratenumeric}
    \item $\tmop{Ep} (x + y) \cup \tmop{Ep} (\omega^x) \cup \tmop{Ep} (x \cdot
    y) \subset S$
    
    \item $(x + y) [f] = x [f] + y [f]$
    
    \item $\omega^x [f] = \omega^{x [f]}$
    
    \item $(x \cdot y) [f] = x [f] \cdot y [f]$
  \end{enumeratenumeric}
\end{proposition}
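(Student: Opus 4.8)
The plan is to prove the four items in the order 1, 3, 2, 4, because item 1 is exactly what guarantees that the substitutions $(x+y)[f]$, $\omega^x[f]$, $(x\cdot y)[f]$ occurring in items 2--4 are well defined, and item 3 will be used inside the proofs of items 2 and 4. The guiding principle is that $[f]$ was \emph{defined} so as to respect the Cantor Normal Form: running through the four clauses of Definition~\ref{NoAppendix_simultaneous_substitution} together with item 3 (for the clause $x\notin\mathbbm E$, $t_1\geq 2\vee n\geq 2$, which reduces $T_i[f]=\omega^{x_i}[f]$ to $\omega^{x_i[f]}$) and with Proposition~\ref{NoAppendix_general_substitution_properties2}, one gets the uniform formula
$$z=_{\tmop{CNF}}\textstyle\sum_{i=1}^{k}\omega^{z_i}c_i\ \Longrightarrow\ z[f]=_{\tmop{CNF}}\textstyle\sum_{i=1}^{k}\omega^{z_i[f]}c_i\qquad(\star)$$
(each $z_i[f]$ being defined since $\tmop{Ep}(z_i)\subset\tmop{Ep}(z)\subset S$; the clauses $x\in\mathbbm E$, $x=\omega^Z$, $x<\varepsilon_0$ are just the degenerate instances $k=1,\,z_1=z,\,c_1=1$; $k=1,\,z_1=Z,\,c_1=1$; and ``$z_i<\varepsilon_0$, hence $z_i[f]=z_i$'', respectively; the right side is genuinely in CNF because $z_1>\cdots>z_k$ forces $z_1[f]>\cdots>z_k[f]$ by Proposition~\ref{NoAppendix_If_x<less>y_then_x[f]<less>y[f]}.2). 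Once $(\star)$ is in hand, the whole proposition is a matter of observing that ordinal addition and multiplication act combinatorially on the exponents and coefficients of Cantor normal forms, and that $[f]$ is applied termwise.

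For item 1 I would use only the standard termwise description of the Cantor normal forms of $x+y$, $\omega^x$ and $x\cdot y$ from those of $x=_{\tmop{CNF}}\sum_i\omega^{x_i}t_i$ and $y=_{\tmop{CNF}}\sum_j\omega^{y_j}s_j$. The exponents occurring in the CNF of $x+y$ all lie in $\{x_1,\ldots\}\cup\{y_1,\ldots\}$, so $\tmop{Ep}(x+y)\subset\tmop{Ep}(x)\cup\tmop{Ep}(y)\subset S$. For $\omega^x$ one distinguishes $x<\varepsilon_0$ (then $\tmop{Ep}(\omega^x)=\emptyset$), $x\in\mathbbm E$ (then $\omega^x=x$), and $x\geq\varepsilon_0\wedge x\notin\mathbbm E$ (then $\omega^x$ is additively principal, $\geq\varepsilon_0$, and not an epsilon number, so $\tmop{Ep}(\omega^x)=\tmop{Ep}(x)$), giving $\tmop{Ep}(\omega^x)\subset\tmop{Ep}(x)\subset S$ in every case. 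For $x\cdot y$ (with $x,y\neq 0$) the exponents of its CNF are among the $x_i$ and the sums $x_1+y_j$, and $\tmop{Ep}(x_1+y_j)\subset\tmop{Ep}(x_1)\cup\tmop{Ep}(y_j)$ by the case of $+$ just treated, whence $\tmop{Ep}(x\cdot y)\subset S$.

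Item 3 is the single direct case analysis, with no induction: if $x<\varepsilon_0$ then $\omega^x<\varepsilon_0$ and both sides equal $\omega^x$; if $x\geq\varepsilon_0$ and $x\notin\mathbbm E$ then $\omega^x\notin\mathbbm E$, $\omega^x\geq\varepsilon_0$, and $\omega^x=\omega^Z$ with $Z=x$, so the third clause gives $\omega^x[f]=\omega^{x[f]}$ outright; if $x\in\mathbbm E$ then $\omega^x=x$, so $\omega^x[f]=x[f]=f(x)$, while $x[f]=f(x)\in\mathbbm E$ forces $\omega^{x[f]}=\omega^{f(x)}=f(x)$ as well. Item 2 then follows: for $y=0$ both sides are $x[f]$, and for $y>0$ one applies $(\star)$ to $x$, $y$ and $x+y$ and uses the explicit CNF of $x+y$ (keep the terms of $x$ with exponent $>y_1$; if some exponent of $x$ equals $y_1$, merge coefficients there; then append the terms of $y$). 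Applying $(\star)$ termwise and invoking strict monotonicity of $f$ — which yields both $x_p[f]>y_1[f]\geq x_{p+1}[f]$ and $x_{p+1}=y_1\Leftrightarrow x_{p+1}[f]=y_1[f]$, exactly the conditions under which the ordinal sum $x[f]+y[f]$ reorganizes in the same pattern — gives $(x+y)[f]=x[f]+y[f]$.

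Finally, item 4 is by induction on $y$. The cases $x=0$ and $y=0$ are trivial; for $0<y<\omega$ one inducts on $y$ using item 2, via $(x\cdot(y+1))[f]=(x\cdot y+x)[f]=(x\cdot y)[f]+x[f]=x[f]\cdot y+x[f]=x[f]\cdot(y+1)$. For $y\geq\omega$, write $y=_{\tmop{CNF}}\omega^{y_1}s_1+y'$ with $y_1>0$ and $y'<\omega^{y_1}$; then $x\cdot y=x\cdot\omega^{y_1}s_1+x\cdot y'=\omega^{x_1+y_1}s_1+x\cdot y'$, where $x_1$ is the leading exponent of $x\neq 0$ (using $x\cdot\omega^{y_1}=\omega^{x_1+y_1}$ for $y_1>0$). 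By item 2, $(\star)$, item 3 and the induction hypothesis applied to $y'$, $(x\cdot y)[f]=\omega^{(x_1+y_1)[f]}s_1+(x\cdot y')[f]=\omega^{x_1[f]+y_1[f]}s_1+x[f]\cdot y'[f]$; on the other hand $x[f]$ has leading exponent $x_1[f]$ by $(\star)$, $y[f]=\omega^{y_1[f]}s_1+y'[f]$ by item 2, and $y_1[f]>0$, so $x[f]\cdot y[f]=\omega^{x_1[f]+y_1[f]}s_1+x[f]\cdot y'[f]$, and the two sides coincide. The main obstacle is not conceptual but the Cantor-normal-form bookkeeping behind $(\star)$ and the addition case — in particular the absorption of a trailing block of $x$ by $y$ and the merging of coefficients when an exponent of $x$ equals $y_1$ — yet all of this is routine once strict monotonicity of $[f]$ (Proposition~\ref{NoAppendix_If_x<less>y_then_x[f]<less>y[f]}) and $(\star)$ are established.
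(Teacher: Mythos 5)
The paper gives no argument for this proposition (its proof is explicitly left to the reader), so there is nothing to compare against; judged on its own, your proof is correct and is exactly the intended CNF-bookkeeping argument. Your ordering $1,3,2,4$ avoids any circularity (item 3 is proved by direct case analysis, not via $(\star)$), the reduction $(\star)$ is justified by the cited monotonicity proposition so that the substituted expression is again a Cantor normal form, and the delicate points — absorption when $x+y$ or a tail of $x$ is swallowed by $y$, merging of coefficients when an exponent of $x$ equals $y_1$, and the identities $x\cdot\omega^{y_1}=\omega^{x_1+y_1}$ with left-distributivity in the transfinite induction for item 4 — are all correctly handled by the equivalences $x_i<y_1\Leftrightarrow x_i[f]<y_1[f]$ and $x_i=y_1\Leftrightarrow x_i[f]=y_1[f]$.
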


\begin{proof}
  {\color{orange} Left to the reader.}
\end{proof}

\begin{proposition}
  Let $g : S \subset \mathbbm{E} \longrightarrow Z \subset \mathbbm{E}$ and $f
  : Z \subset \mathbbm{E} \longrightarrow \mathbbm{E}$ be strictly increasing
  functions. Then $f \circ g : S \subset \mathbbm{E} \longrightarrow
  \mathbbm{E}$ is strictly increasing and for any $t \in \tmop{OR}$ with
  $\tmop{Ep} (t) \subset S$, $t [f \circ g] = t [g] [f]$.
\end{proposition}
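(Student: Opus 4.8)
I would split the statement into its two claims; the first is immediate and the second goes by induction on $t$ along the clauses of Definition~\ref{NoAppendix_simultaneous_substitution}. For the first claim: since $\tmop{Im} g \subset Z = \tmop{Dom} f$, the map $f \circ g : S \longrightarrow \mathbbm{E}$ is well defined, and if $e < e'$ in $S$ then $g (e) < g (e')$ (because $g$ is strictly increasing) and hence $f (g (e)) < f (g (e'))$ (because $f$ is strictly increasing), so $f \circ g$ is strictly increasing. Before starting the induction I would record that both substitutions in the desired equation are well defined: $\tmop{Ep} (t) \subset S = \tmop{Dom} (f \circ g)$ for $t [f \circ g]$, and $\tmop{Ep} (t [g]) = g [\tmop{Ep} (t)] \subset \tmop{Im} g \subset Z = \tmop{Dom} f$ for $t [g] [f]$, the latter by Proposition~\ref{NoAppendix_general_substitution_properties2}(2).

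Then I would prove $t [f \circ g] = t [g] [f]$ by induction on $t$. If $t < \varepsilon_0$ then $\tmop{Ep} (t) = \emptyset$ and $t [f \circ g] = t = t [g] = t [g] [f]$. If $t \in \mathbbm{E}$ then $t \in S$, so $t [f \circ g] = (f \circ g) (t) = f (g (t))$, while $t [g] = g (t) \in Z \subset \mathbbm{E}$ yields $t [g] [f] = f (t [g]) = f (g (t))$. If $t \nin \mathbbm{E}$ and $t = \omega^Z$, then $Z < \omega^Z = t$ (else $t$ would be an epsilon number) and $\tmop{Ep} (Z) = \tmop{Ep} (t) \subset S$, so the induction hypothesis gives $Z [f \circ g] = Z [g] [f]$; by definition $t [f \circ g] = \omega^{Z [f \circ g]}$, while $t [g] = \omega^{Z [g]}$ lies outside $\mathbbm{E}$ by Proposition~\ref{NoAppendix_If_x<less>y_then_x[f]<less>y[f]}(3) (applied with $g$ in place of $f$), so $t [g] [f] = (\omega^{Z [g]}) [f] = \omega^{(Z [g]) [f]}$ (this last step also being Proposition~\ref{NoAppendix_operations_iso_gen_subst}(3)); combining, $t [f \circ g] = \omega^{Z [g] [f]} = t [g] [f]$. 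Finally, if $t \nin \mathbbm{E}$ and $t =_{\tmop{CNF}} \sum_{i = 1}^n T_i t_i$ with $t_1 \geqslant 2 \vee n \geqslant 2$, then each $T_i$ satisfies $T_i < t$ and $\tmop{Ep} (T_i) \subset \tmop{Ep} (t) \subset S$, so the induction hypothesis gives $T_i [f \circ g] = T_i [g] [f]$ for every $i$; by Proposition~\ref{NoAppendix_general_substitution_properties2}(1), $t [g] =_{\tmop{CNF}} \sum_{i = 1}^n T_i [g] t_i$, an expression still subject to the clause $t_1 \geqslant 2 \vee n \geqslant 2$ (the finite coefficients are left untouched) and with $t [g] \nin \mathbbm{E}$ by Proposition~\ref{NoAppendix_If_x<less>y_then_x[f]<less>y[f]}(3), whence $t [g] [f] = \sum_{i = 1}^n (T_i [g]) [f] t_i$; therefore $t [f \circ g] = \sum_{i = 1}^n T_i [f \circ g] t_i = \sum_{i = 1}^n T_i [g] [f] t_i = t [g] [f]$, closing the induction.

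The only point that needs attention — and it is bookkeeping, not a genuine obstacle — is to check that when $[f]$ is applied to $t [g]$ the computation falls into the clause of Definition~\ref{NoAppendix_simultaneous_substitution} that corresponds to the clause entered by $t$ under $[g]$; this is precisely what Proposition~\ref{NoAppendix_If_x<less>y_then_x[f]<less>y[f]}(3) (membership in $\mathbbm{E}$ is transported faithfully) and Proposition~\ref{NoAppendix_general_substitution_properties2}(1)--(2) (a substitution is again in Cantor Normal Form, with the same finite coefficients and the same additive/$\omega$-power shape) guarantee.
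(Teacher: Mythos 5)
Your proof is correct: the paper leaves this proposition to the reader, and your argument is exactly the intended routine verification, namely transfinite induction on $t$ along the clauses of Definition~\ref{NoAppendix_simultaneous_substitution}, with Propositions~\ref{NoAppendix_If_x<less>y_then_x[f]<less>y[f]} and~\ref{NoAppendix_general_substitution_properties2} supplying the bookkeeping that $t[g]$ falls into the clause matching the one entered by $t$. The preliminary well-definedness check ($\tmop{Ep}(t[g]) = g[\tmop{Ep}(t)] \subset Z$) and the observation that the finite coefficients and the condition $t_1 \geqslant 2 \vee n \geqslant 2$ are preserved under $[g]$ are precisely the points that need to be said, and you said them.
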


\begin{proof}
  {\color{orange} Left to the reader.}
\end{proof}

\subsection{The main theorem.}

Now we introduce certain notions that are necessary to enunciate the main
theorem.

\begin{definition}
  For $k \in [1, \omega)$, $\alpha \in \tmop{Class} (k)$ and $t \in [\alpha,
  \alpha ( +^k))$, the ordinal \\
  $\eta (k, \alpha, t)$ is defined as \\
  $\eta (k, \alpha, t) \assign \left\{ \begin{array}{l}
    \alpha (+^{k - 1}) ( +^{k - 2}) \ldots ( +^2) ( +^1) 2 \text{ iff } t
    \in [\alpha, \alpha ( +^{k - 1}) ( +^{k - 2}) \ldots ( +^2) ( +^1) 2]\\
    \\
    \max \{m (e) |e \in (\alpha, t]\} \text{ iff } t > \alpha (+^{k - 1})
    ( +^{k - 2}) \ldots ( +^2) ( +^1) 2
  \end{array} \right.$
\end{definition}

Our next proposition \ref{eta(k,alpha,t)_is_well_defined} shows that $\eta (k,
\alpha, t)$ is well defined.

\begin{proposition}
  \label{eta(k,alpha,t)_is_well_defined}Let $k \in [1, \omega)$, $\alpha \in
  \tmop{Class} (k)$, $t \in (\alpha, \alpha ( +^k))$ and \\
  $P \assign \{r \in (\alpha, t] |m (r) \geqslant t\}$. Then
  
  (0). $P$ is finite; more specifically $1 \leqslant |P| \leqslant k + 1$.
  
  (1). $\max \{m (e) |e \in P\}$ exists and $\max \{m (e) |e \in P\} = \max
  \{m (e) |e \in (\alpha, t]\}$.
  
  (2). $\eta (k, \alpha, t)$ is well defined.
  
  (3). $\eta (k, \alpha, t) \geqslant m (t) \geqslant t$.
\end{proposition}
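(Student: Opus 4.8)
The plan is to first pin down the shape of $P$, then bound its cardinality using that $(\alpha, \alpha(+^k))$ is disjoint from $\tmop{Class}(k)$, and finally obtain (1)--(3) almost for free. I expect the cardinality bound in (0) to be the only place where actual work is needed.

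First I would record the shape of $P$. From the basic facts about $m$ and $<_1$ in \cite{GarciaCornejo1} --- that $r \leqslant_1 m(r)$, that $<_1$ is $\leqslant_1$-connected, and that the lesser member of a proper $<_1$-pair is an $\mathbbm{E}$-number --- one checks that for $r \in (\alpha, t]$ one has $m(r) \geqslant t \Longleftrightarrow r \leqslant_1 t$; hence $P = \{t\} \cup \{r \in (\alpha, t) \mid r <_1 t\}$, and by $\leqslant_1$-connectedness any two elements of $P$ are $<_1$-comparable, so $P$ is an (a priori possibly infinite) $<_1$-chain whose greatest element is $t$. Since $m(t) \geqslant t$ we get $t \in P$, so $|P| \geqslant 1$. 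For the upper bound, suppose $|P| \geqslant k + 1$; then $P \cap (\alpha, t)$ has at least $k$ elements, so we can choose $\alpha < c_k < c_{k - 1} < \ldots < c_1 < t$ in $P$. Each $c_i$ satisfies $c_i <_1 t$, so by $\leqslant_1$-connectedness $c_k <_1 c_{k - 1} <_1 \ldots <_1 c_1$, and $c_1 <_1 t$ with $c_1 < t$ forces $c_1 \in \mathbbm{E} = \tmop{Class}(1)$. Iterating Proposition~\ref{alpha<less>_1_beta_in_Class(k)_then_alpha_in_Class(k+1)} along this chain (from $c_{i + 1} <_1 c_i \in \tmop{Class}(i)$ deduce $c_{i + 1} \in \tmop{Class}(i + 1)$) we obtain $c_k \in \tmop{Class}(k)$. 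But $\alpha < c_k < t < \alpha(+^k)$, which is impossible since no element of $\tmop{Class}(k)$ lies strictly between $\alpha$ and $\alpha(+^k) = \min \{\beta \in \tmop{Class}(k) \mid \alpha < \beta\}$ (and likewise when $\alpha(+^k) = \infty$, since then $\alpha$ is the greatest element of $\tmop{Class}(k)$). Hence $|P| \leqslant k$, so in particular $1 \leqslant |P| \leqslant k + 1$ and $P$ is finite, which is (0). (Equivalently one may feed the chain $c_k <_1 \ldots <_1 c_1 <_1 t$ into Proposition~\ref{<less>_1_chain_of_length_j}.)

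The rest is short. For (1): since $P$ is finite and nonempty, $M \assign \max \{m(e) \mid e \in P\}$ exists, and $M \geqslant m(t) \geqslant t$ because $t \in P$; for any $e \in (\alpha, t]$, either $e \in P$ and $m(e) \leqslant M$, or $e \nin P$ and then $m(e) < t \leqslant M$ by definition of $P$, so $M$ is the greatest element of $\{m(e) \mid e \in (\alpha, t]\}$, whence that maximum exists and equals $\max \{m(e) \mid e \in P\}$. For (2) and (3): put $\mu \assign \alpha(+^{k - 1})(+^{k - 2}) \ldots (+^2)(+^1) 2$, a genuine ordinal since $\alpha \in \tmop{Class}(k)$ forces all the displayed successors to exist by Proposition~\ref{Class(n)_first_properties} ($\mu$ is $\alpha_1 2$ in the notation there); the definition of $\eta(k, \alpha, t)$ branches on the exhaustive, mutually exclusive cases $t \in [\alpha, \mu]$ and $t > \mu$, giving $\mu$ in the first and $\max \{m(e) \mid e \in (\alpha, t]\}$ in the second --- the latter exists by (1) --- so $\eta(k, \alpha, t)$ is well defined, which is (2); and $m(t) \geqslant t$ by the properties of $m$, while $\eta(k, \alpha, t) \geqslant m(t)$ because when $t > \mu$ one has $\eta(k, \alpha, t) = \max \{m(e) \mid e \in (\alpha, t]\} \geqslant m(t)$ (as $t \in (\alpha, t]$) and when $\alpha < t \leqslant \mu$ one has $\eta(k, \alpha, t) = \mu \geqslant m(t)$ by Proposition~\ref{t_in_(a,a(+)..(+^1)2]_then_m(t)<less>=a(+)..(+^1)2}(2), which is (3). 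The one genuine difficulty is the counting in (0): its content is exactly that an over-long $<_1$-chain sitting inside $(\alpha, \alpha(+^k))$ would, through Proposition~\ref{alpha<less>_1_beta_in_Class(k)_then_alpha_in_Class(k+1)}, manufacture a forbidden element of $\tmop{Class}(k)$ in that interval.
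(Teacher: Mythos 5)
Your parts (1)--(3) are correct and essentially identical to the paper's own proof, but part (0) contains a genuine gap, and in fact claims too much. The flawed step is ``$c_1 <_1 t$ with $c_1 < t$ forces $c_1 \in \mathbbm{E}$''. From $c_1 <_1 t$ and $c_1 < t$, $\leqslant_1$-connectedness only gives $c_1 <_1 c_1 + 1$, hence $c_1 \in \tmop{Lim} \mathbbm{P} \subset \mathbbm{P}$; to conclude $c_1 \in \mathbbm{E}$ you need $c_1 <_1 c_1 2$, and for that you would need $c_1 2 \leqslant t$ (or at least $c_1 2 \leqslant m(c_1)$), which is simply not available: the configuration $\mathbbm{P} \ni c_1 \leqslant t < m(c_1) < c_1 2$ with $c_1 \nin \mathbbm{E}$ really occurs --- it is exactly the situation described in Remark \ref{remark_l(1,alpha,t)} for $l(1,\alpha,t)$. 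For the same reason your parenthetical alternative fails: Proposition \ref{<less>_1_chain_of_length_j} requires the top of the chain to satisfy $a_1 <_1 a_1 2$, which your chain ending at $t$ does not provide. Note also that your argument, if it worked, would prove $|P| \leqslant k$, strictly stronger than the stated bound $|P| \leqslant k + 1$, and this stronger bound is false in general: for $k = 1$, take $\pi \in (\alpha, \alpha(+^1))$ additive principal but not an epsilon number with $\pi <_1 m(\pi) < \pi 2$ (again the situation of Remark \ref{remark_l(1,alpha,t)}) and set $t \assign m(\pi)$; then $\{\pi, t\} \subset P$, so $|P| \geqslant 2 = k + 1$.

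The paper's proof avoids this by assuming $|P| \geqslant k + 2$, i.e.\ by taking one extra, sacrificial element at the top of the chain: $E_k < \cdots < E_1 < E_0 < t \leqslant m(E_l)$ for all $l$. First $E_0 <_1 E_0 + 1$ gives $E_0 \in \tmop{Lim}\mathbbm{P} \subset \mathbbm{P}$; then, since $E_1 < E_0$ and $E_0$ is additive principal, $E_1 2 < E_0 < t \leqslant m(E_1)$, so connectedness now legitimately yields $E_1 <_1 E_1 2$, i.e.\ $E_1 \in \mathbbm{E}$; and the remaining chain $E_k <_1 \cdots <_1 E_1 <_1 E_1 2$ of length $k$ puts $E_k \in \tmop{Class}(k) \cap (\alpha, \alpha(+^k))$ by Proposition \ref{<less>_1_chain_of_length_j}, the desired contradiction. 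So the counting in (0) genuinely needs that extra top element; your chain of $k$ elements reaching only up to $t$ does not suffice, and this is the one step of the proposition where your proposal breaks down.
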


\begin{proof}
  Consider $i, \alpha$ and $t$ as stated.
  
  {\noindent}(0)\\
  Clearly $t \in P$. So $|P| \geqslant 1$. We prove the other inequality by
  contradiction. Suppose $|P| \geqslant k + 2$. Then there exist $k + 1$
  ordinals $E_0, E_1, \ldots, E_{k - 1}, E_k \in P$ such that $E_k < E_{k - 1}
  < \ldots < E_1 < E_0 < t$; that is, $\forall l \in [0, k] .E_k < \ldots <
  E_l < \ldots < E_0 < t \leqslant m (E_l)$, and therefore, by
  $\leqslant_1$-connectedness, we get:

  a. $E_0 <_1 E_0 + 1 \leqslant t$, that is, $E_0 \in \tmop{Lim} \mathbbm{P}
  \subset \mathbbm{P}$.
  
  b. $E_1 < 2 E_1 < E_0 < t \leqslant m (E_1)$; then, by
  $\leqslant_1$-connectedness $E_1 <_1 2 E_1$, that is, $E_1 \in \mathbbm{E}$.
  
  c. $\alpha < E_k <_1 E_{k - 1} <_1 \ldots <_1 E_1 <_1 E_1 2 < E_0 < t <
  \alpha (+^k)$

  This way, from c. and proposition \ref{<less>_1_chain_of_length_j} follows
  $E_k \in \tmop{Class} (k) \cap (\alpha, \alpha ( +^k))$. Contradiction.

  Therefore $|P| \leqslant k + 1$.

  {\noindent}(1)\\
  Since by (0) $P \neq \emptyset$ is finite, then $\{m (e) |e \in P\}$ is
  finite too and thus $\mu \assign \max \{m (e) |e \in P\}$ exists. Then:

  $(I)$. \ $\mu \geqslant m (t) \geqslant t$ because $t \in P$ (and because $m
  (\beta) \geqslant \beta$ for any ordinal).
  
  $(I I)$. Since $P \subset (\alpha, t]$, then $\mu \in \{m (e) |e \in
  (\alpha, t]\}$.

  On the other hand, let $e \in (\alpha, t]$ be arbitrary. If $m (e) < t$,
  then $m (e) < \mu$ because of $(I)$. If $m (e) \geqslant t$, then $e \in P$
  and then $m (e) \leqslant \mu$. This shows that $\forall e \in (\alpha, t]
  .m (e) \leqslant \mu$ and since by $(I I)$ \\
  $\mu \in \{m (e) |e \in (\alpha, t]\}$, we have shown $\mu = \max \{m (e) |e
  \in (\alpha, t]\}$.

  {\noindent}(2)\\
  If $t \in [\alpha, \alpha ( +^{k - 1}) ( +^{k - 2}) \ldots ( +^2) ( +^1)
  2]$, then it is clear that $\eta (k, \alpha, t)$ is well defined. So suppose
  $t \in (\alpha ( +^{k - 1}) ( +^{k - 2}) \ldots ( +^2) ( +^1) 2, \alpha (
  +^k))$. By (1) $\max \{m (e) |e \in P\}$ exists and \\
  $\max \{m (e) |e \in P\} = \max \{m (e) |e \in (\alpha, t]\}
  \underset{\text{by definition}}{=} \eta (k, \alpha, t)$. That
  is, $\eta (k, \alpha, t)$ exists.

  {\noindent}(3)\\
  For $t > \alpha (+^{k - 1}) \ldots ( +^2) ( +^1) 2$ the assertion is clear.
  For $t \leqslant \alpha (+^{k - 1}) \ldots ( +^2) ( +^1) 2$, we get by
  proposition \ref{t_in_(a,a(+)..(+^1)2]_then_m(t)<less>=a(+)..(+^1)2}, $t
  \leqslant m (t) \leqslant \alpha (+^{k - 1}) ( +^{k - 2}) \ldots ( +^2) (
  +^1) 2 = \eta (k, \alpha, t)$.
\end{proof}

\begin{remark}
  \label{remark_eta(1,alpha,t)=eta(t)}The ordinal $\eta (i, \alpha, t)$ is
  meant to play in $\tmop{Class} (i)$ the analogous role that the ordinal
  $\eta t$ played in $\tmop{Class} (1) = \mathbbm{E}$. Particularly, for $i =
  1, \alpha \in \tmop{Class} (1) = \mathbbm{E}$ and $t \in [\alpha, \alpha (
  +^1))$, \\
  $\eta (1, \alpha, t)\underset{\text{by {\cite{GarciaCornejo1}} proposition }
  \ref{eta(t)_m(t)_and_<less>_1}}{=}\eta t$.
\end{remark}

\begin{proposition}
  $\forall i \in [1, \omega) \forall \alpha \in \tmop{Class} (i) \forall t \in
  [\alpha, \alpha ( +^i)) . \eta (i, \alpha, t) \in (\alpha, \alpha ( +^i))$.
\end{proposition}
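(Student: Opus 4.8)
The goal is to show $\alpha < \eta(i,\alpha,t) < \alpha(+^i)$ for every $i \in [1,\omega)$, $\alpha \in \mathrm{Class}(i)$ and $t \in [\alpha, \alpha(+^i))$. By Proposition~\ref{eta(k,alpha,t)_is_well_defined}(3) we already have $\eta(i,\alpha,t) \geqslant m(t) \geqslant t \geqslant \alpha$; the only thing that needs care about the lower bound is the strict inequality $\alpha < \eta(i,\alpha,t)$, and the whole content is the upper bound $\eta(i,\alpha,t) < \alpha(+^i)$. The natural approach is to split exactly along the two cases of the definition of $\eta$.

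\emph{Case $t \in [\alpha, \alpha(+^{i-1})(+^{i-2})\cdots(+^1)2]$.} Here $\eta(i,\alpha,t) = \alpha(+^{i-1})\cdots(+^1)2$. For the lower bound note $\alpha < \alpha(+^{i-1}) \leqslant \alpha(+^{i-1})\cdots(+^1) < \alpha(+^{i-1})\cdots(+^1)2$ when $i \geqslant 2$ (using Proposition~\ref{Class(n)_first_properties} to know the successor functionals land in the appropriate classes and are $> \alpha$), and for $i = 1$ one has $\eta(1,\alpha,t) = \alpha 2 > \alpha$ directly. For the upper bound I would invoke Corollary~\ref{alpha_i_properties}(c), which gives precisely $\alpha = \alpha_i <_1 \alpha_{i-1} <_1 \cdots <_1 \alpha_1 <_1 \alpha_1 2 < \alpha(+^i)$; since $\alpha_1 2 = \alpha(+^{i-1})(+^{i-2})\cdots(+^1)2 = \eta(i,\alpha,t)$ (matching the notation $\alpha_j$ of the corollary with the iterated successor functionals), this is exactly $\eta(i,\alpha,t) < \alpha(+^i)$.

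\emph{Case $t > \alpha(+^{i-1})\cdots(+^1)2$.} Here $\eta(i,\alpha,t) = \max\{m(e) \mid e \in (\alpha,t]\}$, attained at some $e_0 \in (\alpha,t]$ with $m(e_0) = \eta(i,\alpha,t)$. The lower bound is immediate since $e_0 > \alpha$ forces $m(e_0) > \alpha$. For the upper bound, suppose toward a contradiction that $m(e_0) \geqslant \alpha(+^i)$. Since $t < \alpha(+^i)$ and $e_0 \leqslant t$, we would have $e_0 < \alpha(+^i) \leqslant m(e_0)$, hence by $\leqslant_1$-connectedness $e_0 <_1 \alpha(+^i)$. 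Now $\alpha \in \mathrm{Class}(i)$ means $\alpha <_1 \alpha(+^i)$ and $\alpha(+^i) \in \mathrm{Class}(i)$; combining $\alpha <_1 \alpha(+^i)$ with $e_0 <_1 \alpha(+^i)$ and $\alpha < e_0$ I would derive, via Proposition~\ref{alpha<less>_1_beta_in_Class(k)_then_alpha_in_Class(k+1)} applied to $e_0 <_1 \alpha(+^i) \in \mathrm{Class}(i)$, that $e_0 \in \mathrm{Class}(i+1)$, and in particular $e_0 \in \mathrm{Class}(i)$. But then $e_0$ is an element of $\mathrm{Class}(i)$ strictly between $\alpha$ and its immediate $\mathrm{Class}(i)$-successor $\alpha(+^i)$ — contradiction. (If $m(e_0) = \alpha(+^i)$ exactly, the same argument applies; if one worries about $e_0 = \alpha(+^i)$, that is impossible since $e_0 \leqslant t < \alpha(+^i)$.) Hence $m(e_0) < \alpha(+^i)$, i.e. $\eta(i,\alpha,t) < \alpha(+^i)$.

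The main obstacle is the second case: one must be careful about whether $\leqslant_1$-connectedness applies (it needs $e_0 < \alpha(+^i) \leqslant m(e_0)$, which holds because $m$ is monotone-enough and $t < \alpha(+^i)$), and about correctly citing Proposition~\ref{alpha<less>_1_beta_in_Class(k)_then_alpha_in_Class(k+1)} to upgrade $e_0 <_1 \alpha(+^i)$ into membership in $\mathrm{Class}(i)$; the clean contradiction is that $\mathrm{Class}(i)$ has no element strictly between $\alpha$ and $\alpha(+^i)$ by definition of the successor functional $(+^i)$. The first case is essentially bookkeeping against Corollary~\ref{alpha_i_properties}, once one identifies the iterated successor functionals with the $\alpha_j$'s.
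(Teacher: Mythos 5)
The paper marks this proposition ``Left to the reader,'' so there is no proof of record to compare against; judged on its own, your argument is correct and complete. The case split along the definition of $\eta(i,\alpha,t)$ is the right one: in the first case Corollary~\ref{alpha_i_properties}(c) indeed gives $\alpha < \alpha(+^{i-1})\cdots(+^1)2 = \alpha_1 2 < \alpha(+^i)$, and in the second case the chain $e_0 \leqslant t < \alpha(+^i) \leqslant m(e_0)$, $\leqslant_1$-connectedness, and Proposition~\ref{alpha<less>_1_beta_in_Class(k)_then_alpha_in_Class(k+1)} applied to $e_0 <_1 \alpha(+^i) \in \tmop{Class}(i)$ yield $e_0 \in \tmop{Class}(i+1) \subset \tmop{Class}(i)$ with $\alpha < e_0 < \alpha(+^i)$, contradicting the minimality built into the definition of $(+^i)$. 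Two small points to clean up. First, your sentence ``$\alpha \in \tmop{Class}(i)$ means $\alpha <_1 \alpha(+^i)$'' is false --- that condition is membership in $\tmop{Class}(i+1)$; what $\alpha \in \tmop{Class}(i)$ gives is $\alpha <_1 \alpha(+^{i-1})$. Fortunately this claim is never used: the cited proposition needs only $e_0 <_1 \alpha(+^i) \in \tmop{Class}(i)$, so simply delete that clause. Second, you tacitly use that $\alpha(+^i)$ is an ordinal lying in $\tmop{Class}(i)$; at this stage of the paper this is not yet known in general, but the gap is harmless: if $\alpha(+^i) = \infty$ the upper bound is vacuous since $\eta(i,\alpha,t)$ is an ordinal, and otherwise $\alpha(+^i)$ is by definition the minimum of a nonempty subset of $\tmop{Class}(i)$, hence an element of it, which is exactly what your contradiction requires. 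With those two remarks inserted, the proof stands.
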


\begin{proof}
  {\color{orange} Left to the reader.}
\end{proof}

\begin{definition}
  \label{l(t)}Let $i \in [1, \omega)$, $\alpha \in \tmop{Class} (i)$ and $t
  \in [\alpha, \alpha ( +^i))$. We define \\
  $l (i, \alpha, t) \assign \left\{ \begin{array}{l}
    \alpha (+^{i - 1}) ( +^{i - 2}) \ldots ( +^2) ( +^1) 2 \text{ iff } t
    \in [\alpha, \alpha ( +^{i - 1}) ( +^{i - 2}) \ldots ( +^2) ( +^1) 2]\\
    \\
    \min \{r \in (\alpha, t] | m (r) = \eta (i, \alpha, t)\} \text{ iff }
    t > \alpha (+^{i - 1}) ( +^{i - 2}) \ldots ( +^2) ( +^1) 2
  \end{array} \right.$.
\end{definition}

\begin{proposition}
  \label{eta(i,a,l(i,a,t))=eta(i,a,t)_if_t_in_(a(+)...(+^1)2,a(+^i)}Let $i \in
  [1, \omega)$, $\alpha \in \tmop{Class} (i)$ and $t \in (\alpha ( +^{i - 1})
  ( +^{i - 2}) \ldots ( +^2) ( +^1) 2, \alpha ( +^i))$. Then
  \begin{enumeratenumeric}
    \item $l (i, \alpha, t) > \alpha (+^{i - 1}) \ldots ( +^1) 2$
    
    \item $\eta (i, \alpha, l (i, \alpha, t)) = \max \{m (e) |e \in (\alpha, l
    (i, \alpha, t)]\} = m (l (i, \alpha, t)) = \eta (i, \alpha, t)$.
  \end{enumeratenumeric}
\end{proposition}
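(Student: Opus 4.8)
The plan is to settle both claims by unwinding the definitions, working only in the relevant case $t > \mu$, where I abbreviate $\mu \assign \alpha ( +^{i - 1}) ( +^{i - 2}) \ldots ( +^2) ( +^1) 2$. Under this case hypothesis both $\eta (i, \alpha, t)$ and $l (i, \alpha, t)$ are given by their second clauses, so set $\eta \assign \eta (i, \alpha, t) = \max \{m (e) \mid e \in (\alpha, t]\}$ and $l \assign l (i, \alpha, t) = \min \{r \in (\alpha, t] \mid m (r) = \eta\}$. The maximum defining $\eta$ exists by Proposition \ref{eta(k,alpha,t)_is_well_defined}(1), which moreover shows that the set over which the minimum defining $l$ is taken is nonempty; hence $l$ is well defined and $m (l) = \eta$. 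Also $l \in (\alpha, t]$ and $t < \alpha ( +^i)$, so $l < \alpha ( +^i)$ and $\eta (i, \alpha, l)$ is defined.

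For 1., I argue by contradiction. If $l \leqslant \mu$, then $l \in (\alpha, \mu]$, and Proposition \ref{t_in_(a,a(+)..(+^1)2]_then_m(t)<less>=a(+)..(+^1)2}(2) gives $m (l) \leqslant \mu$. But $m (l) = \eta \geqslant m (t) \geqslant t > \mu$, by the defining property of $l$, Proposition \ref{eta(k,alpha,t)_is_well_defined}(3), and the case hypothesis $t > \mu$. This contradiction forces $l > \mu$.

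For 2., the first equality is immediate from 1.: since $l > \mu$ and $l \in [\alpha, \alpha ( +^i))$, the ordinal $\eta (i, \alpha, l)$ is computed by the second clause of its definition, i.e.\ $\eta (i, \alpha, l) = \max \{m (e) \mid e \in (\alpha, l]\}$. The third equality, $m (l) = \eta (i, \alpha, t)$, is exactly the defining property of $l$ (Definition \ref{l(t)}). For the middle equality $\max \{m (e) \mid e \in (\alpha, l]\} = \eta$: the inequality $\geqslant$ holds since $l \in (\alpha, l]$ and $m (l) = \eta$; the inequality $\leqslant$ holds since $l \leqslant t$ implies $(\alpha, l] \subseteq (\alpha, t]$, so every $m (e)$ with $e \in (\alpha, l]$ is at most $\max \{m (e') \mid e' \in (\alpha, t]\} = \eta$. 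Chaining the three equalities yields the statement.

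The whole argument is definition-chasing together with the monotonicity of $\max$ over nested intervals; the one external input is in step 1, where the bound $r \leqslant \mu \Rightarrow m (r) \leqslant \mu$ supplied by Proposition \ref{t_in_(a,a(+)..(+^1)2]_then_m(t)<less>=a(+)..(+^1)2}(2) is precisely what keeps $l$ strictly above $\mu$; that is the (mild) point I expect to be the crux.
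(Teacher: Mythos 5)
Your proof is correct and complete; it is the natural definition-chasing argument, with the one substantive input (that $r \leqslant \alpha(+^{i-1})\ldots(+^1)2$ forces $m(r) \leqslant \alpha(+^{i-1})\ldots(+^1)2$, via Proposition \ref{t_in_(a,a(+)..(+^1)2]_then_m(t)<less>=a(+)..(+^1)2}) correctly identified and used to get claim 1, and claim 2 following by monotonicity of the maximum over nested intervals together with the defining property of $l(i,\alpha,t)$. The paper marks this proposition as left to the reader, so there is no proof to compare against; yours fills the gap as intended.
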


\begin{proof}
  {\color{orange} Left to the reader.}
\end{proof}

\begin{remark}
  \label{remark_l(1,alpha,t)}With respect to definition \ref{l(t)}, consider
  the case $i = 1$. Let $t \in (\alpha 2, \alpha^+)$ and suppose $l (1,
  \alpha, t) \in (\alpha, t)$. The inequalities $l (1, \alpha, t) \leqslant t
  < \eta t = m (l (1, \alpha, t))$ and the fact that \\
  $l (1, \alpha, t) \nin \mathbbm{E}$ imply, by {\cite{GarciaCornejo1}}
  theorem \ref{Wilken_Theorem1} and {\cite{GarciaCornejo1}} corollary
  \ref{Wilken_Corollary1}, that $\mathbbm{P} \ni l (1, \alpha, t) \wedge m (l
  (1, \alpha, t)) < l (1, \alpha, t) 2$. Therefore $\mathbbm{P} \ni l (1,
  \alpha, t) \leqslant t < m (l (1, \alpha, t)) < l (1, \alpha, t) 2$, which
  subsequently implies (by considering the cantor normal form of $t$) that
  $\pi t = l (1, \alpha, t)$. From this we conclude: \\
  For any $s \in [\alpha, \alpha^+)$, $l (1, \alpha, s) = \left\{
  \begin{array}{l}
    \alpha 2 \text{ iff } s \in [\alpha, \alpha 2]\\
    \pi s \text{ iff } s \in (\alpha 2, \alpha^+) \wedge l (1, \alpha, s)
    < s\\
    s \text{ iff } s \in (\alpha 2, \alpha^+) \wedge l (1, \alpha, s)
    \nless s
  \end{array} \right.$.
\end{remark}

\begin{definition}
  Let $i \in [1, \omega)$, $\alpha \in \tmop{Class} (i)$, $t \in [\alpha,
  \alpha ( +^i)$ and $j \in [1, i]$. We define $\lambda (j, t)$ as the only
  one ordinal $\delta$ satisfying $\delta \in \tmop{Class} (j)$ and $t \in
  [\delta, \delta ( +^j))$ in case such ordinal exists, and $- \infty$
  otherwise.
\end{definition}

\begin{remark}
  For $i \in [1, \omega)$, $\alpha \in \tmop{Class} (i)$, $t \in [\alpha,
  \alpha ( +^i))$ and $j \in [1, i]$, (i.e., all the conditions above)
  $\lambda (j, t)$ will always be an ordinal. Again, the reason to give the
  definition this way is just because the existence of $\lambda (j, t)$ is not
  completely obvious (we will see that later).
\end{remark}

We can now present the main theorem of this article.

\begin{theorem}
  \label{concise_gen_thm}For any $n \in [1, \omega)$,
  
  {\noindent}(1). $\tmop{Class} (n)$ is $\kappa$-club for any non-countable
  regular ordinal $\kappa$.

  There exist a binary relational $\leqslant^n \subset \tmop{Class} (n) \times
  \tmop{OR}$ such that:
  
  For $\alpha, c \in \tmop{Class} (n)$ and any $t \in \alpha (+^n)$ there
  exist
  
  - A finite set $T (n, \alpha, t) \subset \mathbbm{E} \cap \alpha (+^n)$,
  
  - A strictly increasing function $g (n, \alpha, c) : \tmop{Dom} g (n,
  \alpha, c) \subset \mathbbm{E} \cap \alpha (+^n) \longrightarrow \mathbbm{E}
  \cap c (+^n)$
  
  such that:

  {\noindent}(2) The function $H : (\tmop{Dom} g (n, \alpha, c)) \cap (\alpha,
  \alpha ( +^n)) \longrightarrow (c, c ( +^n))$, $H (e) \assign e [g (n,
  \alpha, c)]$ is \\
  an $(<, +, \cdot, <_1, \lambda x. \omega^x, (+^1), (+^2), \ldots, (+^{n -
  1}))$ isomorphism.

  {\noindent}(3) The relation $\leqslant^n$ satisfies
  $\leqslant^n$-connectedness, $\leqslant^n$-continuity and is such that\\
  $(t \in [\alpha, \alpha (+^n)] \wedge \alpha \leqslant^n t) \Longrightarrow
  \alpha \leqslant_1 t$.
  
  \ \
  
  {\noindent}(4) (First fundamental cofinality property of $\leqslant^n$). \\
  If $t \in [\alpha, \alpha ( +^n)) \wedge \alpha \leqslant^n t + 1$, then
  there exists a sequence $(c_{\xi})_{\xi \in X} \subset \alpha \cap
  \tmop{Class} (n)$ such that $c_{\xi}
  \underset{\text{cof}}{\longhookrightarrow} \alpha$, $\forall \xi \in X.T (n,
  \alpha, t) \cap \alpha \subset c_{\xi}$ and $c_{\xi} \leqslant_1 t [g (n,
  \alpha, c_{\xi})]$.

  {\noindent}(5). (Second fundamental cofinality property of $\leqslant^n$).\\
  Suppose $t \in [\alpha, \alpha ( +^n)) \wedge \alpha \in \tmop{Lim} \{\gamma
  \in \tmop{Class} (n) |T (n, \alpha, t) \cap \alpha \subset \gamma \wedge
  \gamma \leqslant_1 t [g (n, \alpha, \gamma)]\}$. Then \ \ \ \ \ \
  
  (5.1) $\forall s \in [\alpha, t + 1] . \alpha \leqslant^n s$, and therefore
  
  (5.2) $\alpha \leqslant_1 t + 1$

  {\noindent}(6). $(t \in [\alpha, \alpha (+^n)) \wedge \alpha <_1 \eta (n,
  \alpha, t) + 1) \Longrightarrow \alpha \leqslant^n \eta (n, \alpha, t) + 1$.
\end{theorem}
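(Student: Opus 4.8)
The plan is to prove the whole theorem by a single induction on $n$, establishing all of (1)--(6) simultaneously; they cannot be separated, since the cofinality properties (4) and (5) are exactly what force $\tmop{Class}(n)$ to be unbounded in (1), while the isomorphism (2) is what transports $<_1$-facts from below $\alpha$ up into the interval $(\alpha, \alpha(+^n))$ needed for (4)--(6). The base case $n=1$ is essentially the content of \cite{GarciaCornejo1}: there $\tmop{Class}(1)=\mathbbm{E}$ is $\kappa$-club, one puts $\leqslant^1\assign{\leqslant_1}\cap(\mathbbm{E}\times\tmop{OR})$, takes $T(1,\alpha,t)$ to be the finite set of epsilon numbers read off the Cantor normal form of $t$ together with $\alpha$, and takes $g(1,\alpha,c)$ to be the canonical strictly increasing map between $\mathbbm{E}\cap\alpha(+^1)$ and $\mathbbm{E}\cap c(+^1)$ fixing $\mathbbm{E}\cap\alpha$; by Remark \ref{remark_eta(1,alpha,t)=eta(t)} one has $\eta(1,\alpha,t)=\eta t$, and then (2)--(6) are restatements of the theorems on $\leqslant_1$, $\eta t$, $\pi t$ and the translation isomorphisms established in \cite{GarciaCornejo1}.

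For the inductive step, assume the theorem at level $n$ and fix $\alpha,c\in\tmop{Class}(n+1)$. By Corollary \ref{alpha_i_properties}, $\alpha$ carries its descending chain $\alpha=\alpha_{n+1}<_1\alpha_n<_1\cdots<_1\alpha_1<_1\alpha_1 2<\alpha(+^{n+1})$ with $\alpha_j\in\tmop{Class}(j)\setminus\tmop{Class}(j+1)$, $m(\alpha_j)=\alpha_1 2$, and crucially $\alpha_n=\alpha(+^n)\in\tmop{Class}(n)$; similarly for $c$. One defines $\leqslant^{n+1}$ by recursion on its second argument in terms of $\leqslant^n$ evaluated at the ordinal $\alpha(+^n)$ together with $\leqslant_1$-facts about the chain $(\alpha_j)_j$ --- the level-$(n+1)$ analogue of how $\leqslant_1$ on $\mathbbm{E}$ was analyzed via $\eta,\pi$ at level one. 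The finite set $T(n+1,\alpha,t)$ and the map $g(n+1,\alpha,c)$ are assembled from the level-$n$ data $T(n,\alpha(+^n),\cdot)$, $g(n,\alpha(+^n),\cdot)$ and the ordered comparison of the chains of $\alpha$ and of $c$: note $\mathbbm{E}\cap\alpha(+^{n+1})$ splits as the disjoint union of $\mathbbm{E}\cap\alpha(+^n)$ and finitely many further ``blocks'' governed by $\alpha_{n-1},\ldots,\alpha_1$, and $g(n+1,\alpha,c)$ acts as $g(n,\alpha(+^n),c(+^n))$ on the first block and block-wise thereafter.

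The verification then goes part by part. For (2) one checks that $H(e)=e[g(n+1,\alpha,c)]$ preserves $<,+,\cdot$ and $\lambda x.\omega^x$ by Propositions \ref{NoAppendix_If_x<less>y_then_x[f]<less>y[f]}, \ref{NoAppendix_general_substitution_properties2} and \ref{NoAppendix_operations_iso_gen_subst}; that it preserves $(+^1),\ldots,(+^{n-1})$ by the level-$n$ isomorphism property for $g(n,\alpha(+^n),\cdot)$ combined with Proposition \ref{NoAppendix_general_substitution_properties2}(4) and the block structure; and that it preserves $<_1$ and the new operation $(+^n)$ by combining the induction hypothesis with $<_1$-connectedness on intervals of the form $[\delta,\delta(+^n))$ and the defining relations $\alpha<_1\alpha(+^n)$, $c<_1c(+^n)$. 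Property (3) (connectedness, continuity, and $\alpha\leqslant^{n+1}t\Rightarrow\alpha\leqslant_1 t$) follows from the level-$n$ statements plus $<_1$-connectedness and the definition of $\leqslant^{n+1}$. Properties (4) and (5) are the technical core: under their hypotheses one applies the level-$n$ cofinality properties at the ordinal $\alpha(+^n)$, then pulls the resulting cofinal sequences down to $\alpha$ using Proposition \ref{d<less>j<less>n_implies_Class(j)_contained_Lim(Class(d))} and Proposition \ref{Class(n)_first_properties}.3 (the latter yielding $\alpha\in\tmop{Lim}\tmop{Class}(n+1)$ once $\alpha<_1\alpha_1 2+1$), producing $(c_\xi)_{\xi\in X}\subset\alpha\cap\tmop{Class}(n+1)$ cofinal in $\alpha$ with $T(n+1,\alpha,t)\cap\alpha\subset c_\xi$, the relation $c_\xi\leqslant_1 t[g(n+1,\alpha,c_\xi)]$ coming from (2) applied at each $c_\xi$ and the level-$n$ cofinality at each intermediate stage. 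For (6), Propositions \ref{eta(k,alpha,t)_is_well_defined} and \ref{eta(i,a,l(i,a,t))=eta(i,a,t)_if_t_in_(a(+)...(+^1)2,a(+^i)} reduce matters to $t=l(n+1,\alpha,t)$ with $m(t)=\eta(n+1,\alpha,t)$, after which $\alpha<_1\eta(n+1,\alpha,t)+1$ is fed into (5) by exhibiting $\{\gamma\in\tmop{Class}(n+1)\mid T(n+1,\alpha,t)\cap\alpha\subset\gamma\wedge\gamma\leqslant_1 t[g(n+1,\alpha,\gamma)]\}$ as cofinal below $\alpha$. Finally (1) at level $n+1$: closure holds because the supremum of elements of $\tmop{Class}(n+1)$ below a regular $\kappa$ lies in $\tmop{Class}(n)$ by the level-$n$ closure and satisfies $\alpha<_1\alpha(+^n)$ by $<_1$-continuity; unboundedness follows by building, above any $\delta<\kappa$, an increasing $\omega$-chain in $\tmop{Class}(n)$ whose $<_1$-limit is forced into $\tmop{Class}(n+1)$ by (5) and (6).

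I expect the main obstacle to be the coupled verification of the new isomorphism case in (2) --- that $H$ respects $<_1$ and the top operation $(+^n)$ --- together with the cofinality properties (4) and (5), whose proofs demand a careful simultaneous descent through the levels $n,n-1,\ldots,1$, re-deriving the cofinal sequences from the induction hypothesis at each level while keeping the finite sets $T(\cdot)$ and the substitution maps $g(\cdot)$ mutually coherent. It is this bookkeeping, rather than any single isolated trick, that the author has in mind when warning that the proof ``will take a lot of effort''.
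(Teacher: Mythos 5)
Your overall skeleton is the paper's: the theorem is restated in the more technical form of Theorem \ref{most_most_general_theorem}, all clauses are proved simultaneously by induction on $n$, and the base case $n=1$ is read off from \cite{GarciaCornejo1} (Proposition \ref{GenThm_holds_for_n=1}). But in the inductive step the present article proves only the club property (clause (1) here, clause (0) of Theorem \ref{most_most_general_theorem}), and it is exactly there that your sketch has a genuine gap. You assert that unboundedness of $\tmop{Class}(n+1)$ in $\kappa$ follows by building an increasing $\omega$-chain in $\tmop{Class}(n)$ whose limit is ``forced into $\tmop{Class}(n+1)$ by (5) and (6)''. To apply the level-$n$ clause (5) at the limit $\alpha$ you must verify, for \emph{every} $t\in[\alpha,\alpha(+^n))$, that $\alpha$ is a limit of ordinals $\gamma\in\tmop{Class}(n)$ with $T(n,\alpha,t)\cap\alpha\subset\gamma$ and $\gamma\leqslant_1 t[g(n,\alpha,\gamma)]$. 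The interval of relevant $t$'s is known only after the chain is built and has cardinality up to $|\alpha|$, so the members of a naively constructed $\omega$-chain cannot anticipate all of these conditions; nothing in your proposal says how they would. This is precisely the problem the bulk of the paper is devoted to: it introduces the hierarchy $A^{n-1}(t)$, proves the Generalized Hierarchy Theorem $G^{n-1}(t)=A^{n-1}(t)$ (Theorem \ref{G^n-1(t)=A^n-1(t)_Gen_Hrchy_thm}), shows by induction on $t$ that each $A^{n-1}(t)$ is club in $\kappa$ (Proposition \ref{A^n-1(t)_club_in_kapa}) --- where the unboundedness step is itself a diagonal $\omega$-recursion whose successor steps dive into intersections of clubs, legitimized by the cardinality bound $|S(n-1,\kappa,r,t)|<\kappa$ and the club-intersection lemma of \cite{GarciaCornejo1} --- and then uses the canonical sequences $\gamma_k(n-1,\cdot)$ of Proposition \ref{most_important_sequence_in_(e,e(+^i))}, whose $T$-sets below the base point are empty, to reduce membership in $\tmop{Class}(n)$ to membership in an intersection of fewer than $\kappa$ many such clubs (Propositions \ref{alpha_in_Class(n)_iff_alpha_in_certain_intersection_A^n-1(s)} and \ref{Class(n)_indeed_k-club}). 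Some machinery of this kind is indispensable; clauses (5) and (6) alone do not supply it. (Your closure argument has the right shape, cf. Proposition \ref{Class(n)_is_closed}, but there too one first needs clause (5) together with the maps $g(n,\alpha,c_\xi)$ to get $\alpha<_1 t$ for each $t$, and only then $\leqslant_1$-continuity; continuity alone gives nothing.)

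A secondary mismatch: you describe $T(n+1,\alpha,t)$ and $g(n+1,\alpha,c)$ as assembled ``block-wise'' from the level-$n$ data at $\alpha(+^n)$, with $\mathbbm{E}\cap\alpha(+^{n+1})$ split into $\mathbbm{E}\cap\alpha(+^n)$ plus finitely many blocks governed by the chain of $\alpha$. That cannot be right as stated, since $(\alpha,\alpha(+^{n+1}))$ contains unboundedly many elements of $\tmop{Class}(n)$, hence infinitely many such blocks; and the paper's $T(n,\alpha,t)$ is instead defined by a nested recursion through all levels via $\lambda(k,\cdot)$, $f(k+1,\cdot)$ and $\tmop{Ep}(m(\cdot))$ (clause (2.1) of Theorem \ref{most_most_general_theorem}). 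Since the remaining clauses of the inductive step are deferred to the sequel articles this is less decisive, but it indicates that the bookkeeping you anticipate is organized quite differently from what you propose.
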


Theorem \ref{concise_gen_thm} states the general result we are striving for.
But the proof of theorem \ref{concise_gen_thm} is a very long journey: we need
to overcome many technical difficulties not stated in it; because of that, we
restate it in a more technical way: theorem \ref{most_most_general_theorem}.

\begin{theorem}
  \label{most_most_general_theorem}For any $n \in [1, \omega)$
  
  {\noindent}(0). $\tmop{Class} (n)$ is $\kappa$-club for any non-countable
  regular ordinal $\kappa$.

  {\noindent}(1). For any $\alpha \in \tmop{Class} (n)$ the functions
  \begin{itemizedot}
    \item $S (n, \alpha) : \tmop{Class} (n - 1) \cap (\alpha, \alpha ( +^n))
    \longrightarrow \tmop{Subsets} (\tmop{Class} (n - 1) \cap (\gamma, \gamma
    (+^n)))$
    
    {\noindent}$S (n, \alpha) (\delta) \assign \{e \in \tmop{Class} (n - 1)
    \cap (\alpha, \alpha (+^n)) \cap \delta | m (e) [g (n - 1, e, \delta)]
    \geqslant m (\delta)\}$
    
    \item $f (n, \alpha) : \tmop{Class} (n - 1) \cap (\alpha, \alpha ( +^n))
    \longrightarrow \tmop{Subsets} (\tmop{OR})$
    
    {\noindent}$f (n, \alpha) (\delta) \assign \left\{ \begin{array}{l}
      \{\delta\} \text{ iff } S (n, \alpha) (\delta) = \emptyset\\
      \\
      f (n, \alpha) (s) \cup \{\delta\}  \text{ iff } S (n, \alpha)
      (\delta) \neq \emptyset \wedge s \assign \sup (S (n, \alpha) (\delta))
    \end{array} \right.$
  \end{itemizedot}
  \ \ \ \ are well defined and are such that
  
  (1.1) If $S (n, \alpha) (\delta) \neq \emptyset$ then $\sup (S (n, \alpha)
  (\delta)) \in S (n, \alpha) (\delta) \subset \tmop{Class} (n - 1) \cap
  \delta$.
  
  (1.2) $\forall \delta \in \tmop{Class} (n - 1) \cap (\alpha, \alpha (
  +^n))$.$\delta \in f (n, \alpha) (\delta) \subset (\alpha, \alpha ( +^n))
  \cap \tmop{Class} (n - 1)$\\
  \hspace*{14mm} and $f (n, \alpha) (\delta)$ is finite.
  
  (1.3) $\forall q \in [1, \omega) . \forall \sigma \in (\alpha, \alpha (
  +^n)) \cap \tmop{Class} (n - 1)$. If $f (n, \alpha) (\sigma) = \{\sigma_1 >
  \ldots > \sigma_q \}$ for some \hspace*{14mm}
  $\sigma_1, \ldots, \sigma_q \in \tmop{OR}$ then
  
  \ \ \ \ \ \ (1.3.1) $\sigma_1 = \sigma$,
  
  \ \ \ \ \ \ (1.3.2) $q \geqslant 2 \Longrightarrow \forall j \in \{1,
  \ldots, q - 1\} .m (\sigma_j) \leqslant m (\sigma_{j + 1}) [g (n - 1,
  \sigma_{j + 1}, \sigma_j)]$ and
  
  \ \ \ \ \ \ (1.3.3) $\sigma_q = \min \{e \in (\alpha, \sigma_q] \cap
  \tmop{Class} (n - 1) | m (e) [g (n - 1, e, \sigma_q)] \geqslant m
  (\sigma_q)\}$.
  
  \ \ \ \ \ \ (1.3.4) $m (\sigma) = m (\sigma_1) \leqslant m (\sigma_2) [g (n
  - 1, \sigma_2, \sigma)] \leqslant \ldots \leqslant m (\sigma_q) [g (n - 1,
  \sigma_q, \sigma)]$.
  
  \ \ \ \ \ \ (1.3.5) $\sigma_q = \min \{e \in (\alpha, \alpha (+^n)) \cap
  \tmop{Class} (n - 1) |$ $e \leqslant \sigma_q \wedge m (e) [g (n - 1, e,
  \sigma_q)] \geqslant m (\sigma_q) \}$
  
  \ \ \ \ \ \ \ \ \ \ \ \ \ \ \ \ \ \ $= \min \{e \in (\alpha, \alpha (+^n))
  \cap \tmop{Class} (n - 1) |$ $e \leqslant \sigma_q \wedge m (e) [g (n - 1,
  e, \sigma_q)] = m (\sigma_q) \}$.
  
  \ \ \ \ \ \ (1.3.6) For any $j \in \{1, \ldots, q - 1\}$,
  
  \ \ \ \ \ \ \ \ \ \ \ \ \ $\sigma_j = \min \{e \in (\alpha, \alpha
  (+^n)) \cap \tmop{Class} (n - 1) |\sigma_{j +
  1} < e \leqslant \sigma_j \wedge m (e) [g (n - 1, e, \sigma_j)] \geqslant m
  (\sigma_j) \}$
  
  \ \ \ \ \ \ \ \ \ \ \ \ \ \ \ \ $= \min \{e \in (\alpha, \alpha (+^n))
  \cap \tmop{Class} (n - 1) | \sigma_{j +
  1} < e \leqslant \sigma_j \wedge m (e) [g (n - 1, e, \sigma_j)] = m
  (\sigma_j) \}$
  
  {\noindent}{\color{blue} Note: $S (1, \alpha) = \emptyset = f (1, \alpha)$.
  These functions are interesting for $n \geqslant 2$.}

  {\noindent}(2).
  
  (2.1) For any $\alpha \in \tmop{Class} (n)$ and any $t \in \alpha (+^n$)
  consider the set $T (n, \alpha, t)$ defined as:
  
  \ \ \ \ \ \ \ $T (n, \alpha, t) \assign \bigcup_{E \in \tmop{Ep} (t)} T (n,
  \alpha, E)$ \ \ \ if \ \ $t \nin \mathbbm{E}$;
  
  \ \ \ \ \ \ \ $T (n, \alpha, t) \assign \{t\}$ \ \ \ \ \ \ \ \ \ \ \ \ \ \
  \ \ \ \ \ \ if \ \ $t \in \mathbbm{E} \cap (\alpha + 1)$;

  \ \ \ \ \ \ \ $T (n, \alpha, t) \assign \bigcup_{i \in \omega} O (i, t)$, \
  \ \ \ \ \ \ \ \ \ \ if \ \ $t \in (\alpha, \alpha ( +^n)) \cap \mathbbm{E}$,
  \ \\
  \hspace*{13mm} where for $E \in (\alpha, \alpha ( +^n)) \cap
  \mathbbm{E}$:
  
  \ \ \ \ \ \ \ we define $E_1 \assign \lambda (1, m (E)), E_2 \assign
  \lambda (2, E_1), \ldots, E_n \assign \lambda (n, E_{n - 1})$,
  
  \ \ \ \ \ \ (note $\alpha = E_n \leqslant \ldots E_3 \leqslant E_2 < E_1$)
  and

  \ \ \ \ \ \ \ $O (0, E) \assign\underset{\delta \in W (0, k, E), k = 1, \ldots, n - 1}{\bigcup}
  f(k + 1, \lambda (k + 1, \delta)) (\delta)\cup\tmop{Ep} (m (\delta)) \cup \{\lambda (k + 1, \delta)\}$;

  \ \ \ \ \ \ \ $W (0, k, E) \assign (\alpha, \alpha ( +^n)) \cap \{E_1 > E_2
  \geqslant E_3 \geqslant \ldots \geqslant E_n = \alpha\} \cap (\tmop{Class}
  (k) \backslash \tmop{Class} (k + 1))$;

  \ \ \ \ \ \ \ $O (l + 1, E) \assign\underset{\delta \in W (l, k, E),k = 1, \ldots, n - 1}{\bigcup}
  f (k + 1, \lambda (k + 1, \delta)) (\delta) \cup
  \tmop{Ep} (m (\delta)) \cup \{\lambda (k + 1, \delta)\}$;

  \ \ \ \ \ \ \ $W (l, k, E) \assign (\alpha, \alpha ( +^n)) \cap O (l, E)
  \cap (\tmop{Class} (k) \backslash \tmop{Class} (k + 1))$.

  \ \ \ \ \ \ \ Then $T (n, \alpha, t) \subset \mathbbm{E} \cap \alpha (+^n)$
  is such that:

  \ \ \ \ \ \ \ (2.1.1) $\tmop{Ep} (t) \subset T (n, \alpha, t)$ and $T (n,
  \alpha, t)$ is finite.
  
  \ \ \ \ \ \ \ (2.1.2) $T (n, \alpha, t + 1) = T (n, \alpha, t)$
  
  \ \ \ \ \ \ \ (2.1.3) $\alpha (+^{n - 1}) ( +^{n - 2}) \ldots ( +^2) ( +^1)
  2 \leqslant t \Longrightarrow T (n, \alpha, \eta (n, \alpha, t)) \cap \alpha
  \subset T (n, \alpha, t) \cap \alpha$
  
  \ \ \ \ \ \ \ (2.1.4) $\alpha (+^{n - 1}) ( +^{n - 2}) \ldots ( +^2) ( +^1)
  2 \leqslant t \Longrightarrow T (n, \alpha, l (n, \alpha, t)) \subset T (n,
  \alpha, t)$ \\
  \ \ \ \ \ \ \ \ \ \ \ \ \ \ \ \ \ \ \ \
  
  (2.2) For any $\alpha, c \in \tmop{Class} (n)$ there exist a function \\
  \hspace*{14mm} $g (n, \alpha, c) : \tmop{Dom} g (n, \alpha, c)
  \subset \mathbbm{E} \cap \alpha (+^n) \longrightarrow \mathbbm{E} \cap c
  (+^n$) such that \\
  \hspace*{14mm} (2.2.1) $g (n, \alpha, c) |_{c \cap \alpha \cap (\tmop{Dom} g
  (n, \alpha, c))}$ and $g (n, \alpha, \alpha)$ are the identity functions in
  their \\ \hspace*{26mm} respective domain.
  
  \ \ \ \ \ \ \ (2.2.2) $g (n, \alpha, c)$ is strictly increasing.
  
  \ \ \ \ \ \ \ (2.2.3) $\forall t \in \alpha (+^n) .T (n, \alpha, t) \cap
  \alpha \subset c \Longleftrightarrow \tmop{Ep} (t) \subset \tmop{Dom} g (n,
  \alpha, c)$
  
  \ \ \ \ \ \ \ (2.2.4) $\forall t \in \alpha (+^n) . \tmop{Ep} (t) \subset
  \tmop{Dom} g (n, \alpha, c) \Longrightarrow T (n, c, t [g (n, \alpha, c)])
  \cap c = T (n, \alpha, t) \cap \alpha$
  
  \ \ \ \ \ \ \ \ \ (2.2.5) For any $t \in [\alpha, \alpha ( +^n))$ with
  $\tmop{Ep} (t) \subset \tmop{Dom} g (n, \alpha, c)$, $\tmop{Ep} (\eta (n,
  \alpha, t)) \subset \tmop{Dom} g (n, \alpha, c)$\\
  \hspace*{25mm} and $\eta (n, \alpha, t) [g (n,
  \alpha, c)] = \eta (n, c, t [g (n, \alpha, c)])$.

  By (2.2.2), $g (n, \alpha, c)$ is bijective in its image. Let's denote $g^{-
  1} (n, \alpha, c)$ to the inverse function \\ 
  \hspace*{4mm} of $g (n, \alpha, c)$.

  (2.3) For (2.3.1), (2.3.2) and (2.3.3) we suppose $c \leqslant \alpha$.
  Then
  
  \ \ \ \ \ \ \ (2.3.1) $\tmop{Dom} g (n, c, \alpha) = \mathbbm{E} \cap c
  (+^n$)
  
  \ \ \ \ \ \ \ (2.3.2) $g (n, \alpha, c) = g^{- 1} (n, c, \alpha)$
  
  \ \ \ \ \ \ \ (2.3.3) $g (n, \alpha, c) [\tmop{Dom} g (n, \alpha, c)] =
  \mathbbm{E} \cap c (+^n$)

  (2.4) $g (n, \alpha, c)$ has the following homomorphism-like properties:
  
  \ \ \ \ \ \ \ (2.4.1) $g (n, \alpha, c) (\alpha) = c$
  
  \ \ \ \ \ \ \ (2.4.2) For any $i \in [1, n]$ and any $e \in (\tmop{Dom} g
  (n, \alpha, c)) \cap [\alpha, \alpha ( +^n))$,\\
  \hspace*{26mm} $e \in \tmop{Class} (i)
  \Longleftrightarrow g (n, \alpha, c) (e) \in \tmop{Class} (i)$
  
  \ \ \ \ \ \ \ (2.4.3) The function $e \longmapsto e [g (n, \alpha, c)]$
  with domain $(\tmop{Dom} g (n, \alpha, c)) \cap (\alpha, \alpha ( +^n))$
  is\\
  \hspace*{26mm} an $(<, +, \cdot, <_1, \lambda x.
  \omega^x, (+^1), (+^2), \ldots, (+^{n - 1}))$ isomorphism
  
  \ \ \ \ \ \ \ (2.4.4) $\forall e \in (\tmop{Dom} g (n, \alpha, c)) \cap
  (\alpha, \alpha ( +^n)) .m (g (n, \alpha, c) (e)) = m (e) [g (n, \alpha,
  c)]$.
  
  \ \ \ \ \ \ \ (2.4.5) Suppose $n \geqslant 2$. Then\\
  \hspace*{25mm} $\forall i \in [2, n]$.$\forall e \in \tmop{Class} (i) \cap
  (\tmop{Dom} g (n, \alpha, c)) \cap [\alpha, \alpha ( +^n))$.\\
  \hspace*{25mm} $\forall E \in (e, e ( +^i)) \cap
  \tmop{Class} (i - 1)$.\\
  \hspace*{26mm} $f (i, e) (E) = \{E_1 > \ldots >
  E_q \} \Longleftrightarrow$\\
  \hspace*{26mm} $f (i, g (n, \alpha, c) (e)) (g
  (n, \alpha, c) (E)) = \{g (n, \alpha, c) (E_1) > \ldots > g (n, \alpha, c)
  (E_q)\}$
  
  \ \ \ \ \ \ \ (2.4.6) Suppose $n \geqslant 2$. Then \\
   \hspace*{26mm}$\forall i \in [2, n] . \forall s
  \in \tmop{Class} (i - 1) \cap [\alpha, \alpha ( +^n))$.$g (n, \alpha, c) (\lambda (i, s)) =
  \lambda (i, g (n, \alpha, c) (s))$

  (2.5) For (2.5.1), (2.5.2) and (2.5.3) we suppose $c \leqslant \alpha$.Then
  for all $d \in \tmop{Class} (n) \cap [c, \alpha]$,
  
  \ \ \ \ \ \ \ (2.5.1) $\tmop{Dom} g (n, \alpha, c) \subset \tmop{Dom} g (n,
  \alpha, d)$
  
  \ \ \ \ \ \ \ (2.5.2) $g (n, \alpha, d) [\tmop{Dom} g (n, \alpha, c)]
  \subset \tmop{Dom} g (n, d, c)$
  
  \ \ \ \ \ \ \ (2.5.3) $g (n, \alpha, c) = g (n, d, c) \circ g (n, \alpha,
  d) |_{\tmop{Dom} g (n, \alpha, c)}$ and therefore\\
  \ \ \ \ \ \ \ \ \ \ \ \ \ \ \ \ \ \ \ \ $g^{- 1} (n, \alpha, d) \circ g^{-
  1} (n, d, c) = g^{- 1} (n, \alpha, c) : \mathbbm{E} \cap c (+^n)
  \longrightarrow \tmop{Dom} g (n, \alpha, c)$.

  {\noindent}(3). There exists a binary relational $\leqslant^n \subset
  \tmop{Class} (n) \times \tmop{OR}$ satisfying $\leqslant^n$-connectedness
  and \\
  $\leqslant^n$-continuity such that $\forall \alpha \in \tmop{Class} (n) .
  \forall t \in [\alpha, \alpha ( +^n)] . \alpha \leqslant^n t \Longrightarrow
  \alpha \leqslant_1 t$; moreover:

  {\noindent}(4) (First fundamental cofinality property of $\leqslant^n$).\\
  Let $\alpha \in \tmop{Class} (n)$ and $t \in [\alpha, \alpha ( +^n))$ be
  arbitrary. If $\alpha \leqslant^n t + 1$, then there exists a sequence
  $(c_{\xi})_{\xi \in X} \subset \alpha \cap \tmop{Class} (n)$ such that
  $c_{\xi} \underset{\text{cof}}{\longhookrightarrow} \alpha$, $\forall \xi
  \in X.T (n, \alpha, t) \cap \alpha \subset c_{\xi}$ and $c_{\xi} \leqslant_1
  t [g (n, \alpha, c_{\xi})]$.

  {\noindent}(5). (Second fundamental cofinality property of $\leqslant^n$).\\
  Let $\alpha \in \tmop{Class} (n)$ and $t \in [\alpha, \alpha ( +^n))$.\\
  Suppose $\alpha \in \tmop{Lim} \{\gamma \in \tmop{Class} (n) |T (n, \alpha,
  t) \cap \alpha \subset \gamma \wedge \gamma \leqslant_1 t [g (n, \alpha,
  \gamma)]\}$. Then \ \ \ \ \ \
  
  (5.1) $\forall s \in [\alpha, t + 1] . \alpha \leqslant^n s$, and therefore
  
  (5.2) $\alpha \leqslant_1 t + 1$

  {\noindent}(6). For $\alpha \in \tmop{Class} (n)$ and $t \in [\alpha, \alpha
  ( +^n))$, $\alpha <_1 \eta (n, \alpha, t) + 1 \Longrightarrow \alpha
  \leqslant^n \eta (n, \alpha, t) + 1$
\end{theorem}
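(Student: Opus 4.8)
The plan is to prove Theorem~\ref{most_most_general_theorem} by induction on $n\in[1,\omega)$, carrying all of (0)--(6) as a single package at each level and feeding the conclusions at level $n$ into the constructions at level $n+1$; Theorem~\ref{concise_gen_thm} is then an immediate weakening. For the base case $n=1$ essentially everything is a restatement of the main results of \cite{GarciaCornejo1}: (0) is the classical fact that $\mathbbm{E}=\tmop{Class}(1)$ is $\kappa$-club; (1) is vacuous since $S(1,\alpha)=f(1,\alpha)=\emptyset$; in (2) the recursion for $O(l,E)$ collapses because $W(l,k,E)=\emptyset$ for $k\in[1,n-1]=\emptyset$, so $T(1,\alpha,t)$ is just the epsilon-number closure of $\tmop{Ep}(t)$ forced by the $t\notin\mathbbm{E}$ clauses, and $g(1,\alpha,c)$ is precisely the substitution function attached to the $(<,<_1,+)$-isomorphisms of \cite{GarciaCornejo1}, so that (2.2)--(2.5) follow from Propositions~\ref{NoAppendix_[f]_=-compatible}, \ref{NoAppendix_general_substitution_properties2}, \ref{NoAppendix_operations_iso_gen_subst} and their counterparts in \cite{GarciaCornejo1}; finally $\leqslant^1$ is the relation of \cite{GarciaCornejo1} and, via Remark~\ref{remark_eta(1,alpha,t)=eta(t)} and Remark~\ref{remark_l(1,alpha,t)}, (3)--(6) translate into the $\eta t,\ m(t),\ \leqslant_1$ machinery of that article.

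For the inductive step assume (0)--(6) for all levels $\leqslant n$ and establish them for $n+1$. First I would get (0) for $n+1$: that $\tmop{Class}(n+1)\subset\tmop{Lim}\tmop{Class}(n)$ is Proposition~\ref{Class(n)_first_properties}(4) together with Proposition~\ref{d<less>j<less>n_implies_Class(j)_contained_Lim(Class(d))}; closure of $\tmop{Class}(n+1)$ under suprema of length $<\kappa$ is then inherited from closure of $\tmop{Class}(n)$ once one checks that such a supremum $\alpha$ still satisfies $\alpha<_1\alpha(+^n)$, and unboundedness is obtained in the same style as club-ness of $\mathbbm{E}$ in \cite{GarciaCornejo1} — both being applications of the cofinality properties (4)--(6) at level $n$ together with $\leqslant_1$-connectedness and $\leqslant_1$-continuity. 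Next, (1) for $n+1$: the functions $S(n+1,\alpha)$ and $f(n+1,\alpha)$ are defined from $m(\cdot)$, the maps $g(n,\cdot,\cdot)$ and $\tmop{Class}(n)$, so their well-definedness and (1.1)--(1.3) follow from (2.2.2), (2.2.5), (2.4.4) at level $n$, Proposition~\ref{NoAppendix_If_x<less>y_then_x[f]<less>y[f]}, and a descent argument: each step of the $f$-recursion passes to $s=\sup S(n+1,\alpha)(\delta)<\delta$, and this cannot descend forever because the transported $m$-values form a strictly monotone pattern bounded by the finitely many $\tmop{Class}(n)$-successor points produced as in Proposition~\ref{eta(k,alpha,t)_is_well_defined}(0).

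The technical core is (2) for $n+1$. I would first define $T(n+1,\alpha,t)$ by the displayed recursion and prove it finite; the key lemma is that the nested recursion $O(l,E)\to W(l,k,E)\to O(l+1,E)$ stabilizes after finitely many rounds, which one shows by attaching to each generated epsilon number a complexity bounded via (1) at level $n+1$ and Proposition~\ref{eta(k,alpha,t)_is_well_defined} and checking it strictly decreases; (2.1.1)--(2.1.4) are then bookkeeping about $\tmop{Ep}$, $\eta(n+1,\alpha,\cdot)$ and $l(n+1,\alpha,\cdot)$ using Definition~\ref{l(t)} and Proposition~\ref{eta(i,a,l(i,a,t))=eta(i,a,t)_if_t_in_(a(+)...(+^1)2,a(+^i)}. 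Then I would build $g(n+1,\alpha,c)$ by partitioning $(\alpha,\alpha(+^{n+1}))$ into the blocks $[\delta,\delta(+^n))$ cut out by $\delta\in\tmop{Class}(n)\cap[\alpha,\alpha(+^{n+1}))$ (starting with $[\alpha,\alpha(+^n))$) and gluing the level-$n$ isomorphisms $g(n,\delta,\delta')$ between matched blocks, the matching of blocks and of $m$-values across them being exactly what $S(n+1,\alpha),f(n+1,\alpha),\lambda(\cdot,\cdot)$ record. Properties (2.2.1)--(2.2.5), (2.3), (2.5) then reduce, block by block, to the corresponding level-$n$ statements plus the composition-of-substitutions proposition; the homomorphism properties (2.4) are proved by the same block analysis: (2.4.1) is immediate, (2.4.2) uses Proposition~\ref{<less>_1_chain_of_length_j} (membership in $\tmop{Class}(i)$ is a property of the $<_1$-chain below and that chain is preserved block-wise), (2.4.4)--(2.4.6) are the commutation of $g(n+1,\alpha,c)$ with $m$, with the data of (1) and with $\lambda$, and (2.4.3) — that $e\mapsto e[g(n+1,\alpha,c)]$ is an $(<,+,\cdot,<_1,\lambda x.\omega^x,(+^1),\ldots,(+^n))$-isomorphism — splits into an arithmetic part (Propositions~\ref{NoAppendix_If_x<less>y_then_x[f]<less>y[f]} and \ref{NoAppendix_operations_iso_gen_subst}), preservation of each $(+^i)$ with $i\leqslant n$ (from (2.4.2) and (2.4.6)), and preservation of $<_1$, which is the deep point and requires invoking the cofinality properties (4)/(5) at level $\leqslant n$ to spread a $<_1$-relation on one side to the other through the family of local isomorphisms. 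With (2) in hand, (3) for $n+1$ is handled by defining $\alpha\leqslant^{n+1}t$ block-wise (the level-$n$ relations $\leqslant^n$ hold across the blocks of $[\alpha,t]$ and the relevant $\tmop{Class}(n)$-points $\leqslant_1$-resemble, mirroring the construction of $\leqslant^1$ in \cite{GarciaCornejo1}), whence $\leqslant^{n+1}$-connectedness, $\leqslant^{n+1}$-continuity and $\alpha\leqslant^{n+1}t\Rightarrow\alpha\leqslant_1 t$ come from the level-$n$ facts plus $\leqslant_1$-connectedness/continuity; (4) and (5) for $n+1$ follow by combining (0)--(4) at level $n+1$ with the block-wise use of (4)/(5) at level $n$, exactly as at level $1$; and (6) for $n+1$ is obtained by casing on whether $t\leqslant\alpha(+^n)(+^{n-1})\cdots(+^1)2$ (use Proposition~\ref{Class(n)_first_properties}(3), Corollary~\ref{alpha_i_properties} and (3)--(5)) or $t$ is larger (pass to $l(n+1,\alpha,t)$, use Proposition~\ref{eta(i,a,l(i,a,t))=eta(i,a,t)_if_t_in_(a(+)...(+^1)2,a(+^i)} and (2.1.3)--(2.1.4) to reach a point where (5) applies), mirroring the $n=1$ case via Remark~\ref{remark_eta(1,alpha,t)=eta(t)}.

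The hard part will be step (iii), the construction of $g(n+1,\alpha,c)$ and the verification of (2.4.3), specifically preservation of $<_1$, carried out simultaneously with the $T$/$\eta$-compatibility (2.2.4)--(2.2.5): $<_1$ is not an arithmetic relation, so its transfer genuinely needs the cofinality machinery of $\leqslant^n$, which means (2), (3), (4), (5) at level $n+1$ are intertwined with the level-$n$ hypotheses and cannot be separated cleanly; moreover the finiteness of $T(n+1,\alpha,t)$ — without which the substitutions $t[g(n+1,\alpha,c)]$ are not even defined — must be secured first by the delicate descent argument sketched above. Everything else is, modulo careful bookkeeping, inheritance from level $n$ via the block decomposition.
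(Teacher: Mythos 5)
Your overall plan---a simultaneous induction on $n$ carrying (0)--(6) as one package, with the case $n=1$ read off from \cite{GarciaCornejo1}---is exactly the strategy the paper announces, and your base case matches Proposition \ref{GenThm_holds_for_n=1}. The part of the induction step that this article actually proves, however, is clause (0), and there your proposal has a genuine gap. You dispose of the unboundedness of $\tmop{Class}(n)$ in $\kappa$ with the remark that it is obtained ``in the same style as club-ness of $\mathbbm{E}$'', as an application of the cofinality properties (4)--(6) at the previous level together with $\leqslant_1$-connectedness and continuity. Those properties let one verify or transfer relations $\gamma\leqslant_1 t[g(n-1,\alpha,\gamma)]$ once candidate ordinals are already in hand; they do not by themselves produce, above an arbitrary $\beta<\kappa$, an ordinal $\alpha$ with $\alpha<_1\alpha(+^{n-1})$. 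The paper closes exactly this hole with a substantial new development that your sketch neither contains nor replaces: the recursive hierarchy $A^{n-1}(t)$ and its identification with $G^{n-1}(t)$ (Theorem \ref{G^n-1(t)=A^n-1(t)_Gen_Hrchy_thm}); the proof that each $A^{n-1}(t)$ is club in $\kappa$ (Proposition \ref{A^n-1(t)_club_in_kapa}, which needs the cardinality bound $|S(n-1,\kappa,r,t)|<\kappa$ and a diagonal intersection argument); the canonical sequences $\gamma_k(i,e)$ cofinal in $e(+^i)$ with their substitution-invariance $\gamma_j(i,e)[g(i,e,\alpha)]=\gamma_j(i,\alpha)$ (Proposition \ref{most_important_sequence_in_(e,e(+^i))}, itself resting on Lemma \ref{sequence_csi_j_with_m(csi_j)=t[g(k,q,csi_j)]}); the characterization $\alpha\in\tmop{Class}(n)\Longleftrightarrow\alpha\in\bigcap_{t\in[\alpha,\alpha(+^{n-1}))}A^{n-1}(t)$ (Proposition \ref{alpha<less>^n-1alpha(+^n-1)_iff_alpha_in_intersection}); and, crucially, the parametrization of the relevant indices by $M^{n-1}(r,\kappa)\cong[r,r(+^{n-1}))$ with $r<\kappa$, so that one intersects fewer than $\kappa$ many club sets (Propositions \ref{alpha_in_Class(n)_iff_alpha_in_certain_intersection_A^n-1(s)} and \ref{Class(n)_indeed_k-club}). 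Without this machinery, or a workable substitute, the unboundedness claim in your step (0) does not follow. (Your treatment of closedness, by contrast, is essentially the paper's Proposition \ref{Class(n)_is_closed}.)

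Concerning clauses (1)--(6) at the new level: the paper defers their proof to subsequent articles, so there is nothing here to check your block decomposition against. Your identification of the construction of $g(n+1,\alpha,c)$, the finiteness of $T(n+1,\alpha,t)$, and the preservation of $<_1$ in (2.4.3) as the crux is consistent with the paper's warnings, but as written these parts are a programme rather than a proof, and they should not be counted as established by the sketch.
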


The proof of the previous theorem \ref{most_most_general_theorem} will be
carried out by induction on $([1, \omega), <)$, and one proves simultaneously
(0), (1), (2), (3), (4), (5) and (6). Indeed, such proof is now our current
goal.

\subsection{The case $n = 1$ of theorem \ref{most_most_general_theorem}}

\begin{proposition}
  \label{GenThm_holds_for_n=1}Theorem \ref{most_most_general_theorem} holds
  for $n = 1$.
\end{proposition}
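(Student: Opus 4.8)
\ The plan is to read Proposition \ref{GenThm_holds_for_n=1} as the base case of the induction that will establish Theorem \ref{most_most_general_theorem}, and to check its clauses $(0)$--$(6)$ one at a time, exploiting that for $n=1$ almost everything degenerates: $\tmop{Class}(1)=\mathbbm{E}$; $\alpha(+^1)=\alpha^+$ is the epsilon number following $\alpha$, so $(\alpha,\alpha(+^1))\cap\mathbbm{E}=\emptyset$ and $\mathbbm{E}\cap\alpha(+^1)$ is just the set of epsilon numbers $\leqslant\alpha$; $\tmop{Class}(0)$ never occurs; and every index set of the shape $\{1,\ldots,n-1\}$ or $[2,n]$ is empty. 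Accordingly I expect clause $(0)$ to be exactly the assertion that $\mathbbm{E}$ is $\kappa$-club for every uncountable regular $\kappa$, which is the standard closure and unboundedness of the epsilon numbers relativised to $\kappa$ (an $\omega$-tower above $\beta<\kappa$ lands in $(\beta,\kappa)$, and a supremum of fewer than $\kappa$ epsilon numbers below $\kappa$ is again an epsilon number below $\kappa$) and is recorded in \cite{GarciaCornejo1}; clause $(1)$ together with $(1.1)$--$(1.3.6)$ holds vacuously, with $S(1,\alpha)=\emptyset=f(1,\alpha)$ as the note already declares; and clauses $(2.4.5)$, $(2.4.6)$ hold vacuously, being hypothesised on $n\geqslant 2$.

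For clause $(2.1)$ I would first observe that $T(1,\alpha,t)$ collapses to $\tmop{Ep}(t)$ for every $t\in\alpha^+$: in the definitions of the sets $O(\cdot,E)$ and $W(\cdot,k,E)$ the index $k$ runs over $\{1,\ldots,n-1\}=\emptyset$, and there is no epsilon number strictly between $\alpha$ and $\alpha^+$, so the only effective clauses are $T(1,\alpha,t)=\{t\}$ for $t\in\mathbbm{E}$ and $T(1,\alpha,t)=\bigcup_{E\in\tmop{Ep}(t)}T(1,\alpha,E)$ for $t\notin\mathbbm{E}$, and every $E\in\tmop{Ep}(t)$ is an epsilon number $\leqslant\alpha$. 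Then $(2.1.1)$ and $(2.1.2)$ are the elementary facts that $\tmop{Ep}(t)$ is finite and $\tmop{Ep}(t+1)=\tmop{Ep}(t)$; by Remarks \ref{remark_eta(1,alpha,t)=eta(t)} and \ref{remark_l(1,alpha,t)} the hypothesis of $(2.1.3)$ and $(2.1.4)$ becomes $\alpha 2\leqslant t$, the conclusion of $(2.1.3)$ becomes $\tmop{Ep}(\eta t)\cap\alpha\subset\tmop{Ep}(t)\cap\alpha$, which follows from the structure theory of $\eta t=\max\{m(e)\mid e\in(\alpha,t]\}$ in \cite{GarciaCornejo1}, and the conclusion of $(2.1.4)$ becomes $\tmop{Ep}(l(1,\alpha,t))\subset\tmop{Ep}(t)$, where $l(1,\alpha,t)\in\{\alpha 2,\pi t,t\}$ by Remark \ref{remark_l(1,alpha,t)} and $\tmop{Ep}(\pi t)\subset\tmop{Ep}(t)$ by inspection of Cantor normal forms.

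For $(2.2)$--$(2.5)$ I would let $g(1,\alpha,c)$ be the map on epsilon numbers carried by the gap isomorphism $[\alpha,\alpha^+)\cong[c,c^+)$ of \cite{GarciaCornejo1}: explicitly, $\tmop{Dom} g(1,\alpha,c)$ is $(\mathbbm{E}\cap c)\cup\{\alpha\}$ when $c\leqslant\alpha$ and $\mathbbm{E}\cap\alpha^+$ when $\alpha\leqslant c$, the function is the identity below $\min(\alpha,c)$, and $g(1,\alpha,c)(\alpha)=c$. With this explicit description, $(2.2.1)$, $(2.2.2)$, $(2.2.3)$ (which is just the translation of ``$\tmop{Ep}(t)\cap\alpha\subset c$'' into ``$\tmop{Ep}(t)\subset\tmop{Dom} g(1,\alpha,c)$'', legitimate because $\tmop{Ep}(t)\subset\mathbbm{E}\cap\alpha^+$), $(2.3.1)$--$(2.3.3)$, $(2.4.1)$, $(2.4.2)$, $(2.5.1)$--$(2.5.3)$ are direct checks from the definitions; $(2.2.4)$ and $(2.2.5)$ follow from $\tmop{Ep}(t[g])=g[\tmop{Ep}(t)]$ (Proposition \ref{NoAppendix_general_substitution_properties2}), the explicit form of $g$, and the commutation of $m$ and of $\eta(1,\cdot,\cdot)$ with substitution from \cite{GarciaCornejo1}; and $(2.4.3)$--$(2.4.4)$ assert precisely that $t\mapsto t[g(1,\alpha,c)]$ is an $(<,+,\cdot,<_1,\lambda x.\omega^x)$-isomorphism onto its image and commutes with $m$ --- the list $(+^1),\ldots,(+^{n-1})$ being empty for $n=1$ --- which is the homogeneity theorem of \cite{GarciaCornejo1} together with Propositions \ref{NoAppendix_If_x<less>y_then_x[f]<less>y[f]} and \ref{NoAppendix_operations_iso_gen_subst}.

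Finally I would take $\leqslant^1$ to be $\leqslant_1$ restricted to $\mathbbm{E}\times\tmop{OR}$ (equivalently, the relation of \cite{GarciaCornejo1} appearing in corollary \ref{eta(t)+1<less>_1_Equivalences}); then the implication in $(3)$ is trivial, $\leqslant^1$-connectedness and $\leqslant^1$-continuity are the corresponding properties of $\leqslant_1$ from \cite{GarciaCornejo1}, and $(6)$, i.e. $\alpha<_1\eta t+1\Rightarrow\alpha\leqslant^1\eta t+1$, is immediate since $<_1$ refines $\leqslant_1$. That leaves $(4)$ and $(5)$ as the genuine content: $(4)$ says the relation $\alpha\leqslant_1 t+1$ reflects down to cofinally many epsilon numbers $c_{\xi}<\alpha$ in the form $c_{\xi}\leqslant_1 t[g(1,\alpha,c_{\xi})]$ with $\tmop{Ep}(t)\cap\alpha\subset c_{\xi}$, and $(5)$ is the converse ascent from a limit of such $\gamma$ up to $\alpha\leqslant_1 t+1$; both follow from the reflection and continuity properties of $\leqslant_1$ on $\mathbbm{E}$ proved in \cite{GarciaCornejo1}, transported along the gap isomorphisms $g(1,\alpha,\cdot)$ by clause $(2.4.3)$ and the substitution lemmas of this section. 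I expect no single hard step, since the case $n=1$ lies entirely within \cite{GarciaCornejo1}; the main obstacle is the bookkeeping --- correctly matching each of the many abstract data $(T,g,\leqslant^n)$ and sub-clauses with its $n=1$ incarnation and pruning the vacuous ones --- and, within that, the point requiring most care is the faithful transfer of $<_1$ and of $\mathbbm{E}$-membership along $g(1,\alpha,c)$ (clauses $(2.4.2)$--$(2.4.4)$, $(4)$, $(5)$), which is exactly what the homogeneity theorem of \cite{GarciaCornejo1} and Proposition \ref{NoAppendix_If_x<less>y_then_x[f]<less>y[f]} provide.
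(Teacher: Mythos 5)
Your proposal follows, in substance, the same route as the paper: for $n=1$ everything collapses onto the prequel {\cite{GarciaCornejo1}} --- $T(1,\alpha,t)=\tmop{Ep}(t)$ because the $O/W$-clauses are indexed by the empty set and $(\alpha,\alpha(+^1))\cap\mathbbm{E}=\emptyset$; $g(1,\alpha,c)$ is the map fixing $\mathbbm{E}\cap c\cap\alpha$ and sending $\alpha\mapsto c$, so that $t[g(1,\alpha,c)]=t[\alpha\assign c]$; clauses (1), (2.4.5), (2.4.6) are vacuous; and (2.2.4), (2.2.5), (2.4.3), (2.4.4) are the substitution and isomorphism facts of the prequel --- which is exactly how the paper argues. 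The one point where you genuinely diverge is the choice of $\leqslant^1$. The paper imports the relation $\leqslant^1$ already constructed in {\cite{GarciaCornejo1}} and discharges (3), (4), (5), (6) by quoting verbatim its three relevant results (the two fundamental cofinality properties and the $<_1$-iff-$<^1$ corollary); since the theorem only demands the \emph{existence} of a relation with properties (3)--(6), this is frictionless. You instead set $\leqslant^1:=\leqslant_1$ restricted to $\mathbbm{E}\times\tmop{OR}$ and assert parenthetically that this is equivalent to the prequel's relation. That makes (3) and (6) trivial, but it shifts all the weight onto (4) and (5): the hypothesis of (4) becomes ``$\alpha\leqslant_1 t+1$'' for an arbitrary $t\in[\alpha,\alpha(+^1))$, whereas the prequel's first cofinality property (as cited in the paper) is stated with the hypothesis $\alpha\leqslant^1 t+1$. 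To bridge the two you need either the pointwise equivalence of $\leqslant_1$ and $\leqslant^1$ on $[\alpha,\alpha(+^1))$ or the prequel's fact that the $\leqslant_1$-reach of $\alpha$ inside this interval is always bounded by a point of the form $\eta(1,\alpha,t)+1$, where the transfer is available; you gesture at this with ``equivalently'' but neither cite nor verify it, and it is precisely the step that the paper's formulation avoids having to make. So: correct in outline and essentially the paper's proof, with one unverified identification concerning $\leqslant^1$ that should either be backed by the appropriate equivalence result of {\cite{GarciaCornejo1}} or be replaced, as in the paper, by simply using the prequel's relation as the witness.
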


\begin{proof}

  {\noindent}(0).\\
  $\mathbbm{E}$ is $\kappa$-club for any non-countable regular ordinal
  $\kappa$.

  {\noindent}(1).\\
  Let $\alpha \in \mathbbm{E} = \tmop{Class} (1)$. We define $S (1, \alpha)
  \assign \emptyset$ and $f (1, \alpha) \assign \emptyset$. Then clearly $S
  (1, \alpha)$ and $f (1, \alpha)$ satisfy the properties stated.

  {\noindent}(2).

  (2.1)\\
  Let $\alpha, c \in \mathbbm{E} = \tmop{Class} (1)$ with $c \leqslant
  \alpha$. Let $t \in \alpha (+^1$). Note $T (1, \alpha, c) = \tmop{Ep} (t)
  \subset \mathbbm{E}$ is well defined and clearly $(2.1.1)$ and $(2.1.2)$
  hold. Now, suppose $t \geqslant \alpha 2$. Since \\
  $\eta (1, \alpha, t) = \eta t = \max \{t, \pi t + d \pi t\}$, then $(2.1.3)$
  holds. Finally, by remark \ref{remark_l(1,alpha,t)}, \\
  $l (1, \alpha, t) \in \{\alpha 2, \pi t, t\}$ and therefore (2.1.4) holds
  too.

  (2.2)\\
  Let $\alpha, c \in \mathbbm{E} = \tmop{Class} (1)$. Consider $\tmop{Dom} g
  (1, \alpha, c) \assign ( \mathbbm{E} \cap c \cap \alpha) \cup \{\alpha\}$
  and \\ 
  $g (1, \alpha, c) : \tmop{Dom} g (1, \alpha, c) \longrightarrow \mathbbm{E}
  \cap c (+^1$) be the function defined as $e$ $\longmapsto$ $e$ for 
  $e \in \mathbbm{E} \cap c \cap \alpha$ and $\alpha \longmapsto$ $c$.
  
  Then it is easy to see that $g (1, \alpha, c)$ satisfies (2.2.1), (2.2.2)
  and (2.2.3). Besides, $g (1, \alpha, c)$ also satisfies (2.2.4): Take $t \in
  \alpha^+$ with $\tmop{Ep} (t) \subset \tmop{Dom} g (1, \alpha, c) = (
  \mathbbm{E} \cap c \cap \alpha) \cup \{\alpha\}$. Then \\
  $\tmop{Ep} (t) \cap \alpha \subset c$ and $t [g (1, \alpha, c)] = t [\alpha
  \assign c]$ and therefore \\
  $T (1, c, t [g (1, \alpha, c)]) \cap c = T (1, c, t [\alpha \assign c]) \cap
  c = \tmop{Ep} (t [\alpha \assign c]) \cap c\underset{\text{by
  {\cite{GarciaCornejo1}} proposition
  \ref{[alpha:=e]_proposition3}}}{=}\tmop{Ep} (t) \cap \alpha = T (1,
  \alpha, t)$. Finally, we show that $g (1, \alpha, c)$ satisfies (2.2.5):
  Take $t \in \alpha^+$ with $\tmop{Ep} (t) \subset \tmop{Dom} g (1, \alpha,
  c)$. Then $\tmop{Ep} (t) \cap \alpha \subset c$ and so $\tmop{Ep} (\eta (1,
  \alpha, t)) \cap \alpha\underset{\text{remark
  \ref{remark_eta(1,alpha,t)=eta(t)}}}{=}\tmop{Ep} (\eta t) \cap
  \alpha\underset{\text{by {\cite{GarciaCornejo1}} proposition
  \ref{pi.eta.substitutions}}}{\subset}c$, which means $\tmop{Ep} (\eta (1,
  \alpha, t)) \subset \tmop{Dom} g (1, \alpha, c)$. Moreover, $\eta (1,
  \alpha, t) [g (1, \alpha, c)] = (\eta t) [\alpha \assign c]\underset{\text{by
  {\cite{GarciaCornejo1}} proposition \ref{pi.eta.substitutions}}}{=}\eta (t
  [\alpha \assign c]) = \eta (1, c, t [\alpha \assign c]) = \eta (1, c, t [g
  (1, \alpha, c)])$.

  (2.3)\\
  Considering $\alpha, c \in \mathbbm{E}$ and $g (1, \alpha, c)$ as in (2.2)
  with the extra assumption $c \leqslant \alpha$ it is immediate that (2.3.1),
  (2.3.2) and (2.3.3) hold.

  (2.4)\\
  Given $\alpha, c \in \mathbbm{E}$ and $g (1, \alpha, c)$ as in (2.2), it is
  clear that (2.4.1), (2.4.2), (2.4.5) and (2.4.6) hold. Moreover, (2.4.3) and
  (2.4.4) are {\cite{GarciaCornejo1}} corollary
  \ref{A_[alpha:=e]_isomorphisms} and {\cite{GarciaCornejo1}} remark
  \ref{remark_m-iso_<less>_1-iso}.

  (2.5)\\
  Take $\alpha, d, c \in \mathbbm{E}$ with $c \leqslant d \leqslant \alpha$.
  Then $\tmop{Dom} g (1, \alpha, c) = ( \mathbbm{E} \cap c \cap \alpha) \cup
  \{\alpha\} \subset ( \mathbbm{E} \cap d \cap \alpha) \cup \{\alpha\} =
  \tmop{Dom} g (1, \alpha, d)$, that is, (2.5.1) holds. Moreover, \\
  $g (1, \alpha, d) [\tmop{Dom} g (1, \alpha, c)] = \{g (1, \alpha, d) (e) | e
  \in ( \mathbbm{E} \cap c \cap \alpha) \cup \{\alpha\}\} = ( \mathbbm{E} \cap
  c \cap \alpha) \cup \{d\} \subset$\\
  $( \mathbbm{E} \cap c \cap d) \cup \{d\} = \tmop{Dom} g (1, d, c)$, i.e.,
  (2.5.2) holds. Let's show that (2.5.3) also holds: For $e \in \tmop{Dom} g
  (1, \alpha, c) = ( \mathbbm{E} \cap c \cap \alpha) \cup \{\alpha\}$, $e
  \underset{g (1, \alpha, d)}{\longmapsto} \left\{ \begin{array}{l}
    e \underset{g (1, d, c)}{\longmapsto} c = g (1, \alpha, c) (e)
    \text{ iff } e = \alpha\\
    \\
    e \underset{g (1, d, c)}{\longmapsto} e = g (1, \alpha, c) (e)
    \text{ iff } e \neq \alpha
  \end{array} \right.$, that is, $g (1, \alpha, c) = g (1, d, c) \circ g (1,
  \alpha, d) |_{\tmop{Dom} g (n, \alpha, c)}$; finally, direct from the
  previous equality follows that $g^{- 1} (n, \alpha, c) = g^{- 1} (n, \alpha,
  d) \circ g^{- 1} (n, d, c)$ because $g (1, \alpha, c)$, $g (1, d, c)$ and $g
  (1, \alpha, d) |_{\tmop{Dom} g (n, \alpha, c)}$ are invertible functions,
  and since by (2.3.2) $g^{- 1} (1, \alpha, c) = g (1, c, \alpha)$, then \\
  $g^{- 1} (1, \alpha, c) = g (1, c, \alpha) : ( \mathbbm{E} \cap \alpha \cap
  c) \cup \{c\} = \mathbbm{E} \cap c^+ \longrightarrow ( \mathbbm{E} \cap
  \alpha \cap c) \cup \{\alpha\} = \tmop{Dom} g (1, \alpha, c)$.

  {\noindent}(3).\\
  Of course, the relation $\leqslant^1$ from {\cite{GarciaCornejo1}} satisfies
  \\
  $\forall \alpha \in \tmop{Class} (1) . \forall t \in [\alpha, \alpha ( +^1)]
  . \alpha \leqslant^1 t \Longrightarrow \alpha \leqslant_1 t$ and moreover:

  {\noindent}(4) holds because of {\cite{GarciaCornejo1}} proposition
  \ref{<less>^1.implies.cofinal.sequence};

  {\noindent}(5) holds too because of {\cite{GarciaCornejo1}} proposition
  \ref{2nd_Fund_Cof_Property_<less>^1};

  {\noindent}(6) holds because {\cite{GarciaCornejo1}} of corollary
  \ref{<less>_1.iff.<less>^1}.
\end{proof}

\addvspace{4mm}
{\Large \tmstrong{Working on case $n > 1$ of theorem
\ref{most_most_general_theorem}}}

It is in this moment that the hard work starts. As we have already said, we
prove theorem \ref{most_most_general_theorem} by induction on $[1, \omega)$,
and since we have already seen that it holds for $n = 1$, then {\color{blue}
until we complete the whole proof}, we consider a fixed $n \in [2, \omega)$
and our induction hypothesis is that theorem \ref{most_most_general_theorem}
holds for any $i \in [1, n)$. We name {\tmstrong{GenThmIH}} to this induction
hypothesis.

\section{Clause (0) of theorem \ref{most_most_general_theorem}}

We will dedicate the rest of this article to the proof of clause (0) of
theorem \ref{most_most_general_theorem}. (The rest of the proof will appear in
two subsequent articles). In order to do this, our first goal is to provide a
generalized version of the hierarchy theorem done for the intervals
$[\varepsilon_{\gamma}, \varepsilon_{\gamma + 1})$. We first prove certain
propositions that will be necessary later.

\begin{proposition}
  \label{regular_ordinal_in_Class(n-1)}Let $i \in [1, n - 1]$. Let $\kappa$ be
  an uncountable regular ordinal. Then $\kappa \in \tmop{Class} (i)$.
\end{proposition}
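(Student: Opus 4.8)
The plan is to proceed by induction on $i\in[1,n-1]$. For $i=1$ the claim is that every uncountable regular $\kappa$ is an epsilon number; this is classical (it follows from $\mathbb{E}$ being $\kappa$-club, or directly: $\kappa$ is closed under $\xi\mapsto\omega^\xi$ by regularity and a cardinality count, and it is a limit of such closure points, so $\omega^\kappa=\kappa$). For the inductive step, assume $\kappa\in\tmop{Class}(i)$ with $i+1\le n-1$, and we must show $\kappa\in\tmop{Class}(i+1)$, i.e.\ that $\kappa(+^i)\in\tmop{Class}(i)$ and $\kappa<_1\kappa(+^i)$.

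First I would pin down $\kappa(+^i)$. By \textbf{GenThmIH} applied to level $i<n$, clause (0), $\tmop{Class}(i)$ is $\kappa$-club for our uncountable regular $\kappa$; in particular $\tmop{Class}(i)\cap\kappa$ is unbounded in $\kappa$, so $\kappa\in\tmop{Lim}\tmop{Class}(i)$, and $\tmop{Class}(i)\cap(\kappa,\infty)\ne\emptyset$ (e.g.\ the next regular cardinal above $\kappa$, reusing the $i=1$ case plus monotonicity \ref{Class(n)_first_properties}.1 if needed, or more simply just that $\tmop{Class}(i)$ is closed unbounded hence a proper class). Hence $\kappa(+^i)=\min\{\beta\in\tmop{Class}(i)\mid\kappa<\beta\}$ exists and lies in $\tmop{Class}(i)$, giving the first requirement for free. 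The remaining, and main, task is $\kappa<_1\kappa(+^i)$.

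For the $<_1$ statement I would use clause (4) (First fundamental cofinality property) of \textbf{GenThmIH} at level $i$ — or, cleaner, its contrapositive together with clause (5), the Second fundamental cofinality property. Concretely: set $t$ to be an appropriate ordinal in $[\kappa,\kappa(+^i))$ witnessing $\kappa(+^i)$; since $\kappa\in\tmop{Lim}\tmop{Class}(i)$ and, for $\gamma\in\tmop{Class}(i)\cap\kappa$ sufficiently large, the finite set $T(i,\kappa,t)\cap\kappa$ is absorbed into $\gamma$ (finiteness from (2.1.1)) and $\gamma\leqslant_1 t[g(i,\kappa,\gamma)]$ holds — the latter because each such $\gamma$ is itself in $\tmop{Class}(i)$ with its own successor $\gamma(+^i)\le\kappa(+^i)$, so $\leqslant_1$-connectedness propagates the relation — we land in the hypothesis of clause (5). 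That yields $\kappa\leqslant^i s$ for all $s\in[\kappa,t+1]$, and via clause (3) ($\leqslant^i\Rightarrow\leqslant_1$) together with $\leqslant_1$-connectedness up to $\kappa(+^i)$, we conclude $\kappa\leqslant_1\kappa(+^i)$, hence $\kappa<_1\kappa(+^i)$ since $\kappa\neq\kappa(+^i)$. Combined with $\kappa(+^i)\in\tmop{Class}(i)$ this gives $\kappa\in\tmop{Class}(i+1)$, completing the induction.

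The main obstacle I expect is bookkeeping the substitution machinery in the $\leqslant_1$ argument: one must verify that for a cofinal set of $\gamma\in\tmop{Class}(i)\cap\kappa$ the domain condition $T(i,\kappa,t)\cap\kappa\subset\gamma$ is met (immediate from finiteness of $T$ and $\kappa$ being a limit of $\tmop{Class}(i)$) \emph{and} that $\gamma\leqslant_1 t[g(i,\kappa,\gamma)]$; the cleanest route is to take $t$ so that $t[g(i,\kappa,\gamma)]$ falls in $[\gamma,\gamma(+^i))$ below $\gamma(+^i)$, whence $\gamma<_1\gamma(+^i)$ (from $\gamma\in\tmop{Class}(i)$ itself) plus $\leqslant_1$-connectedness gives $\gamma\leqslant_1 t[g(i,\kappa,\gamma)]$. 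Everything else is an application of the induction hypothesis clauses (0), (3), (4), (5) at level $i$, which are available since $i\le n-1<n$.
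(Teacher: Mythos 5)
Your inductive step has a genuine gap. To land in the hypothesis of clause (5) of GenThmIH at level $i$ you must exhibit cofinally many $\gamma \in \tmop{Class}(i) \cap \kappa$ with $\gamma \leqslant_1 t[g(i,\kappa,\gamma)]$, where $t[g(i,\kappa,\gamma)]$ ranges over the interval $(\gamma, \gamma(+^i))$; you justify this by asserting that $\gamma <_1 \gamma(+^i)$ ``from $\gamma \in \tmop{Class}(i)$ itself''. That is false: membership in $\tmop{Class}(i)$ only gives $\gamma <_1 \gamma(+^{i-1})$, whereas $\gamma <_1 \gamma(+^i)$ is exactly the defining condition of $\tmop{Class}(i+1)$ — if every element of $\tmop{Class}(i)$ satisfied it, the whole hierarchy would collapse. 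Already at $i=1$ the claim fails: an arbitrary epsilon number $\gamma < \kappa$ need not satisfy $\gamma \leqslant_1 s$ for $s$ up to the next epsilon number. So the hypothesis of clause (5) is not established and the step $\kappa <_1 \kappa(+^i)$ does not go through as written. The step could be repaired by drawing the $\gamma$'s from $\tmop{Class}(i+1) \cap \kappa$ instead, which is unbounded in $\kappa$ by clause (0) of GenThmIH since $i+1 \leqslant n-1$; but once you invoke clubness of $\tmop{Class}(i+1)$ the whole detour through clauses (3)--(6) becomes superfluous.

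Indeed the paper's proof needs neither induction on $i$ nor any $<_1$-machinery: fix an uncountable regular $\rho > \kappa$. By GenThmIH (0) (legitimate because $i \leqslant n-1 < n$), $\tmop{Class}(i)$ is club in $\kappa$ and club in $\rho$. The set $\tmop{Class}(i) \cap \kappa$ is unbounded in $\kappa$ and bounded in $\rho$, so closedness of $\tmop{Class}(i)$ in $\rho$ gives $\kappa = \sup(\tmop{Class}(i) \cap \kappa) \in \tmop{Class}(i)$. Your observation that $\kappa \in \tmop{Lim}\tmop{Class}(i)$ is the unboundedness half of this; what is missing in your write-up is the realization that closedness in a larger regular already finishes the proof, without ever verifying $\kappa <_1 \kappa(+^i)$ — which is the stronger statement $\kappa \in \tmop{Class}(i+1)$ and is not needed (and not available from GenThmIH when $i = n-1$).
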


\begin{proof}
  Take $i, \kappa$ as stated. Let $\rho$ be an uncountable regular ordinal,
  $\rho > \kappa$ ($\rho$ exists because the class of regular ordinals is
  unbounded in the class of ordinals). Since $\tmop{Class} (i) \cap \kappa$ is
  bounded in $\rho$ and $\tmop{Class} (i)$ is club in $\rho$ by GenThmIH, then
  $\sup (\tmop{Class} (i) \cap \kappa) \in \tmop{Class} (i)$. But
  $\tmop{Class} (i) \cap \kappa$ is unbounded in $\kappa$ (by GenThmIH) and
  therefore $\sup (\tmop{Class} (i) \cap \kappa) = \kappa$. These two
  observations prove $\kappa = \sup (\tmop{Class} (i) \cap \kappa) \in
  \tmop{Class} (i)$.
\end{proof}

\begin{proposition}
  \label{Class(n)_is_closed}For any $i \in [1, n]$, $\tmop{Class} (n)$ is
  closed.
\end{proposition}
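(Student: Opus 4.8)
The statement for $i\in[1,n-1]$ is immediate from GenThmIH: by clause~(0) of Theorem~\ref{most_most_general_theorem} each $\tmop{Class}(i)$ with $i<n$ is $\kappa$-club for every uncountable regular $\kappa$, and this entails closedness as a class by the localization argument used in Proposition~\ref{regular_ordinal_in_Class(n-1)} (if $\tmop{Class}(i)\cap\lambda$ were unbounded in a limit $\lambda$, pick an uncountable regular $\kappa>\lambda$ and use that $\tmop{Class}(i)$ is closed in $\kappa$). So the real content is $i=n$. The plan is to fix a limit ordinal $\lambda$ with $\tmop{Class}(n)\cap\lambda$ unbounded in $\lambda$ and to show $\lambda\in\tmop{Class}(n)$, i.e.\ $\lambda\in\tmop{Class}(n-1)$, $\lambda(+^{n-1})\in\tmop{Class}(n-1)$ and $\lambda<_1\lambda(+^{n-1})$.

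First I would dispose of the two membership requirements. Since $\tmop{Class}(n)\subset\tmop{Class}(n-1)$ (Proposition~\ref{Class(n)_first_properties}.1), $\tmop{Class}(n-1)\cap\lambda$ is unbounded in $\lambda$, so $\lambda\in\tmop{Class}(n-1)$ by closedness of $\tmop{Class}(n-1)$ (GenThmIH). Since $\tmop{Class}(n-1)$ is unbounded in $\tmop{OR}$ (GenThmIH again), $\delta\assign\lambda(+^{n - 1})=\min\{\beta\in\tmop{Class}(n-1)\mid\lambda<\beta\}$ exists and lies in $\tmop{Class}(n-1)\subseteq\mathbbm{E}$; in particular $\lambda<\delta$ and $\delta$ is a limit ordinal.

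The crux is $\lambda<_1\delta$, and the plan is to derive it from the Second Fundamental Cofinality Property of $\leqslant^{n - 1}$, i.e.\ clause~(5) of Theorem~\ref{most_most_general_theorem} at $i=n-1$, applied with $\alpha\assign\lambda$ for every $t\in[\lambda,\delta)$. I claim its limit hypothesis holds for each such $t$: I would show that every $\gamma\in\tmop{Class}(n)\cap\lambda$ with $\gamma>\max\bigl(T(n-1,\lambda,t)\cap\lambda\bigr)$ lies in the set $\{\gamma\in\tmop{Class}(n-1)\mid T(n-1,\lambda,t)\cap\lambda\subset\gamma\ \wedge\ \gamma\leqslant_1 t[g(n-1,\lambda,\gamma)]\}$. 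Indeed such a $\gamma$ is in $\tmop{Class}(n-1)$ and satisfies $T(n-1,\lambda,t)\cap\lambda\subset\gamma$ by choice, so $\tmop{Ep}(t)\subset\tmop{Dom}\,g(n-1,\lambda,\gamma)$ by clause~(2.2.3); by clause~(2.4) at $i=n-1$ the substitution $e\mapsto e[g(n-1,\lambda,\gamma)]$ carries $[\lambda,\delta)$ into $[\gamma,\gamma(+^{n - 1}))$ and sends $\lambda$ to $\gamma$, so $\gamma\leqslant t[g(n-1,\lambda,\gamma)]<\gamma(+^{n - 1})$; and since $\gamma\in\tmop{Class}(n)$ gives $\gamma<_1\gamma(+^{n - 1})$, $\leqslant_1$-connectedness yields $\gamma\leqslant_1 t[g(n-1,\lambda,\gamma)]$. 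As $T(n-1,\lambda,t)\cap\lambda$ is finite and $\tmop{Class}(n)\cap\lambda$ is unbounded in $\lambda$, there are cofinally many such $\gamma$, so $\lambda$ is a limit point of that set. Clause~(5) then gives $\lambda\leqslant^{n - 1}s$ for all $s\in[\lambda,t+1]$; letting $t$ range over $[\lambda,\delta)$ and taking the union (recall $\delta$ is a limit) gives $\lambda\leqslant^{n - 1}s$ for all $s\in[\lambda,\delta)$. By $\leqslant^{n - 1}$-continuity (clause~(3) at $i=n-1$) together with $\delta$ being a limit ordinal, $\lambda\leqslant^{n - 1}\delta$, and then clause~(3) at $i=n-1$ gives $\lambda\leqslant_1\delta$; since $\lambda\neq\delta$ this is $\lambda<_1\delta=\lambda(+^{n - 1})$. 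Combining with the two membership facts, $\lambda\in\tmop{Class}(n)$, so $\tmop{Class}(n)$ is closed.

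The main obstacle is precisely the verification that the limit hypothesis of clause~(5) holds for \emph{every} $t\in[\lambda,\delta)$: the whole argument hinges on recognizing that an element $\gamma\in\tmop{Class}(n)\cap\lambda$, as soon as it is large enough to contain the finite set $T(n-1,\lambda,t)\cap\lambda$, automatically satisfies $\gamma\leqslant_1 t[g(n-1,\lambda,\gamma)]$, because the substitution keeps $t$ inside the interval $[\gamma,\gamma(+^{n - 1}))$ on which $\gamma<_1\gamma(+^{n - 1})$ already holds. One must also take care that $T(n-1,\lambda,\cdot)$, $g(n-1,\lambda,\cdot)$ and the substitution machinery are legitimately invoked (they are, since $\lambda$ and the relevant $\gamma$ lie in $\tmop{Class}(n-1)$ and $t\in\lambda(+^{n - 1})$), and to state precisely what $\leqslant^{n - 1}$-continuity delivers at the limit $\delta$.
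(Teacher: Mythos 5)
Your proposal is correct and follows essentially the same route as the paper: show $\lambda\in\tmop{Class}(n-1)$ by closedness of $\tmop{Class}(n-1)$, then verify the limit hypothesis of clause~(5) of GenThmIH for each $t\in[\lambda,\lambda(+^{n-1}))$ by taking $\gamma\in\tmop{Class}(n)\cap\lambda$ containing the finite set $T(n-1,\lambda,t)\cap\lambda$, noting $t[g(n-1,\lambda,\gamma)]\in[\gamma,\gamma(+^{n-1}))$ and using $\leqslant_1$-connectedness with $\gamma<_1\gamma(+^{n-1})$. The only cosmetic difference is at the end: the paper uses (5.2) to get $\lambda\leqslant_1 t$ for each $t$ and concludes by $\leqslant_1$-continuity, whereas you pass through (5.1), $\leqslant^{n-1}$-continuity and clause~(3); both finishes are fine.
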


\begin{proof}
  For $i \leqslant n - 1$ the claim is clear by GenThmIH. So suppose $i = n$.
  
  Let $\alpha \in \tmop{Lim} \tmop{Class} (n)$. Then there exists a sequence
  $(c_{\xi})_{\xi \in X} \subset \tmop{Class} (n) \cap \alpha$ with
  $c_{\xi}\underset{\text{cof}}{\longhookrightarrow}\alpha$. So $(c_{\xi})_{\xi
  \in X} \subset \tmop{Class} (n - 1)$ and since by (0) of GenThmIH
  $\tmop{Class} (n - 1)$ is club in any non-countable regular ordinal
  $\kappa$, then $\alpha \in \tmop{Class} (n - 1)$.

  Now we want to show that $\forall t \in (\alpha, \alpha ( +^{n - 1})) .
  \alpha <_1 t$. \ {\tmstrong{(*)}}
  
  Let $t \in (\alpha, \alpha ( +^n))$. Since $T (n - 1, \alpha, t)$ is finite
  and $c_{\xi}\underset{\text{cof}}{\longhookrightarrow}\alpha$, then we can
  assume without loss of generality that $\forall \xi \in X.T (n - 1, \alpha,
  t) \cap \alpha \subset c_{\xi}$. This way, for all $\xi \in X$, the ordinal
  \\
  $t [g (n - 1, \alpha, c_{\xi})] \in (c_{\xi}, c_{\xi} ( +^{n - 1}))$ and
  since by hypothesis $c_{\xi} \in \tmop{Class} (n)$, (i.e., $c_{\xi} <_1
  c_{\xi} (+^{n - 1}))$, then $c_{\xi} <_1 t [g (n - 1, \alpha, c_{\xi})]$ by
  $<_1$-connectedness. This shows \\
  $\forall \xi \in X.c_{\xi} <_1 t [g (n - 1, \alpha, c_{\xi})]$.
  
  From our work in the previous paragraph follows that \\
  $\alpha \in \tmop{Lim} \{\gamma \in \tmop{Class} (n - 1) |T (n - 1, \alpha,
  t) \cap \alpha \subset \gamma \wedge \gamma \leqslant_1 t [g (n - 1, \alpha,
  \gamma)]) \}$, and therefore, by use of GenThmIH (5) (Second fundamental
  cofinality property of $\leqslant^{n - 1}$), follows $\alpha \leqslant_1 t$.
  
  The previous shows (*).

  Finally, for the sequence $(d_{\xi})_{\xi \in (\alpha, \alpha ( +^n))}$
  defined as $d_{\xi} \assign \xi$, it follows from (*) that\\
  $\alpha <_1 d_{\xi}\underset{\text{cof}}{\longhookrightarrow}\alpha (+^{n -
  1})$; therefore, by $\leqslant_1$-continuity, $\alpha <_1 \alpha (+^{n -
  1})$, that is, $\alpha \in \tmop{Class} (n)$.
\end{proof}

\begin{remark}
  Consider $i \in [1, n]$, $\alpha \in \tmop{Class} (i)$ and $t \in [\alpha,
  \alpha ( +^i))$. Let $j \in [1, i]$. Then $\lambda (j, e)$ was defined as
  the only one ordinal $\delta$ satisfying $\delta \in \tmop{Class} (j) \wedge
  e \in [\delta, \delta ( +^j))$ or $- \infty$ in case such ordinal does not
  exist. We want {\tmstrong{to show that}} $\tmmathbf{\lambda (j, e)}$
  {\tmstrong{is indeed an ordinal}}:
  
  Let $U \assign (e + 1) \cap \tmop{Class} (j)$. Then $\beta \in U \neq
  \emptyset$ because $j \leqslant i$ implies \\
  $\tmop{Class} (i) \subset \tmop{Class} (j)$ by proposition
  \ref{Class(n)_first_properties}. Let $u \assign \sup U$. Then, by previous
  proposition \ref{Class(n)_is_closed}, \\
  $u \in \tmop{Class} (j) \cap (e + 1)$. Moreover, $e \in [u, u ( +^j))$. This
  shows that $\lambda (e, j) = u \in \tmop{OR}$.
\end{remark}

\begin{proposition}
  \label{Simplest_Characterization_beta_in_Lim(Class(k))}Let $k < n$ and
  $\beta \in \tmop{Class} (k)$. Then $\beta \leqslant_1 \beta (+^{k - 1})
  \ldots ( +^2) ( +^1) 2 + 1 \Longleftrightarrow \beta \leqslant^k \beta (+^{k
  - 1}) \ldots ( +^2) ( +^1) 2 + 1 \Longleftrightarrow \beta \in \tmop{Lim}
  (\tmop{Class} (k))$.
\end{proposition}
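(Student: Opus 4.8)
The statement is a triple equivalence for $\beta\in\operatorname{Class}(k)$ with $k<n$, between: (i) $\beta\leqslant_1\beta(+^{k-1})\ldots(+^1)2+1$, (ii) $\beta\leqslant^k\beta(+^{k-1})\ldots(+^1)2+1$, and (iii) $\beta\in\operatorname{Lim}(\operatorname{Class}(k))$. Since $k<n$, the full GenThmIH is available for $k$, so I would prove the cycle (iii)$\Rightarrow$(ii)$\Rightarrow$(i)$\Rightarrow$(iii), using the machinery of Theorem \ref{most_most_general_theorem} at level $k$ throughout. The implication (ii)$\Rightarrow$(i) is immediate from clause (3) of GenThmIH (i.e. $\beta\leqslant^k t\Rightarrow\beta\leqslant_1 t$), applied with $t=\beta(+^{k-1})\ldots(+^1)2+1$, noting this $t$ lies in $[\beta,\beta(+^k)]$ by Corollary \ref{alpha_i_properties}(c).

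For (iii)$\Rightarrow$(ii): assume $\beta\in\operatorname{Lim}(\operatorname{Class}(k))$. I would set $t\assign\beta(+^{k-1})\ldots(+^1)2$, which is exactly $m(\beta_i)$ for the associated $\beta_i$'s and, crucially, satisfies $T(k,\beta,t)\cap\beta=\emptyset$ (the defining epsilon numbers of $t$ are all $\geqslant\beta$, and the recursively built set $T$ adds nothing below $\beta$ in this base case — this should follow from the explicit description of $T(k,\beta,t)$ in clause (2.1) together with the fact that all the $\lambda$-values and $f$-values involved sit in $[\beta,\beta(+^k))$). Then for every $\gamma\in\operatorname{Class}(k)\cap\beta$ we automatically have $T(k,\beta,t)\cap\beta=\emptyset\subset\gamma$, and $t[g(k,\beta,\gamma)]=\gamma(+^{k-1})\ldots(+^1)2\in(\gamma,\gamma(+^k))$ by the homomorphism properties (2.4), so $\gamma\leqslant_1 t[g(k,\beta,\gamma)]$ holds trivially since $\gamma\in\operatorname{Class}(k)$ means $\gamma<_1\gamma(+^{k-1})$ and $<_1$-connectedness propagates down. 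Hence $\beta\in\operatorname{Lim}\{\gamma\in\operatorname{Class}(k)\mid T(k,\beta,t)\cap\beta\subset\gamma\wedge\gamma\leqslant_1 t[g(k,\beta,\gamma)]\}$, and GenThmIH clause (5.1) gives $\beta\leqslant^k s$ for all $s\in[\beta,t+1]$; in particular $\beta\leqslant^k t+1=\beta(+^{k-1})\ldots(+^1)2+1$, which is (ii).

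For (i)$\Rightarrow$(iii): assume $\beta\leqslant_1\beta(+^{k-1})\ldots(+^1)2+1$, so in particular $\beta<_1\beta(+^{k-1})\ldots(+^1)2+1$ (equality is impossible since the right side is a successor while $\beta$ is a limit — it is in $\operatorname{Class}(k)\subseteq\mathbb{E}$). By Corollary \ref{alpha_i_properties}(c) we have the $<_1$-chain $\beta=\beta_k<_1\beta_{k-1}<_1\cdots<_1\beta_1<_1\beta_1 2<\beta_1 2+1=\beta(+^{k-1})\ldots(+^1)2+1$ (unwinding the notation: $\beta_1 2=\beta(+^{k-1})\ldots(+^1)2$), so from $\beta<_1\beta_1 2+1$ and $<_1$-connectedness the hypothesis of Proposition \ref{Class(n)_first_properties}(3) is met with $\alpha=\beta$, $n=k$. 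That proposition then yields $\beta\in\operatorname{Lim}\operatorname{Class}(k)$, closing the cycle.

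\textbf{Main obstacle.} The delicate point is the claim $T(k,\beta,\beta(+^{k-1})\ldots(+^1)2)\cap\beta=\emptyset$ used in (iii)$\Rightarrow$(ii): one must trace through the inductive definition of $T(k,\beta,\cdot)$ in clause (2.1) of GenThmIH and verify that for the specific argument $t=\beta(+^{k-1})\ldots(+^1)2$, every epsilon number, every $\lambda(j,\cdot)$-value, and every $f(j,\cdot)(\cdot)$-element that gets thrown into $T$ lies in the interval $[\beta,\beta(+^k))$, hence contributes nothing below $\beta$. This is a bookkeeping argument but it requires care because $T$ is defined by a nested recursion through the $O(l,\cdot)$ and $W(l,k,\cdot)$ sets; the key leverage is that the "seed" ordinals $E_1,\ldots,E_k$ here are precisely $\beta_1>\beta_2\geqslant\cdots\geqslant\beta_k=\beta$, all $\geqslant\beta$, and the recursion only ever produces ordinals sandwiched between consecutive $\beta_i$'s, hence never drops below $\beta$. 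Alternatively, if this direct computation proves unwieldy, one can instead invoke clause (6) of GenThmIH: since $\eta(k,\beta,t)=\beta(+^{k-1})\ldots(+^1)2$ for $t$ in the relevant initial segment (by the definition of $\eta$), the hypothesis $\beta<_1\eta(k,\beta,t)+1$ from (i) directly gives $\beta\leqslant^k\eta(k,\beta,t)+1=\beta(+^{k-1})\ldots(+^1)2+1$, which is (ii) — making (i)$\Rightarrow$(ii) the cleaner route and reducing the whole proposition to (ii)$\Rightarrow$(i) (clause (3)) plus (i)$\Leftrightarrow$(iii) (Proposition \ref{Class(n)_first_properties}(3) and its converse via $\operatorname{Lim}$-confinality).
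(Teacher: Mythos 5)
Your overall architecture (clauses (3), (6) for the first equivalence, clause (5) plus the substitution machinery for the hard direction) matches the paper's, but your proof of (iii)$\Rightarrow$(ii) has a genuine gap exactly where you flag the ``main obstacle'': you make it depend on the claim $T(k,\beta,\beta(+^{k-1})\ldots(+^1)2)\cap\beta=\emptyset$, which you do not prove and which is nowhere available at this point of the development (the paper only establishes statements of that flavour much later, in Proposition \ref{most_important_sequence_in_(e,e(+^i))}, for the canonical sequences). The paper's proof shows this claim is simply not needed: by (2.1.1) of GenThmIH the set $T(k,\beta,\beta(+^{k-1})\ldots(+^1)2)\cap\beta$ is \emph{finite}, so from any sequence $(c_\xi)\subset\tmop{Class}(k)$ cofinal in $\beta$ one extracts a subsequence $(d_j)$, still cofinal in $\beta$, with $T(k,\beta,\beta(+^{k-1})\ldots(+^1)2)\cap\beta\subset d_j$ for all $j$; then $d_j\leqslant_1 d_j(+^{k-1})\ldots(+^1)2=(\beta(+^{k-1})\ldots(+^1)2)[g(k,\beta,d_j)]$ (Proposition \ref{Class(n)_first_properties}(2) together with (2.4.1), (2.4.3) of GenThmIH), and clause (5) applies. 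You should adopt this finiteness-plus-subsequence step in place of the unproven $T\cap\beta=\emptyset$ bookkeeping.

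Your proposed fallback does not repair this: reducing the proposition to (ii)$\Rightarrow$(i), (i)$\Rightarrow$(ii) via clause (6), and ``(i)$\Leftrightarrow$(iii)'' leaves the implication (iii)$\Rightarrow$(i) unproved, and ``its converse via $\tmop{Lim}$-confinality'' is not an argument: from $c_\xi<_1 (c_\xi)(+^{k-1})\ldots(+^1)2+1$ for a cofinal sequence $c_\xi\to\beta$ one cannot pass to $\beta\leqslant_1\beta(+^{k-1})\ldots(+^1)2+1$ by $\leqslant_1$-continuity, since the statements live at varying base points; transporting them to $\beta$ is precisely what clause (5) of GenThmIH (with the sets $T$ and the maps $g$) is for, i.e.\ the very argument you were trying to avoid. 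A small additional slip: $\gamma\leqslant_1\gamma(+^{k-1})\ldots(+^1)2$ does not follow from $\gamma<_1\gamma(+^{k-1})$ by $<_1$-connectedness (the target lies \emph{above} $\gamma(+^{k-1})$); it follows from the full chain of Proposition \ref{Class(n)_first_properties}(2) and transitivity of $<_1$.
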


\begin{proof}
  Let $k < n$ and $\beta \in \tmop{Class} (k)$.
  
  Note $\beta \leqslant_1 \beta (+^{k - 1}) \ldots ( +^2) ( +^1) 2 + 1
  \Longleftrightarrow \beta \leqslant^k \beta ( +^{k - 1}) \ldots ( +^2) (
  +^1) 2 + 1$ holds because \\
  $\eta (k, \beta, \beta ( +^{k - 1}) \ldots ( +^1) 2) = \beta (+^{k - 1})
  \ldots ( +^1) 2$ and because of (3) and (6) of GenThmIH. Moreover, $\beta
  \leqslant^k \beta (+^{k - 1}) \ldots ( +^1) 2 + 1 \Longrightarrow \beta \in
  \tmop{Lim} (\tmop{Class} (k))$ holds because of (4) of GenThmIH.

  It only remains to show that $\beta \leqslant^k \beta (+^{k - 1}) \ldots (
  +^1) 2 + 1 \Longleftarrow \beta \in \tmop{Lim} (\tmop{Class} (k))$. Take \\
  $\beta \in \tmop{Lim} (\tmop{Class} (k))$. Then there is a sequence
  $(c_{\xi})_{\xi \in X} \subset \tmop{Class} (k)$ with $c_{\xi}
  \underset{\text{cof}}{\longhookrightarrow} \beta$. Now, by (2.1.1) of
  GenThmIH, $T (k, \beta, \beta ( +^{k - 1}) \ldots ( +^1) 2)$ is finite, and
  so $T (k, \beta, \beta ( +^{k - 1}) \ldots ( +^1) 2) \cap \beta$ is finite
  too. This way, there is a subsequence $(d_j)_{j \in J}$ of $(c_{\xi})_{\xi
  \in X}$ such that \\
  $\forall j \in J.T (k, \beta, \beta ( +^{k - 1}) \ldots ( +^1) 2) \cap \beta
  \subset d_j$ and $d_j \underset{\text{cof}}{\longhookrightarrow} \beta$.

  From the previous paragraph we get that $\forall j \in J.T (k, \beta, \beta
  ( +^{k - 1}) \ldots ( +^1) 2) \cap \beta \subset d_j$, $d_j
  \underset{\text{cof}}{\longhookrightarrow} \beta$ and $\forall j \in J.d_j
  \underset{\text{\tmop{by} \tmop{proposition}
  \ref{Class(n)_first_properties}}}{\leqslant_1} d_j (+^{k - 1}) \ldots ( +^1)
  2 \underset{\text{by (2.4.3) and (2.4.1) of GenThmIH}}{=}$\\
  \ \ \ \ \ \ \ \ \ \ \ \ \ \ \ \ \ \ \ \ \ \ \ \ \ \ \ \ \ \ $= (\beta (
  +^{k - 1}) \ldots ( +^1) 2) [g (k, \beta, d_j)]$. That is, we have shown \\
  $\beta \in \tmop{Lim} \{\gamma \in \tmop{Class} (k) |T (k, \beta, \beta (
  +^{k - 1}) \ldots ( +^1) 2) \cap \beta \subset \gamma \wedge$
  
  \ \ \ \ \ \ \ \ \ \ \ \ \ \ \ \ \ \ \ \ \ \ \ \ \ \ \ \ \ \ \ \ \ $\gamma
  \leqslant_1 (\beta ( +^{k - 1}) \ldots ( +^1) 2) [g (k, \beta, \gamma)] \}$.
  Therefore, by (5) of \\
  GenThmIH, we conclude $\beta \leqslant^k \beta (+^{k - 1}) \ldots ( +^1) 2 +
  1$.
\end{proof}

\begin{definition}
  Let $i \in [1, n)$, $\alpha \in \tmop{Class} (i)$ and $t \in (\alpha, \alpha
  ( +^i))$. For any ordinal $r \in \tmop{OR}$, let \\
  $S (i, \alpha, r, t) \assign \{q \in (\alpha, l (i, \alpha, t)) | T (i,
  \alpha, q) \cap \alpha \subset r\}$.
\end{definition}

\begin{remark}
  \label{S(i,a,r,t)=l(i,a,t)_intersection_Dom(g(i,a,r))}With respect to our
  previous definition, note $S (i, \alpha, r, t) \subset l (i, \alpha, t)
  \leqslant t$. Moreover, since $i \in [1, n)$, then by (2.2.3) of GenThmIH,
  \\
  $r \in \tmop{Class} (i) \Longrightarrow S (i, \alpha, r, t) = \{q \in
  (\alpha, l (i, \alpha, t)) | \tmop{Ep} (q) \subset (\tmop{Dom} g (i, \alpha,
  r))\}$.
\end{remark}

\subsection{The Generalized Hierarchy Theorem}

\begin{definition}
  Let $C^{n - 1} : \tmop{OR} \longrightarrow \tmop{Class} (n - 1)$ be the
  counting functional of $\tmop{Class} (n - 1)$, (by GenThmIH follows
  $\tmop{Class} (n - 1)$ is a closed unbounded class of ordinals) and for $j
  \in \tmop{OR}$, let's write $C^{n - 1}_j$ for $C^{n - 1} (j)$.

  We define by recursion on the interval $[C_{\omega}^{n - 1}, \infty)$ the
  functional \\
  $A^{n - 1} : [C^{n - 1}_{\omega}, \infty) \longrightarrow \tmop{Subclasses}
  (\tmop{OR})$ as:

  For $t \in [C^{n - 1}_{\omega}, \infty)$, let $\alpha \in \tmop{Class} (n -
  1)$ be such that $t \in [\alpha, \alpha ( +^{n - 1}))$. \\
  Let $M \assign \left\{ \begin{array}{l}
    \max (T (n - 1, \alpha, t) \cap \alpha) \text{ iff } T (n - 1,
    \alpha, t) \cap \alpha \neq \emptyset\\
    - \infty \text{ otherwise}
  \end{array} \right.$.

  Case $t = l + 1$.

  $A^{n - 1} (l + 1) \assign \left\{ \begin{array}{l}
    A^{n - 1} (l)  \text{ iff } l < \eta (n - 1, \alpha, l)\\
    \\
    \tmop{Lim} A^{n - 1} (l) \text{ otherwise; that is, }
    l = \eta (n - 1, \alpha, l)
  \end{array} \right.$

  Case $t \in \tmop{Lim}$.

  $A^{n - 1} (t) \assign \left\{ \begin{array}{l}
    (\tmop{Lim} \tmop{Class} (n - 1)) \cap (M, \alpha + 1) \text{ iff } t
    \in [\alpha, \alpha ( +^{n - 2}) ( +^{n - 3}) \ldots ( +^2) ( +^1) 2]\\
    \\
    \tmop{Lim} \{r \leqslant \alpha |M < r \in \bigcap_{s \in S (n - 1,
    \alpha, r, t)} A^{n - 1} (s)\} \text{ otherwise}
  \end{array} \right.$.

  On the other hand, we define the functional $G^{n - 1} : [C^{n -
  1}_{\omega}, \infty) \longrightarrow \tmop{Subclasses} (\tmop{OR})$ in the
  \\
  following way:

  For $t \in [C^{n - 1}_{\omega}, \infty)$, let $\alpha \in \tmop{Class} (n -
  1)$ be such that $t \in [\alpha, \alpha ( +^{n - 1}))$ and let\\
  $G^{n - 1} (t) \assign \{\beta \in \tmop{Class} (n - 1) |T (n - 1, \alpha,
  t) \cap \alpha \subset \beta \leqslant \alpha \wedge \beta \leqslant^{n - 1}
  \eta (n - 1, \alpha, t) [g (n - 1, \alpha, \beta)] + 1\}$ 
  \hspace*{13mm} $=$, by GenThmIH (3) and (6),\\
  \hspace*{14mm}$= \{\beta \in \tmop{Class} (n - 1) |T (n - 1, \alpha, t) \cap
  \alpha \subset \beta \leqslant \alpha \wedge \beta \leqslant_1 \eta (n - 1,
  \alpha, t) [g (n - 1, \alpha, \beta)] + 1\}$.
\end{definition}

\begin{remark}
  Notice that $G^{n - 1} (t)$ is well defined because for $\beta \in
  \tmop{Class} (n - 1)$ satisfying \\
  $T (n - 1, \alpha, t) \cap \alpha \subset \beta$, (2.2.3) and (2.2.4) of
  GenThmIH imply $T (n - 1, \alpha, \eta (n - 1, \alpha, t)) \cap \alpha
  \subset \beta$; therefore, again by (2.2.3) of GenThmIH, $\tmop{Ep} (\eta (n
  - 1, \alpha, t)) \subset \tmop{Dom} (g (n - 1, \alpha, \beta))$.
\end{remark}

\begin{proposition}
  \label{A^n-1_constant_in_[alpha,alpha(+)...(+)2]}Let $\alpha \in
  \tmop{Class} (n - 1) \cap [C_{\omega}^{n - 1}, \infty)$. Then \\
  $\forall t \in [\alpha, \alpha ( +^{n - 2}) \ldots ( +^1) 2] .A^{n - 1} (t)
  = (\tmop{Lim} \tmop{Class} (n - 1)) \cap (\max (T (n - 1, \alpha, t) \cap
  \alpha), \alpha + 1)$
\end{proposition}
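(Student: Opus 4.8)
The plan is to fix $\alpha \in \tmop{Class}(n-1) \cap [C_\omega^{n-1},\infty)$ and prove the identity for all $t \in [\alpha, \alpha(+^{n-2})\ldots(+^1)2]$ by transfinite induction on $t$ within this interval. Throughout, write $I := [\alpha, \alpha(+^{n-2})\ldots(+^1)2]$ and recall two facts that will carry the argument: first, by clause (2.1.2) of theorem \ref{most_most_general_theorem} (via GenThmIH, since $n-1 < n$), $T(n-1,\alpha,\cdot)$ is constant on successor steps, $T(n-1,\alpha,l+1) = T(n-1,\alpha,l)$; second, for every $t \in I$ we have $\eta(n-1,\alpha,t) = \alpha(+^{n-2})\ldots(+^1)2$ by the very definition of $\eta$, so in particular $t \leqslant \eta(n-1,\alpha,t)$ with equality only at the right endpoint $t = \alpha(+^{n-2})\ldots(+^1)2$.

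First, the base case $t = \alpha$: here $T(n-1,\alpha,\alpha) \cap \alpha = \emptyset$, so $M = -\infty$, and $\alpha$ is a limit ordinal (it lies in $\tmop{Class}(n-1)$, hence in $\mathbbm{E} \subset \tmop{Lim}$), so $A^{n-1}(\alpha)$ is computed by the first branch of the limit clause, giving $A^{n-1}(\alpha) = (\tmop{Lim}\tmop{Class}(n-1)) \cap (M,\alpha+1) = (\tmop{Lim}\tmop{Class}(n-1)) \cap (\max(T(n-1,\alpha,\alpha)\cap\alpha),\alpha+1)$ with the usual convention $\max\emptyset = -\infty$. Next, the successor step: suppose $l+1 \in I$ (so $l, l+1 \in I$ and in particular $l < \alpha(+^{n-2})\ldots(+^1)2 = \eta(n-1,\alpha,l)$, the strict inequality because $l+1$ still lies in $I$). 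Then the first branch of the successor clause applies, $A^{n-1}(l+1) = A^{n-1}(l)$, and by the induction hypothesis $A^{n-1}(l) = (\tmop{Lim}\tmop{Class}(n-1)) \cap (\max(T(n-1,\alpha,l)\cap\alpha),\alpha+1)$; since $T(n-1,\alpha,l+1) = T(n-1,\alpha,l)$, this equals the desired expression for $l+1$.

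Finally, the limit step: suppose $t \in I$ is a limit ordinal, $t > \alpha$. Since $t \leqslant \alpha(+^{n-2})\ldots(+^1)2$, the first branch of the limit clause of $A^{n-1}$ applies, so by definition $A^{n-1}(t) = (\tmop{Lim}\tmop{Class}(n-1)) \cap (M, \alpha+1)$ with $M = \max(T(n-1,\alpha,t)\cap\alpha)$ (or $-\infty$), which is precisely the claimed formula. This case is essentially immediate from the definition — no induction hypothesis is even needed here — so the only real content is routing the successor and base cases through the constancy of $T(n-1,\alpha,\cdot)$. The main (modest) obstacle is bookkeeping: one must make sure that every $t \in I$ genuinely falls under the ``first branch'' in each clause of the definition of $A^{n-1}$, i.e. that $l < \eta(n-1,\alpha,l)$ holds at successor steps inside $I$ and that $t \in [\alpha,\alpha(+^{n-2})\ldots(+^1)2]$ at limit steps; both follow directly from the case-split in the definition of $\eta(n-1,\alpha,\cdot)$, which pins $\eta$ to the constant value $\alpha(+^{n-2})\ldots(+^1)2$ on the whole interval $I$. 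Assembling the three cases completes the transfinite induction.
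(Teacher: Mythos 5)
Your proof is correct: the paper leaves this proposition to the reader, and your argument is exactly the intended routine verification — for limit $t$ in the interval the first branch of the limit clause of $A^{n-1}$ gives the formula outright, and for successor steps the observation that $\eta(n-1,\alpha,l)=\alpha(+^{n-2})\ldots(+^1)2>l$ forces $A^{n-1}(l+1)=A^{n-1}(l)$, which combined with $T(n-1,\alpha,l+1)=T(n-1,\alpha,l)$ (GenThmIH (2.1.2)) closes the induction. The only point worth keeping explicit is the convention $\max\emptyset=-\infty$ (matching $M$ in the definition of $A^{n-1}$), which you do state.
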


\begin{proof}
  {\color{orange} Left to the reader.}
\end{proof}

\begin{theorem}
  \label{G^n-1(t)=A^n-1(t)_Gen_Hrchy_thm}$\forall t \in [C^{n - 1}_{\omega},
  \infty) .G^{n - 1} (t) = A^{n - 1} (t)$
\end{theorem}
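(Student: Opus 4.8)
<br>

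The plan is to prove $G^{n-1}(t) = A^{n-1}(t)$ by transfinite induction on $t \in [C^{n-1}_\omega, \infty)$, mirroring the structure of the recursive definition of $A^{n-1}$ and peeling apart the three cases: $t$ in the initial segment $[\alpha, \alpha(+^{n-2})\ldots(+^1)2]$, $t$ a successor, and $t$ a limit above that segment. In each case I fix the unique $\alpha \in \tmop{Class}(n-1)$ with $t \in [\alpha, \alpha(+^{n-1}))$ and set $M := \max(T(n-1,\alpha,t) \cap \alpha)$ (or $-\infty$), and I unwind both sides using the characterizations available from \textbf{GenThmIH}: clauses (3) and (6) let me replace $\leqslant^{n-1}$ by $\leqslant_1$ inside $G^{n-1}(t)$, clause (4) turns $\beta \leqslant^{n-1} s+1$ into the existence of a cofinal sequence of smaller $\tmop{Class}(n-1)$-ordinals below $\beta$ satisfying the pulled-back inequality, and clause (5) is the converse producing $\beta \leqslant^{n-1} s+1$ from such a limit. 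Proposition \ref{A^n-1_constant_in_[alpha,alpha(+)...(+)2]} handles the base-segment case outright, and I expect $G^{n-1}(t)$ there to reduce, via Proposition \ref{Simplest_Characterization_beta_in_Lim(Class(k))} applied to $k = n-1$ (using $n-1 < n$), exactly to $(\tmop{Lim}\tmop{Class}(n-1)) \cap (M, \alpha+1)$, because $\eta(n-1,\alpha,t) = \alpha(+^{n-2})\ldots(+^1)2$ on that segment and $\beta \leqslant^{n-1} \beta(+^{n-2})\ldots(+^1)2+1 \Leftrightarrow \beta \in \tmop{Lim}\tmop{Class}(n-1)$.

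For the successor case $t = l+1$: first I note $T(n-1,\alpha,t) = T(n-1,\alpha,l)$ by (2.1.2) of \textbf{GenThmIH}, so $M$ and hence the ``admissible $\beta$'' constraint $T(n-1,\alpha,t)\cap\alpha \subset \beta \leqslant \alpha$ is unchanged. If $l < \eta(n-1,\alpha,l)$, then $\eta(n-1,\alpha,l+1) = \eta(n-1,\alpha,l)$ (the max over $(\alpha, l+1]$ equals the max over $(\alpha,l]$ since $m(l+1) \leqslant \eta(n-1,\alpha,l)$ — this needs a small argument from Proposition \ref{t_in_(a,a(+)..(+^1)2]_then_m(t)<less>=a(+)..(+^1)2} / the structure of $m$, or alternatively $l+1 \notin \mathbbm{P}$ forces $m(l+1) = m(\pi(l+1)) \leqslant \ldots$), so $G^{n-1}(l+1) = G^{n-1}(l) = A^{n-1}(l) = A^{n-1}(l+1)$ by the induction hypothesis. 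If instead $l = \eta(n-1,\alpha,l)$, I must show $G^{n-1}(l+1) = \tmop{Lim}\,A^{n-1}(l) = \tmop{Lim}\,G^{n-1}(l)$: the inclusion $\subseteq$ comes from \textbf{GenThmIH}(4) applied to $\beta \leqslant^{n-1} \eta(n-1,\alpha,l+1)[g(n-1,\alpha,\beta)]+1$ — here I use (2.2.5) to identify $\eta(n-1,\alpha,l+1)[g] = \eta(n-1,\alpha,l)[g] = \eta(n-1,c_\xi, l[g])$ and (2.1.3) together with (2.2.4) to see that the cofinal $c_\xi$ still carry $T(n-1,\alpha,l)\cap\alpha$ and satisfy the $G^{n-1}(l)$-membership condition at $l[g(n-1,\alpha,c_\xi)]$; the reverse inclusion $\supseteq$ is \textbf{GenThmIH}(5) applied to the limit witness. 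This is where the substitution-coherence lemmas (2.2.3)–(2.2.5), (2.3.2), (2.5.3) get heavily used and where I expect the bookkeeping to be most delicate.

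For the limit case $t \in \tmop{Lim}$ with $t > \alpha(+^{n-2})\ldots(+^1)2$, I target $A^{n-1}(t) = \tmop{Lim}\{r \leqslant \alpha \mid M < r \in \bigcap_{s \in S(n-1,\alpha,r,t)} A^{n-1}(s)\}$. First I rewrite, using the induction hypothesis $A^{n-1}(s) = G^{n-1}(s)$ for all $s < t$ (note $S(n-1,\alpha,r,t) \subset l(n-1,\alpha,t) \leqslant t$ by Remark \ref{S(i,a,r,t)=l(i,a,t)_intersection_Dom(g(i,a,r))}, so every $s$ is below $t$), the set $\{r \leqslant \alpha \mid M < r \in \bigcap_{s} G^{n-1}(s)\}$; membership of $r \in \tmop{Class}(n-1)$ in all $G^{n-1}(s)$ for $s \in S(n-1,\alpha,r,t)$ unpacks (via \textbf{GenThmIH}(3),(6)) to: for each such $s$, $T(n-1,\alpha,s)\cap\alpha \subset r$ and $r \leqslant_1 \eta(n-1,\alpha,s)[g(n-1,\alpha,r)]+1$. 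The key structural fact I will invoke is Proposition \ref{eta(i,a,l(i,a,t))=eta(i,a,t)_if_t_in_(a(+)...(+^1)2,a(+^i)}: $\eta(n-1,\alpha,l(n-1,\alpha,t)) = m(l(n-1,\alpha,t)) = \eta(n-1,\alpha,t)$, and I expect that the ``hardest'' $s$ in $S(n-1,\alpha,r,t)$ — the one controlling the whole intersection — is effectively $l(n-1,\alpha,t)$ itself (when $T(n-1,\alpha,l(n-1,\alpha,t))\cap\alpha \subset r$, which holds by (2.1.4) once $T(n-1,\alpha,t)\cap\alpha\subset r$, i.e. $r$ is admissible for $G^{n-1}(t)$), so that the intersection condition collapses to $r \leqslant_1 \eta(n-1,\alpha,t)[g(n-1,\alpha,r)]+1$, i.e. $r \in G^{n-1}(t)$ modulo the admissibility clause. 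Matching the two $\tmop{Lim}$'s then reduces to showing $G^{n-1}(t)$ is closed and that every $r \in G^{n-1}(t)$ is a limit of such $r$'s — i.e. $G^{n-1}(t) = \tmop{Lim}(G^{n-1}(t))$ on the relevant segment — which in turn follows from \textbf{GenThmIH}(4) (every $\beta \in G^{n-1}(t)$ is approximated from below by $\tmop{Class}(n-1)$-ordinals in $G^{n-1}(t)$, because $\beta \leqslant_1 \eta[g]+1$ yields a cofinal sequence each satisfying the pulled-back condition, which is membership in $G^{n-1}(t)$ after applying (2.2.4)/(2.2.5) to relocate $T$ and $\eta$) plus closedness of the set via \textbf{GenThmIH}(5).

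The main obstacle I anticipate is the limit case: precisely pinning down which $s \in S(n-1,\alpha,r,t)$ dominate the intersection $\bigcap_s A^{n-1}(s)$ and proving that this intersection equals $G^{n-1}(t)$ (restricted to admissible $r$). This requires showing a monotonicity/cofinality statement — that $\eta(n-1,\alpha,\cdot)$ and the $l$-values interact so that $l(n-1,\alpha,t)$ is cofinal in, and ``maximal-$m$'' within, $S(n-1,\alpha,r,t)$ — and then threading the substitution identities $\eta(n-1,\alpha,s)[g(n-1,\alpha,r)] = \eta(n-1,r,s[g(n-1,\alpha,r)])$ and the $T$-coherence (2.2.4), (2.1.3), (2.1.4) consistently. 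I would isolate this as a separate lemma before attacking the main induction, so that the three-case induction itself becomes a relatively mechanical matching of definitions.
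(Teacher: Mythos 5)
Your base-segment case and successor case follow essentially the same route as the paper (Proposition \ref{A^n-1_constant_in_[alpha,alpha(+)...(+)2]} plus Proposition \ref{Simplest_Characterization_beta_in_Lim(Class(k))} for the initial segment; the identity $\eta(n-1,\alpha,l+1)=\max\{\eta(n-1,\alpha,l),l+1\}$ and then GenThmIH (4) for one inclusion and (5) for the other in the critical subcase $l=\eta(n-1,\alpha,l)$), and that part of the outline is sound. The genuine gap is in your limit case, and it is exactly the ``$+1$'' issue that this whole machinery is built to handle. You claim that, for admissible $r$, membership in $\bigcap_{s\in S(n-1,\alpha,r,t)}G^{n-1}(s)$ collapses to $r\leqslant_1\eta(n-1,\alpha,t)[g(n-1,\alpha,r)]+1$, i.e.\ to $r\in G^{n-1}(t)$. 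But every $s\in S(n-1,\alpha,r,t)$ lies strictly below $l(n-1,\alpha,t)$, so by minimality of $l$ one has $\eta(n-1,\alpha,s)<\eta(n-1,\alpha,t)$ for all such $s$; the conditions $r\leqslant_1\eta(n-1,\alpha,s)[g(n-1,\alpha,r)]+1$ therefore only accumulate (via $\leqslant_1$-continuity over the interval up to $l(n-1,\alpha,t)[g]$ and $\leqslant_1$-transitivity through $m(l[g])$) to $r\leqslant_1\eta(n-1,\alpha,t)[g(n-1,\alpha,r)]$ \emph{without} the final $+1$. One direction of your collapse ($G^{n-1}(t)$ into the intersection set) is fine, but the converse fails, so the inner set of the $A^{n-1}(t)$-definition is in general strictly larger than $G^{n-1}(t)$, and your reduction does not go through.

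The same problem undermines your closing step ``$G^{n-1}(t)=\operatorname{Lim}(G^{n-1}(t))$'': GenThmIH (4) applied to $\beta\in G^{n-1}(t)$ produces cofinal $c_\xi$ satisfying only $c_\xi\leqslant_1\eta(n-1,\alpha,t)[g(n-1,\alpha,c_\xi)]$, not the $+1$ version, so the $c_\xi$ need not themselves lie in $G^{n-1}(t)$, and this identity is neither needed nor available. The paper's proof keeps the two levels separate: for $G^{n-1}(t)\subset A^{n-1}(t)$ it shows each $c_\xi$ from (4) lies in the intersection set (here the $+1$ \emph{is} recoverable for each individual $s\in S$, by $\leqslant_1$-connectedness and (6), precisely because $\eta(n-1,\alpha,s)[g]+1\leqslant l(n-1,\alpha,t)[g]$); and for $A^{n-1}(t)\subset G^{n-1}(t)$ it proves only the weaker pointwise fact $c_\xi\leqslant_1\eta(n-1,\alpha,t)[g(n-1,\alpha,c_\xi)]$ and then regains the $+1$ \emph{at the limit} $\beta$ via the Second Fundamental Cofinality Property (5), which is formulated exactly for this purpose. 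To repair your plan, replace the collapse-plus-$\operatorname{Lim}$-matching by this two-sided argument at the level of cofinal sequences; the substitution identities (2.2.5), (2.5.3), (2.3.2) and Proposition \ref{eta(i,a,l(i,a,t))=eta(i,a,t)_if_t_in_(a(+)...(+^1)2,a(+^i)} are then used just as you anticipated.
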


\begin{proof}
  We proceed by induction on the class $[C^{n - 1}_{\omega}, \infty)$.

  Let $t \in [C^{n - 1}_{\omega}, \infty)$ and $\alpha \in \tmop{Class} (n -
  1)$ be with $t \in [\alpha, \alpha ( +^{n - 1}))$.

  Suppose $\forall s \in [C_{\omega}^{n - 1}, \infty) \cap t.G^{n - 1} (s) =
  A^{n - 1} (s)$. \ \ \ {\tmstrong{(cIH)}}

  Case $t \in [\alpha, \alpha ( +^{n - 2}) ( +^{n - 3}) \ldots ( +^2) ( +^1)
  2]$.
  
  Then $\eta (n - 1, \alpha, t) = \alpha (+^{n - 2}) ( +^{n - 3}) \ldots (
  +^2) ( +^1) 2$ and so
  
  $G^{n - 1} (t) = \{\beta \in \tmop{Class} (n - 1) |T (n - 1, \alpha, t) \cap
  \alpha \subset \beta \leqslant \alpha \wedge$
  
  \ \ \ \ \ \ \ \ \ \ \ \ \ \ $\beta \leqslant^{n - 1} \eta (n - 1, \alpha,
  t) [g (n - 1, \alpha, \beta)] + 1\}=$
  
  \ \ \ \ \ \ \ \ \ \ $=\{\beta \in \tmop{Class} (n - 1) |T (n - 1, \alpha,
  t) \cap \alpha \subset \beta \leqslant \alpha \wedge$
  
  \ \ \ \ \ \ \ \ \ \ \ \ \ \ $\beta \leqslant^{n - 1} \alpha (+^{n - 2}) (
  +^{n - 3}) \ldots ( +^2) ( +^1) 2 [g (n - 1, \alpha, \beta)] + 1\} =$
  
  \ \ \ \ \ \ \ \ \ \ $=\{\beta \in \tmop{Class} (n - 1) |T (n - 1, \alpha,
  t) \cap \alpha \subset \beta \leqslant \alpha \wedge$
  
  \ \ \ \ \ \ \ \ \ \ \ \ \ \ $\beta \leqslant^{n - 1} \beta (+^{n - 2}) (
  +^{n - 3}) \ldots ( +^2) ( +^1) 2 + 1\} =$
  
  \ \ \ \ \ $underset{ proposition
  \ref{Simplest_Characterization_beta_in_Lim(Class(k))}}{=}(\tmop{Lim}
  \tmop{Class} (n - 1)) \cap (\max (T (n - 1, \alpha, t) \cap \alpha), \alpha
  + 1) =$
  
  \ \ \ \ $underset{by proposition
  \ref{A^n-1_constant_in_[alpha,alpha(+)...(+)2]} }{=}A^{n - 1} (t)$.

  The previous shows the theorem holds in interval $[\alpha, \alpha ( +^{n -
  2}) ( +^{n - 3}) \ldots ( +^2) ( +^1) 2]$. So, {\tmstrong{from now on, we
  suppose}} {\tmstrong{$t \in (\alpha ( +^{n - 2}) ( +^{n - 3}) \ldots ( +^2)
  ( +^1) 2, \alpha ( +^{n - 1}))$}}. \ \ \ \ \ \ \ {\tmstrong{(A0)}}

  Successor subcase. Suppose $t = s + 1$ for some $s \in [\alpha ( +^{n - 2})
  ( +^{n - 3}) \ldots ( +^2) ( +^1) 2, t)$.

  First note \\
  $\eta (n, \alpha, s + 1) = \max \{m (e) |e \in (\alpha, s + 1]\} = \max
  \{\max \{m (e) |e \in (\alpha, s]\}, m (s + 1) = s + 1\} =$
  
  \ \ \ \ \ \ \ \ \ \ \ $= \left\{ \begin{array}{l}
    \max \{\max \{m (e) |e \in (\alpha, s]\}, s + 1\} \text{ iff } s =
    \alpha (+^{n - 1}) ( +^{n - 2}) \ldots ( +^2) ( +^1) 2\\
    \\
    \max \{\max \{m (e) |e \in (\alpha, s]\}, s + 1\} \text{ iff } s >
    \alpha (+^{n - 1}) ( +^{n - 2}) \ldots ( +^2) ( +^1) 2
  \end{array} \right.$

  \ \ \ \ \ \ \ \ \ \ \ $= \left\{ \begin{array}{l}
    \max \{\alpha ( +^{n - 1}) ( +^{n - 2}) \ldots ( +^2) ( +^1) 2, s + 1\}
    \text{ iff } s = \alpha (+^{n - 1}) ( +^{n - 2}) \ldots ( +^2) ( +^1)
    2\\
    \\
    \max \{\eta (n - 1, \alpha, s), s + 1\} \text{ iff } s > \alpha (+^{n
    - 1}) ( +^{n - 2}) \ldots ( +^2) ( +^1) 2
  \end{array} \right.$
  
  \ \ \ \ \ \ \ \ \ \ \ $= \left\{ \begin{array}{l}
    \max \{\eta (n - 1, \alpha, s), s + 1\} \text{ iff } s = \alpha (+^{n
    - 1}) ( +^{n - 2}) \ldots ( +^2) ( +^1) 2\\
    \\
    \max \{\eta (n - 1, \alpha, s), s + 1\} \text{ iff } s > \alpha (+^{n
    - 1}) ( +^{n - 2}) \ldots ( +^2) ( +^1) 2
  \end{array} \right.$
  
  \ \ \ \ \ \ \ \ \ \ \ $= \max \{\eta (n - 1, \alpha, s), s + 1\}$. \ \ \ \
  \ \ \ {\tmstrong{(A1)}}

  {\underline{Subsubcase $s < \eta (n - 1, \alpha, s)$}}.
  
  Then, using (A1), $\eta (n - 1, \alpha, s + 1) = \eta (n - 1, \alpha, s)$.
  Therefore, \\
  $G^{n - 1} (t) = G^{n - 1} (s + 1) = \{\beta \in \tmop{Class} (n - 1) |T (n
  - 1, \alpha, s + 1) \cap \alpha \subset \beta \leqslant \alpha \wedge$
  
  \ \ \ \ \ \ \ \ \ \ \ \ \ \ \ \ \ \ \ \ \ \ \ \ \ \ \ \ \ \ \ \ \ $\beta
  \leqslant^{n - 1} \eta (n - 1, \alpha, s + 1) [g (n - 1, \alpha, \beta)] +
  1\}=$
  
  \ \ \ \ \ \ \ \ \ \ \ \ \ \ $underset{\text{by (2.1.2) of GenThmIH}}{=} \{\beta
  \in \tmop{Class} (n - 1) |T (n - 1, \alpha, s) \cap \alpha \subset \beta
  \leqslant \alpha \wedge$
  
  \ \ \ \ \ \ \ \ \ \ \ \ \ \ \ \ \ \ \ \ \ \ \ \ \ \ \ \ \ \ \ \ \ \ \ \ \ \
  \ \ \ \ \ \ \ $\beta \leqslant^{n - 1} \eta (n - 1, \alpha, s) [g (n - 1,
  \alpha, \beta)] + 1\}=$
  
  \ \ \ \ \ \ \ \ \ \ \ \ \ \ \ \ \ \ \ \ \ \ \ \ $= G^{n - 1}
  (s)\underset{\text{by cIH}}{=}A^{n - 1} (s)\underset{\text{because } s < \eta (n - 1,
  \alpha, s)}{=}A^{n - 1} (s + 1)$.

  {\underline{Subsubcase $s = \eta (n - 1, \alpha, s)$}}.
  
  So, from (A1), $\eta (n - 1, \alpha, s + 1) = s + 1 = \eta (n - 1, \alpha,
  s) + 1$. \ \ \ \ \ \ \ {\tmstrong{(A2)}}.

  {\tmstrong{To show $G^{n - 1} (t) \subset A^{n - 1} (t)$.}} \ \ \ \ \ \ \
  {\tmstrong{(A3)}}
  
  Let $\beta \in G^{n - 1} (t) = G^{n - 1} (s + 1)$. Then $\beta \in
  \tmop{Class} (n - 1)$, $T (n - 1, \alpha, s + 1) \cap \alpha \subset \beta
  \leqslant \alpha$ and \\
  $\beta \leqslant^{n - 1} \eta (n - 1, \alpha, s + 1) [g (n - 1, \alpha,
  \beta)] + 1\underset{\text{by (A2)}}{=}(\eta (n - 1, \alpha, s) + 1) [g (n - 1,
  \alpha, \beta)] + 1$; from this and (4) of GenThmIH follows the existence of
  a sequence \\
  $(c_{\xi})_{\xi \in X} \subset \tmop{Class} (n - 1) \cap \beta$,
  $c_{\xi}\underset{\text{cof}}{\longhookrightarrow}\beta$ such that for all $\xi
  \in X$, \\
  $T (n - 1, \beta, {\color{magenta} (\eta (n - 1, \alpha, s) + 1) [g (n - 1,
  \alpha, \beta)]}) \cap \beta \subset c_{\xi}$ and \\
  $c_{\xi} \leqslant_1 \tmmathbf{(\eta (n - 1, \alpha, s) + 1)} [g (n - 1,
  \alpha, \beta)] g [(n - 1, \beta, c_{\xi})]$. \ \ \ \ \ \ \
  {\tmstrong{(A4)}}

  On the other hand, for any $\xi \in X$, $c_{\xi} \supset T (n - 1, \beta,
  {\color{magenta} (\eta (n - 1, \alpha, s) + 1) [g (n - 1, \alpha, \beta)]})
  \cap \beta =$\\
  $T (n - 1, \beta, {\color{magenta} (s + 1) [g (n - 1, \alpha, \beta)]}) \cap
  \beta\underset{\text{by (2.2.4) of GenThmIH}}{=}T (n - 1, \alpha, s + 1) \cap
  \alpha =$\\
  $T (n - 1, \alpha, \eta (n - 1, \alpha, s) + 1) \cap \alpha$. \ \ \ \ \ \ \
  {\tmstrong{(A5)}}

  Now, note that for any $\xi \in X$, by (A5) and (2.2.3) of GenThmIH, we have
  that\\
  $\tmop{Ep} (\eta (n - 1, \alpha, s) + 1) \subset \tmop{Dom} (g (n - 1,
  \alpha, c_{\xi}))$. Then\\
  $(\eta (n - 1, \alpha, s) + 1) [g (n - 1, \alpha, \beta)] [g (n - 1, \beta,
  c_{\xi})] =$\\
  $(\eta (n - 1, \alpha, s) + 1) [g (n - 1, \beta, c_{\xi}) \circ g (n - 1,
  \alpha, \beta)]\underset{\text{by (2.5.3) of GenThmIH}}{=}$\\
  $(\eta (n - 1, \alpha, s) + 1) [g (n - 1, \alpha, c_{\xi})] = \eta (n - 1,
  \alpha, s) [g (n - 1, \alpha, c_{\xi})] + 1\underset{\text{by (2.2.5) of
  GenThmIH}}{=}$\\
  $\eta (n - 1, c_{\xi}, s [g (n - 1, \alpha, c_{\xi})]) + 1$. \ \ \ \ \
  {\tmstrong{(A6)}}

  Done the previous work, from (A4), (A5) (and (2.1.2) of GenThmIH) and (A6)
  follows \\
  $\forall \xi \in X.T (n - 1, \alpha, s) \cap \alpha \subset c_{\xi}
  \leqslant \alpha \wedge c_{\xi} \leqslant_1 \eta (n - 1, \alpha, s [g (n -
  1, \alpha, c_{\xi})]) + 1$ and therefore, by (6) of GenThmIH, $\forall \xi
  \in X.T (n - 1, \alpha, s) \cap \alpha \subset c_{\xi} \leqslant \alpha
  \wedge c_{\xi} \leqslant^{n - 1} \eta (n - 1, \alpha, s [g (n - 1, \alpha,
  c_{\xi})]) + 1$. This shows that $(c_{\xi})_{\xi \in X} \subset G^{n - 1}
  (s)\underset{\text{by our cIH}}{=}A^{n - 1} (s)$, and since
  $c_{\xi}\underset{\text{cof}}{\longhookrightarrow}\beta$, then we have that \\
  $\beta \in \tmop{Lim} A^{n - 1} (s) = A (s + 1) = A (t)$. This proves
  {\tmstrong{(A3)}}.

  {\tmstrong{We now show $G^{n - 1} (t) \supset A^{n - 1} (t)$.}} \ \ \ \ \ \
  \ {\tmstrong{(B1)}}
  
  Let $\beta \in A^{n - 1} (t) = A^{n - 1} (s + 1) = \tmop{Lim} A^{n - 1}
  (s)\underset{\text{by our cIH}}{=}\tmop{Lim} G^{n - 1} (s)$. So there is a
  sequence \\
  $(c_{\xi})_{\xi \in X} \subset G^{n - 1} (s)$ such that
  $c_{\xi}\underset{\text{cof}}{\longhookrightarrow}\beta$. So for all $\xi \in
  X$, \\
  $T (n - 1, \alpha, s) \cap \alpha \subset c_{\xi} \in \tmop{Class} (n - 1)
  \cap \beta \subset \alpha$ and \\
  $c_{\xi} \leqslant^{n - 1} \eta (n - 1, \alpha, s) [g (n - 1, \alpha,
  c_{\xi})] + 1 = (\eta (n - 1, \alpha, s) + 1) [g (n - 1, \alpha, c_{\xi})]$.
  \ \ {\tmstrong{(B2)}}

  We will argue similarly as in the proof of (A3). Let $\xi_0 \in X$. Then \
  \\
  $T (n - 1, \alpha, \eta (n - 1, \alpha, s) + 1) \cap \alpha = T (n - 1,
  \alpha, \eta (n - 1, \alpha, s)) \cap \alpha = T (n - 1, \alpha, s) \cap
  \alpha \subset c_{\xi_0} < \beta$, so $\tmmathbf{T (n - 1, \alpha, s) \cap
  \alpha \subset \beta}$ and the ordinal $(\eta (n - 1, \alpha, s) + 1) [g (n
  - 1, \alpha, \beta)] \in [\beta, \beta ( +^{n - 1}))$ is well defined. Now,
  for any $\xi \in X$, $T (n - 1, \beta, {\color{magenta} (\eta (n - 1,
  \alpha, s) + 1) [g (n - 1, \alpha, \beta)]}) \cap \beta = T (n - 1, \alpha,
  {\color{magenta} (\eta (n - 1, \alpha, s) + 1)}) \cap \alpha = T (n - 1,
  \alpha, s + 1) \cap \alpha = T (n - 1, \alpha, s) \cap \alpha \subset
  c_{\xi}$, and so the ordinal $(\eta (n - 1, \alpha, s) + 1) [g (n - 1,
  \alpha, \beta)] [g (n - 1, \beta, c_{\xi})] \in [c_{\xi}, c_{\xi} ( +^{n -
  1}))$ is well defined too; moreover, using this and (2.5.3) of GenThmIH, we
  get \\
  $(\eta (n - 1, \alpha, s) + 1) [g (n - 1, \alpha, \beta)] [g (n - 1, \beta,
  c_{\xi})] =$\\
  $(\eta (n - 1, \alpha, s) + 1) [g (n - 1, \beta, c_{\xi}) \circ g (n - 1,
  \alpha, \beta)] = (\eta (n - 1, \alpha, s) + 1) [g (n - 1, \alpha,
  c_{\xi})]$. But from this and (B2) we get\\
  $\forall \xi \in X.T (n - 1, \alpha, {\color{magenta} (\eta (n - 1, \alpha,
  s) + 1) [g (n - 1, \alpha, \beta)]}) \cap \beta \subset c_{\xi} < \beta
  \wedge$\\
  $c_{\xi} \leqslant_1 (\eta (n - 1, \alpha, s) + 1) [g (n - 1, \alpha,
  c_{\xi})] = (\eta (n - 1, \alpha, s) + 1) [g (n - 1, \alpha, \beta)] [g (n -
  1, \beta, c_{\xi})]$; note these previous two lines and the fact that
  $c_{\xi}\underset{\text{cof}}{\longhookrightarrow}\beta$ means \\
  $\beta \in \tmop{Lim} \{\gamma \in \tmop{Class} (n - 1) |T (n - 1, \beta,
  {\color{magenta} (\eta (n - 1, \alpha, s) + 1) [g (n - 1, \alpha, \beta)]})
  \cap \beta \subset \gamma \wedge$\\
  \ \ \ \ \ \ \ \ \ \ \ \ \ \ \ \ \ \ \ \ \ \ \ \ \ \ \ \ \ \ \ \ \ \ \
  $\gamma \leqslant_1 {\color{magenta} (\eta (n - 1, \alpha, s) + 1) [g (n -
  1, \alpha, \beta)]} [g (n - 1, \beta, c_{\xi})] \}$. Thus, from all of the
  above and using (5.1) of GenThmIH, we conclude \\
  $T (n - 1, \alpha, s) \cap \alpha \subset \beta \leqslant \alpha \wedge$\\
  $\beta \leqslant^{n - 1} (\eta (n - 1, \alpha, s) + 1) [g (n - 1, \alpha,
  \beta)] + 1\underset{\text{by (A2)}}{=}\eta (n - 1, \alpha, s + 1) [g (n - 1,
  \alpha, \beta)] + 1 =$\\
  \ \ \ \ \ \ \ \ \ \ \ \ \ \ \ \ \ \ \ \ \ \ \ \ \ \ \ \ \ \ \ \ \ \ \ \ \ \
  \ \ \ \ \ \ \ \ \ \ \ \ \ \ \ \ \ \ \ \ \ $= \eta (n - 1, \alpha, t) [g (n -
  1, \alpha, \beta)] + 1$. \ \ \ \ \ \ \ {\tmstrong{(B3)}}.
  
  (B3) shows $\beta \in G^{n - 1}  (t)$. Hence we have shown $G^{n - 1}  (t)
  \supset A^{n - 1} (t)$.

  All the previous work shows that for $t$ a successor ordinal the theorem
  holds. Now we have to see what happens when $t$ is a limit ordinal.

  Subcase $t \in \tmop{Lim}$. We remind the reader that, by (A0), we also know
  that \\
  $t \in (\alpha ( +^{n - 2}) ( +^{n - 3}) \ldots ( +^2) ( +^1) 2, \alpha (
  +^{n - 1}))$.

  {\tmstrong{To show}} $\tmmathbf{G^{n - 1} (t) \subset A^{n - 1} (t)}$. \ \ \
  \ \ \ \ {\tmstrong{(B4)}}
  
  Let $\beta \in G^{n - 1} (t)$. So $T (n - 1, \alpha, t) \cap \alpha \subset
  \beta \leqslant^{n - 1} \eta (n - 1, \alpha, t) [g (n - 1, \alpha, \beta)] +
  1$ and \\
  $\alpha \geqslant \beta \in \tmop{Class} (n - 1)$. Then, by (4) of GenThmIH
  there exists a sequence \\
  $(c_{\xi})_{\xi \in X} \subset \tmop{Class} (n - 1) \cap \beta$, $c_{\xi}
  \underset{\text{cof}}{\longhookrightarrow} \beta$ such that for all $\xi \in
  X$,\\
  $T (n - 1, \beta, \eta (n - 1, \alpha, t)) [g (n - 1, \alpha, \beta)]) \cap
  \beta \subset c_{\xi}$ and \\
  $c_{\xi} \leqslant_1 \tmmathbf{\eta (n - 1, \alpha, t)} [g (n - 1, \alpha,
  \beta)] g [(n - 1, \beta, c_{\xi})]$. \ \ \ \ \ \ \ {\tmstrong{(B5)}}

  On the other hand, since $T (n - 1, \alpha, t) \cap \alpha \subset \beta$,
  $T (n - 1, \alpha, t)$ is finite (by (2.1.1) of GenThmIH) and $c_{\xi}
  \underset{\text{cof}}{\longhookrightarrow} \beta$, then $T (n - 1, \alpha,
  t) \cap \alpha$ is also finite and therefore we can assume without loss of
  generality that $\forall \xi \in X.T (n - 1, \alpha, t) \cap \alpha \subset
  c_{\xi}$. \ \ \ \ \ \ \ {\tmstrong{(B6)}}

  Now, notice for any $\xi \in X$, \\
  $T (n - 1, \alpha, \eta (n - 1, \alpha, t)) \cap \alpha\underset{\text{by (2.1.3)
  of GenThmIH}}{\subset}T (n - 1, \alpha, t) \cap \alpha \subset c_{\xi}$
  and therefore, by (B6) and (2.2.3) of GenThmIH, \\
  $\tmop{Ep} (\eta (n - 1, \alpha, t)) \subset \tmop{Dom} (g (n - 1, \alpha,
  c_{\xi}))$. This way, \\
  $\eta (n - 1, \alpha, t) [g (n - 1, \alpha, \beta)] [g (n - 1, \beta,
  c_{\xi})] =$\\
  $\eta (n - 1, \alpha, t) [g (n - 1, \beta, c_{\xi}) \circ g (n - 1, \alpha,
  \beta)]\underset{\text{(2.5.3) of GenThmIH}}{=}$\\
  $\eta (n - 1, \alpha, t) [g (n - 1, \alpha, c_{\xi})]$. From this, (B5) and
  (B6) we obtain\\
  $\forall \xi \in X.T (n - 1, \alpha, t) \cap \alpha \subset c_{\xi}
  \leqslant \alpha \wedge$\\
  \ \ \ \ \ \ \ \ $c_{\xi} \leqslant_1 \eta (n - 1, \alpha, t) [g (n - 1,
  \alpha, c_{\xi})] = \eta (n - 1, \alpha, l (n - 1, \alpha, t)) [g (n - 1,
  \alpha, c_{\xi})]$. \ \ \ \ \ \ \ {\tmstrong{(C1)}}

  Let's see now that $\forall \xi \in X.c_{\xi} \in\underset{s \in S (n - 1,
  \alpha, c_{\xi}, t)}{\bigcap}A^{n - 1} (s)$. \ \ \ \ \ \ \
  {\tmstrong{(C2)}}
  
  Let $\xi \in X$ be arbitrary. Take $s \in S (n - 1, \alpha, c_{\xi}, t)$.
  Then $s \in (\alpha, l (n - 1, \alpha, t))$ and then, by the definition of
  $l (n - 1, \alpha, t)$, it follows that $\eta (n - 1, \alpha, s) < \eta (n -
  1, \alpha, l (n - 1, \alpha, t))$. On the other hand, since $T (n - 1,
  \alpha, s) \cap \alpha \subset c_{\xi}$ then $T (n - 1, \alpha, \eta (n - 1,
  \alpha, s)) \cap \alpha \subset c_{\xi}$ (by (2.1.3) of GenThmIH); moreover,
  we know $T (n - 1, \alpha, t) \cap \alpha \subset c_{\xi}$, so by (2.1.4) of
  GenThmIH, \\
  $T (n - 1, \alpha, l (n - 1, \alpha, t)) \cap \alpha \subset c_{\xi}$.

  From the previous paragraph follows that, for any $\xi \in X$, the ordinals
  \\
  $\eta (n - 1, \alpha, s) [g (n - 1, \alpha, c_{\xi})], l (n - 1, \alpha, t)
  [g (n - 1, \alpha, c_{\xi})] \in (c_{\xi}, c_{\xi} ( +^{n - 1})) \subset
  \beta < \alpha$ are well defined and that $c_{\xi} < \eta (n - 1, \alpha, s)
  [g (n - 1, \alpha, c_{\xi})] + 1 \leqslant l (n - 1, \alpha, t) [g (n - 1,
  \alpha, c_{\xi})]$. This last inequalities imply, by (C1) and
  $\leqslant_1$-connectedness, that \\
  $c_{\xi} <_1 \eta (n - 1, \alpha, s) [g (n - 1, \alpha, c_{\xi})] +
  1\underset{\text{by (2.2.5) of GenThmIH}}{=}$
  
  \ $\eta (n - 1, c_{\xi}, s [g (n - 1, \alpha, c_{\xi})]) + 1$, and then (by
  (6) of GenThmIH) \\
  $c_{\xi} <^{n - 1} \eta (n - 1, c_{\xi}, s [g (n - 1, \alpha, c_{\xi})]) +
  1\underset{\text{by (2.2.5) of GenThmIH}}{=}\eta (n - 1, \alpha, s) [g (n - 1,
  \alpha, c_{\xi})] + 1$.

  The previous shows that for all $\xi \in X$ and all $s \in S (n - 1, \alpha,
  c_{\xi}, t)$, \\
  $c_{\xi} \in G^{n - 1} (s)\underset{\text{cIH}}{=}A^{n - 1} (s)$, that is, we
  have shown (C2). From (C2) and the fact that
  $c_{\xi}\underset{\text{cof}}{\longhookrightarrow}\beta$ we conclude that
  $\beta \in \tmop{Lim} \{r \leqslant \alpha |M < r \in\underset{s \in S (n
  - 1, \alpha, r, t)}{\bigcap}A^{n - 1} (s) \} = A^{n - 1} (t)$. This
  shows (B4).

  {\tmstrong{Now we show}} $\tmmathbf{G^{n - 1} (t) \supset A^{n - 1} (t)}$. \
  \ \ \ \ \ \ {\tmstrong{(C3)}}
  
  Let $\beta \in A^{n - 1} (t) = \tmop{Lim} \{r \leqslant \alpha |M < r
  \in\underset{s \in S (n - 1, \alpha, r, t)}{\bigcap}A^{n - 1} (s)
  \}\underset{\text{cIH}}{=}$\\
  \ \ \ \ \ \ \ \ \ \ \ \ \ \ \ \ \ \ \ \ \ \ \ \ $= \tmop{Lim} \{r \leqslant
  \alpha |M < r \in\underset{s \in S (n - 1, \alpha, r, t)}{\bigcap}G^{n
  - 1} (s) \}$. Then there is a sequence\\
  $(c_{\xi})_{\xi \in X}$ such that $M < c_{\xi}
  \underset{\text{cof}}{\longhookrightarrow} \beta$ and $\forall \xi \in
  X.c_{\xi} \in\underset{s \in S (n - 1, \alpha, c_{\xi},
  t)}{\bigcap}G^{n - 1} (s)$. \ \ \ \ \ \ \ {\tmstrong{(C4)}} \\
  Note that since $\forall \xi \in X.c_{\xi} \in\underset{s \in S (n - 1,
  \alpha, c_{\xi}, t)}{\bigcap}G^{n - 1} (s) \subset \tmop{Class} (n - 1)$
  and $(c_{\xi})_{\xi \in X}$ is cofinal in $\beta$, then, by proposition
  \ref{Class(n)_is_closed}, $\beta \in \tmop{Class} (n - 1)$.

  Now, for any $\xi \in X$, we know $\max (T (n - 1, \alpha, t) \cap \alpha)
  = M < c_{\xi} < \beta$; therefore, by (2.2.3) and (2.1.4) of GenThmIH, we
  have hat\\
  $t [g (n - 1, \alpha, \beta)], l (n - 1, \alpha, t) [g (n - 1, \alpha,
  \beta)] \in (\beta, \beta ( +^{n - 1}))$ and $t [g (n - 1, \alpha,
  c_{\xi})], l (n - 1, \alpha, t) [g (n - 1, \alpha, c_{\xi})] \in (c_{\xi},
  c_{\xi} ( +^{n - 1}))$ are well defined. \ \ \ \ \ \ \ {\tmstrong{(C5)}}

  Our next aim is to show that $\forall \xi \in X.c_{\xi} \leqslant_1 l (n -
  1, \alpha, t) [g (n - 1, \alpha, c_{\xi})]$. \ \ \ \ \ \ \ {\tmstrong{(C6)}}

  Let $\xi \in X$ be arbitrary. First note that, since $t \in \tmop{Lim}$,
  then $l (n - 1, \alpha, t) \in \tmop{Lim}$ (because \\
  $l (n - 1, \alpha, t) = t \in \tmop{Lim}$ or $l (n - 1, \alpha, t) < l (n -
  1, \alpha, t) + 1 \leqslant t \leqslant m (t) = m (l (n - 1, \alpha, t))$;
  the latter case implies $l (n - 1, \alpha, t) <_1 l (n - 1, \alpha, t) + 1$
  by $\leqslant_1$-connectedness and so $l (n - 1, \alpha, t) \in
  \mathbbm{P}$) and then $l (n - 1, \alpha, t) [g (n - 1, \alpha, c_{\xi})]
  \in \tmop{Lim}$ (simply because $l (n - 1, \alpha, t) [g (n - 1, \alpha,
  c_{\xi})]$ is the result of substituting epsilon numbers by other epsilon
  numbers in the cantor normal form of $l (n - 1, \alpha, t)$). Now, let $q
  \in (c_{\xi}, l (n - 1, \alpha, t) [g (n - 1, \alpha, c_{\xi})])
  \underset{\text{by (2.2) of GenThmIH}}{\subset}
  (c_{\xi}, c_{\xi} ( +^n))$ be arbitrary. Then by (2.3.1) of GenThmIH,
  $\tmop{Ep} (q) \subset \tmop{Dom} (g (n - 1, c_{\xi}, \alpha))$ and then\\
  $q [g (n - 1, c_{\xi}, \alpha)] \underset{\text{by (2.4.3) of GenThmIH}}{\in}
  (\alpha, l (n - 1, \alpha, t) [g (n - 1, \alpha,
  c_{\xi})] [g (n - 1, c_{\xi}, \alpha)]) =$ \\
  $\underset{\text{by (2.3.2) of GenThmIH}}{=}
  (\alpha, l (n - 1, \alpha, t))$. This shows \\
  $q [g (n - 1, c_{\xi}, \alpha)] \in \tmop{Im} (g (n - 1, c_{\xi}, \alpha))
  \cap (\alpha, l (n - 1, \alpha, t))\underset{\text{by (2.3.2) of GenThmIH}}{=}$\\
  $(\tmop{Dom} (g (n - 1, \alpha, c_{\xi}))) \cap (\alpha, l (n - 1, \alpha,
  t))\underset{\text{remark}
  \ref{S(i,a,r,t)=l(i,a,t)_intersection_Dom(g(i,a,r))}}{=}S (n - 1, \alpha,
  c_{\xi}, t)$, and so by (C4), \\
  $c_{\xi} \in G^{n - 1} (q [g (n - 1, c_{\xi}, \alpha)])$. Finally, observe
  the latter implies that \\
  $c_{\xi} \leqslant^{n - 1} \eta (n - 1, \alpha, q [g (n - 1, c_{\xi},
  \alpha)]) [g (n - 1, \alpha, c_{\xi})] + 1\underset{\text{(2.2.5) of
  GenThmIH}}{=}$\\
  \ \ \ \ \ \ $= \eta (n - 1, c_{\xi}, q [g (n - 1, c_{\xi}, \alpha)] [g (n -
  1, \alpha, c_{\xi})]) + 1 = \eta (n - 1, c_{\xi}, q) + 1$, which
  subsequently implies, (using $c_{\xi} < q \leqslant \eta (n - 1, c_{\xi},
  q)$ and $\leqslant_1$-connectedness) that $c_{\xi} \leqslant_1 q$.

  Last paragraph proves that, for $\xi \in X$, the sequence $(d_q)_{q \in Y}$
  defined as $d_q \assign q$ and \\
  $Y \assign (c_{\xi}, l (n - 1, \alpha, t) [g (n - 1, \alpha, c_{\xi})])$,
  satisfies $\forall q \in Y.c_{\xi} \leqslant_1 q$; but this and the fact
  that $d_q\underset{\text{cof}}{\longhookrightarrow}l (n - 1, \alpha, t) [g (n -
  1, \alpha, c_{\xi})]$ (we already showed $l (n - 1, \alpha, t) [g (n - 1,
  \alpha, c_{\xi})] \in \tmop{Lim}$) imply $c_{\xi} \leqslant_1 l (n - 1,
  \alpha, t) [g (n - 1, \alpha, c_{\xi})]$ by $\leqslant_1$-continuity. Since
  the previous was done for arbitrary $\xi \in X$, we conclude $\forall \xi
  \in X.c_{\xi} \leqslant_1 l (n - 1, \alpha, t) [g (n - 1, \alpha,
  c_{\xi})]$. This proves (C6).

  We continue with the proof of (C3).
  
  Let $\xi \in X$. Using (C6) we get \\
  $c_{\xi} \leqslant_1 l (n - 1, \alpha, t) [g (n - 1, \alpha, c_{\xi})]
  \leqslant_1 \eta (n - 1, c_{\xi}, l (n - 1, \alpha, t) [g (n - 1, \alpha,
  c_{\xi})]) =$\\
  \ \ \ \ \ \ \ \ \ \ \ \ \ \ \ \ \ \ \ \ \ \ \ \ \ \ \ \ \ \ \ \ $\underset{\text{by
  (2.5.3) of GenThmIH}}{=}$\\
  \ \ \ \ \ \ \ \ \ \ \ \ \ \ \ \ \ \ \ \ \ \ \ \ \ \ \ \ \ \ \ \ \ \ \ \ \ \
  \ \ \ \ $= \eta (n - 1, c_{\xi}, l (n - 1, \alpha, t) [g (n - 1, \beta,
  c_{\xi}) \circ g (n - 1, \alpha, \beta)]) =$\\
  \ \ \ \ \ \ \ \ \ \ \ \ \ \ \ \ \ \ \ \ \ \ \ \ \ \ \ \ \ \ \ \ \ \ \ \ \ \
  \ \ \ \ $= \eta (n - 1, c_{\xi}, l (n - 1, \alpha, t) [g (n - 1, \alpha,
  \beta)] [g (n - 1, \beta, c_{\xi})]) =$\\
  \ \ \ \ \ \ \ \ \ \ \ \ \ \ \ \ \ \ \ \ \ \ \ \ \ \ \ \ \ \ \ \ $\underset{\text{by
  (2.2.5) of GenThmIH}}{=}$\\
  \ \ \ \ \ \ \ \ \ \ \ \ \ \ \ \ \ \ \ \ \ \ \ \ \ \ \ \ \ \ \ \ \ \ \ \ \ \
  \ \ \ \ $= \eta (n - 1, \beta, l (n - 1, \alpha, t) [g (n - 1, \alpha,
  \beta)]) [g (n - 1, \beta, c_{\xi})]$;\\
  therefore, by $\leqslant_1$-transitivity, $c_{\xi} \leqslant_1 \eta (n - 1,
  \beta, l (n - 1, \alpha, t) [g (n - 1, \alpha, \beta)]) [g (n - 1, \beta,
  c_{\xi})]$. But since this was done for arbitrary $\xi \in X$, we have
  proved \\
  $\forall \xi \in X.c_{\xi} \leqslant_1 \eta (n - 1, \beta, l (n - 1, \alpha,
  t) [g (n - 1, \alpha, \beta)]) [g (n - 1, \beta, c_{\xi})]$. \ \ \ \ \ \ \
  {\tmstrong{(C7)}}

  Finally, from (C7), the fact that $c_{\xi}
  \underset{\text{cof}}{\longhookrightarrow} \beta$ and (5) of GenThmIH follow
  that\\
  $\beta \leqslant^{n - 1} \eta (n - 1, \beta, l (n - 1, \alpha, t) [g (n - 1,
  \alpha, \beta)]) + 1\underset{\text{by (2.2.5) of GenThmIH}}{=}$\\
  \ $= \eta (n - 1, \alpha, l (n - 1, \alpha, t)) [g (n - 1, \alpha, \beta)]
  + 1\underset{\text{by proposition }
  \ref{eta(i,a,l(i,a,t))=eta(i,a,t)_if_t_in_(a(+)...(+^1)2,a(+^i)}}{=}$\\
  \ $= \eta (n - 1, \alpha, t) [g (n - 1, \alpha, \beta)] + 1$. This and (C5)
  show that $\beta \in G^{n - 1} (t)$. But the previous we have done for
  arbitrary $\beta \in A^{n - 1} (t)$, so we have proved $A^{n - 1} (t)
  \subset G^{n - 1} (t)$, i.e., we have proved (C3).
\end{proof}

\subsection{Uncountable regular ordinals and the $A^{n - 1} (t)$ sets}

\begin{proposition}
  \label{A^n-1(t)_club_in_kapa}Let $\kappa$ be an uncountable regular ordinal
  ($\kappa \in \tmop{Class} (n - 1)$ by proposition
  \ref{regular_ordinal_in_Class(n-1)}). Then $\forall t \in [\kappa, \kappa (
  +^{n - 1}))$, $A^{n - 1} (t)$ is club in $\kappa$.
\end{proposition}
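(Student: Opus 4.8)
The natural plan is to prove the statement by induction on $t \in [\kappa, \kappa(+^{n-1}))$, following the three clauses of the recursive definition of $A^{n-1}$. This is legitimate because for $t$ in this interval the definition of $A^{n-1}(t)$ only refers to values $A^{n-1}(s)$ with $s \in [\kappa, \kappa(+^{n-1}))$ and $s < t$: indeed $\eta(n-1,\kappa,s), l(n-1,\kappa,s) \in [\kappa,\kappa(+^{n-1}))$ for $s$ in the interval, and $S(n-1,\kappa,r,t) \subset (\kappa, l(n-1,\kappa,t)) \subset [\kappa,t)$. Throughout I use GenThmIH~(0) --- $\tmop{Class}(n-1)$ is club in $\kappa$, hence $\kappa \in \tmop{Class}(n-1)$ by Proposition~\ref{regular_ordinal_in_Class(n-1)} --- together with two routine facts about a regular uncountable $\kappa$: for any class $X$, $\tmop{Lim} X$ is closed; and if $C$ is club in $\kappa$ then so is $(\tmop{Lim} C) \cap \kappa$. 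I also abbreviate $M := \max(T(n-1,\kappa,t) \cap \kappa)$ (or $-\infty$ when this set is empty); since $T(n-1,\kappa,t)\cap\kappa$ is finite by GenThmIH~(2.1.1) and $\kappa$ is regular uncountable, $M < \kappa$.

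For $t \in [\kappa, \kappa(+^{n-2})\ldots(+^1)2]$, Proposition~\ref{A^n-1_constant_in_[alpha,alpha(+)...(+)2]} gives $A^{n-1}(t) = (\tmop{Lim}\tmop{Class}(n-1)) \cap (M, \kappa+1)$, which is club in $\kappa$ because $(\tmop{Lim}\tmop{Class}(n-1)) \cap \kappa$ is and we have only removed the bounded piece below $M$. For $t = s+1$: if $s < \eta(n-1,\kappa,s)$ then $A^{n-1}(s+1) = A^{n-1}(s)$, club in $\kappa$ by the inductive hypothesis; if $s = \eta(n-1,\kappa,s)$ then $A^{n-1}(s+1) = \tmop{Lim} A^{n-1}(s)$, and since $A^{n-1}(s)$ is club in $\kappa$ by the inductive hypothesis, $(\tmop{Lim} A^{n-1}(s)) \cap \kappa$ is club in $\kappa$.

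The heart of the matter is the limit case; by the previous paragraph we may assume $t > \kappa(+^{n-2})\ldots(+^1)2$, so $A^{n-1}(t) = \tmop{Lim} D$ with $D := \{r \le \kappa \mid M < r \in \bigcap_{s \in S(n-1,\kappa,r,t)} A^{n-1}(s)\}$. Closedness of $\tmop{Lim} D$ below $\kappa$ is automatic, so the task is to show $D$ is unbounded in $\kappa$. For $s \in (\kappa, l(n-1,\kappa,t))$ put $\nu(s) := \max(T(n-1,\kappa,s)\cap\kappa)$ (an ordinal $<\kappa$ by GenThmIH~(2.1.1), or $0$ if that set is empty); by GenThmIH~(2.2.3) the condition ``$s \in S(n-1,\kappa,r,t)$'' is, for $r \le \kappa$, the conjunction of ``$s < l(n-1,\kappa,t)$'' and ``$\nu(s) < r$''. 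For $\nu < \kappa$ let $B_\nu := \bigcap\{A^{n-1}(s) \mid s \in (\kappa, l(n-1,\kappa,t)),\ \nu(s) = \nu\}$ (the whole class $\tmop{OR}$ if no such $s$ exists). The key claim is that \emph{each $B_\nu$ is club in $\kappa$}: using the Generalized Hierarchy Theorem~\ref{G^n-1(t)=A^n-1(t)_Gen_Hrchy_thm} to rewrite $A^{n-1}(s) = G^{n-1}(s)$, membership $r \in A^{n-1}(s)$ becomes --- for $r \in \tmop{Class}(n-1) \cap (\nu,\kappa)$, so that $T(n-1,\kappa,s)\cap\kappa \subset r$ --- simply ``$r \leqslant_1 \eta(n-1,\kappa,s)[g(n-1,\kappa,r)]+1$''; and since $l(n-1,\kappa,t)$ is by definition the least $e \in (\kappa,t]$ with $m(e) = \eta(n-1,\kappa,t)$, one has $\eta(n-1,\kappa,s) < \eta(n-1,\kappa,t)$ for all relevant $s$, so by monotonicity of substitution, $\leqslant_1$-connectedness and $\leqslant_1$-continuity the whole family of constraints defining $B_\nu$ collapses to a single $\leqslant_1$-statement about one ordinal $\theta_\nu \le \eta(n-1,\kappa,t)$; one then identifies $\theta_\nu$ with $\eta(n-1,\kappa,s_\nu)$ for a suitable $s_\nu < t$ and reads off, via GenThmIH~(2.2.5),(2.5.3), that $B_\nu$ agrees on a tail of $\kappa$ with $A^{n-1}(s_\nu)$, club by the inductive hypothesis. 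Granting the claim, $D$ contains the diagonal intersection $\{r < \kappa \mid M < r \wedge \forall \nu < r.\, r \in B_\nu\}$ --- because for $r$ in that set and $s \in S(n-1,\kappa,r,t)$ we have $\nu(s) < r$, hence $r \in B_{\nu(s)} \subset A^{n-1}(s)$ --- and a diagonal intersection of clubs in a regular uncountable $\kappa$ is club in $\kappa$ (each proper initial intersection being an intersection of $<\kappa$ clubs). Hence $D$ is unbounded in $\kappa$ and $A^{n-1}(t) = \tmop{Lim} D$ is club in $\kappa$.

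The main obstacle is precisely the key claim that each $B_\nu$ is club in $\kappa$. A priori $B_\nu$ is an intersection of possibly $\kappa$-many clubs, since the epsilon numbers available in $\tmop{Dom} g(n-1,\kappa,\cdot)$ --- and hence the admissible $s < l(n-1,\kappa,t)$ with a given value $\nu(s)$ --- may climb up to nearly $\kappa(+^{n-1})$; one cannot conclude club-ness by brute force, and must genuinely use the monotonicity of $s \mapsto \eta(n-1,\kappa,s)$ and the strict inequality $\eta(n-1,\kappa,s) < \eta(n-1,\kappa,t)$ (for $s < l(n-1,\kappa,t)$) to collapse all those constraints to one, thereby tying $B_\nu$ back to the inductive hypothesis. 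Getting this collapse right involves a careful case distinction according to whether the relevant supremum $\theta_\nu$ of $\eta$-values is attained, and --- in the unattained case --- whether $m$ ``jumps'' at $l(n-1,\kappa,t)$ (so that $\theta_\nu < \eta(n-1,\kappa,t)$, keeping us strictly below $t$); this uses the finer properties of $\eta$, $l$ and $m$ from~\cite{GarciaCornejo1} and from Proposition~\ref{eta(i,a,l(i,a,t))=eta(i,a,t)_if_t_in_(a(+)...(+^1)2,a(+^i)}. The remaining points --- the degenerate sub-cases ($S(n-1,\kappa,r,t) = \emptyset$, or $l(n-1,\kappa,t) = t$) and the checks that $\tmop{Ep}(\eta(n-1,\kappa,t)) \subset \tmop{Dom} g(n-1,\kappa,r)$ so that the substitutions above make sense (via GenThmIH~(2.2.3),(2.2.5) and clause~(2.1.3)) --- are routine.
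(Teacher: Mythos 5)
Your overall architecture is sound: the induction on $t$, the base and successor cases, the reduction of the limit case to unboundedness of $D$ (closedness of $\tmop{Lim}D$ being automatic), and the diagonal intersection over the parameters $\nu(s)=\max (T(n-1,\kappa,s)\cap\kappa)$ are all legitimate, and the diagonal intersection is a reasonable substitute for the paper's explicit $\omega$-recursion that produces points of $Y=D$. But the argument stands or falls with your ``key claim'' that each $B_\nu$ is club in $\kappa$, and that claim is not proved. Your plan is to collapse the family of constraints $r\leqslant_1 \eta(n-1,\kappa,s)[g(n-1,\kappa,r)]+1$ (over all $s<l(n-1,\kappa,t)$ with $\nu(s)=\nu$) into a single constraint attached to $\theta_\nu:=\sup_s\eta(n-1,\kappa,s)$ and then to recognize that constraint as defining $A^{n-1}(s_\nu)$ for some $s_\nu<t$. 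As you half-acknowledge, this does not go through as stated: the substitution $x\longmapsto x[g(n-1,\kappa,r)]$ is strictly increasing but not continuous, so when the supremum is not attained there is no reason that $\sup_s \eta(n-1,\kappa,s)[g(n-1,\kappa,r)]$ equals $\theta_\nu[g(n-1,\kappa,r)]$; moreover $\theta_\nu$ need not be of the form $\eta(n-1,\kappa,s_\nu)$ with $s_\nu<t$ (for instance when the relevant $s$ are cofinal in $l(n-1,\kappa,t)=t$), and in that situation the inductive hypothesis gives you nothing to compare $B_\nu$ with. None of these cases is worked out, so the limit case is left open exactly at its crucial point.

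What you are missing is a much simpler observation, which is the heart of the paper's proof (its step (3*)): your fear that $B_\nu$ is an intersection of $\kappa$-many clubs is unfounded. Fix any $r\in\tmop{Class}(n-1)\cap\kappa$ with $r>\nu$. Every $s\in(\kappa,l(n-1,\kappa,t))$ with $T(n-1,\kappa,s)\cap\kappa\subset r$ satisfies, by remark \ref{S(i,a,r,t)=l(i,a,t)_intersection_Dom(g(i,a,r))} and (2.3) of GenThmIH, $\tmop{Ep}(s)\subset\tmop{Dom}g(n-1,\kappa,r)=\tmop{Im}g(n-1,r,\kappa)$, hence $s=q[g(n-1,r,\kappa)]$ for a unique $q\in(r,r(+^{n-1}))$; since $r(+^{n-1})<\kappa$ (because $\tmop{Class}(n-1)$ is club in $\kappa$) and $\kappa$ is a cardinal, there are fewer than $\kappa$ such $s$. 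Consequently each $B_\nu$ (indeed each $B_r:=\bigcap_{s\in S(n-1,\kappa,r,t)}A^{n-1}(s)$) is an intersection of fewer than $\kappa$ sets which are club in $\kappa$ by the inductive hypothesis, hence club in $\kappa$ by {\cite{GarciaCornejo1}} proposition \ref{Intersection_club_classes}; no collapsing of constraints, no appeal to $\leqslant_1$-continuity, and no case distinction on whether suprema are attained is needed. With this repair your diagonal-intersection finish (or, equivalently, the paper's recursion $r(l+1)\in\bigcap_{z\in R\cap r(l)}B_z$ followed by taking the supremum) completes the proof.
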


\begin{proof}
  We prove the claim by induction on the interval $[\kappa, \kappa ( +^{n -
  1}))$.

  {\tmstrong{Case $t = \kappa$.}}
  
  Then $T (n - 1, \kappa, t) \cap \kappa \underset{\text{definition of } T (n - 1, \kappa, \kappa)}{=} \emptyset$. So \\
  $A^{n - 1} (t) = (\tmop{Lim} \tmop{Class} (n - 1)) \cap (- \infty, \kappa +
  1) = \tmop{Lim} \tmop{Class} (n - 1)$ is club in $\kappa$ because \\
  $\tmop{Class} (n - 1)$ is club in $\kappa$ (by GenThmIH (0)) and because of
  {\cite{GarciaCornejo1}} proposition \ref{X_club_implies_LimX_club}.

  Our induction hypothesis is
  
  $\forall s \in [\kappa, \kappa ( +^{n - 1})) \cap t.A^{n - 1} (s)$ is club
  in $\kappa$. \ \ \ \ \ \ \ {\tmstrong{(IH)}}

  {\tmstrong{Case $t = l + 1 \in [\kappa, \kappa ( +^{n - 1}))$.}}
  
  Then $A^{n - 1} (t) = A^{n - 1} (l + 1) = \left\{ \begin{array}{l}
    A^{n - 1} (l) \text{ if } l < \eta (n - 1, \kappa, l)\\
    \\
    \tmop{Lim} A^{n - 1} (l) \text{ otherwise}
  \end{array} \right.$; this way, by our (IH) and {\cite{GarciaCornejo1}}
  proposition \ref{X_club_implies_LimX_club}, $A^{n - 1} (t)$ is club in
  $\kappa$.

  {\tmstrong{Case $t \in [\kappa, \kappa ( +^{n - 1})) \cap \tmop{Lim}$.}}
  
  By definition

  $M \assign \left\{ \begin{array}{l}
    \max (T (n - 1, \kappa, t) \cap \kappa) \text{ iff } T (n - 1,
    \kappa, t) \cap \kappa \neq \emptyset\\
    - \infty \text{ otherwise}
  \end{array} \right.$

  and

  $A^{n - 1} (t) = \left\{ \begin{array}{l}
    (\tmop{Lim} \tmop{Class} (n - 1)) \cap (M, \kappa + 1) \text{ iff } t
    \in [\kappa, \kappa ( +^{n - 2}) \ldots ( +^2) ( +^1) 2]\\
    \\
    \tmop{Lim} \{r \leqslant \kappa |M < r \in \bigcap_{s \in S (n - 1,
    \kappa, r, t)} A^{n - 1} (s)\}  \text{ otherwise}
  \end{array} \right.$

  If $t \in [\kappa, \kappa ( +^{n - 2}) \ldots ( +^2) ( +^1) 2]$, then \\
  $A^{n - 1} (t) = (\tmop{Lim} \tmop{Class} (n - 1)) \cap (M, \kappa + 1)$ is
  club in $\kappa$ because of exactly the same reasons as in the case $t =
  \kappa$.

  So from now on we suppose $t \in (\kappa ( +^{n - 2}) \ldots ( +^2) ( +^1)
  2, \kappa ( +^{n - 1}))$.

  First we make the following four observations:

  - It is enough to show that $Y \assign \{r \leqslant \kappa |M < r \in
  \bigcap_{s \in S (n - 1, \kappa, r, t)} A^{n - 1} (s)\}$ is club in $\kappa$
  because, knowing this, we conclude Lim$Y = A^{n - 1} (t)$ is club in
  $\kappa$ by {\cite{GarciaCornejo1}} proposition
  \ref{X_club_implies_LimX_club}. Moreover, note that as a consequence of
  theorem \ref{G^n-1(t)=A^n-1(t)_Gen_Hrchy_thm}, $\forall z \in \tmop{Dom}
  A^{n - 1} .A^{n - 1} (z) \subset \tmop{Class} (n - 1)$ and therefore $Y =
  \{r \in \tmop{Class} (n - 1) \cap (\kappa + 1) |M < r \in \bigcap_{s \in S
  (n - 1, \kappa, r, t)} A^{n - 1} (s)\}$. \ \ \ \ \ \ \ {\tmstrong{(0*)}}

  - For $r \in \tmop{Class} (n - 1) \cap \kappa$, \\
  $\{q \in (\kappa, l (n - 1, \kappa, t)) | \tmop{Ep} (q) \subset (\tmop{Im} g
  (n - 1, r, \kappa))\} \underset{\text{by (2.3.2) of GenThmIH}}{=}$\\
  $\{q \in (\kappa, l (n - 1, \kappa, t)) | \tmop{Ep} (q) \subset (\tmop{Dom}
  g (n - 1, \kappa, r))\}\underset{\text{by remark }
  \ref{S(i,a,r,t)=l(i,a,t)_intersection_Dom(g(i,a,r))}}{=}$\\
  $S (n - 1, \kappa, r, t) \underset{\text{by remark } \ref{S(i,a,r,t)=l(i,a,t)_intersection_Dom(g(i,a,r))}}{\subset} l (n - 1,
  \kappa, t) \underset{\text{by remark }\ref{S(i,a,r,t)=l(i,a,t)_intersection_Dom(g(i,a,r))}}{\leqslant} t$. \ \ \
  \ \ \ \ {\tmstrong{(1*)}}

  - By (1*) and our (IH), \\
  $\forall r \in \tmop{Class} (n - 1) \cap \kappa \forall s \in S (n - 1,
  \kappa, r, t)$, $A^{n - 1} (s)$ is club in $\kappa$. \ \ {\tmstrong{(2*)}}

  - Let $r \in \tmop{Class} (n - 1) \cap \kappa$. By (0) of GenThmIH,
  $\tmop{Class} (n - 1)$ is club in $\kappa$ and consequently $r (+^{n - 1})
  \in \tmop{Class} (n - 1) \cap \kappa$; moreover, by proposition
  \ref{regular_ordinal_in_Class(n-1)}, $\kappa \in \tmop{Class} (n - 1)$ and
  subsequently, \\
  $r < r (+^{n - 1}) < \kappa < \kappa (+^{n - 1}$). Consider the function
  $P_r : r (+^{n - 1}) \longrightarrow \kappa (+^{n - 1}$) defined as $P_r (x)
  \assign x [g (n - 1, r, \kappa)]$. $P_r$ is well defined because of (2.3.1)
  of GenThmIH. We now show that $S (n - 1, \kappa, r, t) \subset \tmop{Im}
  P_r$. This is easy: Take $q \in S (n - 1, \kappa, r, t)$. Then, by (1*), \\
  $\tmop{Ep} (q) \subset \tmop{Dom} (g (n - 1, \kappa, r))$ and therefore $q
  [g (n - 1, \kappa, r)]$ is well defined; but then, by (2.3.3) and (2.3.2) of
  GenThmIH, $q [g (n - 1, \kappa, r)] \in r (+^{n - 1}$) and $q = q [g (n - 1,
  \kappa, r)] [g (n - 1, r, \kappa)] = P_r (q [g (n - 1, \kappa, r)])$. This
  shows $S (n - 1, \kappa, r, t) \subset \tmop{Im} P_r$ as we assured.
  Finally, since $P_r$ is a strictly increasing function (so it is injective),
  then \\
  $|S (n - 1, \kappa, r, t) | \leqslant | \tmop{Im} P_r | = |r ( +^{n - 1}) |
  \underset{\text{because } \kappa \text{ is a cardinal}}{<} \kappa$. \ \ \ \ \ \ \ {\tmstrong{(3*)}}

  After the previous observations, we continue with the proof of the theorem,
  that is, as already said in (0*), we want to show that $Y$ is club in
  $\kappa$.

  {\tmstrong{We show first that $Y$ is $\kappa$-closed.}}
  
  Let $(r'_i)_{i \in I'} \subset Y \cap \kappa$ be such that $|I' | < \kappa$
  and $r'_i\underset{\text{cof}}{\longhookrightarrow}\rho$ for some $\rho <
  \kappa$. To show that $\rho \in Y$.
  
  Since $Y \subset \tmop{Class} (n - 1)$ and by (0) of GenThmIH $\tmop{Class}
  (n - 1)$ is club in $\kappa$, then \\
  $\rho \in \tmop{Class} (n - 1)$. Now consider $s \in S (n - 1, \kappa, \rho,
  t) =$\\
  $\{d \in (\kappa, l (n - 1, \kappa, t)) \subset (\kappa, \kappa (+^{n - 1}))
  | T (n - 1, \kappa, d) \cap \kappa \subset \rho\}$. Since by (2.1.1) of
  GenThmIH $T (n - 1, \kappa, s) \cap \kappa$ is finite and
  $r'_i\underset{\text{cof}}{\longhookrightarrow}\rho$, then there exists a
  subsequence $(r_i)_{i \in I}$ of the sequence $(r'_i)_{i \in I'}$, such that
  $r_i\underset{\text{cof}}{\longhookrightarrow}\rho$, $\forall i \in I.T (n - 1,
  \kappa, s) \cap \kappa \subset r_i$ and $|I| \leqslant |I' | < \kappa$; that
  is, \\
  $\forall i \in I.s \in S (n - 1, \kappa, r_i, t)$. This and the fact that
  $(r_i)_{i \in I} \subset Y$ means $\forall i \in I.r_i \in A^{n - 1} (s)$.
  But by (2*) $A^{n - 1} (s)$ is club in $\kappa$, so $\rho = \sup \{r_i |i
  \in I\} \in A^{n - 1} (s)$. Our previous work shows that, for arbitrary $s
  \in S (n - 1, \kappa, \rho, t)$, $\rho \in A^{n - 1} (s)$, i.e., $\rho \in
  \bigcap_{s \in S (n - 1, \kappa, \rho, t)} A^{n - 1} (s)$. From this it
  follows that $\rho \in Y$. Hence $Y$ is $\kappa$-closed.

  {\tmstrong{Now our aim is to to prove that $Y$ is unbounded in $\kappa$.}} \
  \ \ \ \ \ \ {\tmstrong{(b0)}}
  
  We do first the following:

  Let $R \assign \tmop{Class} (n - 1) \cap \kappa$ and $B_r \assign \bigcap_{s
  \in S (n - 1, \kappa, r, t)} A^{n - 1} (s)$ for any $r \in R$.

  Let's show first that $\forall \xi \in \tmop{Lim} R \cap \kappa$.$\bigcap_{r
  \in R \cap \xi} B_r = B_{\xi}$. \ \ \ \ \ \ \ {\tmstrong{(b1)}}
  
  Proof of (b1):
  
  Let $\xi \in \tmop{Lim} R \cap \kappa$.

  {\underline{We show (b1) contention $'' \subset''$.}}
  
  Let $x \in \bigcap_{r \in R \cap \xi} B_r = \bigcap_{r \in R \cap \xi}  (
  \bigcap_{s \in S (n - 1, \kappa, r, t)} A^{n - 1} (s))$ be arbitrary. This
  means \\
  $\forall r \in R \cap \xi . \forall s \in S (n - 1, \kappa, r, t) .x \in
  A^{n - 1} (s)$. \ \ \ \ \ \ \ {\tmstrong{(b2)}}.
  
  On the other hand, let $z \in S (n - 1, \kappa, \xi, t)$ be arbitrary. By
  definition of $S (n - 1, \kappa, \xi, t)$, this means $z \in (\kappa, l (n -
  1, \kappa, t))$ and $T (n - 1, \kappa, z) \cap \kappa \subset \xi$. But $T
  (n - 1, \kappa, d) \cap \kappa$ is a finite set (by (2.1.1) of GenThmIH),
  so, since $\xi \in \tmop{Lim} R$, there exists $r \in R$ such that $T (n -
  1, \kappa, z) \cap \kappa \subset r < \xi$. This means $z \in S (n - 1,
  \kappa, r, t)$, and then, by (b2), $x \in A^{n - 1} (z)$. Note the previous
  shows $\forall z \in S (n - 1, \kappa, \xi, t) .x \in A^{n - 1} (z)$, i.e.,
  $x \in \bigcap_{s \in S (n - 1, \kappa, \xi, t)} A^{n - 1} (s) = B_{\xi}$.
  Finally, since this was done for arbitrary $x \in \bigcap_{r \in R \cap \xi}
  B_r$, then we have actually shown that $\bigcap_{r \in R \cap \xi} B_r
  \subset B_{\xi}$.

  {\underline{Now we show (b1) contention $'' \supset''$.}}
  
  Let $x \in B_{\xi} = \bigcap_{s \in S (n - 1, \kappa, \xi, t)} A^{n - 1}
  (s)$ be arbitrary. This means \\
  $\forall s \in S (n - 1, \kappa, \xi, t) .x \in A^{n - 1} (s)$. \ \ \ \ \
  {\tmstrong{(b4)}}

  On the other hand, let $r \in R \cap \xi$ be arbitrary. Take $z \in S (n -
  1, \kappa, r, t)$. By definition, this means $z \in (\kappa, l (n - 1,
  \kappa, t))$ and $T (n - 1, \kappa, z) \cap \kappa \subset r$. But since $r
  < \xi$, this implies that actually $z \in S (n - 1, \kappa, \xi, t)$, which,
  together with (b4), implies $x \in A^{n - 1} (z)$. Note we have shown \\
  $\forall z \in S (n - 1, \kappa, r, t) .x \in A^{n - 1} (z)$, i.e, $x \in
  \bigcap_{z \in S (n - 1, \kappa, r, t)} A^{n - 1} (z) = B_r$; moreover, we
  have shown this for arbitrary $r \in R \cap \xi$, i.e., we have shown $x \in
  \bigcap_{r \in R \cap \xi} B_r$. Finally, since this was done for arbitrary
  $x \in B_{\xi}$, we have shown $\bigcap_{r \in R \cap \xi} B_r \supset
  B_{\xi}$.
  
  This concludes the proof of (b1).

  Now we show that $X \assign \{r \in R | M < r \in B_r \}$ is unbounded in
  $\kappa$. \ \ \ \ \ \ \ {\tmstrong{(c0)}}.

  By (2*), (3*) and {\cite{GarciaCornejo1}} proposition
  \ref{Intersection_club_classes} we have that for any $r \in R$, $B_r$ is
  club in $\kappa$. \ \ \ \ \ \ \ {\tmstrong{(c1)}}

  Let $\delta \in \kappa$ be arbitrary. Moreover, let $a \assign \min R$. We
  define by recursion the function \\
  $r : \omega \longrightarrow R$ as:
  
  $r (0) \assign \min \{s \in \kappa \cap B_a | \delta < s > M\}$. Note $r
  (0)$ exists because of (c1).
  
  Suppose we have defined $r (l) \in R = \tmop{Class} (n - 1) \cap \kappa$,
  for $l \in \omega$. \ \ \ \ \ \ \ {\tmstrong{(rIH)}}
  
  Note that $|R \cap r (l) | \leqslant r (l) \underset{\text{by (rIH)}}{<}\kappa$, 
  and then, by (c1) and {\cite{GarciaCornejo1}}
  proposition \ref{Intersection_club_classes} it follows that $\underset{z \in
  R \cap r (l)}{\bigcap}B_z$ is club in $\kappa$. So we define $r (l + 1)
  \assign \min \{s \in \kappa \cap\underset{z \in R \cap r
  (l)}{\bigcap}B_z | r (l) < s\}$.

  Consider $\rho \assign \sup \{r (l) | l \in \omega\}$.
  
  First note that, by construction, $(r (l))_{l \in \omega}$ is a strictly
  increasing sequence of ordinals in $R$ (because any $B_s$ is club in
  $\kappa$ and $B_s \subset \tmop{Class} (n - 1)$) and so $\rho \in
  \tmop{Class} (n - 1) \cap (\tmop{Lim} R)$. Moreover, since $\kappa$ is an
  uncountable regular ordinal and $r : \omega \longrightarrow \kappa$, then
  $\rho < \kappa$. Summarizing all these observations: $\rho \in R \cap
  (\tmop{Lim} R)$ \ \ \ \ \ \ {\tmstrong{(c2)}}

  Now we show $M < \rho \wedge \delta < \rho \in B_{\rho}$. \ \ \ \ \ \ \
  {\tmstrong{(c3)}}

  That $\delta < \rho > M$ is clear from the definition of the function $r$.
  Now, let $\gamma \in R \cap \rho$ be arbitrary. Then there exists $l \in
  \omega$ such that $r (l) > \gamma$. Now, by the definition of our function
  $r$, \\
  $r (l + 1) \in\underset{z \in R \cap r (l)}{\bigcap}B_z$; but this
  implies that the sequence $(r (s))_{s \in [l + 1, \omega)} \subset
  B_{\gamma}$, and since \\
  $\rho = \sup \{r (s) | s \in [l + 1, \omega)\}$ and $B_{\gamma}$ is club in
  $\kappa$, then $\rho \in B_{\gamma}$. Finally, since this was done for
  arbitrary $\gamma \in R \cap \rho$, then we have actually shown that $\rho
  \in\underset{\gamma \in R \cap \rho}{\bigcap}B_{\gamma}\underset{\text{by
  (b1) and (c2)}}{=}B_{\rho}$. This concludes the proof of (c3).

  Finally, observe that (c2) and (c3) have actually shown that $\forall \delta
  \in \kappa \exists \rho \in R. \delta < \rho \in X \subset R \subset
  \kappa$. Therefore (c0) holds. But $X \underset{\text{by (0)}}{=} Y
  \cap \kappa \subset Y$. So $Y$ is unbounded in $\kappa$. This has proven
  (b0).
\end{proof}

\begin{proposition}
  \label{alpha<less>^n-1alpha(+^n-1)_iff_alpha_in_intersection}
  
  $\forall \alpha \in \tmop{Class} (n - 1)$.$\alpha <_1 \alpha (+^{n - 1})
  \Longleftrightarrow \alpha <^{n - 1} \alpha (+^{n - 1})$ \\ 
  \hspace*{79mm}$\Longleftrightarrow
  \alpha \in \bigcap_{t \in [\alpha, \alpha ( +^{n - 1}))} A^{n - 1} (t)$.
\end{proposition}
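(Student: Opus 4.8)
The plan is to show that, for a fixed $\alpha \in \tmop{Class}(n-1)$, all three assertions are equivalent to the single statement $(\star)$: $\forall t \in [\alpha, \alpha(+^{n-1})).\ \alpha <_1 \eta(n-1,\alpha,t)+1$. Throughout I would use GenThmIH (0) (so $\tmop{Class}(n-1)$ is closed unbounded and $\alpha(+^{n-1}) \in \tmop{Class}(n-1) \subset \mathbbm{E}$ is a limit ordinal), the proposition above stating $\eta(n-1,\alpha,t) \in (\alpha,\alpha(+^{n-1}))$, and $\eta(n-1,\alpha,t) \geqslant t$ from proposition \ref{eta(k,alpha,t)_is_well_defined}(3); in particular $\alpha < \eta(n-1,\alpha,t)+1 < \alpha(+^{n-1})$ always. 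I would first observe that it suffices to treat $\alpha \geqslant C^{n-1}_\omega$, so that the sets $A^{n-1}(t)$ occurring in the statement are defined: for smaller $\alpha$ the set $\tmop{Class}(n-1) \cap \alpha$ is finite, hence $\alpha \nin \tmop{Lim}\tmop{Class}(n-1)$, so $\alpha \nin \tmop{Class}(n)$ by proposition \ref{Class(n)_first_properties}, and then the first assertion fails (hence, via GenThmIH (3), the second).

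The main step is to rewrite membership of $\alpha$ in the sets $A^{n-1}(t)$ in terms of $\leqslant^{n-1}$. Fixing $t \in [\alpha,\alpha(+^{n-1}))$, the ordinal $\alpha$ is the unique element of $\tmop{Class}(n-1)$ whose $(+^{n-1})$-interval contains $t$, so by the Generalized Hierarchy Theorem \ref{G^n-1(t)=A^n-1(t)_Gen_Hrchy_thm} we have $A^{n-1}(t) = G^{n-1}(t)$. Substituting $\beta = \alpha$ into the defining condition of $G^{n-1}(t)$, the inclusions $T(n-1,\alpha,t)\cap\alpha \subset \alpha \leqslant \alpha$ are trivial, and by GenThmIH (2.2.1), (2.2.3), (2.2.5) the function $g(n-1,\alpha,\alpha)$ is the identity on a domain containing $\tmop{Ep}(\eta(n-1,\alpha,t))$, so $\eta(n-1,\alpha,t)[g(n-1,\alpha,\alpha)] = \eta(n-1,\alpha,t)$. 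Hence $\alpha \in A^{n-1}(t) \Longleftrightarrow \alpha \leqslant^{n-1} \eta(n-1,\alpha,t)+1$, and then GenThmIH (3) (forward direction) together with GenThmIH (6) (converse) upgrade this to $\alpha \in A^{n-1}(t) \Longleftrightarrow \alpha <_1 \eta(n-1,\alpha,t)+1$. Intersecting over all $t$, the third assertion is exactly $(\star)$.

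It then remains to close the loop with the first two assertions. For $\alpha <_1 \alpha(+^{n-1}) \Longleftrightarrow (\star)$: the forward implication is immediate from $\leqslant_1$-connectedness, since $\alpha < \eta(n-1,\alpha,t)+1 < \alpha(+^{n-1})$; for the converse, from $(\star)$ and $\alpha < t \leqslant \eta(n-1,\alpha,t) < \eta(n-1,\alpha,t)+1$ one gets $\alpha <_1 t$ for every $t \in (\alpha,\alpha(+^{n-1}))$ by $\leqslant_1$-connectedness, and $\leqslant_1$-continuity at the limit $\alpha(+^{n-1})$ then gives $\alpha <_1 \alpha(+^{n-1})$. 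Finally, for $\alpha <_1 \alpha(+^{n-1}) \Longleftrightarrow \alpha <^{n-1} \alpha(+^{n-1})$: the implication $\Longleftarrow$ is GenThmIH (3) applied with the endpoint $t = \alpha(+^{n-1})$; for $\Longrightarrow$, from $(\star)$ (now known to hold) and GenThmIH (6) we get $\alpha \leqslant^{n-1} \eta(n-1,\alpha,t)+1$ for all $t$, and since $\eta(n-1,\alpha,t) \geqslant t$ these ordinals are cofinal in $\alpha(+^{n-1})$, so $\leqslant^{n-1}$-continuity yields $\alpha \leqslant^{n-1} \alpha(+^{n-1})$, i.e. $\alpha <^{n-1} \alpha(+^{n-1})$.

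I expect the only genuine obstacle to be the second paragraph — extracting the clean criterion $\alpha \in A^{n-1}(t) \Longleftrightarrow \alpha \leqslant^{n-1}\eta(n-1,\alpha,t)+1$ from the Generalized Hierarchy Theorem and the rather heavy definition of $G^{n-1}$, in particular verifying that substitution along $g(n-1,\alpha,\alpha)$ is trivial. Once that is in hand, everything reduces to routine bookkeeping with $\leqslant_1$- and $\leqslant^{n-1}$-connectedness and continuity, using $\eta(n-1,\alpha,t) \geqslant t$ to keep the relevant ordinals cofinal in $\alpha(+^{n-1})$.
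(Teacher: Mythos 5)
Your proposal is correct and follows essentially the same route as the paper: identify $A^{n-1}(t)$ with $G^{n-1}(t)$ via the Generalized Hierarchy Theorem, plug in $\beta=\alpha$ so that $g(n-1,\alpha,\alpha)$ acts as the identity, and then shuttle between $\leqslant_1$ and $\leqslant^{n-1}$ using GenThmIH (3) and (6) together with connectedness, continuity, and the cofinality of the ordinals $\eta(n-1,\alpha,t)+1$ in $\alpha(+^{n-1})$. Your reduction of all three statements to the single condition $(\star)$, and your remark disposing of $\alpha<C^{n-1}_{\omega}$, are only minor reorganizations and refinements of the paper's argument.
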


\begin{proof}
  Let $\alpha \in \tmop{Class} (n - 1)$.
  
  To show $\alpha <^{n - 1} \alpha (+^{n - 1}) \Longrightarrow \alpha \in
  \bigcap_{t \in [\alpha, \alpha ( +^{n - 1}))} A^{n - 1} (t)$. \ \ \ \ \ \ \
  {\tmstrong{(a)}}
  
  Suppose $\alpha <^{n - 1} \alpha (+^{n - 1})$. Let $t \in [\alpha, \alpha (
  +^{n - 1}))$ be arbitrary. \\
  Then $\alpha \leqslant^{n - 1} \eta (n - 1, \alpha, t) [g (n - 1, \alpha,
  \alpha)] + 1 = \eta (n - 1, \alpha, t) + 1$ by $<^{n - 1}$-connectedness. So
  \ $\alpha \in \{\beta \in \tmop{Class} (n - 1) |T (n - 1, \alpha, t) \cap
  \alpha \subset \beta \leqslant \alpha \wedge \beta \leqslant^{n - 1} \eta (n
  - 1, \alpha, t) [g (n - 1, \alpha, \beta)] + 1\} =$\\
  $G^{n - 1} (t)\underset{\text{theorem }
  \ref{G^n-1(t)=A^n-1(t)_Gen_Hrchy_thm}}{=}A^{n - 1} (t)$. Since this holds
  for an arbitrary $t \in [\alpha, \alpha ( +^{n - 1}))$, we have shown
  $\alpha \in \bigcap_{t \in [\alpha, \alpha ( +^{n - 1}))} A^{n - 1} (t)$.
  This shows (a).

  To show $\alpha <^{n - 1} \alpha (+^{n - 1}) \Longleftarrow \alpha \in
  \bigcap_{t \in [\alpha, \alpha ( +^{n - 1}))} A^{n - 1} (t)$. \ \ \ \ \ \ \
  {\tmstrong{(b)}}
  
  Suppose $\alpha \in \bigcap_{t \in [\alpha, \alpha ( +^{n - 1}))} A^{n - 1}
  (t)\underset{\text{theorem } \ref{G^n-1(t)=A^n-1(t)_Gen_Hrchy_thm}}{=}\bigcap_{t
  \in [\alpha, \alpha ( +^{n - 1}))} G^{n - 1} (t)$. Then for any \\
  $t \in [\alpha, \alpha ( +^{n - 1}))$, $\alpha \leqslant^{n - 1} \eta (n -
  1, \alpha, t) [g (n - 1, \alpha, \alpha)] + 1 = \eta (n - 1, \alpha, t) +
  1$; thus, by (3) of GenThmIH (that is, by $\leqslant^{n - 1}$-continuity),
  $\alpha \leqslant^{n - 1} \alpha (+^{n - 1}$). This shows (b).

  To show $\alpha <_1 \alpha (+^{n - 1}) \Longrightarrow \alpha <^{n - 1}
  \alpha (+^{n - 1})$. \ \ \ \ \ \ \ {\tmstrong{(c)}}
  
  Suppose $\alpha <_1 \alpha (+^{n - 1})$. Then for any $t \in [\alpha, \alpha
  ( +^{n - 1}))$, $\eta (n - 1, \alpha, t)) + 1 \in (\alpha, \alpha ( +^{n -
  1}))$ and so, by $\leqslant_1$-connectedness, $\alpha \leqslant_1 \eta (n -
  1, \alpha, t)) + 1$. Subsequently, by (6) of GenThmIH, $\alpha \leqslant^{n
  - 1} \eta (n - 1, \alpha, t)) + 1$. The previous shows that $\forall t \in
  [\alpha, \alpha ( +^{n - 1})) . \alpha \leqslant^{n - 1} \eta (n - 1,
  \alpha, t)) + 1$, and since the sequence $\{\eta (n - 1, \alpha, t)) + 1 | t
  \in [\alpha, \alpha ( +^n)) \}$ is confinal in $\alpha (+^{n - 1}$), then by
  (3) of GenThmIH (that is, $\leqslant^{n - 1}$-continuity), $\alpha <^{n - 1}
  \alpha (+^{n - 1}$). This shows (c).

  Finally, $\alpha <_1 \alpha (+^{n - 1}) \Longleftarrow \alpha <^{n - 1}
  \alpha (+^{n - 1})$ clearly holds by (3) of GenThmIH.
\end{proof}

\begin{corollary}
  \label{corollary_kapa_uncount_regul_in_Class(n)}Let $\kappa$ be an
  uncountable regular ordinal ($\kappa \in \tmop{Class} (n - 1)$ by
  proposition \ref{regular_ordinal_in_Class(n-1)}). Then
  \begin{enumeratealpha}
    \item $\kappa <^{n - 1} \kappa (+^{n - 1})$ and therefore $\kappa \in
    \tmop{Class} (n)$.
    
    \item $\kappa \in \bigcap_{s \in [\kappa, \kappa ( +^{n - 1}))} A^{n - 1}
    (s)$.
  \end{enumeratealpha}
\end{corollary}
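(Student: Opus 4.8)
The plan is to obtain both parts essentially for free from Proposition~\ref{A^n-1(t)_club_in_kapa} and Proposition~\ref{alpha<less>^n-1alpha(+^n-1)_iff_alpha_in_intersection}, the one point that requires a word being that $\kappa$ itself lies in each $A^{n-1}(s)$.

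First I would prove (b). Fix an arbitrary $s \in [\kappa, \kappa ( +^{n - 1}))$. By Proposition~\ref{A^n-1(t)_club_in_kapa}, $A^{n - 1}(s)$ is club in $\kappa$; as throughout this article (cf.\ the proof of Proposition~\ref{regular_ordinal_in_Class(n-1)}, where ``club'' is used precisely to conclude that the supremum of a bounded subset is again in the class), this means in particular that $A^{n - 1}(s)$ is closed under suprema and that $A^{n - 1}(s) \cap \kappa$ is unbounded in $\kappa$. Hence $\kappa = \sup(A^{n - 1}(s) \cap \kappa) \in A^{n - 1}(s)$ by closedness. Since $s$ was arbitrary, $\kappa \in \bigcap_{s \in [\kappa, \kappa ( +^{n - 1}))} A^{n - 1}(s)$, which is (b). I want to stress that this argument is completely insensitive to the cardinality of the index set $[\kappa, \kappa ( +^{n - 1}))$: we never claim the intersection is club, only that the single fixed ordinal $\kappa$ belongs to every term, and that is a consequence of the closedness of each individual $A^{n - 1}(s)$.

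For (a), recall that $\kappa \in \tmop{Class}(n - 1)$ by Proposition~\ref{regular_ordinal_in_Class(n-1)}, so Proposition~\ref{alpha<less>^n-1alpha(+^n-1)_iff_alpha_in_intersection} applies with $\alpha = \kappa$; combined with part (b) it yields $\kappa <^{n - 1} \kappa ( +^{n - 1})$ and $\kappa <_1 \kappa ( +^{n - 1})$. To conclude $\kappa \in \tmop{Class}(n)$ via the defining clause of $\tmop{Class}(n)$ it remains to verify the two side conditions $\kappa \in \tmop{Class}(n - 1)$ (already noted) and $\kappa ( +^{n - 1}) \in \tmop{Class}(n - 1)$. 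The latter holds because the set $\{\beta \in \tmop{Class}(n - 1) \mid \kappa < \beta\}$ is nonempty --- any uncountable regular ordinal $\rho > \kappa$ lies in it, again by Proposition~\ref{regular_ordinal_in_Class(n-1)} --- so $\kappa ( +^{n - 1}) = \min\{\beta \in \tmop{Class}(n - 1) \mid \kappa < \beta\}$ is itself an element of $\tmop{Class}(n - 1)$. Thus all three requirements in the definition of $\tmop{Class}(n)$ are met and $\kappa \in \tmop{Class}(n)$, finishing (a).

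There is no real obstacle here beyond being explicit that ``$\kappa$-club'' carries closedness with it --- that is the feature that pins down $\kappa$ in every $A^{n-1}(s)$ --- since the genuine work has already been carried out in Proposition~\ref{A^n-1(t)_club_in_kapa} and Proposition~\ref{alpha<less>^n-1alpha(+^n-1)_iff_alpha_in_intersection}.
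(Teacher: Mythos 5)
Your reduction of (a) to (b) via Proposition \ref{alpha<less>^n-1alpha(+^n-1)_iff_alpha_in_intersection}, and the check that $\kappa (+^{n-1}) \in \tmop{Class}(n-1)$, are fine. The gap is in (b), and it is the heart of the corollary: from ``$A^{n-1}(s)$ is club in $\kappa$'' you infer $\kappa = \sup (A^{n-1}(s) \cap \kappa) \in A^{n-1}(s)$ ``by closedness''. But $\kappa$-closedness, as the paper uses it (see the closedness part of the proof of Proposition \ref{A^n-1(t)_club_in_kapa}, which treats only limits $\rho < \kappa$), asserts that limit points \emph{below} $\kappa$ belong to the set; it says nothing about $\kappa$ itself. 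The proof you cite as your model, Proposition \ref{regular_ordinal_in_Class(n-1)}, actually illustrates this: to put $\kappa$ into $\tmop{Class}(i)$ it does not argue from club-ness in $\kappa$, but passes to a larger regular $\rho > \kappa$ and uses closedness in $\rho$, where $\kappa$ is a limit point below $\rho$. That escape route is unavailable here, since $A^{n-1}(s) \subset \kappa + 1$ and Proposition \ref{A^n-1(t)_club_in_kapa} gives club-ness only in $\kappa$. So as written, (b) --- and hence (a) --- is unproved.

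The conclusion is true, but it needs an argument that uses the \emph{recursive definition} of $A^{n-1}$ together with the \emph{unboundedness} half of Proposition \ref{A^n-1(t)_club_in_kapa}, by induction on $s \in [\kappa, \kappa(+^{n-1}))$: for $s$ in the initial segment $[\kappa, \kappa(+^{n-2}) \ldots (+^1)2]$ one has $A^{n-1}(s) = (\tmop{Lim}\tmop{Class}(n-1)) \cap (M, \kappa+1) \ni \kappa$ because $\tmop{Class}(n-1)$ is unbounded in $\kappa$; for $s = l+1$ with $l = \eta(n-1,\kappa,l)$, $A^{n-1}(s) = \tmop{Lim} A^{n-1}(l)$ and $A^{n-1}(l)$ is unbounded in $\kappa$, so $\kappa \in \tmop{Lim} A^{n-1}(l)$; for $s = l+1$ with $l < \eta(n-1,\kappa,l)$ use the induction hypothesis; for limit $s$, $A^{n-1}(s) = \tmop{Lim} Y$ with $Y = \{r \leqslant \kappa \mid M < r \in \bigcap_{q \in S(n-1,\kappa,r,s)} A^{n-1}(q)\}$, and unboundedness of $\tmop{Lim} Y$ in $\kappa$ forces $Y$ itself to be unbounded in $\kappa$, whence $\kappa \in \tmop{Lim} Y$. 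Alternatively one can bypass (b) and prove (a) first in the style of Proposition \ref{Class(n)_is_closed}: for each $t$, $A^{n-1}(t) = G^{n-1}(t)$ is unbounded in $\kappa$ and its elements $\gamma < \kappa$ satisfy $T(n-1,\kappa,t) \cap \kappa \subset \gamma$ and $\gamma \leqslant_1 t[g(n-1,\kappa,\gamma)]$, so GenThmIH (5) gives $\kappa \leqslant^{n-1} t+1$, and letting $t$ range cofinally in $\kappa(+^{n-1})$ yields $\kappa \leqslant^{n-1} \kappa(+^{n-1})$; then (b) follows from Proposition \ref{alpha<less>^n-1alpha(+^n-1)_iff_alpha_in_intersection}. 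Either way, the missing step is precisely the membership $\kappa \in A^{n-1}(s)$, which club-ness in $\kappa$ alone does not deliver.
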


\begin{proof}
  {\color{orange} Left to the reader.}
\end{proof}

\begin{corollary}
  {\tmdummy}
  
  \begin{enumeratenumeric}
    \item $\tmop{Class} (n) \neq \emptyset$
    
    \item For any $\alpha \in \tmop{Class} (n)$, $\alpha (+^n) < \infty$; that
    is, $\alpha (+^n)$ is an ordinal.
  \end{enumeratenumeric}
\end{corollary}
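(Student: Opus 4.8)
The plan is to read off both statements directly from Corollary~\ref{corollary_kapa_uncount_regul_in_Class(n)}(a), which already tells us that every uncountable regular ordinal lies in $\tmop{Class}(n)$. Since the class of uncountable regular ordinals (e.g.\ the successor cardinals $\omega_1, \omega_2, \ldots$) is nonempty and in fact unbounded in $\tmop{OR}$, this is essentially all the content we need.

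For part~1, I would simply fix any uncountable regular ordinal $\kappa$ (say $\kappa = \omega_1$). By proposition~\ref{regular_ordinal_in_Class(n-1)} we have $\kappa \in \tmop{Class}(n-1)$, and by Corollary~\ref{corollary_kapa_uncount_regul_in_Class(n)}(a), $\kappa <^{n-1} \kappa(+^{n-1})$, hence $\kappa \in \tmop{Class}(n)$. Therefore $\tmop{Class}(n) \neq \emptyset$.

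For part~2, let $\alpha \in \tmop{Class}(n)$ be arbitrary. I would invoke the fact that the class of uncountable regular ordinals is unbounded in $\tmop{OR}$ to pick such a $\kappa$ with $\kappa > \alpha$. Again by Corollary~\ref{corollary_kapa_uncount_regul_in_Class(n)}(a), $\kappa \in \tmop{Class}(n)$, so $\kappa \in \{\beta \in \tmop{Class}(n) \mid \alpha < \beta\}$, i.e.\ this class is nonempty. Being a nonempty subclass of the ordinals, it has a least element, and by the definition of the successor functional $(+^n)$ this least element is exactly $\alpha(+^n)$. In particular $\alpha(+^n) \in \tmop{OR}$, which by the convention $\infty \nin \tmop{OR}$ and $\forall \gamma \in \tmop{OR}.\ \gamma < \infty$ gives $\alpha(+^n) < \infty$.

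There is no real obstacle here: the whole corollary is a packaging of Corollary~\ref{corollary_kapa_uncount_regul_in_Class(n)}(a) together with the elementary facts that uncountable regular ordinals exist and are unbounded. The only point worth stating explicitly is that a nonempty class of ordinals always has a minimum, so that $\alpha(+^n)$ genuinely is an ordinal once we know the relevant class is inhabited.
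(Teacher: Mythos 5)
Your proposal is correct and is exactly the intended argument: the paper leaves this corollary to the reader, and it is meant to follow directly from Corollary~\ref{corollary_kapa_uncount_regul_in_Class(n)}(a) together with the unboundedness of the uncountable regular ordinals, just as you argue. The only point needing mention is the one you already make explicit, namely that a nonempty class of ordinals has a minimum, so $\alpha(+^n)$ is indeed an ordinal.
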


\begin{proof}
  {\color{orange} Left to the reader.}
\end{proof}

\begin{lemma}
  \label{sequence_csi_j_with_m(csi_j)=t[g(k,q,csi_j)]}Let $k \in [1, n)$, $q
  \in \tmop{Class} (k)$, $t = \eta (k, q, t) \in [q, q ( +^k))$ and $q <_1 t +
  1$. Then there is a sequence $(\xi_j)_{j \in J} \subset \tmop{Class} (k)$
  such that $\xi_j \underset{\text{cof}}{\longhookrightarrow} q$ and such that
  for all $j \in J$, $T (k, q, t) \cap q \subset \xi_j$ and $m (\xi_j) = t [g
  (k, q, \xi_j)]$.
\end{lemma}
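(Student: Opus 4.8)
The plan is to obtain $(\xi_j)_{j\in J}$ from the cofinality properties of $\leqslant^k$ supplied by GenThmIH (legitimate since $k<n$), after a short reduction.

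First, since $t=\eta(k,q,t)$, the hypothesis $q<_1\eta(k,q,t)+1$ together with clause (6) of GenThmIH yields $q\leqslant^k t+1$. Clause (4) of GenThmIH then produces a sequence $(c_\xi)_{\xi\in X}\subset\tmop{Class}(k)\cap q$ with $c_\xi\underset{\text{cof}}{\longhookrightarrow}q$ and, for every $\xi$, $T(k,q,t)\cap q\subset c_\xi$ and $c_\xi\leqslant_1 t[g(k,q,c_\xi)]$. Put $t_\xi\assign t[g(k,q,c_\xi)]$. Since $\eta(k,q,t)\in(q,q(+^k))$ we have $t>q$, so $t_\xi\in(c_\xi,c_\xi(+^k))$ by clause (2) of Theorem \ref{concise_gen_thm}, whence $c_\xi<_1 t_\xi$; moreover clause (2.2.5) of GenThmIH applied to $t=\eta(k,q,t)$ gives $t_\xi=\eta(k,c_\xi,t_\xi)$.

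Next, for each $\xi$ put $\ell_\xi\assign l(k,c_\xi,t_\xi)$ and $\ell\assign l(k,q,t)$. If $t_\xi=c_\xi(+^{k-1})(+^{k-2})\ldots(+^1)2$ (equivalently $t=q(+^{k-1})\ldots(+^1)2$, the only possibility under $t=\eta(k,q,t)$ with $t\leqslant q(+^{k-1})\ldots(+^1)2$), then $\ell_\xi=t_\xi$ by Definition \ref{l(t)} and $m(\ell_\xi)=t_\xi$ by Propositions \ref{eta(k,alpha,t)_is_well_defined}(3) and \ref{t_in_(a,a(+)..(+^1)2]_then_m(t)<less>=a(+)..(+^1)2}; otherwise $t_\xi>c_\xi(+^{k-1})\ldots(+^1)2$ and Proposition \ref{eta(i,a,l(i,a,t))=eta(i,a,t)_if_t_in_(a(+)...(+^1)2,a(+^i)} gives $m(\ell_\xi)=\eta(k,c_\xi,t_\xi)=t_\xi$ with $c_\xi<\ell_\xi\leqslant t_\xi$. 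In either case $m(\ell_\xi)=t_\xi$, $c_\xi\leqslant_1\ell_\xi$ by $\leqslant_1$-connectedness, and $\ell_\xi=\ell[g(k,q,c_\xi)]$ (use clauses (2.1.4), (2.2.3) of GenThmIH to see $\tmop{Ep}(\ell)\subset\tmop{Dom}\,g(k,q,c_\xi)$, together with the compatibility of $[\,\cdot\,]$ with the description of $l$ through $m$ and $\min$, clauses (2.4.3), (2.4.4) of GenThmIH).

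The remaining — and main — step is to replace each $c_\xi$ by an honest element $\xi_j\in\tmop{Class}(k)$, cofinally below $q$ and still containing $T(k,q,t)\cap q$, with the equality $m(\xi_j)=t[g(k,q,\xi_j)]$; the witness $\ell_\xi$ only realizes this equality at a point strictly inside $(c_\xi,c_\xi(+^k))$, hence not in $\tmop{Class}(k)$. In the boundary case $t=q(+^{k-1})\ldots(+^1)2$ this is straightforward: from $q<_1 t+1$ and Proposition \ref{Simplest_Characterization_beta_in_Lim(Class(k))} we get $q\in\tmop{Lim}(\tmop{Class}(k))$, so by Proposition \ref{Class(n)_is_closed} the set $\tmop{Lim}(\tmop{Class}(k))\cap q$ is cofinal in $q$, and for each $\gamma$ in it, by Proposition \ref{Simplest_Characterization_beta_in_Lim(Class(k))} (which gives $\gamma\leqslant_1\gamma(+^{k-1})\ldots(+^1)2+1$) and the $m$-bounds one reads off $m(\gamma)=\gamma(+^{k-1})\ldots(+^1)2=t[g(k,q,\gamma)]$; discarding finitely many $\gamma$ we may also assume $T(k,q,t)\cap q\subset\gamma$. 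In the non-boundary case $t>q(+^{k-1})\ldots(+^1)2$ one must instead produce cofinally-below-$q$ elements $\xi\in\tmop{Class}(k)$ whose $m$-value is as large as $t[g(k,q,\xi)]>\xi(+^{k-1})\ldots(+^1)2$, and this is where the substance lies — it is carried out by transporting the configuration $(\ell_\xi,t_\xi)$ back along the isomorphisms $g(k,\cdot,\cdot)$ of GenThmIH (clauses (2.5.3), (2.2.4), (2.2.5)), using the $S(k+1,\cdot)$/$f(k+1,\cdot)$ apparatus of GenThmIH clause (1) when $k+1<n$, and a separate argument invoking the $A^{n-1}$-sets (via Theorem \ref{G^n-1(t)=A^n-1(t)_Gen_Hrchy_thm} and Proposition \ref{alpha<less>^n-1alpha(+^n-1)_iff_alpha_in_intersection}) when $k=n-1$. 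Having produced such points, enumerating them gives the desired sequence $(\xi_j)_{j\in J}$.
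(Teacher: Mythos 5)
Your first step matches the paper: from $t=\eta(k,q,t)$ and $q<_1 t+1$, clauses (6) and (4) of GenThmIH give a cofinal sequence $(c_\xi)\subset q\cap\tmop{Class}(k)$ with $T(k,q,t)\cap q\subset c_\xi$ and $c_\xi\leqslant_1 t[g(k,q,c_\xi)]$, hence $m(c_\xi)\geqslant t[g(k,q,c_\xi)]$. But the heart of the lemma is upgrading this inequality to the equality $m(\xi)=t[g(k,q,\xi)]$ at cofinally many $\xi<q$, and at exactly this point your argument stops being a proof: the last paragraph only announces that the equality ``is carried out by transporting the configuration $(\ell_\xi,t_\xi)$ back along the isomorphisms'' and by ``the $S(k+1,\cdot)/f(k+1,\cdot)$ apparatus'' or, when $k=n-1$, ``a separate argument invoking the $A^{n-1}$-sets'', without giving any such argument. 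The detour through $\ell_\xi=l(k,c_\xi,t_\xi)$ does not help, since (as you yourself note) $\ell_\xi$ lies inside $(c_\xi,c_\xi(+^k))$ and is not a candidate for $\xi_j$; and the proposed case split on $k=n-1$ is both unexecuted and unnecessary, because GenThmIH is available uniformly for all $k\in[1,n)$.

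The missing idea in the paper is a minimal-counterexample argument, not a transport argument. If no subsequence of $(c_\xi)$ already realizes the equality, then strict inequality holds cofinally, and for any $e<q$ one may set $l:=\min\{r\in q\cap\tmop{Class}(k)\mid T(k,q,t)\cap q\subset r>e\wedge m(r)>t[g(k,q,r)]\}$. From $m(l)>t[g(k,q,l)]$ one gets $l<_1 t[g(k,q,l)]+1=\eta(k,l,t[g(k,q,l)])+1$ (via (2.2.3), (2.2.5)), so clauses (6) and (4) of GenThmIH applied \emph{at $l$} yield points $s\in\tmop{Class}(k)$ cofinally below $l$ with $T(k,q,t)\cap q\subset s$ and $m(s)\geqslant t[g(k,q,s)]$ (using (2.5.3) to collapse the two substitutions); choosing such an $s$ with $e<s<l$, the minimality of $l$ forbids strict inequality, forcing $m(s)=t[g(k,q,s)]$, and these witnesses (one for each $e<q$) form the desired sequence. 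Without this (or some equivalent) mechanism your proposal does not establish the lemma.
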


\begin{proof}
  Let $k, q$ and $t$ as stated. Then by (6) and (4) of GenThmIH, there exists
  a sequence $(l_i)_{i \in I} \in q \cap \tmop{Class} (k)$, $l_i
  \underset{\text{cof}}{\longhookrightarrow} q$ such that for all $i \in I$,
  \\
  $T (k, q, t) \cap q \subset l_i$ and $m (l_i) \geqslant t [g (k, q, l_i)]$.
  \ \ \ \ (*)

  We have now two cases:
  
  (a). For some subsequence $(l_d)_{d \in D} \subset (l_i)_{i \in I}$ it
  occurs $\forall d \in D.m (l_d) = t [g (k, q, l_d)]$.
  
  Then $(l_d)_{d \in D}$ is the sequence we are looking for.

  (b). For every subsequence $(l_d)_{d \in D} \subset (l_i)_{i \in I}$
  $\exists d \in D.m (l_d) \neq t [g (k, q, l_d)]$.
  
  Choose an arbitrary $e < q$ and let \\
  $l \assign \min \{r \in q \cap \tmop{Class} (k) | T (k, q, t) \cap q \subset
  r > e \wedge m (r) > t [g (k, q, r)]\}$. Observe $l$ exists because of (b)
  and (*). Then \\
  $e < l <_1 t [g (k, q, l)] + 1 = \eta (k, q, t) [g (k, q, l)] + 1 =$\\
  \ \ \ \ \ \ \ \ \ \ \ \ $underset{\text{by (2.2.3) and (2.2.5) of
  GenThmIH}}{=}\eta (k, l, t [g (k, q, l)]) + 1$, which implies, by (6) and
  (4) of GenThmIH, the existence of a sequence $(s_u)_{u \in U}$, $s_u
  \underset{\text{cof}}{\longhookrightarrow} l$ such that for all $u \in U$,
  \\
  $T (k, q, {\color{magenta} \eta (k, q, t)}) \cap q\underset{\text{by (2.2.3) and
  (2.2.4) of GenThmIH}}{=}T (k, l, {\color{magenta} \eta (k, q, t) [g (k, q,
  l)]}) \cap l$\\
  \ \ \ \ \ \ \ \ \ \ \ \ \ \ \ \ \ \ \ \ \ \ \ \ \ \ \ $underset{\text{by (2.2.3)
  and (2.2.5) of GenThmIH}}{=}T (k, l, {\color{magenta} \eta (k, l, t [g (k,
  q, l)])}) \cap l \subset s_u$ \ \ \ \ \ \ \ {\tmstrong{(1*)}} \\
  and\\
  $s_u \leqslant_1 {\color{magenta} \eta (k, l, t [g (k, q, l)])} [g (k, l,
  s_u)]\underset{\text{by (2.2.3) and (2.2.5) of GenThmIH}}{=}$\\
  \ \ \ \ \ \ $\eta (k, q, t) [g (k, q, l)] [g (k, l, s_u)] = \eta (k, q, t)
  [g (k, l, s_u) \circ g (k, q, l)]\underset{\text{by (2.5.3) of GenThmIH}}{=}$\\
  \ \ \ \ \ \ $\eta (k, q, t) [g (k, q, s_u)] = t [g (k, q, s_u)]$. \ \ \ \ \
  \ \ {\tmstrong{(2*)}}

  Now, note that (1*) and (2*) assert $\forall u \in U$.$T (k, q,
  {\color{magenta} \eta (k, q, t)}) \cap q \subset s_u \wedge m (s_u)
  \geqslant t [g (k, q, s_u)]$. Therefore, since $s_u
  \underset{\text{cof}}{\longhookrightarrow} l$, there is some $a \in U$ such
  that $e < s_a < l$, \\
  $T (k, q, t) \cap q \subset s_a$ and $m (s_a) \geqslant t [g (k, q, s_a)]$;
  moreover, by the definition of $l$, \\
  $m (s_a) \ngtr t [g (k, q, s_a)]$ and then $m (s_a) = t [g (k, q, s_a)]$. We
  define $\xi_e \assign s_a$. Then, the sequence $(\xi_e)_{e \in q}$ is the
  sequence we are looking for.
\end{proof}

\subsection{Canonical sequence of an ordinal $e (+^i)$}

{\noindent}{\tmstrong{Reminder:}} For $e \in \mathbbm{E}$, we denote by
$(\omega_k (e))_{k \in \omega}$ to the recursively defined sequence \\
$\omega_0 (e) \assign e + 1$, $\omega_{k + 1} (e) \assign \omega^{\omega_k
(e)}$.

We want now to define, for $e \in \tmop{Class} (i)$, a (canonical) sequence
cofinal in $e (+^i$).

\begin{definition}
  \label{canonical_fundamental_sequence}(Canonical sequence of an ordinal $e
  (+^i))$
  
  For $i \in [1, n)$, $e \in \tmop{Class} (i)$, and $k \in [1, \omega)$ we
  define the set $X_k (i, e)$ and the ordinals $x_k (i, e)$ and $\gamma_k (i,
  e)$ simultaneously by recursion on $([1, n), <)$ as follows:

  Let $i = 1$, $e \in \tmop{Class} (1)$ and $k \in [1, \omega)$. Let it be
  
  $X_k (1, e) \assign \{\omega_k (e)\}$,
  
  $x_k (1, e) \assign \omega_k (e) = \min X_k (1, e)$ and
  
  $\gamma_k (1, e) \assign m (x_k (1, e)) = m (\omega_k (e)) = \pi (\omega_k
  (e)) + d \pi (\omega_k (e)) = \omega_k (e) + d (\omega_k (e)) = \eta
  (\omega_k (e)) =$\\
  \ \ \ \ \ \ \ \ $underset{\text{properties of }\eta}{=}\eta (\eta (\omega_k
  (e))) = \eta (\gamma_k (1, e)) = \eta (1, e, \gamma_k (1, e))$.

  Suppose $i + 1 \in [2, n)$ and that for $i \in [1, n)$, $X_k (i, E)$, $x_k
  (i, E)$ and $\gamma_k (i, E)$ have already been defined for arbitrary $E \in
  \tmop{Class} (i)$ and $k \in [1, \omega)$.

  Let $e \in \tmop{Class} (i + 1)$ and $k \in [1, \omega)$. We define
  
  $X_k (i + 1, e) \assign \{r \in (e, e (+^{i + 1})) \cap \tmop{Class} (i) |
  m (r) = \gamma_k (i, r)\}$,
  
  $x_k (i + 1, e) \assign \min X_k (i + 1, e)$ and
  
  $\gamma_k (i + 1, e) \assign m (x_k (i + 1, e)) \in (x_k (i + 1, e), x_k (i
  + 1, e) ( +^i)) \subset (e, e ( +^{i + 1}))$.

  For $e \in \tmop{Class} (i)$, we call $(\gamma_k (i, e))_{k \in [1,
  \omega)}$ the canonical sequence of $e (+^i)$.
\end{definition}

To assure that our previous definition \ref{canonical_fundamental_sequence} is
correct we need to show that $\min X_k (i, e)$ exists. This is one of the
reasons for our next

\begin{proposition}
  \label{most_important_sequence_in_(e,e(+^i))}$\forall i \in [1, n) \forall e
  \in \tmop{Class} (i)$.
  \begin{enumeratenumeric}
    \item For any $k \in [1, \omega)$, $X_k (i, e) \neq \emptyset$ and
    therefore $\min X_k (i, e)$ exists.
    
    \item $(\gamma_j (i, e))_{j \in [1, \omega)} \subset (e, e ( +^i))$
    
    \item $\forall j \in [1, \omega) . \gamma_j (i, e) = \eta (i, e, \gamma_j
    (i, e))$
    
    \item $(\gamma_j (i, e))_{j \in [1, \omega)}$ and cofinal in $e (+^i)$.
    
    \item If $i = 1$, then $\forall z \in [1, \omega) .T (i, e, \gamma_z (i,
    e)) = \{\lambda (1, \gamma_z (1, e)) = e\}$.\\
    Case $i \geqslant 2$. Then for any $z \in [1, \omega)$,
    
    - $m (x_z (i, e)) = m (x_z (i - 1, {\color{magenta} x_z (i, e)})) = \ldots
    = m (x_z (2, x_z (3, \ldots x_z (i - 1, {\color{magenta} x_z (i, e)})
    \ldots)))$;
    
    - $T (i, e, \gamma_z (i, e)) = \{o_1 > o_2 > \ldots > o_{i - 1} > o_i =
    e\}$, where
    
    \ $o_1 \assign \lambda (1, \gamma_z (i, e))$, \\
    \ \ \ \ \ $o_2 \assign \lambda (2, \gamma_z (i, e)), \ldots$,\\
    \ \ \ \ \ $o_{i - 1} \assign \lambda (i - 1, \gamma_z (i, e))$,\\
    \ \ \ \ \ $o_i \assign \lambda (i, \gamma_z (i, e)) = e$;
    
    - Moreover,
    
    \ $x_z (i, e) = o_{i - 1}$,$x_z (i - 1, {\color{magenta} x_z (i, e)}) =
    o_{i - 2}, \ldots$,$x_z (2, x_z (3, \ldots x_z (i - 1, {\color{magenta}
    x_z (i, e)}) \ldots)) = o_1$
    
    \ and
    
    \ $o_2 = \lambda (2, o_1), o_3 = \lambda (3, o_2), \ldots, o_i = \lambda
    (i, o_{i - 1}), o_{i + 1} = \lambda (i + 1, o_i)$.
    
    \item $\forall j \in [1, \omega) . \forall a \in ( {\color{magenta} T (i,
    e, \gamma_j (i, e))} \backslash \{e\}) .m (a) = \gamma_j (i, e)$
    
    \item $\forall \alpha \in \tmop{Class} (i) . \forall j \in [1, \omega) .
    \emptyset = T (i, e, \gamma_j (i, e)) \cap e \subset \alpha \wedge
    \gamma_j (i, e) [g (i, e, \alpha)] = \gamma_j (i, \alpha)$
  \end{enumeratenumeric}
\end{proposition}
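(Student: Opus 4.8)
The plan is to prove clauses 1--7 simultaneously by induction on $i\in[1,n)$, following the recursion of Definition \ref{canonical_fundamental_sequence}, while carrying along the extra invariant that $j\mapsto\gamma_j(i,e)$ is strictly increasing (this will be needed for clause 4). In the base case $i=1$ everything is explicit: clause 1 holds since $X_k(1,e)=\{\omega_k(e)\}$; clauses 2, 3 and the monotonicity invariant come from $\gamma_k(1,e)=m(\omega_k(e))=\eta(\omega_k(e))=\eta(1,e,\gamma_k(1,e))$ together with $\omega_k(e)\leqslant\gamma_k(1,e)<e(+^1)$ and $d(\omega_k(e))<\omega_k(e)$ (the $\pi/d/\eta/m$-calculus of \cite{GarciaCornejo1} and Remark \ref{remark_eta(1,alpha,t)=eta(t)}); clause 4 follows from cofinality of $(\omega_k(e))_k$ in $e(+^1)$; and clauses 5--7 reduce to $T(1,e,\gamma_k(1,e))=\tmop{Ep}(\gamma_k(1,e))=\{e\}$ and $\lambda(1,\gamma_k(1,e))=e$ (using Proposition \ref{GenThm_holds_for_n=1} for the shape of $T(1,\cdot,\cdot)$ and $g(1,\cdot,\cdot)$), clause 6 being vacuous and clause 7 being the substitution identity $\gamma_k(1,e)[g(1,e,\alpha)]=\gamma_k(1,e)[e:=\alpha]=\gamma_k(1,\alpha)$ from \cite{GarciaCornejo1}.

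For the inductive step, assume the statement (with monotonicity) for $i$ and prove it for $i+1\in[2,n)$; fix $e\in\tmop{Class}(i+1)$ and $k$, and note that $e(+^{i+1})$ is an ordinal lying in $\tmop{Class}(i+1)\subset\tmop{Class}(i)$ with $e(+^i)<e(+^{i+1})$ (GenThmIH (0), Proposition \ref{Class(n)_first_properties}). The key move for clause 1 is to apply Lemma \ref{sequence_csi_j_with_m(csi_j)=t[g(k,q,csi_j)]} with its parameter ``$k$'' taken to be $i$, with $q:=e(+^{i+1})$ and $t:=\gamma_k(i,e(+^{i+1}))$: its hypotheses hold since $q\in\tmop{Class}(i)$, $t=\eta(i,q,t)\in[q,q(+^i))$ (IH clauses 3 and 2), and $q<_1 t+1$ by $\leqslant_1$-connectedness from $q=e(+^{i+1})<_1(e(+^{i+1}))(+^i)$. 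The lemma returns $(\xi_j)_j\subset\tmop{Class}(i)$ cofinal in $e(+^{i+1})$ with $m(\xi_j)=t[g(i,q,\xi_j)]$, and IH clause 7 (applicable since $T(i,q,\gamma_k(i,q))\cap q=\emptyset$) turns this into $m(\xi_j)=\gamma_k(i,\xi_j)$; choosing $\xi_j>e$ gives $\xi_j\in X_k(i+1,e)$, so $X_k(i+1,e)\neq\emptyset$ (indeed cofinal in $e(+^{i+1})$) and $x_k(i+1,e),\gamma_k(i+1,e)$ are well defined. Clause 2 then follows from IH clause 2 plus $x_k(i+1,e)(+^i)\leqslant e(+^{i+1})$. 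For clause 3, first note $e(+^i)\notin X_k(i+1,e)$ because, $e(+^i)$ being the second term of the $\alpha_\bullet$-chain of $e$, one has $m(e(+^i))=\alpha_1 2=e(+^i)(+^{i-1})\ldots(+^2)2<e(+^i)(+^{i-1})\ldots(+^1)2\leqslant\gamma_k(i,e(+^i))$ (Corollary \ref{alpha_i_properties}(d), IH clause 3); hence $x_k(i+1,e)\geqslant(e(+^i))(+^i)>e(+^i)(+^{i-1})\ldots(+^1)2$ (Corollary \ref{alpha_i_properties}(c)), so $\gamma_k(i+1,e)$ exceeds the first-case threshold of $\eta(i+1,e,\cdot)$ and $\eta(i+1,e,\gamma_k(i+1,e))=\max\{m(a):a\in(e,\gamma_k(i+1,e)]\}=\gamma_k(i+1,e)$ — attained at $a=x_k(i+1,e)$, the upper bound being split into $a>x_k(i+1,e)$ (monotonicity of $\eta(i,x_k(i+1,e),\cdot)$ and IH clause 3) and $a\leqslant x_k(i+1,e)$ (Proposition \ref{t_in_(a,a(+)..(+^1)2]_then_m(t)<less>=a(+)..(+^1)2} and minimality of $x_k(i+1,e)$).

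Clause 4 is the crux. Suppose, for contradiction, that $\mu:=\sup_j\gamma_j(i+1,e)<e(+^{i+1})$. By the monotonicity invariant, $j\mapsto x_j(i+1,e)$ is injective, so $\{x_j(i+1,e)\}_j$ is infinite in $\tmop{Class}(i)\cap(e,\mu]$; with $\nu:=\sup_j x_j(i+1,e)$ one gets $\nu\in\tmop{Class}(i)$ (Proposition \ref{Class(n)_is_closed}) and $\nu=\mu$ (since $x_j(i+1,e)\leqslant\gamma_j(i+1,e)$ while $\gamma_j(i+1,e)=\gamma_j(i,x_j(i+1,e))<x_j(i+1,e)(+^i)\leqslant\nu$ for all but at most one $j$). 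Now $x_j(i+1,e)<_1\beta$ for every $\beta\in(x_j(i+1,e),m(x_j(i+1,e)))$ (the defining property of $m$ via $<_1$, \cite{GarciaCornejo1}), $\gamma_j(i,x_j(i+1,e))[g(i,x_j(i+1,e),\mu)]=\gamma_j(i,\mu)$ (IH clause 7), and $\gamma_j(i,\mu)\underset{\text{cof}}{\longhookrightarrow}\mu(+^i)$ (IH clause 4). Hence, fixing $t\in(\mu,\mu(+^i))$: for all large $j$ one has $T(i,\mu,t)\cap\mu\subset x_j(i+1,e)<\mu$ and $x_j(i+1,e)<t[g(i,\mu,x_j(i+1,e))]<\gamma_j(i,\mu)[g(i,\mu,x_j(i+1,e))]=\gamma_j(i,x_j(i+1,e))=m(x_j(i+1,e))$, so $x_j(i+1,e)<_1 t[g(i,\mu,x_j(i+1,e))]$; thus $\mu$ is a limit of $\{\gamma\in\tmop{Class}(i):T(i,\mu,t)\cap\mu\subset\gamma\wedge\gamma\leqslant_1 t[g(i,\mu,\gamma)]\}$, and GenThmIH (5) for $i$ gives $\mu\leqslant_1 t+1$. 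Letting $t$ range over $(\mu,\mu(+^i))$ and applying $\leqslant_1$-continuity yields $\mu<_1\mu(+^i)$, i.e.\ $\mu\in\tmop{Class}(i+1)$, contradicting $\mu\in(e,e(+^{i+1}))$ and the consecutiveness of $e$ and $e(+^{i+1})$ in $\tmop{Class}(i+1)$. Strict monotonicity of $(\gamma_j(i+1,e))_j$ is read off along the way from that of the inner sequences.

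Finally, clauses 5 and 6 are obtained by unravelling $T(i+1,e,\gamma_z(i+1,e))$ (legitimate by clause 2) through $\tmop{Ep}(\gamma_z(i+1,e))$ and the $O(\cdot,\cdot)/W(\cdot,\cdot,\cdot)$-recursion down to the level-$i$ object $T(i,x_z(i+1,e),\gamma_z(i,x_z(i+1,e)))$ described by IH clause 5, the pivotal facts being $\gamma_z(i+1,e)=\gamma_z(i,x_z(i+1,e))=m(x_z(i+1,e))$, $\lambda(i+1,\gamma_z(i+1,e))=e$ (as $e\in\tmop{Class}(i+1)$ and $\gamma_z(i+1,e)\in[e,e(+^{i+1}))$), $\lambda(i,\gamma_z(i+1,e))=x_z(i+1,e)$, and the lower $\lambda(j,\cdot)$'s coming from the IH chain; iterating yields the $o_\bullet$-chain, the shape of $T$, and the equalities for $m(x_z(\cdots))$. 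Clause 7 is then immediate: $T(i+1,e,\gamma_j(i+1,e))\cap e=\emptyset$ by clause 5, and $\gamma_j(i+1,e)[g(i+1,e,\alpha)]=\gamma_j(i+1,\alpha)$ because, by GenThmIH (2.4) for $i+1$, $g(i+1,e,\alpha)$ is an $(<,+,\cdot,<_1,\lambda x.\omega^x,(+^1),\ldots,(+^i))$-isomorphism commuting with $m$, $\lambda$ and $f$, hence transports the whole construction of $x_j(i+1,e),\gamma_j(i+1,e)$ onto that of $x_j(i+1,\alpha),\gamma_j(i+1,\alpha)$, the inner substitutions being handled by IH clause 7. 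The main obstacle is clause 4 of the inductive step — showing that a bounded supremum of the canonical sequence would be forced into $\tmop{Class}(i+1)$, which is impossible — together with the need to carry strict monotonicity of the sequences; the $\eta$-closedness in clause 3 and the $T/\lambda$-bookkeeping in clauses 5--7 are routine but lengthy.
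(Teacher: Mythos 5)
Your treatment of clauses 1--3 and 5--7 follows essentially the paper's own route: the same simultaneous induction, the same application of Lemma \ref{sequence_csi_j_with_m(csi_j)=t[g(k,q,csi_j)]} with $q:=e(+^{i+1})$, $t:=\gamma_k(i,e(+^{i+1}))$ plus IH clause 7 for clause 1, and the same unravelling of the $T/O/W$-recursion together with (2.4.5), (2.4.6) of GenThmIH for clauses 5--7. Where you genuinely diverge is clause 4. The paper proves cofinality directly: for each $\sigma\in(e,e(+^{i+1}))\cap\tmop{Class}(i)$ it uses the finite sets $f(i+1,e)(\sigma)=\{\sigma_1>\ldots>\sigma_q\}$ from (1.3) of GenThmIH, IH clause 4 applied to $\sigma_q$, and IH clause 7 to find a single index $z$ with $m(d)<\gamma_z(i,d)$ for every $d\in(e,\sigma]\cap\tmop{Class}(i)$, whence $x_z(i+1,e)>\sigma$. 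You instead argue by contradiction, showing a bounded supremum $\mu$ of the canonical sequence would satisfy $\mu<_1\mu(+^i)$ via GenThmIH (5) and $\leqslant_1$-continuity, contradicting the consecutiveness of $e$ and $e(+^{i+1})$ in $\tmop{Class}(i+1)$; this nicely mirrors the proof of Proposition \ref{Class(n)_is_closed} and avoids the $f$-machinery.

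However, as written your clause-4 argument has a gap: it needs $(x_j(i+1,e))_j$ to be cofinal in $\mu$ (both for $\nu=\mu$ and to verify the $\tmop{Lim}$-hypothesis of GenThmIH (5) at $\mu$), and you charge this to a level-$(i+1)$ "monotonicity invariant" that you never establish. Injectivity of $j\mapsto x_j(i+1,e)$ does follow from the level-$i$ invariant, but injectivity alone does not exclude that $\{x_j(i+1,e)\}_j$ attains a maximum $x_{j_0}$; in that case $\nu=x_{j_0}<m(x_{j_0})\leqslant\mu$, the $x_j$ are not cofinal in $\mu$, and the application of GenThmIH (5) collapses. The invariant is provable with a tool you already use: if $x_{j+1}(i+1,e)<x_j(i+1,e)$, then $m(x_{j+1}(i+1,e))=\gamma_{j+1}(i,x_{j+1}(i+1,e))>\gamma_j(i,x_{j+1}(i+1,e))=\eta(i,x_{j+1}(i+1,e),\gamma_j(i,x_{j+1}(i+1,e)))$, so by $\leqslant_1$-connectedness $x_{j+1}(i+1,e)<_1\gamma_j(i,x_{j+1}(i+1,e))+1$, and Lemma \ref{sequence_csi_j_with_m(csi_j)=t[g(k,q,csi_j)]} together with IH clause 7 yields $\xi\in(e,x_{j+1}(i+1,e))\cap\tmop{Class}(i)$ with $m(\xi)=\gamma_j(i,\xi)$, contradicting the minimality of $x_j(i+1,e)$; hence $x_j(i+1,e)<x_{j+1}(i+1,e)$ and the no-maximum property follows. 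With that supplied your clause-4 route goes through. A minor further point: in clause 3 the bound on $m(a)$ for $a\in(e,x_k(i+1,e))$ should come from Proposition \ref{alpha<less>_1_beta_in_Class(k)_then_alpha_in_Class(k+1)} (if $m(a)\geqslant x_k(i+1,e)$ then $a<_1 x_k(i+1,e)$ forces $a\in\tmop{Class}(i+1)$, impossible), not from minimality of $x_k(i+1,e)$, which constrains only points of $\tmop{Class}(i)$.
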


\begin{proof}
  We prove simultaneously 1, 2, 3, 4, 5, 6 and 7 by induction on $[1, n)$.

  {\tmstrong{Case $i = 1$ and $e \in \tmop{Class} (1)$.}}

  It follows immediately from definition \ref{canonical_fundamental_sequence}
  (and the equalities explicitly given there) that 1, 2 and 3 hold. Moreover,
  it is also clear that 4. holds.
  
  Now, let $j \in [1, \omega)$ be arbitrary. Then, by the definition (see
  statement of theorem \ref{most_most_general_theorem}), \\
  $T (1, e, \gamma_j (1, e)) = \bigcup_{E \in \tmop{Ep} (\gamma_j (1, e))} T
  (1, e, E) = \tmop{Ep} (\gamma_j (1, e)) = \{e = \lambda (1, \gamma_j (1,
  e))\}$. So 5. holds. Moreover, by the equality $T (1, e, \gamma_j (1, e)) =
  \{e\}$ it is clear that 6. holds too.
  
  Finally, let $\alpha \in \tmop{Class} (1)$ and $j \in [1, \omega)$ be
  arbitrary. Then by 5. $\emptyset = T (i, e, \gamma_j (i, e)) \cap e \subset
  \alpha$. Moreover, by definition of $\gamma_j (1, e)$ and the usual
  properties of the substitution $x \longmapsto x [e \assign \alpha]$, we have
  $\gamma_j (1, e) [g (1, e, \alpha)] = \gamma_j (1, e) [e \assign \alpha] =
  \gamma_j (1, \alpha)$, that is, 7. holds.

  {\tmstrong{Let $i + 1 \in [2, n)$}} and suppose the claim holds for $i$. \ \
  \ \ \ \ \ {\tmstrong{(IH)}}

  Let $e \in \tmop{Class} (i + 1)$.

  {\underline{To show that 1. holds.}}
  
  Let $k \in [1, \omega)$.
  
  Since $e (+^{i + 1}) \in \tmop{Class} (i + 1) \subset \tmop{Class} (i)$,
  then by our (IH), \\
  $\eta (i, e ( +^{i + 1}), \gamma_k (i, e ( +^{i + 1}))) = \gamma_k (i, e (
  +^{i + 1})) \in (e ( +^{i + 1}), e (+^{i + 1}) ( +^i))$; from this, the fact
  that $e (+^{i + 1}) <_1 e (+^{i + 1}) ( +^i$) and $<_1$-connectedness
  follows \\
  $e (+^{i + 1}) <_1 \eta (i, e ( +^{i + 1}), \gamma_k (i, e ( +^{i + 1}))) +
  1$. Thus, by lemma \ref{sequence_csi_j_with_m(csi_j)=t[g(k,q,csi_j)]}, there
  is a sequence \\
  $(\xi_j)_{j \in J} \subset \tmop{Class} (i)$ such that $\xi_j
  \underset{\text{cof}}{\longhookrightarrow} e (+^{i + 1}$) and such that for
  all $j \in J$, \\
  $T (i, e ( +^{i + 1}), \gamma_k (i, e ( +^{i + 1}))) \cap e (+^{i + 1})
  \subset \xi_j$ and \\
  $m (\xi_j) = \gamma_k (i, e ( +^{i + 1})) [g (i, e (+^{i + 1}),
  \xi_j)]\underset{\text{by 7. of our (IH)}}{=}\gamma_k (i, \xi_j)$.
  
  From the previous follows that \\
  $X_k (i + 1, e) = \{r \in (e, e (+^{i + 1})) \cap \tmop{Class} (i) | m (r) =
  \gamma_k (i, r)\} \neq \emptyset$. Hence 1. holds.

  {\underline{2. holds.}}

  This is clear from the definition of $(\gamma_j (i + 1, e))_{j \in [1,
  \omega)}$ (the fact that $X_k (i + 1, e) \neq \emptyset$ implies that
  $(\gamma_j (i + 1, e))_{j \in [1, \omega)}$ is well defined).

  {\underline{To show that $[\gamma_k (i + 1, e))_{k \in [1, \omega)}$
  satisfies 3.}}

  Let $k \in [1, \omega)$.
  
  Since $x_k (i + 1, e) \in (e, e ( +^{i + 1})) \cap \tmop{Class} (i)$, then
  $x_k (i + 1, e) \geqslant e (+^i)$ and so \\
  $m (x_k (i + 1, e)) \geqslant e (+^i) ( +^{i - 1}) \ldots ( +^1) 2$. \ \ \ \
  \ \ \ (1*)
  
  On the other hand, for any $t \in (e, x_k (i + 1, e))$ proposition
  \ref{alpha<less>_1_beta_in_Class(k)_then_alpha_in_Class(k+1)} implies \\
  $m (t) < x_k (i + 1, e) \leqslant {\color{magenta} m (x_k (i + 1, e))}$.
  Moreover, notice for any $t \in [x_k (i + 1, e), {\color{magenta} m (x_k (i
  + 1, e))}]$, $m (t) \ngtr {\color{magenta} m (x_k (i + 1, e))}$: Assume the
  opposite. Then the inequalities \\
  $x_k (i + 1, e) \leqslant t \leqslant {\color{magenta} m (x_k (i + 1, e))} <
  {\color{magenta} m (x_k (i + 1, e))} + 1 \leqslant m (t)$ imply by
  $\leqslant_1$-connectedness that $x_k (i + 1, e) \leqslant_1 t <_1
  {\color{magenta} m (x_k (i + 1, e))} + 1$ and then, by
  $\leqslant_1$-transitivity, \\
  $x_k (i + 1, e) <_1 {\color{magenta} m (x_k (i + 1, e))} + 1$.
  Contradiction. Hence, from all this we conclude \\
  $\forall t \in (e, {\color{magenta} m (x_k (i + 1, e))}] .m (t) \leqslant
  {\color{magenta} m (x_k (i + 1, e))}$. \ \ \ \ \ \ \ (2*)
  
  Finally, \\
  $\eta (i + 1, e, \gamma_k (i + 1, e)) = \eta (i + 1, e, m (x_k (i + 1,
  e)))\underset{\text{by (1*)}}{=}$\\
  \ \ \ \ \ \ \ \ \ \ \ \ \ \ \ \ \ \ \ \ \ \ \ \ \ \ \ $= \max \{m (\beta) |
  \beta \in (e, m (x_k (i + 1, e))]\}\underset{\text{by (2*)}}{=}$
  
  \ \ \ \ \ \ \ \ \ \ \ \ \ \ \ \ \ \ \ \ \ \ $= m (x_k (i + 1, e)) =
  \gamma_k (i + 1, e)$.\\
  Thus 3. holds.

  {\underline{To show 4., that is, $(\gamma_k (i + 1, e))_{k \in [1, \omega)}$
  is cofinal in $e (+^{i + 1})$.}}

  First note that since $e (+^{i + 1}) {\color{magenta} (+^{i - 1})} \ldots (
  +^2) ( +^1) 2 + 1 \in (e (+^{i + 1}), e (+^{i + 1}) ( +^i))$, then\\
  $e (+^{i + 1}) <_1 e (+^{i + 1}) {\color{magenta} (+^{i - 1})} \ldots ( +^2)
  ( +^1) 2 + 1 =$\\
  $\eta (i, e ( +^{i + 1}), e (+^{i + 1}) {\color{magenta} (+^{i - 1})} \ldots
  ( +^2) ( +^1) 2) + 1$ by $<_1$-connectedness; then, by (6) and (4) of
  GenThmIH, there exist a sequence of elements in $\tmop{Class} (i)$ that is
  cofinal in $e (+^{i + 1}$). So, to show that $(\gamma_k (i + 1, e))_{k \in
  [1, \omega)}$ is cofinal in $e (+^{i + 1}$) it is enough \\
  to show $\forall \sigma \in (e, e ( +^{i + 1})) \cap \tmop{Class} (i) .
  \exists s \in [1, \omega) . \gamma_s (i + 1, e) > \sigma$. \ \ \ \ \ \ \
  {\tmstrong{(b1)}}

  To show (b1).
  
  Let $\sigma \in \tmop{Class} (i) \cap (e, e ( +^{i + 1}))$. Then by (1.3),
  (1.3.1), (1.3.5), (1.3.6) and (1.3.4) of GenThmIH $f (i + 1, e) (\sigma) =
  \{\sigma = \sigma_1 > \ldots > \sigma_q \}$ for some $q \in [1, \omega)$, \
  \ \ \ \ \ \ {\tmstrong{(c0)}}\\
  where \\
  $\sigma_q = \min \{d \in (e, \sigma_q] \cap \tmop{Class} (i) | m (d) [g (i,
  d, \sigma_q)] \geqslant m (\sigma_g)\}$, \ \ \ \ \ \ \ {\tmstrong{(c1)}}\\
  $\forall l \in [1, q - 1] . \sigma_l = \min \{d \in (\sigma_{l + 1},
  \sigma_l] \cap \tmop{Class} (i) | m (d) [g (i, d, \sigma_l)] \geqslant m
  (\sigma_l)\}$ \ \ \ \ \ \ \ {\tmstrong{(c2)}}\\
  and \\
  $m (\sigma) = m (\sigma_1) \leqslant m (\sigma_2) [g (i, \sigma_2, \sigma)]
  \leqslant m (\sigma_3) [g (i, \sigma_3, \sigma)] \leqslant \ldots \leqslant
  m (\sigma_q) [g (i, \sigma_q, \sigma)]$. \ \ \ \ \ \ \ {\tmstrong{(c3)}}

  On the other hand, by (IH) $(\gamma_j (i, \sigma_q))_{j \in [1, \omega)}$ is
  cofinal in $\sigma_q (+^i$), so there exists $z \in [1, \omega)$ such that
  $\gamma_z (i, \sigma_q) \in (m (\sigma_q), \sigma_q ( +^i))$. \ \ \ \ \ \ \
  {\tmstrong{(c4)}}\\
  But by (c1), \\
  $\forall d \in (e, \sigma_q] \cap \tmop{Class} (i)$.$m (d) [g (i, d,
  \sigma_q)] \leqslant m (\sigma_q) < \gamma_z (i, \sigma_q)$, particularly,
  \\
  $\forall d \in (e, \sigma_q] \cap \tmop{Class} (i)$.$m (d) [g (i, d,
  \sigma_q)] < \gamma_z (i, \sigma_q)$. From this and using (2.3.2) of
  GenThmIH and (7) of our (IH), we get \\
  $\forall d \in (e, \sigma_q] \cap \tmop{Class} (i)$.\\
  $m (d) = m (d) [g (i, d, \sigma_q)] [g (i, \sigma_q, d)] < \gamma_z (i,
  \sigma_q) [g (i, \sigma_q, d)]\underset{\text{by (7) of our (IH)}}{=}\gamma_z
  (i, d)$ \ \ \ \ \ \ \ {\tmstrong{(*)}}

  Now let $l \in [1, q - 1]$ and $d \in (\sigma_{l + 1}, \sigma_l] \cap
  \tmop{Class} (i)$. By (c2), $m (d) [g (i, d, \sigma_l)] \leqslant m
  (\sigma_l)$; this inequality, (c0) and (2.5.3) and (2.3.1) of GenThmIH
  imply,\\
  $m (d) [g (i, d, \sigma)] = m (d) [g (i, d, \sigma_l)] [g (i, \sigma_l,
  \sigma)] \leqslant m (\sigma_l) [g (i, \sigma_l, \sigma)] \leqslant$\\
  \ \ \ \ \ \ \ \ \ \ \ \ \ \ \ \ \ \ $underset{\text{by (c3)}}{\leqslant}m
  (\sigma_q) [g (i, \sigma_q, \sigma)]\underset{\text{using (c4)}}{<}\gamma_z (i,
  \sigma_q) [g (i, \sigma_q, \sigma)]\underset{\text{(7) of our (IH)}}{=}\gamma_z
  (i, \sigma)$. From this, by (2.3.2) and (2.5.3) of GenThmIH, \\
  $m (d) = m (d) [g (i, d, \sigma)] [g (i, \sigma, d)] < \gamma_z (i, \sigma)
  [g (i, \sigma, d)]\underset{\text{(7) of our (IH)}}{=}\gamma_z (i, d)$. The
  previous shows $\forall l \in [1, q - 1] \forall d \in (\sigma_{l + 1},
  \sigma_l] \cap \tmop{Class} (i) .m (d) < \gamma_z (i, d)$. \ \ \ \ \ \ \
  {\tmstrong{(**)}}

  From (*) and (**) follows $\forall d \in (e, \sigma] \cap \tmop{Class} (i)
  .m (d) < \gamma_z (i, d)$, and therefore\\
  $\forall d \in (e, \sigma] \cap \tmop{Class} (i) .d < \min X_z (i + 1, e) =
  x_z (i + 1, e) \leqslant m (x_z (i + 1, e)) = \gamma_z (i + 1, e)$.
  
  This shows (b1). Hence 4. holds.

  {\underline{To show that $(\gamma_k (i + 1, e))_{k \in [1, \omega)}$
  satisfies 5.}}

  First note that for arbitrary $k, j \in [1, \omega)$ and $c \in \tmop{Class}
  (j + 1)$\\
  $x_k (j + 1, c) = \min \{r \in (c, c (+^{j + 1})) \cap \tmop{Class} (j) | m
  (r) = \gamma_k (j, r)\}$. \ \ \ \ \ \ \ {\tmstrong{(J0)}}\\
  So\\
  $m (x_k (j + 1, c)) = \gamma_k (j, x_z (j + 1, c)) = m (x_k (j, x_z (j + 1,
  c)))$; \ \ \ \ \ \ \ {\tmstrong{(J1)}}\\
  $x_k (j + 1, c) \in \tmop{Class} (j) \backslash \tmop{Class} (j + 1)$; \ \ \
  \ \ \ \ {\tmstrong{(J2)}}\\
  $\gamma_k (j + 1, c) = m (x_k (j + 1, c)) \in (x_k (j + 1, c), x_k (j + 1,
  c) ( +^j))$; \ \ \ \ \ \ \ {\tmstrong{(J3)}}\\
  $\lambda (j, m (x_k (j + 1, c)) = \lambda (j, \gamma_k (j + 1, c)) = x_k (j
  + 1, c)$. \ \ \ \ \ \ \ {\tmstrong{(J4)}}

  Let $z \in [1, \omega)$.

  We show now
  
  $\gamma_z (i + 1, e) = m (x_z (i + 1, e)) = m (x_z (i, {\color{magenta} x_z
  (i + 1, e)})) = \ldots =$\\
  \ \ \ \ \ \ \ \ \ \ \ \ \ \ \ \ \ \ \ \ \ \ \ \ \ \ \ \ \ \ \ \ \ \ \ \ \ \
  \ $= m (x_z (2, x_z (3, \ldots x_z (i, {\color{magenta} x_z (i + 1, e)})
  \ldots)))$ \ \ \ \ \ \ \ {\tmstrong{(J5)}}\\
  This is easy:\\
  $\gamma_z (i + 1, e) = m (x_z (i + 1, e))\underset{\text{by (J1)}}{=}m (x_z (i,
  x_z (i + 1, e)))\underset{\text{by (J1)}}{=}$\\
  \ \ \ \ \ \ \ \ \ \ \ \ \ $= m (x_z (i - 1, x_z (i, x_z (i + 1,
  e))))\underset{\text{by (J1)}}{=}\ldots\underset{\text{by (J1)}}{=}$
  
  \ \ \ \ \ \ \ \ \ $= m (x_z (2, x_z (3, \ldots x_z (i, {\color{magenta} x_z
  (i + 1, e)}) \ldots)))$.
  
  This shows (J5).

  Let's abbreviate\\
  $o_1 \assign \lambda (1, \gamma_z (i + 1, e))$,\\
  $o_2 \assign \lambda (2, \gamma_z (i + 1, e)), \ldots$,\\
  $o_i \assign \lambda (i, \gamma_z (i + 1, e))$,\\
  $o_{i + 1} \assign \lambda (i + 1, \gamma_z (i + 1, e))$. \ \ \ \ \ \ \
  {\tmstrong{(d1)}}

  To show $o_{i + 1} = e$,
  
  $x_z (i + 1, e) = o_i$,$x_z (i, {\color{magenta} x_z (i + 1, e)}) = o_{i -
  1}, \ldots$,$x_z (2, x_z (3, \ldots x_z (i, {\color{magenta} x_z (i + 1,
  e)}) \ldots)) = o_1$ and
  
  $o_2 = \lambda (2, o_1), o_3 = \lambda (3, o_2), \ldots, o_i = \lambda (i,
  o_{i - 1}), o_{i + 1} = \lambda (i + 1, o_i)$. \ \ \ \ \ \ \
  {\tmstrong{(J6)}}

  First let's see $o_{i + 1} = e$. \ \ \ \ \ \ \ {\tmstrong{(J6.1)}}
  
  Note $\gamma_z (i + 1, e)\underset{\text{By (J3)}}{\in}(x_z (i + 1, e), x_z (i
  + 1, e) ( +^i))\underset{\text{By (J0)}}{\subset}(e, e ( +^{i + 1}))$. Then,
  since \\
  $(e, e ( +^{i + 1})) \cap \tmop{Class} (i) = \emptyset$, we get $o_{i + 1} =
  \lambda (i + 1, \gamma_z (i + 1, e)) = e$. So (J6.1) holds.

  Now let's show $x_z (i + 1, e) = o_i, \ldots$,$x_z (2, x_z (3, \ldots x_z
  (i, {\color{magenta} x_z (i + 1, e)}) \ldots)) = o_1$. \ \ \ \ \ \ \
  {\tmstrong{(J6.2)}}
  
  This is also easy:\\
  $x_z (i + 1, e)\underset{\text{by (J4)}}{=}\lambda (i, \gamma_z (i + 1, e)) =
  o_i$,\\
  $x_z (i, x_z (i + 1, e))\underset{\text{by (J4)}}{=}\lambda (i - 1,
  {\color{magenta} m (x_z (i, x_z (i + 1, e)))})\underset{\text{by 
  (J5)}}{=}\lambda (i - 1, \gamma_z (i + 1, e)) = o_{i - 1}$,\\
  $\ldots$\\
  $x_z (2, x_z (3, \ldots x_z (i, {\color{magenta} x_z (i + 1, e)})
  \ldots))\underset{\text{by (J4)}}{=}\lambda (1, m (x_z (2, x_z (3, \ldots x_z
  (i, {\color{magenta} x_z (i + 1, e)}) \ldots))))\underset{\text{by (J5)}}{=}$\\
  \ \ \ \ \ \ \ \ \ \ \ \ \ \ \ \ \ \ \ \ \ \ \ \ \ \ \ \ \ \ \ \ \ \ \ \ \ \
  \ \ \ \ \ \ $= \lambda (1, \gamma_z (i + 1, e)) = o_1$. \\
  So (J6.2) holds.

  Let's see that $o_2 = \lambda (2, o_1), o_3 = \lambda (3, o_2), \ldots, o_i
  = \lambda (i, o_{i - 1}), o_{i + 1} = \lambda (i + 1, o_i)$. \ \ \ \ \ \ \
  {\tmstrong{(J6.3)}}
  
  Note for any $k \in [1, i]$
  
  $o_k \underset{\text{by (J 6.2) and (J 6.1)}}{=} x_z (k + 1,
  o_{k + 1}) \underset{\text{by (J 0)}}{\in} (o_{k + 1}, o_{k + 1} (
  +^{k + 1})) \cap \tmop{Class} (k)$, so $\lambda (k + 1, o_k) = o_{k + 1}$.\\
  So (J6.3) holds.

  Hence (J6) holds because of the proofs of (J6.1), (J6.2), (J6.3).

  To show $T (i + 1, e, \gamma_z (i + 1, e)) = \{o_1 > o_2 > \ldots > o_{i -
  1} > o_{i + 1} \}$ \ \ \ \ \ \ \ {\tmstrong{(J7)}}

  Since $\gamma_z (i + 1, e)\underset{\text{by (J5) and (J6)}}{=}m (o_1)$, then
  \\
  $T (i + 1, e, \gamma_z (i + 1, e)) = T (i + 1, e, m (o_1)) = \bigcup_{d \in
  \tmop{Ep} (m (o_1))} T (i + 1, e, d)\underset{\tmop{Ep} (m (o_1)) = T (1,
  o_1, m (o_1))}{=}$\\
  $\bigcup_{d \in T (1, o_1, m (o_1))} T (i + 1, e, d)\underset{\text{by our 
  (IH)}}{=}\bigcup_{d \in \{o_1 \}} T (i + 1, e, d) = T (i + 1, e, o_1) =
  \bigcup_{k \in \omega} O (k, o_1)$,

  where by definition \\
  $E_1 = \lambda (1, m (o_1)) = o_1,E_2 = \lambda (2, E_1)\underset{\text{by 
  (d1)}}{=}o_2, \ldots,E_{i + 1} = \lambda (i + 1, E_i)\underset{\text{by 
  (d1)}}{=}o_{i + 1}$ and

  $O (0, o_1) \assign\underset{\delta \in W (0, k, o_1), k = 1, \ldots, i}{\bigcup}
  f (k + 1, \lambda (k + 1, \delta)) (\delta) \cup
  \tmop{Ep} (m (\delta)) \cup \{\lambda (k + 1, \delta)\}$;\\
  $W (0, k, o_1) \assign (e, e ( +^{i + 1})) \cap \{E_1 > E_2 \geqslant E_3
  \geqslant \ldots \geqslant E_{i + 1} = e\} \cap (\tmop{Class} (k) \backslash
  \tmop{Class} (k + 1))$;

  $O (l + 1, o_1) \assign\underset{\delta \in W (l, k, o_1), k = 1, \ldots, i}{\bigcup}
  f (k + 1, \lambda (k + 1, \delta)) (\delta) \cup
  \tmop{Ep} (m (\delta)) \cup \{\lambda (k + 1, \delta)\}$;

  $W (l, k, o_1) \assign (e, e ( +^{i + 1})) \cap O (l, o_1) \cap
  (\tmop{Class} (k) \backslash \tmop{Class} (k + 1))$.

  But note for any $k \in [1, i]$,\\
  $W (0, k, o_1) = \{o_k \}$,\\
  $\lambda (k + 1, o_k) = o_{k + 1}$,\\
  $f (k + 1, \lambda (k + 1, o_k)) (o_k) = f (k + 1, o_{k + 1}) (o_k) = \{o_k
  \}$,\\
  $\tmop{Ep} (m (o_k))\underset{\text{by (J5) and (J6)}}{=}\tmop{Ep} (m (o_1)) =
  \{o_1 \}$. \\
  Therefore $f (k + 1, \lambda (k + 1, o_k)) (o_k) \cup \tmop{Ep} (m (o_k))
  \cup \{\lambda (k + 1, o_k)\} = \{o_k, o_1, o_{k + 1} \}$. This way $O (0,
  o_1) = \{o_1 > o_2 \geqslant o_3 \geqslant \ldots \geqslant o_{i + 1} =
  e\}$, and moreover, exactly because of the same reasoning, $\forall l \in
  \omega .O (l + 1, o_1) = \{o_1 > o_2 \geqslant o_3 \geqslant \ldots
  \geqslant o_{i + 1} = e\}$. Thus, we conclude \\
  $T (i + 1, e, \gamma_z (i + 1, e)) = \{o_1 > o_2 \geqslant o_3 \geqslant
  \ldots \geqslant o_{i + 1} = e\}$. Finally, we just make the reader aware
  that actually $o_1 > o_2 > o_3 > \ldots > o_{i + 1}$ holds because of (J6)
  and (J2). So we have shown (J7). This concludes the proof of 5.

  {\underline{To show that $(\gamma_k (i + 1, e))_{k \in [1, \omega)}$
  satisfies 6.}}

  From 5. we get $T (i, e, \gamma_z (i, e)) \backslash \{e\} = \{o_1 > o_2 >
  \ldots > o_{i - 1} \}$ with $m (o_{i - 1}) = \ldots = m (o_1)$.

  {\underline{To show $(\gamma_k (i + 1, e))_{k \in [1, \omega)}$ satisfies
  7.}}

  Let $\alpha \in \tmop{Class} (i + 1)$ and $z \in [1, \omega)$.
  
  Let $o_1, \ldots, o_{i + 1}$ as in 5. (that is, for $k \in [1, i + 1]$, $o_k
  \assign \lambda (k, {\color{magenta} \gamma_z (i + 1, e)})$). By 5., we know
  that $T (i + 1, e, \gamma_z (i + 1, e)) = \{o_1 > o_2 > \ldots > o_{i - 1} >
  o_{i + 1} = e\}$. So $T (i + 1, e, \gamma_z (i + 1, e)) \cap e = \emptyset$.
  Now, for any $k \in [1, i + 1]$, $T (i + 1, e, o_k)\underset{\text{definition of }
  T (i + 1, e, o_k)}{\subset}T (i + 1, e, \gamma_z (i + 1, e))$, which
  means \\
  $\forall k \in [1, i + 1] . \emptyset = T (i + 1, e, o_k) \cap e \subset
  \alpha$. The latter expression implies, by (2.2.3) of GenThmIH that $\forall
  k \in [1, i + 1] . \tmop{Ep} (o_k) \subset \tmop{Dom} [g (i + 1, e,
  \alpha)]$. So for $k \in [1, i + 1]$, let $\tmmathbf{u_k \assign o_k [g (i +
  1, e, \alpha)]}$.

  We will need the following observations (K1), (K2), (K3) and (W):

  Since by 5. we know $\forall k \in [1, i] .o_{k + 1} = \lambda (k + 1,
  o_k)$, then by (2.4.6) of GenThmIH, this implies $\forall k \in [1, i] .u_{k
  + 1} = o_{k + 1} [g (i + 1, e, \alpha)] = \lambda (k + 1, o_k [g (i + 1, e,
  \alpha)]) = \lambda (k + 1, u_k)$. \ \ \ \ \ \ \ {\tmstrong{(K1)}}

  Note $o_1\underset{\text{by 5.}}{=}x_z (2, o_2) \in X_z (2, o_2) \assign \{r
  \in (o_2, o_2 (+^2)) \cap \tmop{Class} (1) | m (r) = \gamma_z (1, r)\}$.
  This implies $m (o_1) = \gamma_z (1, o_1)$. \ \ \ \ \ \ \ {\tmstrong{(K2)}}

  Moreover, observe that\\
  $o_i <_1 o_{i - 1} <_1 \ldots <_1 o_1 <_1 \gamma_z (i + 1, e) = m (o_1)
  \underset{\text{by (K 2)}}{=} \gamma_z (1, o_1)
  \underset{\text{by definition}}{=} m (\omega_z (o_1))$ and \\
  $\forall j \in [1, i] .o_j \nleqslant_1 m (\omega_z (o_1)) + 1$ imply, by
  (2.4.3) of GenThmIH, that\\
  $u_i <_1 \ldots <_1 u_1 <_1 m (\omega_z (o_1)) [g (i + 1, e, \alpha)]
  \underset{\text{by (2.4.4) of GenThmIH}}{=} m (
  {\color{magenta} (\omega_z (o_1)) [g (i + 1, e, \alpha)]}) =$ \ \ \ \ \ \ \
  \ \ \ \ \ \ \ \ \ \ \ \ \ \ \ \ \ \ \ \ \ \ \ \ \ \ \ \ \ \ \ \ \ \ \ \ \ \
  \ \ \ \ \ \ \ \ \ \ \ \ \ \ \ \ \ \ \ \ \ \ \ \ \ \ \ \ \ $= m (\omega_z
  (u_1))$\\
  and \\
  $\forall j \in [1, i] .u_j \nleqslant_1 (m (\omega_z (o_1)) + 1) [g (i + 1,
  e, \alpha)] \underset{\text{by (2.4.4) of GenThmIH}}{=}$\\
  \ \ \ \ \ \ \ \ \ \ \ \ \ \ \ \ \ \ \ \ $m ( {\color{magenta} (\omega_z
  (o_1)) [g (i + 1, e, \alpha)]}) + 1 = m (\omega_z (u_1)) + 1$.\\
  From this follows $\forall j \in [1, i] .m (u_j) = m (\omega_z (u_1))
  \underset{\text{by definition}}{=} \gamma_z (1, u_1)$. \ \ \ \
  \ \ \ {\tmstrong{(K3)}}

  Now we show that $\forall j \in [1, i] .u_j = x_z (j + 1, u_{j + 1})$ \ \ \
  \ \ \ \ {\tmstrong{(W)}}

  We prove (W) by a (side)induction on $([1, i], <)$.

  Let $j \in [1, i]$.
  
  Suppose $\forall l \in j \cap [1, i] .u_l = x_z (l + 1, u_{l + 1})$. \ \ \ \
  \ \ \ {\tmstrong{(WIH)}}

  Note $m (u_j)\underset{\text{by (K3)}}{=}\gamma_z (1, u_1)\underset{\text{if }j
  \geqslant 2}{=}m (u_{j - 1})\underset{\text{by (WIH)}}{=}m (x_z (j, u_j)) =
  \gamma_z (j, u_j)$. This shows that, in any case, $m (u_j) = \gamma_z (j,
  u_j)$ \ \ \ \ \ {\tmstrong{(M1)}}.\\
  This way, \\
  $u_j \underset{\text{by (M1)}}{\in}  \{r \in (\lambda (j + 1, u_j),
  \lambda (j + 1, u_j) (+^{j + 1})) \cap \tmop{Class} (j) |m (r) = \gamma_z
  (j, r)\} =$\\
  \ \ \ \ \ \ \ \ \ \ \ $= X_z (j + 1, \lambda (j + 1, u_j))\underset{\text{by
  (K1)}}{=}X_z (j + 1, u_{j + 1})$. Moreover, \ since \\
  $f (j + 1, o_{j + 1}) (o_j) = \{o_j \}\underset{\text{by (2.4.5) of
  GenThmIH}}{\Longleftrightarrow}f (j + 1, u_{j + 1}) (u_j) = \{u_j \}$,
  then by (1.3.5) of GenThmIH, \\
  $u_j = \min \{s \in (u_{j + 1}, u_{j + 1} (+^{j + 1})) \cap \tmop{Class} (j)
  | s \leqslant u_j \wedge m (s) [g (j, s, u_j)] = m (u_j)\} =$\\
  \ \ $= \min \{s \in (u_{j + 1}, u_{j + 1} (+^{j + 1})) \cap \tmop{Class}
  (j) | s \leqslant u_j \wedge m (s) [g (j, s, u_j)] \underset{\text{by (M1)}}{=} \gamma_z (j, u_j)\} =$\\
  \ \ $=$, since by (IH) 7., applied to $j \leqslant i$, $u_j, s \in
  \tmop{Class} (j)$, we get $T (j, u_j, \gamma_z (j, u_j)) \cap u_j =
  \emptyset \subset s$,\\
  \ \ $= \min \{s \in (u_{j + 1}, u_{j + 1} (+^{j + 1})) \cap \tmop{Class}
  (j) | s \leqslant u_j \wedge m (s) = \gamma_z (j, u_j) [g (j, u_j, s)]\}$\\
  \ \ $=$, by (IH) 7. applied to $j \leqslant i$, $u_j, s \in \tmop{Class}
  (j)$ and $z \in [1, \omega)$,\\
  \ \ $= \min \{s \in (u_{j + 1}, u_{j + 1} (+^{j + 1})) \cap \tmop{Class}
  (j) | s \leqslant u_j \wedge m (s) = \gamma_z (j, s)\} =$\\
  \ \ $= \min \{s \in (u_{j + 1}, u_{j + 1} (+^{j + 1})) \cap \tmop{Class}
  (j) | m (s) = \gamma_z (j, s)\} =$\\
  \ \ $= \min X_z (j + 1, u_{j + 1}) = x_z (j + 1, u_{j + 1})$.\\
  This shows that (W) holds.

  Finally, \\
  $\gamma_z (i + 1, e) [g (i + 1, e, \alpha)] = m (x_z (i + 1, e)) [g (i + 1,
  e, \alpha)] \underset{\text{\tmop{by} 5.}}{=} m (o_i) [g (i + 1, e, \alpha)]
  =$\\
  \ \ \ \ \ \ \ \ \ \ \ \ \ \ \ \ \ \ \ \ \ \ \ \ \ \ \ \ \ \ $= m (o_i [g (i
  + 1, e, \alpha)]) = m (u_i) \underset{\text{by (W)}}{=} m (x_z (i +
  1, u_{i + 1})) =$\\
  \ \ \ \ \ \ \ \ \ \ \ \ \ \ \ \ \ \ \ \ \ \ \ \ \ \ \ \ \ \ $= m (x_z (i +
  1, \alpha)) = \gamma_z (i + 1, \alpha)$.\\
  Since this last equality was done for arbitrary $\alpha \in \tmop{Class} (i
  + 1)$ and $z \in [1. \omega)$, then 7. holds.
\end{proof}

\begin{remark}
  \label{Remark_Cannonical_sequences}For $i \in [1, n)$ and $e \in
  \tmop{Class} (i)$, it is not hard to see that the sequences \\
  $(x_k (i, e))_{k \in [1, \omega)}$ and $(\gamma_k (i, e))_{k \in [1,
  \omega)}$ are strictly increasing.
  
  Moreover, for any $k \in [1, \omega)$, $\eta (i, e, x_k (i, e)) = m (x_k
  (i, e))$. This equality holds because \\
  $x_k (i, e) \leqslant m (x_k (i, e))$, implies \\
  $m (x_k (i, e)) \leqslant \eta (i, e, x_k (i, e)) \leqslant \eta (i, e, m
  (x_k (i, e))\underset{\text{by 3. of previous proposition }
  \ref{most_important_sequence_in_(e,e(+^i))}}{=}m (x_k (i, e))$.
\end{remark}

\subsection{$\tmop{Class} (n)$ is $\kappa$-club}

\begin{proposition}
  \label{alpha_in_Class(n)_iff_alpha_in_certain_intersection_A^n-1(s)}Let
  $\kappa$ be an uncountable regular ordinal and $r \in \tmop{Class} (n - 1)
  \cap \kappa$ arbitrary. Let $M^{n - 1} (r, \kappa) \assign \{q \in [\kappa,
  \kappa (+^{n - 1})) | T (n - 1, \kappa, q) \cap \kappa \subset r\}$. Then
  $\bigcap_{s \in M^{n - 1} (r, \kappa)} A^{n - 1} (s) \subset \tmop{Class}
  (n)$.
\end{proposition}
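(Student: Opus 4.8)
The plan is to fix $\alpha\in\bigcap_{s\in M^{n-1}(r,\kappa)}A^{n-1}(s)$ and deduce $\alpha\in\tmop{Class}(n)$. The naive route --- lifting an arbitrary $t\in[\alpha,\alpha(+^{n-1}))$ into the $\kappa$-block via $g(n-1,\alpha,\kappa)$ --- is useless, since such a lift sits in $M^{n-1}(r,\kappa)$ only when $T(n-1,\alpha,t)\cap\alpha\subset r$, and $r$ may lie far below $\alpha$. The right witnesses are the members of the canonical sequence $(\gamma_k(n-1,\kappa))_{k\in[1,\omega)}$ of $\kappa(+^{n-1})$ from Definition \ref{canonical_fundamental_sequence} (legitimate because $\kappa\in\tmop{Class}(n-1)$ by Proposition \ref{regular_ordinal_in_Class(n-1)}): by Proposition \ref{most_important_sequence_in_(e,e(+^i))} applied with $i=n-1$, each $\gamma_k(n-1,\kappa)$ lies in $(\kappa,\kappa(+^{n-1}))$, satisfies $\gamma_k(n-1,\kappa)=\eta(n-1,\kappa,\gamma_k(n-1,\kappa))$, and --- being built solely from $\kappa$ and its $(+^j)$-successors --- has $T(n-1,\kappa,\gamma_k(n-1,\kappa))\cap\kappa=\emptyset$. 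Hence $\gamma_k(n-1,\kappa)\in M^{n-1}(r,\kappa)$ for every $k$ and every $r$, so by hypothesis $\alpha\in A^{n-1}(\gamma_k(n-1,\kappa))$ for all $k$.

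Next I unwind this membership. By the Generalized Hierarchy Theorem \ref{G^n-1(t)=A^n-1(t)_Gen_Hrchy_thm}, $A^{n-1}(\gamma_k(n-1,\kappa))=G^{n-1}(\gamma_k(n-1,\kappa))\subset\tmop{Class}(n-1)$; in particular $\alpha\in\tmop{Class}(n-1)$. Writing out the definition of $G^{n-1}$ for $t=\gamma_k(n-1,\kappa)$ (whose associated element of $\tmop{Class}(n-1)$ is $\kappa$), and using both $\eta(n-1,\kappa,\gamma_k(n-1,\kappa))=\gamma_k(n-1,\kappa)$ and $T(n-1,\kappa,\gamma_k(n-1,\kappa))\cap\kappa=\emptyset\subset\alpha$ --- the latter also making the substitution legal, via (2.2.3) of GenThmIH --- we get $\alpha\leqslant^{n-1}\gamma_k(n-1,\kappa)[g(n-1,\kappa,\alpha)]+1$. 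Finally clause 7 of Proposition \ref{most_important_sequence_in_(e,e(+^i))}, applied with $e=\kappa$ and our $\alpha\in\tmop{Class}(n-1)$, gives $\gamma_k(n-1,\kappa)[g(n-1,\kappa,\alpha)]=\gamma_k(n-1,\alpha)$, so $\alpha\leqslant^{n-1}\gamma_k(n-1,\alpha)+1$ for every $k\in[1,\omega)$.

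To conclude: by clause 4 of Proposition \ref{most_important_sequence_in_(e,e(+^i))}, $(\gamma_k(n-1,\alpha))_{k\in[1,\omega)}$ is cofinal in $\alpha(+^{n-1})$, hence so is $(\gamma_k(n-1,\alpha)+1)_{k}$ (as $\alpha(+^{n-1})\in\mathbbm{E}$ is a limit ordinal); combined with $\alpha\leqslant^{n-1}\gamma_k(n-1,\alpha)+1$ for all $k$, $\leqslant^{n-1}$-continuity ((3) of GenThmIH) gives $\alpha\leqslant^{n-1}\alpha(+^{n-1})$, i.e.\ $\alpha<^{n-1}\alpha(+^{n-1})$. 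By Proposition \ref{alpha<less>^n-1alpha(+^n-1)_iff_alpha_in_intersection} this is equivalent to $\alpha<_1\alpha(+^{n-1})$, whence $\alpha\in\tmop{Class}(n)$ (the case $\alpha=\kappa$ just re-derives Corollary \ref{corollary_kapa_uncount_regul_in_Class(n)}(a); the argument is uniform over $\alpha\leqslant\kappa$). I foresee no genuine obstacle here: Proposition \ref{most_important_sequence_in_(e,e(+^i))} already carries all the weight, and the only real insight needed is that its canonical sequences are precisely the witnesses which remain in $M^{n-1}(r,\kappa)$ regardless of how small $r$ is.
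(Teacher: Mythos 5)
Your proof is correct and follows essentially the same route as the paper: you use the canonical sequence $(\gamma_k(n-1,\kappa))_k$ as the witnesses lying in $M^{n-1}(r,\kappa)$, unwind $A^{n-1}=G^{n-1}$ via the Generalized Hierarchy Theorem, transfer with clause 7 of Proposition \ref{most_important_sequence_in_(e,e(+^i))} to get $\alpha\leqslant^{n-1}\gamma_k(n-1,\alpha)+1$, and conclude by cofinality and $\leqslant^{n-1}$-continuity. The only difference is cosmetic: you route the final step through Proposition \ref{alpha<less>^n-1alpha(+^n-1)_iff_alpha_in_intersection}, where the paper concludes directly from $\alpha\leqslant^{n-1}\alpha(+^{n-1})$.
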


\begin{proof}
  Let $\alpha \in \bigcap_{s \in M^{n - 1} (r, \kappa)} A^{n - 1} (s)$
  
  Consider $(\gamma_j (n - 1, \kappa))_{j \in [1, \omega)}$, the canonical
  sequence of $\kappa (+^{n - 1}) \in \tmop{Class} (n - 1)$. Then \\
  $\forall j \in [1, \omega) . \gamma_j (n - 1, \kappa) \in M^{n - 1} (r,
  \kappa)$. Therefore for any $j \in [1, \omega)$, \\
  $\alpha \in A^{n - 1} (\gamma_j (n - 1, \kappa))\underset{\text{theorem }
  \ref{G^n-1(t)=A^n-1(t)_Gen_Hrchy_thm}}{=}G^{n - 1} (\gamma_j (n - 1,
  \kappa))$. \\
  \ \ \ \ \ \ \ \ \ \ \ \ \ \ \ \ \ \ \ \ \ \ \ \ \ \ \ \ \ \ \ \ \ \
  $=\{\beta \in \tmop{Class} (n - 1) | T (n - 1, \kappa, \gamma_j (n - 1,
  \kappa)) \cap \kappa \subset \beta \leqslant \kappa \wedge$ \\
  \ \ \ \ \ \ \ \ \ \ \ \ \ \ \ \ \ \ \ \ \ \ \ \ \ \ \ \ \ \ \ \ \ \ \ \ \ \
  \ $\beta \leqslant^{n - 1} \eta (n - 1, \kappa, \gamma_j (n - 1, \kappa)) [g
  (n - 1, \kappa, \beta)] + 1\}$.
  
  The previous means, for any $j \in [1, \omega)$,\\
  $\alpha \leqslant^{n - 1} \eta (n - 1, \kappa, \gamma_j (n - 1, \kappa)) [g
  (n - 1, \kappa, \alpha)] + 1 = \gamma_j (n - 1, \kappa) [g (n - 1, \kappa,
  \alpha)]) + 1 =$
  
  \ \ \ \ $= \gamma_j (n - 1, \alpha) + 1$. But, by previous proposition
  \ref{most_important_sequence_in_(e,e(+^i))}, $(\gamma_j (n - 1, \alpha))_{j
  \in [1, \omega)}$ is cofinal in $\alpha (+^{n - 1})$; therefore, by
  $\leqslant^{n - 1}$-continuity follows $\alpha \leqslant^n \alpha (+^{n -
  1})$. Hence $\alpha \in \tmop{Class} (n)$.
\end{proof}

\begin{proposition}
  \label{Class(n)_indeed_k-club}Let $\kappa$ be an uncountable regular
  ordinal. Then $\tmop{Class} (n)$ is club in $\kappa$.
\end{proposition}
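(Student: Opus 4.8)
The plan is to establish the two defining properties of a $\kappa$-club class separately. Closedness is already available: by proposition \ref{Class(n)_is_closed}, $\tmop{Class}(n)$ is closed, hence $\kappa$-closed for every uncountable regular $\kappa$. So everything reduces to showing $\tmop{Class}(n)$ is unbounded in $\kappa$, and for that I would exhibit inside $\tmop{Class}(n)$ a subclass that is club in $\kappa$; since a club class is in particular unbounded, this suffices.

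The club subclass comes from combining propositions \ref{A^n-1(t)_club_in_kapa} and \ref{alpha_in_Class(n)_iff_alpha_in_certain_intersection_A^n-1(s)}. First I would fix some $r\in\tmop{Class}(n-1)\cap\kappa$; this set is nonempty because $\tmop{Class}(n-1)$ is club in $\kappa$ by GenThmIH (0). Then I would consider $M^{n-1}(r,\kappa)=\{q\in[\kappa,\kappa(+^{n-1}))\mid T(n-1,\kappa,q)\cap\kappa\subset r\}$, which is nonempty since the definition of $T$ gives $T(n-1,\kappa,\kappa)=\{\kappa\}$ and hence $\kappa\in M^{n-1}(r,\kappa)$. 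Proposition \ref{alpha_in_Class(n)_iff_alpha_in_certain_intersection_A^n-1(s)} gives $\bigcap_{s\in M^{n-1}(r,\kappa)}A^{n-1}(s)\subset\tmop{Class}(n)$, and proposition \ref{A^n-1(t)_club_in_kapa} gives that each $A^{n-1}(s)$ appearing there (note $s\in[\kappa,\kappa(+^{n-1}))$) is club in $\kappa$. So the argument closes as soon as I know $|M^{n-1}(r,\kappa)|<\kappa$, for then {\cite{GarciaCornejo1}} proposition \ref{Intersection_club_classes} makes $\bigcap_{s\in M^{n-1}(r,\kappa)}A^{n-1}(s)$ itself club in $\kappa$, hence a club subclass of $\kappa$ contained in $\tmop{Class}(n)$.

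Thus the one genuinely new point, and the step I expect to be the main obstacle, is the cardinality bound $|M^{n-1}(r,\kappa)|<\kappa$. I would prove it via the substitution map $\varphi\colon M^{n-1}(r,\kappa)\longrightarrow\tmop{OR}$, $\varphi(q):=q[g(n-1,\kappa,r)]$. Since $\tmop{Class}(n-1)$ is closed and unbounded in $\kappa$ and $r<\kappa$, the ordinal $r(+^{n-1})$ --- the least element of $\tmop{Class}(n-1)$ above $r$ --- satisfies $r<r(+^{n-1})<\kappa$. For $q\in M^{n-1}(r,\kappa)$, (2.2.3) of GenThmIH gives $\tmop{Ep}(q)\subset\tmop{Dom}\,g(n-1,\kappa,r)$, so $\varphi(q)$ is well defined; by proposition \ref{NoAppendix_general_substitution_properties2}(2) together with (2.3.3) of GenThmIH (valid since $r\leqslant\kappa$), $\tmop{Ep}(\varphi(q))=g(n-1,\kappa,r)[\tmop{Ep}(q)]\subset\mathbbm{E}\cap r(+^{n-1})$, whence $\varphi(q)<r(+^{n-1})$ because $r(+^{n-1})\in\mathbbm{E}$; and $\varphi$ is strictly increasing, hence injective, by proposition \ref{NoAppendix_If_x<less>y_then_x[f]<less>y[f]}(2) using that $g(n-1,\kappa,r)$ is strictly increasing (2.2.2). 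Therefore $|M^{n-1}(r,\kappa)|\leqslant|r(+^{n-1})|<\kappa$, the last inequality because $\kappa$ is a cardinal. Having settled this bound, assembling it with the club intersection of the previous paragraph and with $\kappa$-closedness gives that $\tmop{Class}(n)$ is $\kappa$-club, as required.
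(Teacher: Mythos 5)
Your proposal is correct and takes essentially the same route as the paper: closedness from proposition \ref{Class(n)_is_closed}, and unboundedness by showing $\bigcap_{s \in M^{n - 1} (r, \kappa)} A^{n - 1} (s) \subset \tmop{Class} (n)$ is club in $\kappa$ via propositions \ref{alpha_in_Class(n)_iff_alpha_in_certain_intersection_A^n-1(s)}, \ref{A^n-1(t)_club_in_kapa} and the cardinality bound $|M^{n - 1} (r, \kappa)| < \kappa$. The only (harmless) difference is in that bound: you inject $M^{n - 1} (r, \kappa)$ into $r (+^{n - 1})$ by $q \longmapsto q [g (n - 1, \kappa, r)]$ and get $r (+^{n - 1}) < \kappa$ directly from unboundedness of $\tmop{Class} (n - 1)$ in $\kappa$, whereas the paper exhibits this same map as a bijection onto $[r, r ( +^{n - 1}))$ and estimates $|r ( +^{n - 1}) | = |r| < \kappa$ via $\aleph_{\delta + 1}$.
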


\begin{proof}
  We already know that $\tmop{Class} (n)$ is closed in $\kappa$. So we only
  need to show that $\tmop{Class} (n)$ is unbounded in $\kappa$.

  Let $\beta \in \kappa$.
  
  Since we know $\tmop{Class} (n - 1)$ is club in $\kappa$, take $r, r (+^{n -
  1}) \in \tmop{Class} (n - 1) \cap \kappa \neq \emptyset$.

  Consider $M^{n - 1} (r, \kappa) \assign \{q \in [\kappa, \kappa (+^{n - 1}))
  | T (n - 1, \kappa, q) \cap \kappa \subset r\}$.

  Consider $R : [r, r ( +^{n - 1})) \longrightarrow R [r ( +^{n - 1})] \subset
  [\kappa, \kappa ( +^{n - 1}))$, $R (t) \assign t [g (n - 1, r, \kappa)]$.
  Then $R$ is a bijection. We assure that $R [ {\color{magenta} [ r, r ( +^{n
  - 1}))}] = M^{n - 1} (r, \kappa)$. {\tmstrong{(a)}}

  To show $R [ {\color{magenta} [ r, r ( +^{n - 1}))}] \subset M^{n - 1} (r,
  \kappa)$. \ \ \ \ \ \ \ {\tmstrong{(a1)}}
  
  Take $t \in [r, r ( +^{n - 1}))$. Then by (2.3.1) of GenThmIH, $\tmop{Ep}
  (t) \in \tmop{Dom} (g (n - 1, r, \kappa))$ and then, by (2.2.4) of GenThmIH,
  $T (n - 1, \kappa, t [g (n - 1, r, \kappa)]) \cap \kappa = T (n - 1, r, t)
  \cap r \subset r$. Moreover, it is clear also from GenThmIH that $t [g (n -
  1, r, \kappa)] \in [\kappa, \kappa ( +^{n - 1}))$. This shows that $R (t) =
  t [g (n - 1, r, \kappa)] \in M^{n - 1} (r, t)$, and since this was done for
  $t \in r (+^{n - 1}$) arbitrary, then (a1) holds.

  To show $R [ {\color{magenta} [ r, r ( +^{n - 1}))}] \supset M^{n - 1} (r,
  \kappa)$. \ \ \ \ \ \ \ {\tmstrong{(a2)}}
  
  Let $s \in M^{n - 1} (r, \kappa)$. By (2.2.3) of GenThmIH we have that \\
  $M^{n - 1} (r, \kappa) = \{t \in [\kappa, \kappa (+^{n - 1})) | \tmop{Ep}
  (t) \subset \tmop{Dom} g (n - 1, \kappa, r)\}$. Therefore, easily from
  GenThmIH we get that $s [g (n - 1, \kappa, r)] \in [r, r ( +^{n
  - 1}))$. But then \\
  $R (s [g (n - 1, \kappa, r)]) = s [g (n - 1, \kappa, r)] [g (n - 1, r,
  \kappa)]\underset{\text{by (2.3.2) of GenThmIH}}{=}s$. This shows that \\
  $s \in R [ {\color{magenta} [ r, r ( +^{n - 1}))}]$, and since this was done
  for arbitrary $s \in M^{n - 1} (r, \kappa)$, then (a2) holds.

  (a1) and (a2) show (a).

  By (a) and (2.3.2) of GenThmIH, the function $H \assign R^{- 1} : M^{n - 1}
  (r, \kappa) \longrightarrow [r, r ( +^{n - 1}))$, \\
  $H (s) \assign s [g (n - 1, \kappa, r)]$ is a bijection. \ \ \ \ \ \ \
  {\tmstrong{(b)}}

  On the other hand, since $r \in \tmop{Class} (n - 1) \subset \mathbbm{E}
  \subset [\omega, \infty)$ (because $n - 1 \geqslant 1$), then there exists
  $\delta \in \tmop{OR}$ such that $\aleph_{\delta} = |r|$. Then
  $\aleph_{\delta} \leqslant r < \aleph_{\delta + 1} \leqslant \kappa$. But
  $\aleph_{\delta + 1}$ is a regular uncountable ordinal (because it is a
  successor cardinal), and then, by (0) of GenThmIH, $\tmop{Class} (n - 1)$ is
  club in $\aleph_{\delta + 1}$. Hence, $\aleph_{\delta} \leqslant r < r (+^{n
  - 1}) < \aleph_{\delta + 1} \leqslant \kappa$ and subsequently \\
  $| [r, r (+^{n - 1})) | \leqslant |r ( +^{n - 1}) | = |r| < \kappa$. \ \ \ \
  \ \ \ {\tmstrong{(c)}}

  Finally, from (c), (b), proposition \ref{A^n-1(t)_club_in_kapa} and
  {\cite{GarciaCornejo1}} proposition \ref{Intersection_club_classes} follows
  that the set\\
  $\bigcap_{s \in M^{n - 1} (r, \kappa)} A^{n - 1} (s)$ is club
  in $\kappa$. So there exists $\gamma \in \bigcap_{s \in M^{n - 1} (r,
  \kappa)} A^{n - 1} (s)$, with $\gamma > \beta$. But by previous proposition
  \ref{alpha_in_Class(n)_iff_alpha_in_certain_intersection_A^n-1(s)}, $\gamma
  \in \tmop{Class} (n)$. Since the previous was done for an arbitrary $\beta
  \in \kappa$, then we have shown that $\tmop{Class} (n)$ is unbounded in
  $\kappa$.
\end{proof}

{\nocite{Bachmann}} \ {\nocite{Bridge}} \ {\nocite{Buchholz1}} \
{\nocite{Buchholz2}} \ {\nocite{Buchholz3}} \ {\nocite{Buchholz4}} \
{\nocite{BuchholzSch{"u}tte1}} \ {\nocite{Carlson1}} \ {\nocite{Carlson2}} \
{\nocite{Pohlers1}} \ {\nocite{Pohlers2}} \ {\nocite{Rathjen1}} \
{\nocite{Sch{"u}tte2}} \ {\nocite{Sch{"u}tteSimpson}} \
{\nocite{Schwichtenberg1}} \ {\nocite{Setzer}} \ {\nocite{Wilken1}} \
{\nocite{Wilken2}} \ {\nocite{Wilken3}} \ {\nocite{GarciaCornejo0}} \
{\nocite{GarciaCornejo1}}

\end{document}